\documentclass[reqno]{amsart}

\usepackage{amssymb,amsmath}
\usepackage{euscript}
\usepackage{graphicx}
\usepackage{tikz}
\usepackage{tikz-cd}
\usepackage{hyperref}
\usepackage{mathrsfs}
\usepackage{dsfont}
\usepackage{mathtools}
\usepackage{microtype}

\usepackage{xy}

\tikzset{
  trim node/.default=1cm,
  trim node/.style={
    overlay,
    append after command={
      ([xshift={+#1}]\tikzlastnode.north west)
      ([xshift={+-#1}]\tikzlastnode.south east)}},
  down and trim/.default=1cm,
  down and trim/.style={
    yshift=-(\pgfmatrixcurrentcolumn-1)*1.5\baselineskip,
    trim node={#1}},
  downup and trim/.default=1cm,
  downup and trim/.style={
    yshift=iseven(\pgfmatrixcurrentcolumn) ? -1.5\baselineskip : 0pt,
    trim node={#1}},
  -|/.style={to path={-|(\tikztotarget)\tikztonodes}},
  |-/.style={to path={|-(\tikztotarget)\tikztonodes}},
  -| sl/.style={-|, xslant=-1},
  |- sl/.style={|-, xslant= 1},
  center picture/.style={
    trim left=(current bounding box.center),
    trim right=(current bounding box.center)}}

\usepackage[textwidth=25mm, textsize=tiny]{todonotes}

\newtheorem{thm}{Theorem}[section]
\newtheorem{cor}[thm]{Corollary}
\newtheorem{lem}[thm]{Lemma}

\newtheorem{prop}[thm]{Proposition}
\newtheorem{defin}[thm]{Definition}
\newtheorem{def-lem}[thm]{Definition-Lemma}

\theoremstyle{remark}

\newtheorem{warning}[thm]{Warning}
\newtheorem{rem}[thm]{Remark}

\newtheorem{example}[thm]{Example}

\newtheorem{construction}[thm]{Construction}
\newtheorem{notation}[thm]{Notation}
\newtheorem{convention}[thm]{Convention}
\newtheorem*{claim}{Claim}

\numberwithin{equation}{section}

\newcommand{\bbA}{\mathbb{A}}
\newcommand{\bbB}{\mathbb{B}}
\newcommand{\bbC}{\mathbb{C}}
\newcommand{\bbD}{\mathbb{D}}

\newcommand{\bbF}{\mathbb{F}}

\newcommand{\bbL}{\mathbb{L}}
\newcommand{\bbM}{\mathbb{M}}

\newcommand{\bbR}{\mathbb{R}}

\newcommand{\bbS}{\mathbb{S}}

\newcommand{\bbX}{\mathbb{X}}

\newcommand{\bbZ}{\mathbb{Z}}

\newcommand{\Sp}{\mathrm{Sp}}
\newcommand{\Ex}{\mathrm{Ex}}
\newcommand{\Ho}{\mathrm{Ho}}
\newcommand{\Cat}{\mathrm{Cat}}
\newcommand{\perf}{\mathrm{Perf}}

\def\bC{\mathbb{C}}

\def\bE{\mathbb{E}}
\def\bF{\mathbb{F}}
\def\bG{\mathbb{G}}

\def\bL{\mathbb{L}}

\def\bP{\mathbb{P}}

\def\bR{\mathbb{R}}
\def\bS{\mathbb{S}}
\def\bT{\mathbb{T}}

\def\bX{\mathbb{X}}
\def\bY{\mathbb{Y}}
\def\bZ{\mathbb{Z}}

\newcommand{\scrA}{\mathscr{A}}
\newcommand{\scrB}{\mathscr{B}}
\newcommand{\scrC}{\mathscr{C}}
\newcommand{\scrD}{\mathscr{D}}
\newcommand{\scrE}{\mathscr{E}}
\newcommand{\scrF}{\mathscr{F}}

\newcommand{\scrM}{\mathscr{M}}

\newcommand{\calC}{\mathcal{C}}
\newcommand{\calD}{\mathcal{D}}

\newcommand{\calG}{\mathcal{G}}

\newcommand{\calL}{\mathcal{L}}

\newcommand{\calN}{\mathcal{N}}

\newcommand{\calP}{\mathcal{P}}

\newcommand{\calW}{\mathcal{W}}

\newcommand{\cA}{\mathcal{A}}

\newcommand{\cC}{\mathcal{C}}
\newcommand{\cD}{\mathcal{D}}

\newcommand{\cH}{\mathcal{H}}

\newcommand{\cL}{\mathcal{L}}

\newcommand{\cP}{\mathcal{P}}

\newcommand{\fraka}{\mathfrak{a}}
\newcommand{\frakb}{\mathfrak{b}}
\newcommand{\frakc}{\mathfrak{c}}
\newcommand{\frakD}{\mathfrak{D}}

\newcommand{\frake}{\mathfrak{e}}

\newcommand{\frakh}{\mathfrak{h}}
\newcommand{\frakl}{\mathfrak{l}}
\newcommand{\frakL}{\mathfrak{L}}

\newcommand{\frakm}{\mathfrak{m}}
\newcommand{\frako}{\mathfrak{o}}

\newcommand{\frakq}{\mathfrak{q}}

\newcommand{\frakU}{\mathfrak{U}}

\newcommand{\frakX}{\mathfrak{X}}
\newcommand{\frakY}{\mathfrak{Y}}
\newcommand{\frakZ}{\mathfrak{Z}}

\DeclareMathOperator{\codim}{codim}
\DeclareMathOperator{\coker}{coker}
\DeclareMathOperator{\crit}{Crit}
\DeclareMathOperator{\grad}{grad}

\DeclareMathOperator{\ind}{ind}

\DeclareMathOperator{\ob}{ob}

\newcommand{\ch}{{\rm ch}}
\newcommand{\Mod}{{\rm Mod}}

\newcommand{\eval}{\mathrm{Eval}}

\newcommand{\flow}{\mathrm{Flow}}
\newcommand{\fr}{\mathrm{fr}}

\newcommand{\ho}{\mathrm{Ho}}
\newcommand{\identity}{\mathrm{Id}}

\newcommand{\std}{\mathrm{std}}

\newcommand{\unit}{\mathds{1}}

\newcommand{\dgnerve}{{\rm N}^{\rm dg}}
\newcommand{\abs}[1]{\lvert#1\rvert}

\title[Bulk-deformations, Floer complex bordism, and GRR]{Bulk-deformations, Floer complex bordism, \\ and Grothendieck-Riemann-Roch}
\author{Kenneth Blakey}
\address{Department of Mathematics, MIT, 182 Memorial Drive, Cambridge, MA 02139, U.S.A.} 
\email{kblakey@mit.edu}

\author{Noah Porcelli}
\address{Max Planck Institute for Mathematics, Vivatsgasse 7, 53111 Bonn, Germany} 
\email{porcelli@mpim-bonn.mpg.de}

\begin{document}

\begin{abstract}
Given a Liouville manifold, we compute a Floer-homotopical invariant -- the complexification of the lift of symplectic cohomology to complex cobordism -- in terms of a classical Floer-theoretic invariant, namely, symplectic cohomology bulk-deformed by the Chern character. We do this by giving an explicit model for the complexified homotopy groups of the ${\rm MU}$-module spectrum associated to a complex-oriented flow category and proving a ``homotopy coherent'' version of the classical Grothedieck-Riemann-Roch theorem. 

Using the aforementioned relation, we establish a computable cohomological criterion, in terms of the pair-of-pants product and the BV operator on symplectic cohomology, for when this ${\rm MU}$ lift cannot be obtained via base change from the sphere spectrum; moreover, we give examples where this holds. Finally, we use this non-base change criterion to detect examples of non-trivial higher-dimensional complex cobordism classes of relative Gromov-Witten type moduli spaces in the context of a smooth complex projective variety relative to an ample smooth divisor.
\end{abstract}

\maketitle
\tableofcontents
\section{Introduction}

\subsection{Background and context}
Symplectic cohomology is one of the main Floer-theoretic invariants associated to a Liouville manifold $M$. Traditionally, this is defined as a cochain complex over a classical ring, e.g., $\bbZ$ or $\bbC$. We denote by $SC^*(M;\bbZ)$ or $SC^*(M;\bbC)$ the aforementioned cochain complex. Floer homotopy theory seeks to understand lifts of symplectic cohomology to more exotic ring spectra. The question of when such a lift exists is intimately tied to the tangent bundle of $M$; indeed, under appropriate conditions on $TM$, symplectic cohomology can be lifted to many ring spectra of interest. For example, if $M$ admits a stable $\bbR$-polarization, i.e., a Lagrangian distribution in a stabilization of $TM$, then symplectic cohomology can be lifted to a module over the sphere spectrum $\bbS$. Moreover, these lifts really do depend on a choice of tangential structure on $M$: for example, different stable $\bbR$-polarizations give rise to different lifts, cf. \cite{BB25}. Also, cf. \cite[Section 1.2]{AGLA25} and \cite[Section 2]{PS25c} for further discussion regarding such conditions and choices for general ring spectra. 

The most general version of spectral symplectic cohomology which exists on any graded Liouville manifold $M$, and which does not depend on any auxiliary choices, is a lift to a module $\bbF^{\rm U}$ over the complex cobordism ring spectrum ${\rm MU}$.\footnote{Recall, graded means $2c_1(M)=0$. In particular, in the ungraded case, one must work with the periodic analogue ${\rm MUP}$ of ${\rm MU}$.} Taking homotopy groups, 
    \begin{equation}
    SH^{-*}(M;{\rm MU})\equiv\pi_*\bbF^{\rm U},
    \end{equation}
yields a module over the classical complex bordism ring $\Omega^{\rm U}_*\cong\pi_*{\rm MU}$; this is beginning to be referred to as \emph{Floer complex bordism} in the literature. Now, $\bbF^{\rm U}$ ``lifts'' ordinary symplectic cochains in the sense that $\bbF^{\rm U}$ recovers ordinary symplectic cochains via base change:
    \begin{equation}
    SC^{-*}(M;\bbZ)\cong\bbF^{\rm U}\otimes_{{\rm MU}}\bbZ.
    \end{equation}

$\Omega^{\rm U}_*$ is a classical object; it is a polynomial algebra in infinitely many generators, cf. \cite{Milnor,Qillen}. In particular, we see
    \begin{equation}\label{eq: weiughewourbg}
    \Omega^{\rm U}_* \otimes_{\bbZ} \bC \cong \bC[\underline b] \equiv \bC[b_1,\ldots,b_n,\ldots],\;\;\deg(b_n)=2n.
    \end{equation}
As a first step to understanding $\bbF^{\rm U}$ in general, the present article discards torsion and studies its complexification (or $\bbC$-linearization), i.e., we study $\bbF^{\rm U}\otimes_\bbS\bbC$ as a module over 
    \begin{equation}\label{eqn:complexification}
    {\rm MU}\otimes_\bbS\bbC\simeq\Omega^{\rm U}_* \otimes_{\bbZ} \bC\simeq\bbC[\underline{b}].
    \end{equation}
Cf. Theorem \ref{thm:main} and Corollary \ref{cor:main} for our main results, Subsection \ref{subsec:applications} for our applications, and Subsection \ref{subsec:methods} for an overview of our methods.
    
\subsection{Main results}
Setting each $b_\rho$ to zero, i.e., base changing $\bbF^{\rm U}\otimes_{\bbS}\bbC$ along $\bbC[\underline{b}]\to\bbC$, recovers $SC^{-*}(M;\bbC)$, so we may view $\bbF^{\rm U}\otimes_{\bbS}\bbC$ as a deformation of $SC^{-*}(M;\bbC)$; it is a sheaf on the infinite-dimensional affine space $\mathrm{Spec}(\Omega^{\rm U}_* \otimes_\bbZ \bC)$ whose restriction to 0 recovers $SC^{-*}(M;\bbC)$.

Meanwhile, a classical source of deformations of ordinary symplectic cohomology (or the wrapped Fukaya category), which arise naturally in mirror symmetry, are \emph{bulk-deformations}, cf. \cite{FOOO,FOOO:ToricII,AA,Sheridan}. Let $\frakU$ be a cohomology class of degree $\ell\geq3$. By choosing a smooth cycle Poincar\'e dual to $\frakU$ and counting Floer trajectories with $q\geq0$ marked points which are constrained to lie on the aforementioned smooth cycle (weighted by $\hbar^q$, where $\hbar$ is a formal variable of degree $2-\ell$), we obtain a module $SH_\frakU^*(M; \bC[\hbar])$ over $\bbC[\hbar]$. Analogously, given a $\mu$-tuple $\frakU\equiv(\frakU_1,\ldots,\frakU_\mu)$, we obtain a module $SH_\frakU^*(M;\bbL)$ over $\bbL\equiv\bbC[\hbar_1,\ldots,\hbar_\mu]$. In fact, we may define bulk-deformations using a different model for (co)cycles, e.g. Morse cycles or de Rham cocyles (i.e., closed differential forms).

Our main result is that the complexification of Floer complex bordism is determined explicitly via bulk-deformations. Let $\ch(M)\in H^*(M;\bbC)$ denote the Chern character of $M$ and $\ch_\rho(M)\in H^{2\rho}(M;\bbC)$ its summand in degree $2\rho$.

\begin{thm}\label{thm:main}
Let $M$ be a graded Liouville manifold of dimension $2n$ and $\frakU^{\frakc\frakh}\equiv\big(\ch_2(M),\ldots,\ch_n(M)\big)$. We have the following isomorphism of $\bbC[\underline{b}]$-modules:
    \begin{equation}\label{eq: rwigpiweg}
    SH^*(M;{\rm MU})\otimes_\bbZ\bbC\cong SH^{-*}_{\frakU^{\frakc\frakh}}(M;\bbL)\otimes_\bbC\bbC[b_n, b_{n+1},\ldots].
    \end{equation}
\end{thm}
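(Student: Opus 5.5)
The plan is to work at the level of complex-oriented flow categories and pass to homotopy groups only at the end. The first step is an explicit model for the complexified ${\rm MU}$-module attached to a complex-oriented flow category $\calC$, here the one underlying $SC^*(M)$. Rationally ${\rm MU}\otimes_\bbS\bbC\simeq \bbC[\underline b]$. A compact stably complex manifold $X$ has complexified bordism class $[X]\mapsto \int_X \sum_\alpha c_\alpha(TX)\, b^\alpha$, where the $c_\alpha$ are the characteristic classes dual to the monomials $b^\alpha$ (the Hurewicz/Boardman map ${\rm MU}_*\otimes\bbC\to H_*({\rm BU};\bbC)$). From this I would build a chain complex
\[
C_*(\calC;\bbC[\underline b])=\bigoplus_x \bbC[\underline b]\langle x\rangle, \qquad d x=\sum_y \Big(\int_{\overline{\calM}(x,y)} \exp\big(\textstyle\sum_\rho b_\rho\, \mathfrak{p}_\rho(T\overline{\calM}(x,y))\big)\Big)\, y .
\]
Here the multiplicative characteristic class is chosen so that integrating it gives the image of the ${\rm MU}$-fundamental class. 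I would prove that this complex computes $\pi_*(\bbF^{\rm U}\otimes_\bbS\bbC)$. The argument compares with a cellular filtration of the flow-category realization: on associated graded pieces both sides are free $\bbC[\underline b]$-modules, and the differentials agree because the attaching maps are the bordism classes of the moduli spaces, with corners handled coherently.

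The second step is to rewrite these integrals using Chern character classes. The key point is that the logarithm of the universal multiplicative class (genus) with parameters $b$ is a linear combination $\sum_\rho \hbar_\rho\,\ch_\rho$ after a change of coordinates from $b$ to $\hbar$: the $\ch_\rho$ are additive, so an exponential generating class is $\exp(\sum_\rho \hbar_\rho \ch_\rho \cdot \text{const})$. Then
\[
T\overline{\calM}(x,y)\oplus \underline{\bbC}^k \cong \text{(index bundle)} \oplus \bbR\text{-trivial},
\]
and Grothendieck--Riemann--Roch (families index theorem) expresses $\ch$ of the index bundle of the linearized Floer operators. Via the evaluation/forgetful maps $\overline{\calM}^{q}(x,y)\to \overline{\calM}(x,y)$, it becomes a pushforward of $ev^*\ch(TM)$ along marked points. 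Expanding the exponential then turns $\int \exp(\sum \hbar_\rho \ch_\rho)$ into $\sum_q \frac{1}{q!}\int_{\overline{\calM}^q(x,y)} \prod ev_i^*(\sum_\rho \hbar_\rho \ch_\rho(M))$. This is exactly the bulk-deformed differential with $\frakU^{\frakc\frakh}$ in de Rham form. The term $\ch_1=c_1$ vanishes up to 2-torsion by gradedness, and $\ch_\rho(M)=0$ for $\rho>n$. Hence the variables $b_n,b_{n+1},\dots$ (after the triangular change of variables) act freely, which produces the tensor factor $\bbC[b_n,\ldots]$. Note that the lowest-degree variable is absorbed by $\ch_1$ and the top ones are free, so I would track the indexing carefully.

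The main obstacle is the ``homotopy coherent'' GRR. GRR must hold not just for each moduli space but compatibly with boundary/corner gluing, with the chosen complex orientations, and at chain level with forms. Otherwise the identification of differentials gives only isomorphic $E_1$-pages, not a chain isomorphism. To handle this, I would first try to construct, on the whole flow category, compatible connections on the index bundles and on the stabilized tangent bundles. I would then use Bismut-type families-index transgression forms $\eta$ with $d\eta=\ch(\text{index})-\pi_*(\ch\cdot\mathrm{Td})$, chosen inductively on energy so that they restrict to products on boundary strata. These $\eta$'s assemble into a $\bbC[\underline b]$-linear chain map $\exp(\eta)$ intertwining the two differentials, which is unipotent and hence an isomorphism. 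The final step is to take homology and check $\bbC[\underline b]$-linearity.
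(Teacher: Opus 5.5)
\textbf{There is a genuine gap, in your last step.} It is where the whole difficulty lives.

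You need transgression forms $\eta_{xy}$ with $d\eta_{xy}=c_{xy}$. Here $c_{xy}$ is the difference between $\ch$ of the complex virtual bundle $I(x,y)$ and $\pi_!\ch(\scrE_{xy})$ (with the degree shift). These forms must also be additively coherent: $\eta_{xy}\vert_{\bbX_{xz}\times\bbX_{zy}}=\eta_{xz}+\eta_{zy}$.

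Such forms cannot simply be ``chosen inductively.'' Once $\eta$ is fixed on $\partial\bbX_{xy}$ by earlier choices, an extension with prescribed $d\eta_{xy}$ exists only if the class of $(c_{xy},\eta\vert_{\partial\bbX_{xy}})$ in $H^*(\bbX_{xy},\partial\bbX_{xy};\bC)$ vanishes. GRR on the individual $\bbX_{xy}$ only says this class lies in the image of $H^{*-1}(\partial\bbX_{xy})$. Killing it forces you to revise earlier choices by coherent closed forms. Whether that is possible is exactly whether the two cocycles agree in the cohomology of the complex of additively coherent cochains. That is strictly stronger than componentwise GRR, and is essentially the homotopy-coherent GRR you are trying to prove.

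Bismut-type eta forms do not supply this for free.
\begin{itemize}
\item Over a collar $\bbX_{xz}\times\bbX_{zy}\times[0,\epsilon)$ the family is the glued operator $D_{xz}\#_{1/\tau}D_{zy}$, not a direct sum. Its kernel bundle is identified with $\ind D_{xz}\oplus\ind D_{zy}$ only through linear gluing maps. Exact additivity would therefore need an adiabatic gluing theorem for eta forms, uniform over all corner strata.
\item The operators are stabilized, real-linear Floer operators with non-degenerate asymptotics on non-compact cylinders, so the superconnection families index theorem does not apply as stated.
\item Even the complex structure whose Chern character you want exists only after deforming to complex-linear Cauchy--Riemann operators. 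The paper does this with reflected puncture data and a trivialization over $\ell_\scrC$ via $\calG$.
\end{itemize}
The paper explicitly flags this analytic coherence problem as open. It states that a chain-level quasi-isomorphism is only conjectural.

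\textbf{What the paper does instead.} Its argument is purely homotopy-theoretic and gives only the homology-level statement, which is all the theorem claims.
\begin{itemize}
\item Both twisted complexes are upgraded to exact functors $\ch(\bbX)$ and $\smallint{}_{\scrC/\bbX}\ch(\scrE)$ into $\dgnerve(\Mod_{\bC[\underline b]})$.
\item On finite objects, after complexification, each becomes an equivalence with $\perf_{\bC[\underline b]}$. This is because the flow categories are generated by $\unit$ and each functor is an isomorphism on $\pi_*\mathrm{End}(\unit)\otimes\bC$.
\item Comparing through the equivalence $\ind(\bbX)$, the two functors differ by an autoequivalence of $\perf_{\bC[\underline b]}$ fixing $\unit$.
\item To\"en's theorem identifies this autoequivalence with the bimodule $\bC[\underline b]$, with right action twisted by a ring automorphism $\psi$.
\item GRR only for \emph{closed} stably complex manifolds (via Atiyah's proof of Bott periodicity) forces $\psi=\mathrm{id}$ on homology. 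That suffices for an isomorphism on $H_*$ as $\bC[\underline b]$-modules.
\end{itemize}

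\textbf{Two secondary holes.}
\begin{itemize}
\item Your first step also lacks a comparison map. Matching attaching maps on a cellular filtration only compares $E_1$-pages. The paper gets the actual map from the functor $\ch(\bbX)$.
\item You still need the comparison between de Rham and Morse bulk-deformations, because the right-hand side is defined with the Morse cycles $\frakU^{\frakc\frakh}$. The paper handles this with an interpolating complex over $\widetilde{\bbL}$.
\end{itemize}
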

\begin{rem}
Some remarks about the previous theorem are in order.
\begin{enumerate}
\item Observe, since $SH^*_{\frakU^{\frakc\frakh}}(M;\bbL)$ is a module over
    \begin{equation}
    \bbL=\bbC[\hbar_1,\ldots,\hbar_{n-1}],\;\;\deg(\hbar_\rho)=-2\rho,
    \end{equation}
$SH^{-*}_{\frakU^{\frakc\frakh}}(M;\bbL)$, which has the reversed grading, is a module over 
    \begin{equation}
    \bbC[b_1,\ldots,b_{n-1}],\;\;\deg(b_\rho)=2\rho
    \end{equation}
which we identify with $\bL$ in the natural grading-reversing way.
\item Note, there is a shift on the right hand side; the $\rho$-th bulk-deformation variable $b_\rho$ corresponds to $\ch_{\rho+1}(M)$.
\item An immediate consequence is that only finitely many of the $b_\rho$'s provide non-trivial deformations. This is clear on the right hand side since there are only finitely many $\ch_\rho(M)$'s, but we are not aware of an \emph{a priori} reason for this to be true for the left hand side.
\end{enumerate}
\end{rem}

Using Theorem \ref{thm:main}, we give a computable obstruction to a further lift of Floer complex bordism to the sphere spectrum. Recall, ordinary symplectic cohomology is equipped with a pair-of-pants product and Batalin-Vilkovisky (BV) operator; these, together, determine a Lie bracket $[-,-]$, cf. \cite{Ritter:TQFT}. One can also define the Lie bracket directly.

\begin{cor}\label{cor:main}
Suppose there is a class $\alpha\in SH^*(M;\bbC)$ such that 
    \begin{equation}\label{eq:18}
    \big[\alpha,\ch_{\rho_0}(M)\big]\neq0
    \end{equation}
for some $\rho_0$, then $\bbF^{\rm U}$ is not homotopy equivalent to $\frakY\otimes_\bbS{\rm MU}$ for any spectrum $\frakY$.

\end{cor}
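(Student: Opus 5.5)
We argue by contraposition. Suppose $\bbF^{\rm U}\simeq\frakY\otimes_\bbS{\rm MU}$ for some spectrum $\frakY$, and complexify. Rationally every spectrum splits as a sum of shifted Eilenberg--MacLane spectra, so
\begin{equation}
\bbF^{\rm U}\otimes_\bbS\bbC\simeq(\frakY\otimes_\bbS\bbC)\otimes_\bbC({\rm MU}\otimes_\bbS\bbC)\simeq V\otimes_\bbC\bbC[\underline b],
\end{equation}
where $V=\pi_*(\frakY)\otimes\bbC$ is a graded vector space. In particular $\pi_*(\bbF^{\rm U})\otimes\bbC$ is a \emph{free} graded $\bbC[\underline b]$-module. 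Moreover, base changing along $\bbC[\underline b]\to\bbC$ recovers $SC^{-*}(M;\bbC)$, so $V\cong SH^{-*}(M;\bbC)$. The deformation over $\mathrm{Spec}\,\bbC[\underline b]$ is therefore trivial, in the strong sense that the module is isomorphic to its special fibre tensored with $\bbC[\underline b]$.

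Next we apply Theorem \ref{thm:main}. Since $\bbC[b_n,b_{n+1},\ldots]$ is faithfully flat over $\bbC$, freeness of the left-hand side of \eqref{eq: rwigpiweg} implies that $SH^{-*}_{\frakU^{\frakc\frakh}}(M;\bbL)$ is free over $\bbL$, of the same rank in each degree as $SH^{-*}(M;\bbC)\otimes\bbL$. Concretely, every class in $SH^*(M;\bbC)$ lifts to a class of the bulk-deformed cohomology. So it suffices to show that \eqref{eq:18} obstructs this lifting.

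For the obstruction, filter by $\hbar$-adic order. Restrict to the first-order deformation in the single variable $\hbar_{\rho_0-1}$, set the other variables to zero, and work over $\bbC[\hbar]/\hbar^2$ with $\hbar\equiv\hbar_{\rho_0-1}$. The bulk-deformed differential has the form $d+\hbar\,d_1+O(\hbar^2)$. The standard identification in bulk-deformation theory says that, on cohomology, $d_1$ is the action of the Lie bracket with the closed-string class $\ch_{\rho_0}(M)\in H^*(M)\to SH^*(M)$; this is the chain-level BV/Gerstenhaber deformation, cf.\ \cite{Ritter:TQFT}. Hence the first differential of the spectral sequence of the $\hbar$-adic filtration is $[\ch_{\rho_0}(M),-]$ up to sign. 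If $[\alpha,\ch_{\rho_0}(M)]\neq0$, then $\alpha$ does not survive to $E_2$. Two cases arise:
\begin{enumerate}
\item if $\alpha$ is a cocycle of the $SH$ complex, it fails to lift to a deformed cocycle;
\item if $[\alpha,\ch_{\rho_0}]$ is itself nonzero, it produces $\hbar$-torsion.
\end{enumerate}
In either case the $\bbL$-module is not free of the undeformed rank. This contradicts the previous step and proves the corollary.

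The main obstacle is the identification of the first-order term of the bulk-deformed differential with the bracket against $\ch_{\rho_0}(M)$. Making this rigorous requires comparing a count of Floer cylinders with one interior marked point constrained to a cycle dual to $\ch_{\rho_0}$ against the BV operator composed with the pair-of-pants product. We would try the standard degeneration argument: let the marked point rotate around the cylinder, using the $S^1$-family definition of the BV operator. A secondary point to handle is that rank comparison in graded modules is legitimate, degree by degree, even when $SH^*$ is infinite-dimensional. This works because each graded piece of $\bbL$ is finite-dimensional and the generators $b_\rho$ have positive degree, so graded Nakayama applies.
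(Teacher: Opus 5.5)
Your argument is correct in outline, but it takes a different route from the paper's.

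The paper works with the change-of-coefficients spectral sequence. It shows the spectral sequence of the bulk-deformed complex has a non-zero differential: in one variable via Lemma \ref{lem:78}, then over all of $\bbL$ via the surjection of Corollary \ref{cor:foriwengoienbp}. It transports this to the twisted Morse complex of $\bbF^{\rm U}$ by Corollary \ref{cor:ornbpihrnhbpirtgpnr}. It concludes by comparison with framed flow categories, whose twisted complexes have the form $CM_*(\bbX;\bbC)\otimes_\bbC\bbC[\underline b]$ and so have degenerate spectral sequences. Corollary \ref{cor:ornbpihrnhbpirtgpnr} is a chain-level statement which, as the paper remarks, does not follow from Theorem \ref{thm:main}. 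It needs the analysis of the autoequivalence bimodule from the proof of Theorem \ref{thm:tech}.

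You instead use the weaker invariant ``homology is free over the coefficient ring''. On the ${\rm MU}$ side it is read off from the rational splitting of $\frakY$. It is a homology-level property, so Theorem \ref{thm:main} as stated transports it. On the bulk side it forces every class of $SH^*(M;\bbC)$ to lift, and you only need to test this to first order in $\hbar_{\rho_0-1}$ with the other variables set to zero. A lift over $\bbL$ reduces to such a lift, so no analogue of Corollary \ref{cor:foriwengoienbp} is needed. Your route therefore needs strictly less input. The paper's gives a finer conclusion: non-degeneracy of the spectral sequence on the ${\rm MU}$ side, with the differential located on the $E_{2\rho_0-1}$-page.

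The geometric input is the same in both. The $\hbar_{\rho_0-1}$-linear part of the bulk-deformed differential induces $\bG\big(-,{\rm PSS}(\ch_{\rho_0}(M))\big)$ on cohomology, which Lemma \ref{lem:78} identifies with the bracket. Comparing directly with the bracket defined by the rotating-puncture family is shorter than going through the BV operator and the pair-of-pants product.

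Two justifications need repair.
\begin{enumerate}
\item Freeness of $N\equiv SH^{-*}_{\frakU^{\frakc\frakh}}(M;\bbL)$ is not a consequence of faithful flatness. Instead, base change the isomorphism of Theorem \ref{thm:main} along $\bbC[\underline b]\to\bbL$, killing $b_n,b_{n+1},\ldots$. This carries $V\otimes_\bbC\bbC[\underline b]$ to $V\otimes_\bbC\bbL$ and $N\otimes_\bbC\bbC[b_n,b_{n+1},\ldots]$ to $N$.
\item ``Free implies every class lifts'' should not be argued by degreewise rank comparison. $SH^k$ can be infinite-dimensional (e.g.\ $SH^0(T^*S^1;\bbC)$), so rank counting detects nothing and graded Nakayama is beside the point. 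Argue by formality instead. Cocycle representatives of a homogeneous $\bbL$-basis of $N$ give a quasi-isomorphism $N\to SC^{-*}_{\frakU^{\frakc\frakh}}(M;\bbL)$ of DG $\bbL$-modules. Both objects are K-flat, since the target is a directed colimit of perfect $\bbL$-modules. Reducing along $\bbL\to\bbC$ then yields $N\otimes_\bbL\bbC\cong SH^{-*}(M;\bbC)$, i.e.\ the reduction map is surjective.
\end{enumerate}
Finally, your two ``cases'' are not alternatives. All that is used is that the first-order obstruction to lifting $\alpha$ is $\pm\big[\alpha,\ch_{\rho_0}(M)\big]\neq0$.
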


There is a change-of-coefficients spectral sequence associated to the augmentation $\bC[\underline b] \to \bC$ which computes the homology of any cochain complex over $\bC[\underline b]$. For instance, it computes $\pi_*(\frakZ\otimes_\bbS\bbC)$ for an ${\rm MU}$-module $\frakZ$, and non-degeneracy obstructs $\frakZ$ from being homotopy equivalent to $\frakY\otimes_\bbS{\rm MU}$. Now, Corollary \ref{cor:main} follows by showing that the change-of-coefficients spectral sequence associated to the cochain complex underlying $SH^*(M;{\rm MU})\otimes_\bbZ\bbC$ agrees with that of the cochain complex underlying $SH^{-*}_{\frakU^{\frakc\frakh}}(M;\bbL)\otimes_\bbC\bbC[b_n,b_{n+1},\ldots]$ (from the $E_2$-page onwards). Meanwhile, we can compute enough to see that $\big[-,\ch_{\rho_0}(M)\big]$ contributes non-trivially to the differential on the $E_{2\rho_0-1}$-page.
    
\begin{rem}
We would like to emphasize that Corollary \ref{cor:main} yielding non-trivial consequences is particular to the exact setting since, in the closed setting, the BV operator, and hence $[-,-]$, vanishes.\footnote{An expected, but not constructed, example of a closed symplectic manifold where one can associate a complex-oriented flow category to it (whose morphism spaces do not admit orbifold points) is a symplectic K3 surface; here, one should choose the almost complex structure generically among the integrable ones to prevent sphere bubbling.}
\end{rem}

\subsection{Applications}\label{subsec:applications}
\subsubsection{Non-base change examples}

We provide examples to demonstrate the computability of the criterion of Corollary \ref{cor:main}.

\begin{example}\label{example:nonbasechange}
The hypothesis of Corollary \ref{cor:main} holds in the case of $M=T^*\bbC P^n$, $n\in\bbZ$ is odd and at least 3, cf. Subsection \ref{subsec:nonbasechange} for the computation. 
\end{example}

Note, $T^*\bbC P^n$ (and, more generally, any cotangent bundle) admits a stable $\bbR$-polarization, and hence admits a theory of spectral symplectic cohomology over the sphere spectrum. One concrete consequence of Example \ref{example:nonbasechange} is that, for any choice of stable $\bbR$-polarization of $T^*\bbC P^n$, we have 
    \begin{equation}\label{eqn:nonbasechange}
    \bbF^{\rm U}_{T^*\bbC P^n}\not\simeq\bbF^{\rm fr}_{T^*\bbC P^n}\otimes_\bbS{\rm MU},
    \end{equation}
where $\bbF^{\rm fr}_{T^*\bbC P^n}$ denotes the spectral symplectic cohomology of $T^*\bbC P^n$ over the sphere spectrum with respect to the given choice of stable $\bbR$-polarization, when $n$ is odd and at least 3. 

\begin{rem}
Note, the consequence detailed in \eqref{eqn:nonbasechange} did not require an explicit computation of the left or right hand side. This leads to the following natural question: given a closed smooth manifold $Q$, can one compute $\bbF^{\rm U}_{T^*Q}$ in terms of a Thom spectrum associated to an ${\rm MU}$ local system on $\calL Q$, i.e., what is the statement of the ``spectral ${\rm MU}$ Viterbo isomorphism''? Note, $T^*Q$ admits a ``standard'' stable $\bbR$-polarization via the horizontal distribution, and the ``spectral ${\rm \bbS}$ Viterbo isomorphism'' reads
    \begin{equation}
    \bbF^{{\rm fr},\std}_{T^*Q}\simeq\calL Q^{-TQ},
    \end{equation}
where $\bbF^{{\rm fr},\std}_{T^*\bbC P^n}$ denotes the spectral symplectic cohomology of $T^*\bbC P^n$ over the sphere spectrum associated to this ``standard'' choice of stable $\bbR$-polarization, cf. \cite{Bla1}, which utilizes the (co)homological proof in \cite{Abo14} of the general non-oriented/non-spin case, for a proof of this spectral equivalence in the general non-oriented/non-spin case, and cf. the references therein for an overview of partial spectral results proven beforehand. In particular, Example \ref{example:nonbasechange} shows the ${\rm MU}$ Viterbo isomorphism is, in some sense, orthogonal to the $\bbS$ Viterbo isomorphism, and in fact still open.
\end{rem}

\begin{rem}
Observe, we exclude the case $M=T^*\bbC P^n$, $n\in\bbZ$ is even and at least 2, from Example \ref{example:nonbasechange} since, in this case, $\bbC P^n$ is not spin, hence: 
    \begin{equation}
    SH^*(T^*\bbC P^n;\bbZ)\not\cong H_{2n-*}(\calL\bbC P^n;\bbZ);
    \end{equation}
this anomaly was first noticed by Kragh \cite{Kra18} and verified in an unpublished note by Seidel \cite{Sei}. In fact, the left hand side of the previous equation will be completely 2-torsion while the right hand side is not. (The Viterbo isomorphism in the case of a non-oriented/non-spin base requires twisting by a local coefficient system, cf. \cite{Abo14}.) However, this 2-torsion computation at least tells us that
    \begin{equation}
    \bbF^{\rm U}_{T^*\bbC P^n}\not\simeq\bbF^{\fr,\std}_{T^*\bbC P^n}\otimes_\bbS{\rm MU},
    \end{equation}
when $n\in\bbZ$ is even and at least 2, since the left and right hand side have different integral homology: the left hand side recovers ordinary integral symplectic cohomology, while the right hand side recovers ordinary integral symplectic cohomology with coefficients twisted by the local coefficient system coming from the non-oriented/non-spin Viterbo isomorphism, cf. \cite[Proposition 4.3]{Bla1}. In order to say the left hand side cannot be homotopy equivalent to $\frakY\otimes_\bbS{\rm MU}$, for any spectrum $\frakY$, would require further investigation.
\end{rem}

\begin{rem}
Observe, we exclude the case $M=T^*\bbC P^1$ in Example \ref{example:nonbasechange} since it has no non-trivial Chern classes. In fact, we can explicitly compute
    \begin{equation}
    \bbF^{\rm U}_{T^*\bbC P^1}\simeq\bbF^{\fr,\std}_{T^*\bbC P^1}\otimes_\bbS{\rm MU};
    \end{equation}
indeed, this relation holds for any $T^*Q$ for which $Q$ is stably framed.
\end{rem}

\subsubsection{Ample smooth divisor complements}
Now, we will explain how Example \ref{example:nonbasechange} implies the non-triviality of higher-dimensional complex cobordism classes of relative Gromov-Witten type moduli spaces in the context of a smooth complex projective variety relative to an ample smooth divisor. 

Let $Z\equiv\bbC P^n\times\bbC P^n$ be equipped with its standard K\"ahler form and consider the ample smooth $(1,1)$-hypersurface
    \begin{equation}
    D\equiv\bigg\{\big([x_0:\cdots:x_n],[y_0:\cdots:y_n]\big):\sum_{i=0}^n x_iy_i=0\bigg\};
    \end{equation}
it is straightforward to see that $T^*\bbC P^n$, with its standard Weinstein form, is Weinstein isomorphic to $M\equiv Z-D$. We denote by $S_DZ$ the circle bundle associated to the normal bundle of $D$ in $Z$. Now, in \cite{Bla26}, building on the (co)homological work of Ganatra-Pomerleano \cite{GP20,GP21} (and even earlier work of Diogo \cite{Dio12} and Diogo-Lisi \cite{DL19}, and ideas of Seidel \cite{Sei02a}), it was shown that $\bbF^{\rm U}_{T^*\bbC P^n}$ is a filtered spectrum whose associated graded is given by 
    \begin{align}
    Gr_k\bbF^{\rm U}_{T^*\bbC P^n}&\equiv{\rm cofib}\big(F_{k-1}\bbF^{\rm U}_{T^*\bbC P^n}\to F_k\bbF^{\rm U}_{T^*\bbC P^n}\big) \\
    &\simeq\begin{cases}
    \frakD\bbC P^n_+\otimes_\bbS{\rm MU}& k=1 \\
    \frakD\Sigma^{-2nk}(S_DM)_+\otimes_\bbS{\rm MU}& k>1 \\
    * & {\rm otherwise},
    \end{cases}
    \end{align}
where $\frakD(-)$ denotes taking Spanier-Whitehead dual; moreover, it was shown that the obstruction to degenerating into the associated graded is given by an element
    \begin{equation}
    \calG\calW^{\rm U}_{T^*\bbC P^n}\in\Omega^{\rm U}_0\Big(\Sigma^{-2n+1}(S_DZ)_+\otimes_\bbS\frakD\bbC P^n_+\Big)
    \end{equation}
which is related to the complex cobordism classes of higher-dimensional 1-pointed genus 0 relative Gromov-Witten type moduli spaces of $(Z,D)$ (the moduli spaces are essentially given by considering pseudoholomorphic spheres which are only allowed to intersect $D$ at a $+\infty$, where the intersection is transverse at $+\infty$, modulo $\bbR$-translations of the domain and \emph{not} $S^1$-rotations of the domain, cf. \cite[Section 5]{Bla26}). A simple observation is the following.

\begin{claim}
In the case $n\in\bbZ$ is odd and at least 3, $\calG\calW^{\rm U}_{T^*\bbC P^n}\neq0$.
\end{claim}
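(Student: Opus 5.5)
We argue by contradiction, combining Example \ref{example:nonbasechange} (via Corollary \ref{cor:main}) with the filtration of $\bbF^{\rm U}_{T^*\bbC P^n}$ from \cite{Bla26}. Suppose $\calG\calW^{\rm U}_{T^*\bbC P^n}=0$. Since $\calG\calW^{\rm U}_{T^*\bbC P^n}$ is, by \cite{Bla26}, the complete obstruction to degenerating the filtered spectrum into its associated graded, its vanishing should yield a splitting
\begin{equation*}
\bbF^{\rm U}_{T^*\bbC P^n}\simeq\bigoplus_k Gr_k\bbF^{\rm U}_{T^*\bbC P^n}\simeq\Big(\frakD\bbC P^n_+\oplus\bigoplus_{k>1}\frakD\Sigma^{-2nk}(S_DM)_+\Big)\otimes_\bbS{\rm MU}
\end{equation*}
as ${\rm MU}$-modules. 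Because $-\otimes_\bbS{\rm MU}$ commutes with direct sums, the right-hand side has the form $\frakY\otimes_\bbS{\rm MU}$ for the spectrum
\begin{equation*}
\frakY\equiv\frakD\bbC P^n_+\oplus\bigoplus_{k>1}\frakD\Sigma^{-2nk}(S_DM)_+ .
\end{equation*}
On the other hand, Example \ref{example:nonbasechange} gives a class $\alpha$ with $[\alpha,\ch_{\rho_0}(T^*\bbC P^n)]\neq0$ for $n$ odd and $n\ge 3$. Corollary \ref{cor:main} then says $\bbF^{\rm U}_{T^*\bbC P^n}$ is not of the form $\frakY\otimes_\bbS{\rm MU}$ for any spectrum $\frakY$. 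This contradicts the splitting, so $\calG\calW^{\rm U}_{T^*\bbC P^n}\neq0$.

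The key steps, in order, are as follows.
\begin{enumerate}
\item Recall precisely what \cite{Bla26} means by the obstruction. I expect the filtration to have only two ``layers'' that interact: $Gr_1$ and the piece built from $k>1$. If so, the filtered spectrum is the cofiber of a single attaching map of ${\rm MU}$-modules, and $\calG\calW^{\rm U}_{T^*\bbC P^n}$ classifies that map.
\item Show the attaching map is determined by a map of spectra
\begin{equation*}
\Sigma^{-2n+1}(S_DZ)_+\otimes_\bbS\frakD\bbC P^n_+\to\frakD\bbC P^n_+\otimes_\bbS{\rm MU},
\end{equation*}
via the free/forgetful adjunction. Its vanishing then makes the cofiber sequence split.
\item Conclude that $\bbF^{\rm U}_{T^*\bbC P^n}$ is a sum of free ${\rm MU}$-modules on spectra, hence base-changed from $\bbS$.
\item Invoke Corollary \ref{cor:main} and Example \ref{example:nonbasechange} to get the contradiction.
\end{enumerate}

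The main obstacle is step 1: making sure that vanishing of the single class $\calG\calW^{\rm U}_{T^*\bbC P^n}$ really forces full degeneration of the filtration. If there are higher attaching maps, say between distinct $k>1$ layers, we need to know that \cite{Bla26} packages all of them into $\calG\calW^{\rm U}_{T^*\bbC P^n}$. Alternatively, we need those higher maps to vanish for formal reasons. One such reason would be that the $k>1$ layers assemble, via the $S^1$-action on $S_DZ$, into something already free on a spectrum. We would first try to read this off from the construction in \cite{Bla26}, where the stated role of $\calG\calW^{\rm U}_{T^*\bbC P^n}$ as ``the obstruction'' suggests it is indeed the complete obstruction. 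Granting this, the remaining steps are formal.
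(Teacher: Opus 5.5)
Your argument is the paper's argument: if $\calG\calW^{\rm U}_{T^*\bbC P^n}=0$, the filtration from \cite{Bla26} degenerates. The resulting sum of associated graded pieces is visibly base-changed from $\bbS$, which contradicts Corollary \ref{cor:main} applied via Example \ref{example:nonbasechange}. The issue you flag in step 1 is also not addressed in the paper. The paper simply takes from \cite{Bla26} that $\calG\calW^{\rm U}_{T^*\bbC P^n}$ is the full obstruction to degeneration, which is the same input you assume.
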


\begin{proof}[Proof of claim]
Cf. Subsection \ref{subsec:ampledc}.
\end{proof}

Meanwhile, we also have the following computation.

\begin{claim}
In the case $n\in\bbZ$ is odd and at least 3, $\calG\calW^{\rm U}_{T^*\bbC P^n}\otimes_{\rm MU}\bbZ=0$.
\end{claim}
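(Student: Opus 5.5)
The claim is that the base change of $\calG\calW^{\rm U}_{T^*\bbC P^n}$ along ${\rm MU}\to\bbZ$ vanishes. The class lives in $\Omega^{\rm U}_0$ of a spectrum, so after base change it becomes a class in ordinary integral homology. The plan is to identify this image with the corresponding obstruction class for ordinary integral symplectic cohomology. Then we show that obstruction vanishes for degree reasons, or because the ordinary spectral sequence already degenerates.

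\emph{Step 1: identify the base change.} By construction in \cite{Bla26}, $\calG\calW^{\rm U}$ is the obstruction to splitting the filtration on $\bbF^{\rm U}$. Base change $-\otimes_{\rm MU}\bbZ$ is exact, so it carries the filtration on $\bbF^{\rm U}$ to the filtration on $SC^{-*}(T^*\bbC P^n;\bbZ)\cong \bbF^{\rm U}\otimes_{\rm MU}\bbZ$. It also carries the associated graded pieces to $\frakD\bbC P^n_+\otimes\bbZ$ and $\frakD\Sigma^{-2nk}(S_DM)_+\otimes\bbZ$. Hence $\calG\calW^{\rm U}\otimes_{\rm MU}\bbZ$ is the class in
\[
H_0\big(\Sigma^{-2n+1}(S_DZ)_+\otimes\frakD\bbC P^n_+;\bbZ\big)
\]
counting the zero-dimensional part of the same relative Gromov--Witten type moduli spaces. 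Geometrically, the map $\Omega^{\rm U}_*\to H_*$ kills every bordism class of positive-dimensional manifolds mapping in. Only the component where the moduli space has the expected dimension matching the homological degree survives, and it is recorded as a fundamental class. This is the Ganatra--Pomerleano/Diogo--Lisi type obstruction.

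\emph{Step 2: compute the integral class.} This group is
\[
\bigoplus_j H_{j+2n-1}(S_DZ;\bbZ)\otimes H^{j}(\bbC P^n;\bbZ).
\]
The relevant term pairs the $\frakD\bbC P^n$ cohomology with $S_DZ$ homology, through a map of the form $H^*(\bbC P^n)\to H^{*+1}(S_DZ)$-type. I would show this term vanishes by one of two routes.

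\emph{Route (a): parity.} $\bbC P^n$ has only even-degree cohomology. The homology of $S_DZ$, a circle bundle over the even-cohomology variety $D$, must be checked in the needed degrees via the Gysin sequence. Because $n$ is odd, the relevant odd-degree groups should vanish or be torsion-free with a trivial pairing.

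\emph{Route (b): comparison with the known answer.} The Viterbo isomorphism for spin $\bbC P^n$ ($n$ odd) gives $SH^*(T^*\bbC P^n;\bbZ)\cong H_{2n-*}(\calL\bbC P^n;\bbZ)$. Comparing ranks, the spectral sequence from the filtration already has the size of the associated graded. That forces the integral connecting map, which is exactly $\calG\calW^{\rm U}\otimes\bbZ$, to vanish on homotopy. Upgrading this to vanishing of the class itself needs the class to be detected by its action, which holds because the source and target are free.

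\emph{Main obstacle.} The delicate point is Step 1: checking that the obstruction class is natural under base change, so that $\calG\calW^{\rm U}\otimes\bbZ$ is literally the integral obstruction. I would try to get this from functoriality of the filtered-spectrum construction in \cite{Bla26} under ring maps. After that, I would use route (b) first, since the loop space homology of $\bbC P^n$ is explicitly known.
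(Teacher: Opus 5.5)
Your Route (b) is the paper's argument. Viterbo for spin $\bbC P^n$ together with the splitting of $H_*(\calL\bbC P^n;\bbZ)$ from \cite[Theorem 6.1]{BO18} shows $SH^*(T^*\bbC P^n;\bbZ)$ is abstractly the direct sum of the associated graded pieces. Hence the filtration spectral sequence has no differentials, and the maps on homotopy groups induced by $\calG\calW^{\rm U}_{T^*\bbC P^n}\otimes_{\rm MU}\bbZ$, which are among those differentials, vanish.

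The gap is your final upgrade step. You claim the class is detected by its action on homotopy ``because the source and target are free''. The source is not free. $S_DZ$ is diffeomorphic to the unit cotangent bundle of $\bbC P^n$. Its Gysin sequence, whose Euler class is $\pm(n+1)$ times the top generator, gives $H^{2n}(S_DZ;\bbZ)\cong H_{2n-1}(S_DZ;\bbZ)\cong\bbZ/(n+1)$. So the group $\bigoplus_j H_{j+2n-1}(S_DZ;\bbZ)\otimes H^j(\bbC P^n;\bbZ)$ that you wrote down has positive rank and contains the torsion summand $\bbZ/(n+1)$ (the $j=0$ term). The target homotopy $H^*(\bbC P^n;\bbZ)$ is free, so these torsion classes induce zero on integral homotopy. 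They are exactly the $\mathrm{Ext}^1_\bbZ(\bbZ/(n+1),\bbZ)$ component of the corresponding morphism of $H\bbZ$-modules. They only change the extension problem, so a rank comparison is blind to them as well. The same computation rules out Route (a): the group is non-zero, so no parity or degree argument can make the class vanish.

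The missing ingredient is the paper's second check. You must show the morphism $\frakD\Sigma^{-2n+1}(S_DZ)_+\otimes_\bbS\bbZ\to\frakD\bbC P^n_+\otimes_\bbS\bbZ$ is also zero on homotopy after base change along $\bbZ\to\bbZ/m$, for every $m$. Do this by rerunning the Viterbo plus \cite{BO18} comparison with $\bbZ/m$ coefficients. There every group is finite in each degree, so counting cardinalities forces degeneration.

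Because all homotopy groups involved are finitely generated, vanishing on integral homotopy and on all mod-$m$ homotopy does force the map, and hence the class, to be zero. The reason is that an element of $\mathrm{Ext}^1_\bbZ(\bbZ/m,N)\cong N/mN$ shows up on mod-$m$ homotopy through the $\mathrm{Tor}$ term.

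Your Step 1, naturality of the obstruction under base change, is used implicitly by the paper too and is not where the difficulty lies.
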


\begin{proof}[Proof of claim]
Cf. Subsection \ref{subsec:ampledc}.
\end{proof}

Combining the previous two claims, we are led to the following interpretation. $\calG\calW^{\rm U}_{T^*\bbC P^n}\otimes_{\rm MU}\bbZ$ is the obstruction to $\bbF^{\rm U}\otimes_{\rm MU}\bbZ$ degenerating into its associated graded and is only governed by the oriented cobordism classes of the zero-dimensional moduli spaces defining $\calG\calW^{\rm U}_{T^*\bbC P^n}$ (this interpretation, although phrased differently here, was already known to Ganatra-Pomerleano, cf. \cite[Theorem 1.6]{GP20}), i.e., since the former vanishes and the latter does not, there must be a higher-dimensional 1-pointed genus 0 relative Gromov-Witten type moduli space which is non-trivial in complex cobordism; this answers a natural perturbation of \cite[Question 1]{Bla26}.

\begin{rem}
Let ${\rm B}$ denote another cobordism ring spectrum which receives a ring map ${\rm MU}\to{\rm B}$, e.g. ${\rm MSpin}$, ${\rm MSO}$, or ${\rm MO}$. Even though $\calG\calW^{\rm U}_{T^*\bbC P^n}$ cannot be zero, it could also be the case that $\calG\calW^{\rm U}_{T^*\bbC P^n}\otimes_{\rm MU}{\rm B}$ is non-zero, i.e., the non-triviality in complex cobordism of the higher-dimensional moduli spaces could possibly be detected in ${\rm B}$-structured cobordism. A natural question is: what is the ``minimal'' ${\rm B}$ such that $\calG\calW^{\rm U}_{T^*\bbC P^n}\otimes_{\rm MU}{\rm B}$ is non-trivial? However, we would like to emphasize that it must be a \emph{higher-dimensional} moduli space which is non-trivial in ${\rm B}$-structured cobordism due to the fact that $\calG\calW^{\rm U}_{T^*\bbC P^n}\otimes_{\rm MU}\bbZ$ vanishes, i.e., this is a Floer-homotopical phenomenon and \emph{not} a Floer-(co)homological one.
\end{rem}

\begin{rem}
\cite{Bla26} uses Assumption 1 in \emph{loc. cit.} which the pair $(Z,D)$ considered here does not fall under (in particular, it fails part (3)). This assumption was only necessary to work with framed flow categories and obtain splittings of spectra instead of ${\rm MU}$-modules, i.e., it is straightforward to see that, using the canonical complex orientations, one can repeat the entirety of \emph{loc. cit.} to obtain the analogous splitting/obstruction results for complex-oriented flow categories in order to deal with examples such as $(Z,D)$.
\end{rem}

\subsection{Methods}\label{subsec:methods}
Throughout the present article, we use the $\infty$-categorical machinery of complex-oriented flow categories due to Abouzaid-Blumberg \cite{AB24}: this is an abstraction of the structures arising for the moduli spaces of Floer trajectories (i.e., inhomogeneous pseudoholomorphic cylinders) in $X$ along with their canonical complex orientations.

\begin{rem}\label{rem:equivalence}
    It has recently been shown by Hedenlund-Oldervoll \cite{HO} that the stable $\infty$-category of complex-oriented flow categories is equivalent to the stable $\infty$-category of ${\rm MU}$-modules. 
    This builds on ideas of Abouzaid-Blumberg \cite{AB24} which show that the stable $\infty$-category of framed flow categories is equivalent to the stable $\infty$-category of spectra (i.e., $\bbS$-modules). 
\end{rem}

$\bbF^{\rm U}$ is constructed most naturally as a complex-oriented flow category, and our first offering is a concrete chain-level model for the complexification of such objects. There is an explicit formula for \eqref{eqn:complexification} via the Chern character of a stably complex closed smooth manifold $Y$:
    \begin{equation}
    [Y] \mapsto \sum_{q_1,q_2,\ldots\geq 0} \frac{1}{q_1!q_2!\cdots}\Bigg(\int_Y\ch_1(Y)^{q_1}\ch_2(Y)^{q_2}\cdots\Bigg)(b^{q_1}_1b^{q_2}_2\cdots)\in \bC[\underline b].
    \end{equation}
    
Given a complex-oriented flow category $\bbX$, we use a similar formula to produce a chain complex 
    \begin{equation}
    CM_*(\bbX,\frakc\frakh;\bC[\underline b]),
    \end{equation}
linear over $\bC[\underline b]$, by integrating the Chern characters of all the $\bbX_{xy}$'s. Now, the $\bbX_{xy}$'s are not closed, so we must make coherent choices of cochain representatives of each Chern character via Chern-Weil theory. In particular, while the coefficients strictly depend on these choices, the resulting object of the derived category of $\bbC[\underline{b}]$-modules does not; moreover, its homology groups provide a concrete model for $SH^{-*}(M; {\rm MU})\otimes_\bbZ\bC$.

Thus, we desire to compute the Chern character of each moduli space of Floer trajectories. Since the Cauchy-Riemann operator is a Dirac-type operator on a complex vector bundle over a family of cylinders, $\scrE\to\scrC$, parameterized by a base, the Chern character of a single moduli space is computed via Grothendieck-Riemann-Roch as the pushforward (or fiberwise integral) of the Chern character of $\scrE$. Note, there is some precedent for computing Chern classes of moduli spaces of holomorphic curves, e.g. in Gromov-Witten theory, cf. \cite{Coates-Givental}.

Unfortunately, it is not sufficient for our purposes to compute the Chern character of each moduli space $\bbX_{xy}$ individually. As mentioned above, the coefficients in our chain complex are not determined by the Chern characters as cohomology classes, but rather, cochain representatives of all such cohomology classes simultaneously, i.e., we require a computation of these Chern characters in a way compatible with gluing; this is a coherence problem.

To keep track of the needed coherence, we introduce a notion of \emph{Grothendieck-Riemann-Roch (GRR) flow category}; roughly, this consists of a flow category $\bbX$ such that each $\bbX_{xy}$ is equipped with a stable isomorphism of its tangent bundle to the kernel of a family of Cauchy-Riemann operators on a complex vector bundle $\pi_{xy}:\scrE_{xy}\to\bbX_{xy}\times(\bbR\times S^1)$. Such a flow category $\bbX$ naturally yields a complex-oriented flow category $\ind(\bbX)$. In particular, the flow category underlying symplectic cohomology naturally admits a GRR structure with the complex vector bundles given by the pullback of $TX$ under suitable evaluation maps and the families of Cauchy-Riemann operators induced by the linearization of the Floer equation. Finally, given a GRR flow category $\bbX$, we may again define a chain complex 
    \begin{equation}
    CM_*\big(\bbX,\pi_!\frakc\frakh(\scrE);\bbC[\underline{b}]\big),
    \end{equation}
linear over $\bbC[\underline{b}]$, by integrating all the pushforwards of all the Chern characters of all the $\scrE_{xy}$'s along all the projections $\bbX_{xy}\times(\bbR\times S^1)\to\bbX_{xy}$.

Our main technical result, from which we deduce Theorem \ref{thm:main}, is a `` homotopy coherent'' version of the Grothendieck-Riemann-Roch theorem in the context of GRR flow categories.

\begin{thm}\label{thm:tech}
Let $\bbX$ be a GRR flow category which is a directed colimit of finite GRR flow categories. We have the following isomorphism of $\bC[\underline b]$-modules:
    \begin{equation}\label{eq: eiongwiepngreping}
    HM_*\big(\bbX,\pi_!\frakc\frakh(\scrE);\bbC[\underline{b}]\big)\cong HM_*\big(\ind(\bbX),\frakc\frakh;\bC[\underline b]\big).
    \end{equation}
\end{thm}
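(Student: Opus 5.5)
The plan is to compare the two complexes through a zig-zag of filtered quasi-isomorphisms over $\bC[\underline b]$, using the families index theorem (GRR) at the level of forms. First I reduce to the finite case. Both sides of \eqref{eq: eiongwiepngreping} are built from a directed colimit of finite GRR flow categories. Homology commutes with filtered colimits of chain complexes, and the Chern--Weil choices can be made compatibly along the colimit by induction on finite stages. So it suffices to produce, for each finite GRR flow category $\bbX$, a quasi-isomorphism natural with respect to the inclusions.

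For finite $\bbX$ I set up the coherence data. By induction on the action/length filtration of pairs $(x,y)$, I choose connections on each $\scrE_{xy}$ and on the index bundle $\ind(\bbX)_{xy}$, and hence Chern character forms $\ch(\scrE_{xy})$ and $\ch(T\bbX_{xy})$. On boundary strata $\bbX_{xz}\times\bbX_{zy}$ these should restrict to the product (glued) choices. This is a standard relative extension argument, since collars exist on manifolds with corners. The key input is the Bismut--Cheeger / Bismut--Freed form-level families index theorem. It provides a transgression form $\eta_{xy}$ on $\bbX_{xy}$ such that
\[
d\eta_{xy}=\pi_!\big(\mathrm{Td}\cdot\ch(\scrE_{xy})\big)-\ch(\ind_{xy}),
\]
where the correct characteristic-class combination must be matched to the coefficient map defining $\pi_!\frakc\frakh$. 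The cylinder $\bbR\times S^1$ is flat with trivial Todd form, which is why plain $\pi_!\ch$ appears. I also need $\eta$ to be compatible with gluing on boundary faces, up to lower-order corrections. Working in this order, I would first construct $\eta$ on the lowest strata and then extend it inductively, using that the restriction to $\partial\bbX_{xy}$ is already determined and closed relative to the data.

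Next I pass from forms to coefficients in $\bC[\underline b]$. Both differentials have the shape $x\mapsto\sum_y\big(\int_{\bbX_{xy}}\exp(\sum_\rho b_\rho\,\omega_\rho)\big)y$, where $\omega_\rho$ is either $\pi_!\ch_{\rho+1}(\scrE)$ or $\ch_\rho(\ind)$, and $\exp$ encodes the $1/q_1!q_2!\cdots$ weights. The exponential of the homotopy between the two form families yields a chain homotopy-type map. Concretely, consider the interpolating forms $\omega^t=\omega^0+t(\omega^1-\omega^0)+dt\wedge\eta$ on $\bbX_{xy}\times[0,1]$. Integrating $\exp(\sum b_\rho\omega^t)$ over $\bbX_{xy}\times[0,1]$ gives the coefficients of a map $\Phi$. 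Stokes on the corners of $\bbX_{xy}\times[0,1]$ then shows that $\Phi$ is a morphism between the two twisted complexes, i.e.\ an $A_\infty$/twisted-complex morphism with identity leading term. Because $\Phi$ is the identity modulo the augmentation ideal of $\bC[\underline b]$ (or modulo the action filtration), a filtration spectral sequence argument shows that it is a quasi-isomorphism. This gives the isomorphism of $\bC[\underline b]$-modules.

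The main obstacle is the coherent transgression step. The Bismut family index forms are defined for a family of Dirac operators with a kernel bundle, whereas here the index bundle is only stably identified with $T\bbX_{xy}$, cokernels may occur, and the operators on cylinders need asymptotic (APS-type) boundary analysis at the ends of $\bbR\times S^1$. My first approach would be to stabilize by finite-dimensional surjectivizing subspaces chosen compatibly with gluing, which reduces to finite-dimensional superconnection Chern forms. I would then use the multiplicativity of Chern characters under the gluing isomorphism $\ind_{xz}\oplus\ind_{zy}\cong\ind_{xy}|$ to check the boundary compatibilities needed for Stokes.
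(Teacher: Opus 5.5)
There is a genuine gap, and it is the step you yourself flag as the main obstacle. For $\Phi$ to be a chain map, Stokes' theorem needs transgression forms $\eta_{xy}$ with $d\eta_{xy}=\delta_{xy}$, where $\delta\equiv\pi_!\frakc\frakh(\scrE)-\frakc\frakh(I)$. These forms must be additively coherent \emph{on the nose}, i.e.\ $\eta_{xy}|_{\bbX_{xz}\times\bbX_{zy}}=\eta_{xz}+\eta_{zy}$, and compatible with all face maps. Coherence ``up to lower-order corrections'' leaves uncancelled boundary terms, and then $\Phi$ is not a chain map.

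The Bismut--Cheeger/Bismut--Freed theorems do not supply such forms. They treat families of Dirac operators on compact fibres, with transgressions built from a superconnection on the kernel bundle. Your setting differs in three ways:
\begin{itemize}
\item the fibres are cylinders with asymptotic operators;
\item the operators are real-linear Floer--Cauchy--Riemann operators;
\item $I(x,y)=\ind D_{xy}\oplus\ind\overline{D}_{xy}\oplus\underline{\bC}^{r_{xy}}$ is a stabilized virtual bundle whose complex structure only comes from a concordance, and it carries an independently chosen Hermitian connection.
\end{itemize}
Even granting a local index theorem over each $\bbX_{xy}^\circ$, you would still have to show two things. The transgression forms must extend smoothly to the corners. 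They must also split exactly additively in the infinite-neck limit at $\bbX_{xz}\times\bbX_{zy}$. This is precisely the analytic ``compatibility with gluing'' that the paper says it cannot control and deliberately avoids.

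Your inductive fallback does not close the gap either. Once coherence fixes $\eta$ on $\partial\bbX_{xy}$, extending it to $\bbX_{xy}$ with prescribed $d\eta$ is obstructed by the class of $(\delta_{xy},\eta|_{\partial})$ in $H^{*}(\bbX_{xy},\partial\bbX_{xy};\bC)\otimes\bC[\underline b]$. The classical families index theorem at best kills the image of this class in $H^*(\bbX_{xy})$, and nothing in your argument kills the rest. Already in top degree, the obstruction is the number $\int_{\bbX_{xy}}\delta_{xy}-\int_{\partial\bbX_{xy}}\eta$.

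If your route worked, it would give a chain-level quasi-isomorphism, which the paper only conjectures (Remark \ref{rem:eopjfpwnwGN}). There is also a smaller problem. Integrating over $\bbX_{xy}\times[0,1]$ with a single global parameter $t$ does not give a chain map even if $\eta$ is coherent. The faces $\bbX_{xz}\times\bbX_{zy}\times[0,1]$ carry a diagonal $t$ and do not factor as $\Phi\circ\partial$ or $\partial\circ\Phi$. You would need the stratified diagonal bimodule $s\bbX$ and coherent forms on it.

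The paper avoids form-level transgression entirely. It regards $\smallint{}_{\scrC/\bbX}\ch(\scrE)$ and $\ch(\bbX)\circ\ind(\bbX)$ as exact functors $\flow^{\rm GRR}_{\rm fin}\to\dgnerve(\perf_{\bC[\underline b]})$. Passing to DG-categories via the left adjoint $\bL$, both become equivalences, because everything is generated by $\unit$ and the maps on $\pi_*(\unit,\unit)\otimes\bC$ are isomorphisms. So the discrepancy is an autoequivalence of $\perf_{\bC[\underline b]}$ fixing the unit. By To\"en's theorem this autoequivalence is tensoring with a bimodule that is $\bC[\underline b]$ as a left module, with right action twisted by an automorphism $\psi$. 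This $\psi$ is detected on closed families, where the non-coherent GRR of Lemma \ref{lem:inc grr} (via Atiyah's proof of Bott periodicity) forces $\psi=\mathrm{id}$ on homology. Homotopy coherence thus comes for free from generation by the unit, at the price of only a homology-level isomorphism.
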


Our proof of Theorem \ref{thm:tech} necessitates the entirety of the $\infty$-categorical machinery of \cite{AB24}; in particular, we upgrade all of the constructions discussed above to $\infty$-categories and functors between them. Using this, we prove that both sides of \eqref{eq: eiongwiepngreping}, now viewed as functors acting on $\bbX$, agree up to an autoequivalence of the category of differential graded (DG) $\bC[\underline b]$-modules. Using To\"en's classification of DG-functors \cite{Toen}, together with the classical Grothendieck-Riemann-Roch theorem, we constrain this autoequivalence enough to prove Theorem \ref{thm:tech}.

\begin{rem}
There are many proofs/versions of Grothedieck-Riemann-Roch which are index-theoretic in nature, cf. \cite{BGV04} and the references therein; however, to the knowledge of the present authors, there is no statement in the literature which incorporates the ``homotopy coherence'' required in the present article. The difficulty for carrying this out in other frameworks (e.g. using heat kernels) is analytic and lies in the compatibility with gluing; here, we bypass this with purely homotopy-theoretic methods.
\end{rem}

\begin{rem}\label{rem:eopjfpwnwGN}
We do not prove that the chain complexes underlying both sides of \eqref{eq: eiongwiepngreping}, and analogously \eqref{eq: rwigpiweg}, are quasi-isomorphic. However, we indicate in Subsection \ref{sec: opwjgfoebgjorbtgojrbt} a conjectural approach to doing so using more homotopical machinery. Conjecturally, flow categories admit a symmetric monoidal structure and the equivalences of Remark \ref{rem:equivalence} can be upgraded to be symmetric monoidal; the sketch in Subsection \ref{sec: opwjgfoebgjorbtgojrbt} requires this extra structure. In terms of $\mathrm{Spec}(\Omega^{\rm U}_* \otimes_\bbZ \bC)$, this means that Theorem \ref{thm:main} computes the cohomology of the corresponding sheaf, rather than the object of the derived category itself.
\end{rem}

\begin{rem}
If we consider the map ${\rm MU}\to{\rm KU}$, where ${\rm KU}$ is the topological complex $K$-theory spectrum, then the complexification of
    \begin{equation}
    SH^{-*}(X;{\rm KU})\equiv\pi_*\big(\bbF^{\rm U}\otimes_{\rm MU}{\rm KU}\big)
    \end{equation}
should be computable via symplectic cohomology bulk-deformed by the Todd class; this does not immediately follow from the results in the present article, but would require the chain-level lift of our homotopy coherent Grothendieck-Riemann-Roch result as described in the previous remark.
\end{rem}

\subsection*{Acknowledgments}
The authors are grateful to Shaoyun Bai, Rune Haugseng, Liam Keenan, Yonghwan Kim, 
John Pardon, Paul Seidel, Nick Sheridan, Ivan Smith, and Hiro Lee Tanaka for helpful comments and discussions. The first author was partially supported by an NSF Graduate Research Fellowship award during this work. The second author is grateful to the Max Planck Institute for Mathematics in Bonn for its hospitality and financial support, and to the EPSRC for its financial support through EPSRC standard grant EP/W015889/1. This work was partly completed while the first author was a visitor at the Max Planck Institute for Mathematics in Bonn, and the first author would like to thank the Institute for their hospitality.

\section{Complex-oriented flow categories}
\subsection{Flow categories}
\subsubsection{Stratified smooth manifolds with corners}
Given any manifold with corners $X$, there is a poset $\cP_X$ whose objects are connected corner strata such that there is a morphism $F\to F'$ if $F'$ lies in the closure of $F$. Moreover, there is a functor $\codim: \cP_X \to \bZ_{\geq 0}$ sending each object to its codimension. A morphism $X \to X'$ of manifolds with corners is a smooth map which is the inclusion of a union of disjoint boundary faces; this induces a codimension-preserving functor $\cP_X \to \cP_{X'}$.

\begin{defin}
    A category $\cP$ equipped with a functor $\codim:\cP \to \bZ_{\geq 0}$ is a \emph{model (for manifolds with corners)} if, for each $x \in \cP$, the overcategory $\cP_{/x}$ of $x$ is isomorphic to the cube $2^{\{1, \ldots, \codim(x)\}}$ with its usual poset structure.
\end{defin}

If $X$ has is a manifold with corners, $\cP_X$ is not always a model, e.g. consider the case of a teardrop. However, all manifolds considered in this paper will satisfy this condition.

\begin{defin}
    A \emph{stratified smooth manifold with corners} is a pair $(X, \cP)$, where $X$ is a manifold with corners and $\cP$ is a model equipped with a functor $F:\cP_X \to \cP$ which (1) preserves the codimension and (2) for each $x\in\cP$, induces an equivalence $\cP_{/x}\xrightarrow{\sim}\cP_{/Fx}$.
\end{defin}

\begin{defin}
    A \emph{morphism} of stratified smooth manifold with corners $(X, \cP) \to (X',\cP')$ consists of a morphism $X \to X'$ and a codimension-preserving functor $\cP \to \cP'$ which is compatible with the functors from $\cP_X$ and $\cP_{X'}$.
\end{defin}

We think of $\cP$ as providing a stratification of the corner structure of $X$. For $p \in \cP$, we write $(\partial^p X,\partial^p \cP)$ for the stratified smooth manifold with corners consisting of the union of faces in $X$ living over the object $p$ together with the stratifying category given by the overcategory $\partial^p \cP \equiv \cP_{/p}$.
\subsubsection{Flow simplices}
We recall a construction from \cite[Section 4]{AB24}. Let $\vec{\calP}\equiv(\calP_0,\cdots,\calP_n)$ be any $(n+1)$-tuple of sets, $n\geq0$; we call such a tuple an \emph{object $(n+1)$-tuple}. For any two $x\in\calP_j$ and $y\in\calP_{j'}$, where $0\leq j\leq j'\leq n$, we may construct a model $\vec{\calP}(x,y)$ as follows. 

\begin{itemize}
\item The objects are trees with only bivalent vertices whose
    \begin{enumerate}
    \item edges are labeled by elements of $\calP_k$, $j\leq k\leq j'$, such that elements on edges appear in \emph{ascending order from left to right} and the incoming resp. outgoing leaf is labeled by $x$ resp. $y$;
    \item and vertices are labeled by a subset of $\{k+1,\ldots,k'-1\}$, where the edge to the left resp. right of a vertex is labeled by an element of $\calP_k$ resp. $\calP_{k'}$. (This condition is vacuous if $n=0,1$.)
    \end{enumerate}
\item There is a morphism $T\to T'$ if 
    \begin{enumerate}
    \item $T$ is obtained from $T'$ by collapsing internal edges;
    \item the labels of the uncollapsed edges agree with the labels of the corresponding edges of $T'$;
    \item and, given any vertex $v\in T$, $v$ contains the union of the labels of the vertices $v'\in T'$ which are collapsed to it together with any $k$, $j<k<j'$, with the property that (i) $T'$ contains an edge labeled by $\calP_k$ and (ii) all edges of $T'$ labeled by an element of $\calP_k$ are collapsed to $v$. (This condition is vacuous if $n=0,1$.)
    \end{enumerate}
\item The functor $\codim:\vec{\calP}(x,y)\to\bbZ_{\geq0}$ is given by counting 
    \begin{enumerate}
    \item the number of internal edges;
    \item and, if a vertex $v\in T$ is labeled by $S\subset\{k+1,\ldots,k'-1\}$, the cardinality of $\{k+1,\ldots,k'-1\}-S$.
    \end{enumerate}
\end{itemize}
For any three $x\in\calP_j$, $y\in\calP_{j'}$, and $z\in\calP_k$, where $0\leq j\leq k\leq j'\leq n$, there is a natural functor 
    \begin{equation}
    \vec{\calP}(x,z)\times\vec{\calP}(z,y)\to\vec{\calP}(x,y)
    \end{equation}
given by concatenation. In particular, the collection
    \begin{equation}
    \big\{\vec{\calP}(x,y)\big\}_{x,y}
    \end{equation}
yields a strict 2-category which we abuse notation and denote by $\vec{\calP}$. There is a natural inclusion of strict 2-categories: 
    \begin{equation}
    \partial^j\vec{\calP}\equiv(\calP_0,\ldots,\widehat{\calP_j},\ldots,\calP_n)\hookrightarrow\vec{\calP};
    \end{equation}
it is straightforward to see that, for any $0\leq j<j'\leq n$, there is an equality of strict 2-categories, 
    \begin{equation}
    \partial^{j'}\partial^j\vec{\calP}=\partial^{j-1}\partial^{j'}\vec{\calP},
    \end{equation}
which is compatible with the inclusions.

\begin{defin}
A \emph{flow $n$-simplex} $\bbX\equiv\bbX_{0\cdots n}$ on an object $(n+1)$-tuple consists of the following data. In the following, we always take $x\in\calP_j$, $y\in\calP_{j'}$, and $z\in\calP_k$, where
    \begin{equation}
    0\leq j\leq k\leq j'\leq n,
    \end{equation}
unless otherwise specified. 
\begin{enumerate}
\item For any two $x$ and $y$, a stratified compact smooth manifold with corners 
    \begin{equation}
    \big(\bbX_{xy},\vec{\calP}(x,y)\big).
    \end{equation}
\item For any three $x$, $y$, and $z$, a morphism of stratified compact smooth manifolds with corners 
    \begin{equation}
    \bbX_{xz}\times\bbX_{zy}\hookrightarrow\bbX_{xy}
    \end{equation}
which is an embedding of a codimension 1 boundary stratum that lifts 
    \begin{equation}
    \vec{\calP}(x,z)\times\vec{\calP}(z,y)\to\vec{\calP}(x,y).
    \end{equation}
\item For any two $x$ and $y$, the codimension 1 boundary strata of $\bbX_{xy}$ are enumerated by (i) the morphisms of the above form and (2) the stratified compact submanifolds with corners 
    \begin{equation}
    \big(\partial^k\bbX_{xy},\partial^k\vec{\calP}(x,y)\big),\;\;j<k<j';
    \end{equation}
moreover, the natural associativity diagrams intertwining these commutes.
\end{enumerate}
\end{defin}

\begin{rem}
A flow $0$-simplex is simply referred to as a \emph{flow category}.
\end{rem}

\begin{notation}
In the remainder of the present article, we will frequently consider a flow $n$-simplex $\bbX\equiv\bbX_{0\cdots n}$ together with $x\in\calP_j$, $y\in\calP_{j'}$, and $z\in\calP_k$, where
    \begin{equation}
    0\leq j\leq k\leq j'\leq n.
    \end{equation}
We fix this once and for all as standing notation so that we do not have to repeat it \emph{ad infinitum}. Often, we will actually require $j<k<j'$; we will note this explicitly when it occurs.
\end{notation}

\subsubsection{Aside: virtual bundles}
In the present article, we take the following as our definition of virtual bundle.

\begin{defin}
Let $Y$ be a space. A \emph{real/complex virtual bundle} on $Y$ is a pair $E\equiv(E^+,E^-)$ such that $E^\pm\to Y$ is a real/complex vector bundle. An \emph{equivalence} of real/complex virtual bundles, written 
    \begin{equation}
    E\equiv(E^+,E^-)\cong(F^+,F^-)\equiv F,
    \end{equation}
is an isomorphism of real/complex vector bundles 
    \begin{equation}
    E^+\oplus F^-\cong E^-\oplus F^+.
    \end{equation}
\end{defin}

\begin{rem}
If $Y=*$, we refer to a virtual bundle over $*$ as a \emph{virtual vector space}.
\end{rem}

\begin{defin}\label{defin:connection2}
A connection on a virtual bundle $E\equiv(E^+,E^-)$ is a connection on both $E^\pm$.
\end{defin}

\begin{rem}
Analogously to Definition \ref{defin:connection2}, we may define a metric, or a Hermitian connection (on a complex virtual bundle), or a...
\end{rem}

\begin{defin}
Let $E\equiv(E^+,E^-)\to Y$ be a real/complex virtual bundle. We say $E$ is \emph{orientable} if 
    \begin{equation}
    \det E^+\otimes_\bbR(\det E^-)^{-1},
    \end{equation}
where $\det(E^\pm)$ is the real determinant line bundle of $E^\pm$, is trivializable.
\end{defin}

A choice of orientation of a virtual bundle is a choice of trivialization of the aforementioned real tensor product of real determinant line bundles.

\subsubsection{Oriented flow simplices}
In the sequel, we will use the following convention.

\begin{convention}
We endow $\bbR^{\lvert\{j+1,\ldots,j'\}\rvert}$ with the standard orientation if and only if $\binom{j'-j}{2}$ is even. Observe, this forces the map
    \begin{equation}
    \bbR^{\lvert\{j+1,\ldots,k\}\rvert}+\bbR^{\lvert\{k+1,\ldots,j'\}\rvert}\to\bbR^{\lvert\{j+1,\ldots,j'\}\rvert},
    \end{equation} 
which simply concatenates, to have a Koszul sign of $(-1)^{(j'-k)(k-j)}$. Moreover, this forces the map 
    \begin{equation}
    \bbR^{\lvert\{j+1,\ldots,\widehat{k},\ldots,j'\}\rvert}+\bbR^{\lvert\{k\}\rvert}\to\bbR^{\lvert\{j+1,\ldots,j'\}\rvert},
    \end{equation}
which simply puts the $k$-th coordinate into the correct position and then concatenates, to have a Koszul sign of $(-1)^{k-j-1}$.
\end{convention}

\begin{defin}\label{defin:orientedsimplex}
An \emph{oriented flow $n$-simplex} is a flow $n$-simplex $\bbX\equiv\bbX_{0\cdots n}$ together with the following data.
\begin{enumerate}
\item A real virtual vector space $V_x$ for any $x$.
\item A real vector bundle $W(x,y)\to\bbX_{xy}$, for any two $x$ and $y$, satisfying 
    \begin{equation}
    W(x,y)\cong W(x,z)\oplus W(z,y)\;\;\textrm{over}\;\;\bbX_{xz}\times\bbX_{zy}
    \end{equation}
such that the natural diagram encoding associativity commutes.
\item An oriented real virtual bundle $I(x,y)\to\bbX_{xy}$, for any two $x$ and $y$, satisfying 
    \begin{equation}
    I(x,y)\cong I(x,z)\oplus I(z,y)\;\;\textrm{over}\;\;\bbX_{xz}\times\bbX_{zy},
    \end{equation}
where the equivalence is an an equivalence of underlying real virtual bundles which has the Koszul sign $(-1)^{\abs{z}}$ when the right hand side has the product orientation, such that the natural diagram encoding associativity commutes.\footnote{Cf. Remark \ref{rem:koszulsignoriented} for a discussion about this Koszul sign.}
\item An equivalence
    \begin{equation}\label{eqn:wxy1oriented}
    T\bbX_{xy}+\underline{V}_y+\underline{\bbR}+W(x,y)\cong I(x,y)+W(x,y)+\underline{\bbR}^{\lvert\{j+1,\ldots,j'\}\rvert}+\underline{V}_x,
    \end{equation}
for any two $x$ and $y$, such that the natural diagram associated to any codimension 1 boundary stratum of $\bbX_{xy}$ commutes.
\end{enumerate}
\end{defin}

\cite[Theorem 1.6]{AB24} shows there is a stable $\infty$-category $\flow^{\rm SO}$ whose $n$-simplices consist of oriented flow $n$-simplices.\footnote{In fact, $\flow^{\rm SO}$ is equivalent to the stable $\infty$-category of ${\rm MSO}$-modules, cf. \cite{HO}.} However, we will require a slightly more refined definition.

\begin{defin}\label{defin:cohorientedsimplex}
A \emph{decorated oriented flow $n$-simplex} is an oriented flow $n$-simplex $\bbX\equiv\bbX_{0\cdots n}$ together with the following additional data.
\begin{enumerate}
\item An orientation of $V_x$ for any $x$.
\item An equivalence of real virtual bundles
    \begin{equation}\label{eqn:wxy2oriented}
    T\bbX_{xy}+\underline{V}_y+\underline{\bbR}\cong I(x,y)+\underline{\bbR}^{\lvert\{j+1,\ldots,j'\}\rvert}+\underline{V}_x,
    \end{equation}
for any two $x$ and $y$, such that the natural diagram associated to any codimension 1 boundary stratum of $\bbX_{xy}$ commutes. Moreover, the sum of \eqref{eqn:wxy2oriented} with the identity on $W(x,y)$ recovers \eqref{eqn:wxy1oriented}.
\end{enumerate}
\end{defin}

Again, \cite[Theorem 1.6]{AB24} shows there is a stable $\infty$-category $\flow^{\rm decSO}$ whose $n$-simplices consist of decorated oriented flow $n$-simplices. Fortunately, $\flow^{\rm decSO}$ is really only cosmetically different than $\flow^{\rm SO}$.

\begin{lem}\label{lem:obviousforgetful}
The obvious forgetful functor 
	\begin{equation}
	\flow^{\rm decSO}\to\flow^{\rm SO}
	\end{equation}
is an equivalence of stable $\infty$-categories.
\end{lem}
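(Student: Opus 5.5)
Our plan is to show that the forgetful functor is essentially surjective and fully faithful at the level of mapping spaces. We do this by proving a lifting property: every boundary-compatible partial decoration on an oriented flow $n$-simplex extends to a full decoration. The key observation is that the decoration data in Definition \ref{defin:cohorientedsimplex} can always be manufactured by changing the bundles. We absorb $W(x,y)$ into $I(x,y)$: given an oriented flow $n$-simplex $\bbX$, replace $I(x,y)$ by
\begin{equation*}
I'(x,y)\equiv I(x,y)+W(x,y)
\end{equation*}
and $W(x,y)$ by $W'(x,y)\equiv 0$. For $I'$ to be an oriented virtual bundle, $W(x,y)$ must be oriented. We handle this by replacing $W$ by $W\oplus W$ (or by using $I + W - W$), which carries a canonical orientation. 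Concretely, we set $I'(x,y)=I(x,y)+(W(x,y),W(x,y))$. As a virtual bundle $(W,W)$ is canonically oriented, and the equivalence \eqref{eqn:wxy1oriented} then rearranges tautologically into \eqref{eqn:wxy2oriented}. The additivity isomorphisms for $I'$ over $\bbX_{xz}\times\bbX_{zy}$ are induced from those of $I$ and $W$. We would check that the Koszul sign $(-1)^{\abs{z}}$ is unaffected, since $(W,W)$ has even virtual rank zero. Orientations of $V_x$ can be chosen arbitrarily, but this must be done compatibly with the orientation of $I$ appearing in \eqref{eqn:wxy1oriented}. Where necessary, we flip orientations of $I(x,y)$ by a sign depending only on $x$ and $y$, multiplicatively in $x,y$ as a coboundary.

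Next, we show the modified simplex $\bbX'$ is equivalent to $\bbX$ in $\flow^{\rm SO}$. Because $\bbX'$ has the same underlying flow simplex and only differs in the stabilizing data, the identity on underlying manifolds should give an equivalence. Following \cite{AB24}, we would exhibit a flow $1$-simplex over $(\calP,\calP)$ built from the collar $\bbX_{xy}\times[0,1]$-type interpolation (the standard identity bimodule). The $I$-bundle on it is $I+(W,W)$ on one end and $I$ on the other, interpolated by the canonical trivialization of $(W,W)$ as a virtual bundle. This proves essential surjectivity on homotopy categories.

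For full faithfulness, we run the same construction relative to the boundary. Given a map of Kan complexes, we need that any oriented flow $n$-simplex whose faces are decorated admits a decoration extending the given ones, up to the same modification. We apply the absorption trick only in the interior, keeping the boundary decorations. The boundary compatibility of \eqref{eqn:wxy2oriented} is then the statement that the canonical trivializations of $(W,W)$ restrict correctly under the associativity maps, which holds by naturality.

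We expect the main obstacle to be the orientation and sign bookkeeping: choosing orientations of the $V_x$ and of the modified $I$ so that all face compatibilities and Koszul signs in Definition \ref{defin:orientedsimplex} hold simultaneously. First, we would try to reduce this to the observation that the space of such choices on each $\bbX_{xy}$ is a torsor that is contractible after stabilization, so that extension over simplices is unobstructed.
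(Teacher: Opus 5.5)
There is a genuine gap, and it is exactly where the content of the lemma lies. The lifting property you announce is false as stated. Definition \ref{defin:cohorientedsimplex} requires \eqref{eqn:wxy2oriented} plus ${\rm id}_{W(x,y)}$ to \emph{equal} \eqref{eqn:wxy1oriented}. So a given oriented simplex has at most one such destabilization, and none unless \eqref{eqn:wxy1oriented} already splits off ${\rm id}_W$.

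You therefore decorate a modified simplex $\bbX'$ instead. Full faithfulness then needs two things: the modification must leave the prescribed decorated faces untouched, and $\bbX\simeq\bbX'$ must hold relative to $\partial\Delta^n$ in $\flow^{\rm SO}$. Neither is achieved. On the interior pairs $x\in\calP_0$, $y\in\calP_n$ you set $W'(x,y)=0$ and $I'(x,y)=I(x,y)+(W(x,y),W(x,y))$. Now take a breaking stratum $\bbX_{xz}\times\bbX_{zy}$ with $z\in\calP_k$, $0<k<n$; both factors lie in fixed faces. Then:
\begin{itemize}
\item Definition \ref{defin:orientedsimplex}(2) forces $W'(x,y)\vert\cong W(x,z)\oplus W(z,y)$, which is nonzero in general.
\item $I'(x,y)\vert$ is off from $I(x,z)\oplus I(z,y)$ by a copy of $(W,W)$.
\item Your interior decoration restricts to the rearranged product of the faces' stabilized equivalences \eqref{eqn:wxy1oriented}, not to the product of their given decorations.
\end{itemize}
Naturality of the trivialization of $(W,W)$ does not address this mismatch.

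The same rank count breaks your comparison $\bbX\simeq\bbX'$. On the diagonal bimodule (topologically $\bbX_{xy}\times[0,1]$), the $W$-bundle must be $W(x,y)$ plus a fixed vector space at one end. At the other end it must be just a fixed vector space. This is impossible unless ${\rm rk}\,W(x,y)$ has the form $d_x-d_y$. Keeping $W'=W$ and only enlarging $I$ repairs the ranks, but the homotopy rel boundary is still missing.

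Your closing appeal to a torsor that becomes ``contractible after stabilization'' is precisely the unproved step. The fibre of the forgetful map over a fixed simplex is discrete: orientations of the $V_x$, and at most one destabilization. Contractibility appears only after allowing homotopies and adding large trivial summands, using connectivity of $O(N)\to O(N+w)$ relative to the boundary. You would also need a relative ``concordant structures give homotopic simplices'' statement in the model of \cite{AB24}. None of this is set up.

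The paper avoids simplex-level lifting altogether. By \cite[Section 8]{AB24}, both categories are module categories over the endomorphism ring spectrum of $\unit$, hence generated by $\unit$. The forgetful functor sends $\unit$ to $\unit$ and preserves colimits, since cones and coproducts are built identically on both sides. So it suffices that $\flow^{\rm decSO}(\unit,\unit)\to\flow^{\rm SO}(\unit,\unit)$ is an equivalence, which is checked on $\pi_*$, where both sides are $\Omega^{\rm SO}_*$. At that level only closed manifolds and their bordisms occur, and classes depend only on the underlying oriented bordism class. The choice of $I$ and of destabilization is then irrelevant, and that is the natural place for an absorption trick like yours.
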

\begin{proof}[Proof sketch]
Consider the unit flow category $\unit$ with a single object $*$ and no morphisms. We may upgrade $\unit$ to an oriented flow category $\Sigma^d\unit$, $d\in\bbZ$, via defining $V_*\equiv\bbR^d$. We may go even further and upgrade $\Sigma^d\unit$ to a decorated oriented flow category by fixing the standard orientation on $\bbR^d$. Now, the results of \cite[Section 8]{AB24} imply $\flow^{\rm SO}(\unit,\unit)$ resp. $\flow^{\rm decSO}(\unit,\unit)$ is an associative ring spectrum; in fact, as stable $\infty$-categories, we have
    \begin{equation}
    \flow^{\rm SO}\simeq\Mod_{\flow^{\rm SO}(\unit,\unit)}\;\;\textrm{and}\;\;\flow^{\rm decSO}\simeq\Mod_{\flow^{\rm decSO}(\unit,\unit)}.
    \end{equation}
In particular, $\flow^{\rm SO}$ resp. $\flow^{\rm decSO}$ is generated under homotopy colimits by $\unit$.  Finally, the obvious forgetful functor (1) sends $\unit$ to $\unit$, (2) preserves homotopy colimits (this may be checked explicitly since homotopy cofibers resp. coproducts in both the domain and codomain are given by a mapping cone construction resp. disjoint union), and (3) induces an equivalence of associative ring spectra: 
    \begin{equation}
    \flow^{\rm decSO}(\unit,\unit)\simeq\flow^{\rm SO}(\unit,\unit).
    \end{equation}
The aforementioned equivalence may be checked by (1) taking the associative ring map induced by the forgetful functor and (2) observing the induced map on homotopy groups, which are both the oriented bordism groups, is an equivalence.
\end{proof}

\begin{rem}\label{rem:koszulsignoriented}
In part (3) of Definition \ref{defin:orientedsimplex}, we require the Koszul sign $(-1)^{\abs{z}}$ for the following reason. Recall, the homotopy groups of the mapping spectrum $\flow^{\rm SO}(\unit,\unit)$ resp. $\flow^{\rm decSO}(\unit,\unit)$ are 
    \begin{align}
    \pi_d\flow^{\rm SO}(\unit,\unit)\equiv{\rm HoFlow}^{\rm SO}(\Sigma^d\unit,\unit)&\;\;\textrm{resp.} \\
    \pi_d\flow^{\rm decSO}(\unit,\unit)\equiv{\rm HoFlow}^{\rm cohSO}(\Sigma^d\unit,\unit)&,
    \end{align}
where ${\rm HoFlow}^{\rm SO}$ resp. ${\rm HoFlow}^{\rm cohSO}$  denotes the homotopy category. It is straightforward to see that our choice of Koszul sign ensures 
    \begin{equation}
    \pi_d\flow^{\rm SO}(\unit,\unit)\cong\pi_d\flow^{\rm decSO}(\unit,\unit)\cong\Omega^{\rm SO}_d,
    \end{equation}
where the latter is the degree $d$ oriented bordism group. Meanwhile, if we had instead asked that
    \begin{equation}
    I(x,z)\oplus I(z,y)\cong I(x,y)
    \end{equation}
was an equivalence of oriented real virtual bundles, we would obtain the wrong homotopy groups (for instance, each homotopy group would be 2-torsion).
\end{rem}

Consider a decorated oriented flow $n$-simplex $\bbX\equiv\bbX_{0\cdots n}$. Of course, the equivalence 
    \begin{equation}
    T\bbX_{xy}+\underline{V}_y+\underline{\bbR}\cong I(x,y)+\underline{\bbR}^{\lvert\{j+1,\ldots,j'\}\rvert}+\underline{V}_x
    \end{equation}
induces a choice of orientation on $\bbX_{xy}$; therefore, the product 
    \begin{equation}
    \bbX_{xz}\times\bbX_{zy}
    \end{equation}
may be given the product orientation. Meanwhile, by thinking of the aforementioned product as a codimension 1 boundary stratum, 
    \begin{equation}
    \bbX_{xz}\times\bbX_{zy}\hookrightarrow\bbX_{xy},
    \end{equation}
we may induce an \emph{a priori} different orientation on the product via the Stoke's orientation (i.e., via the outward pointing normal).

\begin{lem}\label{lem:koszulsign1}
The Koszul sign associated to comparing the product orientation to the boundary orientation of $\bbX_{xz}\times\bbX_{zy}$ has exponent
    \begin{multline}
    \abs{z}\big(1+\dim I(z,y)+j'-k\big)+\abs{x}\big(\dim I(z,y)+j'-k\big)+ \\
    \dim I(z,y)(k-j)+\abs{z}+(j'-k)(k-j)-\dim\bbX_{zy}-1.
    \end{multline}
\end{lem}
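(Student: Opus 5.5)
The sign is a pure linear-algebra bookkeeping of the decorated equivalence \eqref{eqn:wxy2oriented}. The plan is to write the orientation of $\bbX_{xy}$ restricted to the stratum $\bbX_{xz}\times\bbX_{zy}$ in two ways and compare. Here $\abs{x}\equiv\dim V_x$, interpreted mod $2$ via virtual dimension.

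\emph{Product side.} By definition, the orientation of $\bbX_{xy}$ is the unique one making
\[
T\bbX_{xy}+\underline{V}_y+\underline{\bbR}\cong I(x,y)+\underline{\bbR}^{\lvert\{j+1,\ldots,j'\}\rvert}+\underline{V}_x
\]
orientation preserving. The same holds for $\bbX_{xz}$ and $\bbX_{zy}$. Taking the direct sum of the two equivalences for $\bbX_{xz}$ and $\bbX_{zy}$ gives
\[
T\bbX_{xz}+\underline V_z+\underline\bbR+T\bbX_{zy}+\underline V_y+\underline\bbR\cong I(x,z)+\underline\bbR^{k-j}+\underline V_x+I(z,y)+\underline\bbR^{j'-k}+\underline V_z.
\]
This isomorphism is orientation preserving when the left side carries the product orientation. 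I then rearrange both sides into the normal form
\[
\big(T\bbX_{xz}\oplus T\bbX_{zy}\big)+\underline\bbR_{\rm glue}+\underline V_y+\underline\bbR \quad\text{and}\quad I(x,z)\oplus I(z,y)+\underline\bbR^{k-j}+\underline\bbR^{j'-k}+\underline V_x,
\]
after cancelling one copy of $\underline V_z$ on each side. Each transposition of summands contributes the product of their dimensions mod $2$. On the right, moving $\underline V_x$ past $I(z,y)+\underline\bbR^{j'-k}$ gives $\abs{x}(\dim I(z,y)+j'-k)$. Moving $\underline\bbR^{k-j}$ past $I(z,y)$ gives $\dim I(z,y)(k-j)$. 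On the left, and in cancelling $V_z$ from the two sides, moving $\underline V_z$ past $1+\dim I(z,y)+j'-k$ dimensions gives $\abs{z}(1+\dim I(z,y)+j'-k)$. Moving $T\bbX_{zy}$ past the extra $\underline\bbR$ gives a term in $\dim\bbX_{zy}$.

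\emph{Boundary side.} The compatibility in \eqref{eqn:wxy2oriented} along codimension $1$ strata identifies the extra $\underline\bbR$ with the normal direction. I will take the convention, fixed by the standing gluing conventions of \cite{AB24}, that it is the inward normal, so comparing with the outward-normal Stokes orientation costs one more $1$. Two further identifications then enter. First, $I(x,z)\oplus I(z,y)\cong I(x,y)$ has Koszul sign $(-1)^{\abs z}$ by Definition \ref{defin:orientedsimplex}(3). Second, the concatenation $\bbR^{k-j}+\bbR^{j'-k}\to\bbR^{j'-j}$ has sign $(-1)^{(j'-k)(k-j)}$ by the Convention. Summing all contributions should give the stated exponent. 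In particular the terms $\abs z$, $(j'-k)(k-j)$, $-\dim\bbX_{zy}$ and $-1$ come respectively from the $I$-gluing sign, the coordinate concatenation, moving the normal/$\bbR$ past $T\bbX_{zy}$, and outward versus inward normal.

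\emph{Main obstacle.} The difficult step is fixing conventions consistently: the order in which summands are moved, where the collar coordinate sits relative to $T\bbX_{xz}$ and $T\bbX_{zy}$, and whether the compatibility diagram uses the inward or outward normal. Mod-$2$ parity of virtual dimensions must also be handled, using $\dim\bbX_{zy}+\abs y+1=\dim I(z,y)+(j'-k)+\abs z$ to trade terms. I would first carry out the computation in the simplest case $n=0$ with all $V$'s and $I$'s trivial, to pin down the normal convention against the constant $-\dim\bbX_{zy}-1$. I would then add each summand's contribution one at a time, using the dimension relation only at the end to bring the answer into the displayed form.
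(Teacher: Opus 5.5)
Your proposal is correct and is essentially the paper's argument. The paper chases Koszul signs around the commutative associativity square that compares the sum of the equivalences for $\bbX_{xz}$ and $\bbX_{zy}$ (with $\underline{V}_z$ cancelled) against the equivalence for $\bbX_{xy}$: one route gives $(-1)^{1+\Theta+\dim\bbX_{zy}}$ (the normal direction moved past $T\bbX_{zy}$, plus inward versus outward normal), and the other gives exactly the transposition signs, the $I$-gluing sign $(-1)^{\abs{z}}$ and the concatenation sign $(-1)^{(j'-k)(k-j)}$ that you list, after which one solves for $\Theta$.
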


\begin{proof}
We consider the associativity diagram
    \begin{equation}
    \begin{tikzcd}
    \begin{gathered}T\bbX_{xz}+\underline{\bbR}+\\T\bbX_{zy}+\underline{V}_y+\underline{\bbR}\end{gathered} \arrow[r]\arrow[d] & \begin{gathered}I(x,z)+\underline{\bbR}^{\lvert\{j+1,\ldots,k\}\rvert}+\\\underline{V}_x+I(z,y)+\underline{\bbR}^{\lvert\{j+1,\ldots,k\}\rvert}\end{gathered} \arrow[d] \\
    T\bbX_{xy}+\underline{V}_y+\underline{\bbR} \arrow[r] & I(x,y)+\underline{\bbR}^{\lvert\{j+1,\ldots,j'\}\rvert}+\underline{V}_x.
    \end{tikzcd}
    \end{equation}
If we keep track of the Koszul sign, then, starting in the top left corner, going down and then right has the Koszul sign
    \begin{equation}
    (-1)^{1+\Theta+\dim\bbX_{zy}},
    \end{equation}
where $\Theta\in\bbZ$ is the unknown Kozsul sign comparing the product and boundary orientation. Meanwhile, going right and then down has the Koszul sign
    \begin{equation}
    (-1)^{\abs{z}(1+\dim I(z,y)+j'-k)+\abs{x}(\dim I(z,y)+j'-k)+\dim I(z,y)(k-j)+\abs{z}+(j'-k)(k-j)}.
    \end{equation}
\end{proof}

\begin{cor}\label{cor:koszulsign1}
Suppose $\dim\bbX_{xz}$, $\dim\bbX_{zy}$, $\dim I(x,z)$, and $\dim I(z,y)$ are all even, then the Koszul sign associated to comparing the product orientation to the boundary orientation of $\bbX_{xz}\times\bbX_{zy}$ is
    \begin{equation}
    (-1)^{j'-k+1}.
    \end{equation}
\end{cor}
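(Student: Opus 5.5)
The plan is to obtain the corollary by specializing the exponent from Lemma \ref{lem:koszulsign1} and reducing it mod $2$. The only extra input is a rank count coming from the equivalence \eqref{eqn:wxy2oriented}. Throughout, $\abs{x}$ denotes the (virtual) dimension of $V_x$, as in the proof of Lemma \ref{lem:koszulsign1}.

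First, using that $\dim I(z,y)$ and $\dim\bbX_{zy}$ are even, the exponent of Lemma \ref{lem:koszulsign1} reduces mod $2$ to
\begin{equation}
\abs{z}(1+j'-k)+\abs{x}(j'-k)+\abs{z}+(j'-k)(k-j)+1\equiv(\abs{x}+\abs{z})(j'-k)+(j'-k)(k-j)+1 .
\end{equation}

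Second, we eliminate $\abs{x}+\abs{z}$. Comparing ranks on both sides of \eqref{eqn:wxy2oriented} for the pair $(x,z)$ gives
\begin{equation}
\dim\bbX_{xz}+\abs{z}+1=\dim I(x,z)+(k-j)+\abs{x}.
\end{equation}
Since $\dim\bbX_{xz}$ and $\dim I(x,z)$ are even, this yields $\abs{x}+\abs{z}\equiv 1+(k-j)\pmod 2$.

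Substituting this into the reduced exponent gives
\begin{equation}
(1+k-j)(j'-k)+(j'-k)(k-j)+1\equiv (j'-k)+1 \pmod 2,
\end{equation}
which is the asserted sign $(-1)^{j'-k+1}$.

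There is no real obstacle here. The only points needing care are that the rank identity is taken for the pair $(x,z)$ and not $(z,y)$, and that the parities of the virtual dimensions are used consistently. The analogous rank identity for $(z,y)$ is not needed.
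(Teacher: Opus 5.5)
Your proposal is correct and follows the paper's proof. You specialize the exponent of Lemma \ref{lem:koszulsign1} using the parity hypotheses. You then use the rank count in \eqref{eqn:wxy2oriented} for the pair $(x,z)$, which gives exactly the congruence $\abs{x}-\abs{z}+k-j-1\equiv 0 \pmod 2$ that the paper invokes; you simply spell out the arithmetic the paper leaves implicit.
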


\begin{proof}
Starting from the conclusion of the previous lemma, use the fact that, by assumption, 
    \begin{equation}
    \abs{x}-\abs{z}+k-j-1\equiv0\mod 2.
    \end{equation}
\end{proof}

Now, consider a face $\partial^{\langle 0\cdots\widehat{k}\cdots n\rangle}\bbX$ of $\bbX$, $0<k<n$. Of course, the equivalence 
    \begin{equation}
    T\partial^{\langle 0\cdots\widehat{k}\cdots n\rangle}\bbX_{xy}+\underline{V}_y+\underline{\bbR}\cong I(x,y)+\underline{\bbR}^{\lvert\{j+1,\ldots,\widehat{k},\ldots,j'\}\lvert}+\underline{V}_x
    \end{equation}
induces a choice of orientation on $\partial^{\langle 0\cdots\widehat{k}\cdots n\rangle}\bbX_{xy}$; we refer to this as the ``face orientation''. Meanwhile, by thinking of \emph{idem} as a codimension 1 boundary stratum,
    \begin{equation}
    \partial^{\langle 0\cdots\widehat{k}\cdots n\rangle}\bbX_{xy}\hookrightarrow\bbX_{xy},
    \end{equation}
we may induce an \emph{a priori} different orientation on $\partial^{\langle 0\cdots\widehat{k}\cdots n\rangle}\bbX_{xy}$.

\begin{lem}\label{lem:koszulsign2}
The Koszul sign associated to comparing the face orientation to the boundary orientation of $\partial^{\langle 0\cdots\widehat{k}\cdots n\rangle}\bbX_{xy}$ has exponent
    \begin{equation}
    \abs{y}+1+\abs{x}+k-j.
    \end{equation}
\end{lem}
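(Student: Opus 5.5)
We will argue exactly as in the proof of Lemma \ref{lem:koszulsign1}: write down the commutative square obtained by restricting the decoration \eqref{eqn:wxy2oriented} of $\bbX_{xy}$ to the face $\partial^{\langle 0\cdots\widehat{k}\cdots n\rangle}\bbX_{xy}$. Then compare the Koszul signs of the two composites. The top row is the face decoration
\begin{equation*}
T\partial^{\langle 0\cdots\widehat{k}\cdots n\rangle}\bbX_{xy}+\underline{\bbR}+\underline{V}_y+\underline{\bbR}\cong \underline{\bbR}+I(x,y)+\underline{\bbR}^{\lvert\{j+1,\ldots,\widehat{k},\ldots,j'\}\rvert}+\underline{V}_x,
\end{equation*}
where the extra summand $\underline{\bbR}$ on the left is the outward normal to the face. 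The corresponding summand on the right is the $k$-th coordinate direction $\bbR^{\lvert\{k\}\rvert}$. The left vertical map identifies $\underline{\bbR}_{\rm normal}+T\partial\bbX_{xy}$ with $T\bbX_{xy}|_{\partial}$. By definition of the Stokes orientation, this introduces the unknown sign $(-1)^{\Theta}$ comparing the face and boundary orientations. The right vertical map inserts the $k$-th coordinate into $\bbR^{\lvert\{j+1,\ldots,j'\}\rvert}$. The bottom row is \eqref{eqn:wxy2oriented} for $\bbX_{xy}$. Commutativity of this square (condition (2) of Definition \ref{defin:cohorientedsimplex}) forces the two Koszul signs to agree, and we solve for $\Theta$.

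\textbf{Down then right.} Moving the normal $\underline{\bbR}$ into first position past $T\partial\bbX_{xy}$ costs $(-1)^{\dim\bbX_{xy}-1}$. Composing with the bottom equivalence contributes nothing further beyond $\Theta$.

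\textbf{Right then down.} On the right-hand side, the new coordinate $\underline{\bbR}$ must be moved past $I(x,y)$, past $\bbR^{\lvert\{j+1,\ldots,\widehat{k},\ldots,j'\}\rvert}$, and then into position. By the Convention, the insertion sign is $(-1)^{k-j-1}$. The orientation conventions for $\bbR^{\lvert\{j+1,\ldots,j'\}\rvert}$ versus the hatted version contribute the difference $\binom{j'-j}{2}-\binom{j'-j-1}{2}=j'-j-1$.

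We then use the dimension identity. Taking ranks in \eqref{eqn:wxy2oriented} gives
\begin{equation*}
\dim\bbX_{xy}+\abs{y}+1=\dim I(x,y)+(j'-j)+\abs{x}.
\end{equation*}
This lets us eliminate $\dim I(x,y)$ and $\dim\bbX_{xy}$ in favour of $\abs{x},\abs{y},j,j'$. Equating the two composites and reducing mod 2 should produce $\Theta\equiv\abs{y}+1+\abs{x}+k-j$.

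\textbf{Main obstacle.} The difficulty is purely bookkeeping: fixing once and for all where the normal and $\bbR^{\lvert\{k\}\rvert}$ sit in each sum, consistently with the conventions already used in Lemma \ref{lem:koszulsign1}, and checking that the $j'$-dependence cancels. I would first verify the formula in the case $n=2$, $j=0$, $k=1$, $j'=2$, and then run the general parity computation.
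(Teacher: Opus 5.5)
Your strategy is the paper's: restrict the decoration to the face, form the commutative square, compare the two composites, and solve for $\Theta$. The gap is in the sign ledger, which is the entire content of this lemma, and as written it does not produce the stated exponent. Write $F\equiv\partial^{\langle 0\cdots\widehat{k}\cdots n\rangle}\bbX_{xy}$. Your top row is not the face decoration. You place the extra line after $TF$ on the left but in front of $I(x,y)$ on the right, so the top arrow is the face decoration conjugated by two reorderings. It therefore carries the sign $(-1)^{\abs{y}+1+\dim I(x,y)+(j'-j-1)+\abs{x}}$, which by your rank identity equals $(-1)^{\dim\bbX_{xy}+1}$. You never charge it. Using only the terms you list (left: $\Theta+\dim\bbX_{xy}-1$; right: $\dim I(x,y)+(k-j-1)+(j'-j-1)$), equating and applying the rank identity gives $\Theta\equiv\abs{x}+\abs{y}+k-j$, which is off by one. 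Separately, the Convention's $(-1)^{k-j-1}$ is for a coordinate appended at the \emph{end}, and it already absorbs $\binom{j'-j}{2}-\binom{j'-j-1}{2}$. Since your coordinate starts in front, the term $j'-j-1$ must be charged exactly once, for moving past $\underline{\bbR}^{\lvert\{j+1,\ldots,\widehat{k},\ldots,j'\}\rvert}$, and not again as an orientation correction.

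Even with the top arrow included, your conventions (outward normal in first position, matched with the positive direction of $\underline{\bbR}^{\lvert\{k\}\rvert}$) give $\Theta\equiv\dim I(x,y)+j'-k\equiv\abs{y}+1+\abs{x}+k-j+\dim\bbX_{xy}$. This agrees with the lemma only when $\dim\bbX_{xy}$ is even. That is the case of Corollary \ref{cor:koszulsign2}, which is the only one used later, but it is not the statement. The stated formula comes from the conventions implicit in the paper's own count (compare the term $1+\Theta+\dim\bbX_{zy}$ in the proof of Lemma \ref{lem:koszulsign1}). There, the Stokes orientation is computed with the normal line placed after $TF$, and $\underline{\bbR}^{\lvert\{k\}\rvert}$ corresponds to the inward normal, which is where the $+1$ comes from. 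Use the paper's arrangement: $\underline{\bbR}^{\lvert\{k\}\rvert}$ immediately after $TF$ on the left, and at the very end on the right. Then the top arrow costs $(-1)^{\abs{y}+1}$, the right arrow costs $(-1)^{\abs{x}+k-j-1}$ (moving past $\underline{V}_x$, then the Convention), and the left arrow costs $(-1)^{\Theta+1}$. Equating gives $\Theta\equiv\abs{y}+1+\abs{x}+k-j$ directly, with no rank identity needed. So fix the position and direction of the normal explicitly, and charge every reordering in the top row, before running the parity count.
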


\begin{proof}
The proof follows by arguing analogously to the proof of Lemma \ref{lem:koszulsign1} using the associativity diagram
    \begin{equation}
    \begin{tikzcd}
    \partial^{\langle 0\cdots\widehat{k}\cdots n\rangle}\bbX_{xy}+\underline{\bbR}^{\lvert\{k\}\rvert}+\underline{V}_y+\underline{\bbR} \arrow[d]\arrow[r] & I(x,y)+\underline{\bbR}^{\lvert\{j+1,\ldots,\widehat{k},\ldots,j'\}\lvert}+\underline{V}_x+\underline{\bbR}^{\lvert\{k\}\rvert} \arrow[d] \\
    T\bbX_{xy}+\underline{V}_y+\underline{\bbR} \arrow[r] & \underline{\bbR}^{\lvert\{j+1,\ldots,j'\}\lvert}+\underline{V}_x.
    \end{tikzcd}
    \end{equation}
\end{proof}

\begin{cor}\label{cor:koszulsign2}
Suppose $\dim\bbX_{xy}$ and $\dim I(x,y)$ are both even, then the Koszul sign associated to comparing the face orientation to the boundary orientation of $\partial^{\langle 0\cdots\widehat{k}\cdots n\rangle}\bbX_{xy}$ is
    \begin{equation}
    (-1)^{j'-k}.
    \end{equation}
\end{cor}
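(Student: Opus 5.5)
This is a parity simplification of Lemma \ref{lem:koszulsign2}, just as Corollary \ref{cor:koszulsign1} simplifies Lemma \ref{lem:koszulsign1}. I will start from the exponent
\begin{equation*}
\abs{y}+1+\abs{x}+k-j
\end{equation*}
given there. The task is to rewrite $\abs{x}+\abs{y}$ modulo $2$ using the evenness hypotheses on $\dim\bbX_{xy}$ and $\dim I(x,y)$.

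To do this, I take the rank of both sides of the equivalence \eqref{eqn:wxy2oriented} over $\bbX_{xy}$, where $x\in\calP_j$ and $y\in\calP_{j'}$. Write $\abs{x}=\dim V_x$ for the virtual dimension. This gives
\begin{equation*}
\dim\bbX_{xy}+\abs{y}+1=\dim I(x,y)+(j'-j)+\abs{x}.
\end{equation*}
Since $\dim\bbX_{xy}$ and $\dim I(x,y)$ are both even, reducing modulo $2$ yields
\begin{equation*}
\abs{x}+\abs{y}\equiv j'-j+1 \pmod 2.
\end{equation*}

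Substituting this into the exponent from Lemma \ref{lem:koszulsign2} gives
\begin{equation*}
(j'-j+1)+1+k-j\equiv j'+k \equiv j'-k \pmod 2.
\end{equation*}
The Koszul sign is therefore $(-1)^{j'-k}$, as claimed.

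The only real care needed is in the dimension count. First, one must confirm that the grading $\abs{x}$ in Lemma \ref{lem:koszulsign2} is the virtual dimension of $V_x$, with no additional shift. Second, one must confirm that the trivial summand $\underline{\bbR}^{\lvert\{j+1,\ldots,j'\}\rvert}$ on the right of \eqref{eqn:wxy2oriented} has rank exactly $j'-j$. This is the same bookkeeping already used in the proof of Corollary \ref{cor:koszulsign1}, applied here to the single space $\bbX_{xy}$ rather than to the pair $\bbX_{xz}$, $\bbX_{zy}$.
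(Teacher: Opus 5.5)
Your proposal is correct and matches the paper's proof. The paper also starts from the exponent $\abs{y}+1+\abs{x}+k-j$ of Lemma \ref{lem:koszulsign2} and substitutes the parity relation $\abs{x}-\abs{y}+j'-j-1\equiv 0 \pmod 2$, which is exactly what your rank count of \eqref{eqn:wxy2oriented} gives under the evenness hypotheses.
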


\begin{proof}
Starting from the conclusion of the previous lemma, use the fact that, by assumption, 
    \begin{equation}
    \abs{x}-\abs{y}+j'-j-1\equiv0\mod 2.
    \end{equation}
\end{proof}

\subsubsection{Complex-oriented flow simplices}
\begin{defin}\label{def: cx flow cat}
A \emph{complex-oriented flow $n$-simplex} is a flow $n$-simplex $\bbX\equiv\bbX_{0\cdots n}$ together with the following data.
\begin{enumerate}
\item A real virtual vector space $V_x$ for any $x$.
\item A real vector bundle $W(x,y)\to\bbX_{xy}$, for any two $x$ and $y$, satisfying 
    \begin{equation}
    W(x,y)\cong W(x,z)\oplus W(z,y)\;\;\textrm{over}\;\;\bbX_{xz}\times\bbX_{zy},
    \end{equation}
such that the natural diagram encoding associativity commutes.
\item A complex virtual bundle $I(x,y)\to\bbX_{xy}$, for any two $x$ and $y$, satisfying 
    \begin{equation}
    I(x,y)\cong I(x,z)\oplus I(z,y)\;\;\textrm{over}\;\;\bbX_{xz}\times\bbX_{zy},
    \end{equation}
such that the natural diagram encoding associativity commutes.
\item An equivalence
    \begin{equation}\label{eqn:wxy1complex}
    T\bbX_{xy}+\underline{V}_y+\underline{\bbR}+W(x,y)\cong I(x,y)+W(x,y)+\underline{\bbR}^{\lvert\{j+1,\ldots,j'\}\rvert}+\underline{V}_x,
    \end{equation}
for any two $x$ and $y$, such that the natural diagram associated to any codimension 1 boundary stratum of $\bbX_{xy}$ commutes.
\end{enumerate}
\end{defin}

\cite[Theorem 1.6]{AB24} shows there is a stable $\infty$-category $\flow^{\rm U}$ whose $n$-simplices consist of complex-oriented flow $n$-simplices.\footnote{In fact, $\flow^{\rm U}$ is equivalent to the stable $\infty$-category of ${\rm MU}$-modules, cf. \cite{HO}.} Again, we require a slightly more refined definition.

\begin{defin}\label{def: dec cx flow simp}
A \emph{decorated complex-oriented flow $n$-simplex} is a complex-oriented flow $n$-simplex $\bbX\equiv\bbX_{0\cdots n}$ together with the following additional data.
\begin{enumerate}
\item An orientation of $V_x$ for any $x$.
\item An equivalence of real virtual bundles
    \begin{equation}\label{eqn:wxy2complex}
    T\bbX_{xy}+\underline{V}_y+\underline{\bbR}\cong I(x,y)+\underline{\bbR}^{\lvert\{j+1,\ldots,j'\}\rvert}+\underline{V}_x,
    \end{equation}
for any two $x$ and $y$, such that the natural diagram associated to any codimension 1 boundary stratum of $\bbX_{xy}$ commutes. Moreover, the sum of \eqref{eqn:wxy2complex} with the identity on $W(x,y)$ recovers \eqref{eqn:wxy1complex}.
\end{enumerate}
\end{defin}

Again, \cite[Theorem 1.6]{AB24} shows there is a stable $\infty$-category $\flow^{\rm decU}$ whose $n$-simplices consist of decorated complex-oriented flow $n$-simplices. 

\begin{lem}\label{L219}
The obvious forgetful functor 
	\begin{equation}
	\flow^{\rm decU}\to\flow^{\rm U}
	\end{equation}
is an equivalence of stable $\infty$-categories.
\end{lem}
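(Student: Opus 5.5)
The plan is to mirror the proof sketch of Lemma \ref{lem:obviousforgetful} verbatim, replacing oriented data by complex-oriented data. First, upgrade the unit flow category $\unit$ (one object $*$, no morphisms) to a complex-oriented flow category $\Sigma^d\unit$ by setting $V_*\equiv\bbR^d$. Then upgrade it further to a decorated complex-oriented flow category by fixing the standard orientation on $\bbR^d$; the equivalence \eqref{eqn:wxy2complex} is vacuous because there are no morphism spaces. By the results of \cite[Section 8]{AB24}, applied to $\flow^{\rm U}$ and $\flow^{\rm decU}$, the endomorphism spectra $\flow^{\rm U}(\unit,\unit)$ and $\flow^{\rm decU}(\unit,\unit)$ are associative ring spectra, and each stable $\infty$-category is equivalent to modules over the corresponding ring spectrum. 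In particular, each is generated under homotopy colimits by $\unit$.

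Next, check that the forgetful functor behaves well on generators and colimits. It sends $\unit$ to $\unit$. It preserves homotopy colimits, because cofibers in both categories are computed by the same mapping cone construction and coproducts by disjoint union; forgetting the decoration commutes with both constructions, since the decoration data on a cone is assembled from the decorations of the pieces together with standard orientations of the added $\underline{\bbR}$ factors. It therefore suffices to show that the induced map of associative ring spectra
\begin{equation}
\flow^{\rm decU}(\unit,\unit)\to\flow^{\rm U}(\unit,\unit)
\end{equation}
is an equivalence. Equivalently, it suffices to show that it induces an isomorphism on $\pi_d$ for every $d$, i.e. on ${\rm HoFlow}^{\rm decU}(\Sigma^d\unit,\unit)\to{\rm HoFlow}^{\rm U}(\Sigma^d\unit,\unit)$.

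The main obstacle is identifying both homotopy groups with $\Omega^{\rm U}_d$ compatibly with the forgetful map. On the $\flow^{\rm U}$ side, a morphism $\Sigma^d\unit\to\unit$ is a closed manifold $\bbX_{**}$ with a stable complex structure $T\bbX+\underline{\bbR}+W\cong I+W+\underline{\bbR}^d$, and homotopies are bordisms of such data. This is a model of stably complex bordism, giving $\Omega^{\rm U}_d$. On the decorated side, the extra datum is a destabilized equivalence $T\bbX+\underline{\bbR}\cong I+\underline{\bbR}^d$ whose stabilization by $W$ recovers the given one.

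To show the map is an isomorphism, I would argue as follows. For surjectivity, given a complex-oriented representative, enlarge $I$ by $W\otimes\bbC$ (absorbing $W$ into the complex part) so that the equivalence already holds without the $W$ summand; this yields a decorated representative in the same bordism class. For injectivity, apply the same absorption trick to a bordism, relative to its boundary. Because stabilizing by a complex bundle does not change the stable complex bordism class, both groups are $\Omega^{\rm U}_d$ and the forgetful map is the identity on them.

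A sign check in the spirit of Remark \ref{rem:koszulsignoriented} is needed for the complex case as well. Since complex bundles carry canonical orientations, I expect the Koszul conventions to introduce no 2-torsion discrepancy here, but I would verify this against the composition signs of Lemma \ref{lem:koszulsign1}.
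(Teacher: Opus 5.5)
Your proposal is correct and follows the paper's proof: the paper says the argument is that of Lemma \ref{lem:obviousforgetful} (generation by $\unit$, preservation of homotopy colimits, and an equivalence of endomorphism ring spectra checked on $\pi_*$), with the only change being that both homotopy groups are now $\Omega^{\rm U}_d$. Your absorption argument supplies that last identification, which the paper leaves implicit; note that because $I$ is a virtual bundle, the correct move is to add $W\otimes_\bbR\bbC$ to both $I^+$ and $I^-$ and sum the given equivalence with the identity on the extra copy $iW$, since adding it to $I^+$ alone would break the rank count.
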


\begin{proof}
The proof is completely analogous to the proof of Lemma \ref{lem:obviousforgetful}. One difference to note is that 
    \begin{equation}
    \pi_d\flow^{\rm decU}(\unit,\unit)\cong\pi_d\flow^{\rm U}(\unit,\unit)\cong\Omega^{\rm U}_d,
    \end{equation}
where the latter is the degree $d$ complex-oriented bordism group.
\end{proof}

Consider a (decorated) complex-oriented flow $n$-simplex $\bbX\equiv\bbX_{0\cdots n}$; since $I(x,y)$ is a complex virtual bundle, it has an underlying orientable real virtual bundle which we denote by $\widetilde{I}(x,y)$.

\begin{lem}
Let $\widetilde{I}(x,y)$ have the standard orientation induced from the complex structure if and only if $\abs{y}$ is even. The equivalence of real virtual bundles
    \begin{equation}\label{eqn:koszulconventioncomplex}
    \widetilde{I}(x,y)\cong\widetilde{I}(x,z)\oplus\widetilde{I}(z,y)
    \end{equation}
which is induced from the corresponding equivalence of complex virtual bundles has the Koszul sign $(-1)^{\abs{z}}$ when the right hand side has the product orientation.
\end{lem}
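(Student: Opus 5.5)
The plan is a direct sign computation comparing two orientations on $\widetilde{I}(x,z)\oplus\widetilde{I}(z,y)$. On the one hand, there is the orientation pulled back from $\widetilde{I}(x,y)$ via \eqref{eqn:koszulconventioncomplex}. On the other, there is the product of the orientations on the two factors, each given by the stated convention: complex orientation twisted by $(-1)^{\abs{z}}$ on $\widetilde{I}(x,z)$, and by $(-1)^{\abs{y}}$ on $\widetilde{I}(z,y)$.

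First I would record the basic fact. For complex virtual bundles $A,B$, the underlying real equivalence $\widetilde{A\oplus B}\cong\widetilde{A}\oplus\widetilde{B}$ is orientation-preserving when all three carry their complex orientations. This holds because complex vector spaces have even real dimension, so no reordering sign appears. For virtual bundles $(E^+,E^-)$ the orientation is a trivialization of $\det E^+\otimes(\det E^-)^{-1}$. Since all of $E^\pm$ are complex, hence even-dimensional, the shuffles used to identify $\det$ of a sum with a tensor product of $\det$'s also introduce no signs. So the complex equivalence $I(x,y)\cong I(x,z)\oplus I(z,y)$ is orientation-preserving for the standard complex orientations.

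Second, I would insert the twists. Write $\epsilon(w)=0$ if $\abs{w}$ is even and $1$ otherwise, so that the chosen orientation of $\widetilde{I}(a,b)$ is the complex one times $(-1)^{\epsilon(b)}$. With the chosen orientations on all three bundles, the sign of \eqref{eqn:koszulconventioncomplex} relative to the product orientation is
\begin{equation}
(-1)^{\epsilon(y)}\cdot(-1)^{\epsilon(z)}\cdot(-1)^{\epsilon(y)}=(-1)^{\epsilon(z)}=(-1)^{\abs{z}}.
\end{equation}
The factor $(-1)^{\epsilon(y)}$ appears twice, once from the left-hand side and once from the second factor, and these cancel. This is the required sign.

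The only real obstacle is making sure the orientation conventions for virtual bundles introduce no hidden signs, for instance in $\det(E^+\oplus F^+)\otimes\det(E^-\oplus F^-)^{-1}$ versus the product of the individual determinant ratios. I would check this by noting that every graded line swap involves lines of even degree, because each constituent bundle is complex. The resulting sign is therefore trivial, and the computation above is complete. This also explains the choice of convention: it reproduces exactly the Koszul sign required in part (3) of Definition \ref{defin:orientedsimplex}, so $\widetilde{I}$ makes a complex-oriented simplex into an oriented one.
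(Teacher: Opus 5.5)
Your proof is correct and is essentially the paper's argument. The paper enumerates the four parity cases of $\abs{z}$ and $\abs{y}$, using that the complex equivalence preserves the standard complex orientations. You collapse those four cases into the single computation $(-1)^{\epsilon(y)}(-1)^{\epsilon(z)}(-1)^{\epsilon(y)}=(-1)^{\abs{z}}$, and you also check explicitly that the determinant-line identifications for complex (hence even-rank) virtual bundles introduce no hidden signs, which the paper leaves implicit.
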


\begin{proof}
Let $\widetilde{I}^\pm(x,y)$ be $\widetilde{I}(x,y)$ with the standard resp. opposite orientation induced from the complex structure. We have four possibilities depending on the parity of $\abs{z}$ and $\abs{y}$.
\begin{itemize}
\item If $\abs{z}$ is even and $\abs{y}$ is even, then
    \begin{equation}
    \widetilde{I}^+(x,y)\cong\widetilde{I}^+(x,z)\oplus\widetilde{I}^+(z,y)
    \end{equation}
is orientation-preserving. 
\item If $\abs{z}$ is even and $\abs{y}$ is odd, then
    \begin{equation}
    \widetilde{I}^-(x,y)\cong\widetilde{I}^+(x,z)\oplus\widetilde{I}^-(z,y)
    \end{equation}
is orientation-preserving. 
\item If $\abs{z}$ is odd and $\abs{y}$ is even, then
    \begin{equation}
    \widetilde{I}^+(x,y)\cong\widetilde{I}^-(x,z)\oplus\widetilde{I}^+(z,y)
    \end{equation}
is orientation-reversing. 
\item If $\abs{z}$ is odd and $\abs{y}$ is odd, then
    \begin{equation}
    \widetilde{I}^-(x,y)\cong\widetilde{I}^-(x,z)\oplus\widetilde{I}^-(z,y)
    \end{equation}
is orientation-reversing. 
\end{itemize}
In particular, whether or not \eqref{eqn:koszulconventioncomplex} is orientation-preserving or orientation-reversing with respect to the orientation rule in the statement of this lemma is  completely determined by the parity of $\abs{z}$, as desired.
\end{proof}

Now, the following result is immediate. 

\begin{cor}
There are obvious base change functors
    \begin{equation}
    \flow^{\rm U}\to\flow^{\rm SO}\;\;\textrm{and}\;\;\flow^{\rm decU}\to\flow^{\rm decSO},
    \end{equation}
induced by the previous lemma, which commute with the forgetful functors. 
\end{cor}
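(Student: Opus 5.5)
The plan is to build the base change functors simplex by simplex. We send a (decorated) complex-oriented flow $n$-simplex to a (decorated) oriented flow $n$-simplex with the same underlying flow $n$-simplex, the same $V_x$ (with the same orientation in the decorated case), the same $W(x,y)$, and the same equivalences \eqref{eqn:wxy1complex} resp. \eqref{eqn:wxy2complex}. The complex virtual bundle $I(x,y)$ is replaced by its underlying real virtual bundle $\widetilde{I}(x,y)$. For the orientation we use the rule of the previous lemma: the complex orientation when $\abs{y}$ is even, and the opposite orientation when $\abs{y}$ is odd. Since $\abs{y}$ depends only on the object $y$, this is a locally constant choice over $\bbX_{xy}$, so it gives an honest orientation of $\widetilde{I}(x,y)$.

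Next we check the axioms of Definition \ref{defin:orientedsimplex} and Definition \ref{defin:cohorientedsimplex}.
\begin{itemize}
\item Items (1), (2) and (4) are carried over verbatim. The forgetful map from complex to real virtual bundles commutes with direct sums, so the boundary-compatibility diagrams still commute.
\item Item (3) needs the equivalence $\widetilde{I}(x,y)\cong\widetilde{I}(x,z)\oplus\widetilde{I}(z,y)$, induced from the complex one, to have Koszul sign $(-1)^{\abs{z}}$ relative to the product orientation. This is exactly the content of the previous lemma.
\item Associativity of these equivalences over triple products $\bbX_{xz}\times\bbX_{zw}\times\bbX_{wy}$ follows from associativity of the complex equivalences. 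The orientation signs are then automatically consistent, because the signs are determined by the underlying maps.
\end{itemize}

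To get functors of $\infty$-categories, we use that the simplicial sets $\flow^{\rm U}$ and $\flow^{\rm SO}$ of \cite{AB24} are assembled from flow $n$-simplices and face maps $\partial^j$. Our construction is defined levelwise and does not depend on $n$, and restricting to a face $\partial^j\vec{\calP}$ commutes with taking underlying real bundles and with the orientation rule, which refers only to $\abs{y}$. So the construction commutes with face and degeneracy maps and defines maps of simplicial sets $\flow^{\rm U}\to\flow^{\rm SO}$ and $\flow^{\rm decU}\to\flow^{\rm decSO}$.

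Finally, the forgetful functors only discard the decoration data: the orientation of $V_x$ and \eqref{eqn:wxy2complex}, resp. \eqref{eqn:wxy2oriented}. Our construction leaves exactly this data unchanged, so the square formed by the two forgetful functors and the two base change functors commutes strictly.

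The one place where care is needed is the sign bookkeeping in item (3): the parity rule in $\abs{y}$ is what makes the comparison sign equal $(-1)^{\abs{z}}$. That check has already been done in the previous lemma, so there is no remaining obstacle.
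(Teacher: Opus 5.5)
Your proposal is correct and matches the paper's argument. The paper gives no separate proof: it declares the corollary immediate from the preceding lemma. That is precisely your construction: pass to underlying real virtual bundles oriented by the parity of $\abs{y}$, so the induced splitting carries the Koszul sign $(-1)^{\abs{z}}$ required by Definition~\ref{defin:orientedsimplex}, and leave all other data unchanged.

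One correction: the flow simplices of \cite{AB24} form a semi-simplicial set, with no degeneracy maps. Your levelwise construction therefore gives a map of semi-simplicial sets. To get a functor of $\infty$-categories, upgrade it via \cite[Theorem 1.4]{Tan18}, as the paper does for $\ind(\bbX)$ and $\calG$, using that diagonal complex-oriented bimodules are sent to diagonal oriented bimodules.
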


\subsection{de Rham theory for flow categories}

In this section, we will work with de Rham cochains (i.e., differential forms) and de Rham cohomology unless otherwise specified. We work over $\bC[\underline b] \equiv \bC[b_1,\ldots,b_n,\ldots]$, with $\deg(b_n)=2n$; this is a model for $H_*(BU; \bC)$.

\begin{rem}
Let $M$ be a smooth manifold. De Rham cochains $\Omega^*(M;\bC[\underline b])$ over $\bC[\underline b]$ are naturally bigraded via 
    \begin{equation}
    \Omega^*(M;\bC[\underline b])^{p,q}\equiv\Omega^p(M;\bC[\underline b]_{-q}),\;\;p,q\in\bbZ
    \end{equation}
and, unless otherwise specified, we use the total grading: 
    \begin{equation}
    \Omega^r(M;\bbR)\equiv\bigoplus_{p+q=r}\Omega^p(M;\bC[\underline b]_{-q}).
    \end{equation}
\end{rem}

\begin{defin}
Let $\bbX\equiv\bbX_{0\cdots n}$ be a flow $n$-simplex.
\begin{enumerate}
\item An \emph{additively coherent cochain} on $\bbX$, written $\fraka$, is a choice of differential forms 
    \begin{equation}
    a_{xy}\in\Omega^*(\bbX_{xy};\bC[\underline b]),
    \end{equation}
for any two $x$ and $y$, satisfying the coherence relation 
    \begin{equation}
    a_{xy}\vert_{\bbX_{xz}\times\bbX_{zy}}=\pi_{\bbX_{xz}}^*a_{xz}+\pi_{\bbX_{zy}}^*a_{zy},
    \end{equation}
for any three $x$, $y$ and $z$.
\item Given $\fraka$, we may define another additively coherent cochain $d\fraka$ via 
    \begin{equation}
    (da)_{xy}\equiv da_{xy}.
    \end{equation}
\item We say $\fraka$ is an \emph{additively coherent cocycle} if each $a_{xy}$ is closed.
\item We say $\fraka$ is \emph{degree} $r$ if each $a_{xy}$ is degree $r$.
\end{enumerate}
\end{defin}

\begin{defin}
Let $\bbX\equiv\bbX_{0\cdots n}$ be a flow $n$-simplex.
\begin{enumerate}
\item A \emph{multiplicatively coherent cochain} on $\bbX$, written $\frakm$, is a choice of differential forms 
    \begin{equation}
    m_{xy}\in\Omega^*(\bbX_{xy};\bC[\underline b]),
    \end{equation}
for any two $x$ and $y$, satisfying the coherence relation 
    \begin{equation}
    m_{xy}\vert_{\bbX_{xz}\times\bbX_{zy}}=\pi_{\bbX_{xz}}^*m_{xz}\wedge\pi_{\bbX_{zy}}^*m_{zy},
    \end{equation}
for any three $x$, $y$, and $z$.
\item We say $\frakm$ is a \emph{multiplicatively coherent cocycle} if each $m_{xy}$ is closed.
\end{enumerate}
\end{defin}

We may turn additively coherent cocycles into multiplicatively coherent ones.

\begin{defin}
Let $\fraka$ be an additively coherent cochain on $\bbX$ of degree 0; we define a multiplicatively coherent cochain $\exp\fraka$ on $\bbX$ via 
    \begin{equation}\label{eqn:exponentialsum}
    (\exp a)_{xy}\equiv\exp a_{xy}\equiv\sum_{n\geq0}\dfrac{1}{n!}a_{xy}^i.
    \end{equation}
We refer to $\exp\fraka$ as the \emph{exponential} of $\fraka$.
\end{defin}

Note, when defining $\exp\fraka$, since we assumed $\fraka$ was degree 0, \eqref{eqn:exponentialsum} is finite for degree reasons (except for the de Rham degree 0 part, but here it converges because it is the usual exponential map applied to a smooth function on $\bbX_{xy}$). Also, an elementary argument shows that, if $\fraka$ was a cocycle, so is $\exp\fraka$.

\subsection{Vector bundles on flow categories}
For any $r\in\bbZ_{\geq0}$, we write 
    \begin{equation}
    [r]\equiv\{1,\ldots,r\}.
    \end{equation}
    
\begin{defin}
Let $\bbX\equiv\bbX_{0\cdots n}$ be a flow $n$-simplex. \emph{Stabilization data} on $\bbX$, written $\vec{r}$, consists of the following data. 
\begin{enumerate}
\item For any two $x$ and $y$, an integer 
    \begin{equation}
    r_{xy}\in\bbZ_{\geq0}\;\;\textrm{resp.}\;\;r_{xy,k}\in\bbZ_{\geq0}.
    \end{equation}
\item For any three $x$, $y$, and $z$, an injection 
    \begin{equation}\label{eqn:stabilizationdatabreak}
    [r_{xz}]\amalg[r_{zy}]\hookrightarrow[r_{xy}]
    \end{equation}
such that the natural associativity diagram commutes.
\item For any two $x$ and $y$, an inclusion 
    \begin{equation}\label{eqn:stabilizationdatarestrict}
    [r_{xy,k}]\hookrightarrow[r_{xy}]
    \end{equation}
such that the natural associativity diagram commutes.
\item We require the natural associativity diagram intertwining \eqref{eqn:stabilizationdatabreak} and \eqref{eqn:stabilizationdatarestrict} commutes.
\end{enumerate}
\end{defin}

\begin{notation}
In the sequel, expressions of the form $\underline\bC^{r-r'}$ are shorthand for $\underline\bC^{[r]-[r']}$ with respect to the relevant injection $[r']\hookrightarrow[r]$.
\end{notation}

\begin{defin}
Let $\bbX\equiv\bbX_{0\cdots n}$ be a flow $n$-simplex. A \emph{real/complex vector bundle} on $\bbX$ with stabilization data $\vec{r}$, written $E\to\bbX$, consists of the following data.
\begin{enumerate}
\item For any two $x$ and $y$, a real/complex vector bundle
    \begin{equation}
    E_{xy}\to\bbX_{xy}\;\;\textrm{resp.}\;\;E_{xy,k}\to\partial^k\bbX_{xy}.
    \end{equation}
\item For any $x$, $z$ and $y$, an isomorphism
    \begin{equation}\label{eqn:vbbreak1}
    E_{xz}\oplus E_{zy}\oplus \underline \bC^{r_{xy}-r_{xz}-r_{zy}}\cong E_{xy} \;\;\textrm{over}\;\;\bbX_{xz}\times\bbX_{zy}
    \end{equation}
such that the natural associativity diagram commutes.
\item For any two $x$ and $y$, an isomorphism
    \begin{equation}\label{eqn:vbrestrict1}
    E_{xy,k} \oplus \underline\bC^{r_{xy}-r_{xy,k}}\cong E_{xy}\;\;\textrm{over}\;\;\partial^k\bbX_{xy}
    \end{equation}
such that the natural associativity diagram commutes.
\item We require the natural associativity diagram intertwining \eqref{eqn:vbbreak1} and \eqref{eqn:vbrestrict1} commutes.
\end{enumerate}
\end{defin}

\begin{rem}
An isomorphism of real/complex vector bundles over a flow $n$-simplex is defined in the natural way. 
\end{rem}

\begin{defin}
Let $\bbX\equiv\bbX_{0\cdots n}$ be a flow $n$-simplex. A \emph{real/complex virtual bundle} on $\bbX$, written $E\equiv(E^+,E^-)\to\bbX$, consists of the following data. 
\begin{enumerate}
\item For any two $x$ and $y$, a real/complex virtual bundle 
    \begin{equation}
    E_{xy}\equiv (E^+_{xy},E^-_{xy})\to\bbX_{xy}\;\;\textrm{resp.}\;\;E_{xy,k}\equiv(E^+_{xy,k},E^-_{xy,k})\to\partial^k\bbX_{xy}.
    \end{equation}
\item For any two $x$ and $y$, an equivalence
    \begin{equation}\label{eqn:vbbreak}
    E_{xz}\oplus E_{zy}\cong E_{xy}\;\;\textrm{over}\;\;\bbX_{xz}\times\bbX_{zy}
    \end{equation}
such that the natural associativity diagram commutes.
\item For any two $x$ and $y$, an equivalence
    \begin{equation}\label{eqn:vbrestrict}
    E_{xy,k}\cong E_{xy}\;\;\textrm{over}\;\;\partial^k\bbX_{xy}
    \end{equation}
such that the natural associativity diagram commutes.
\item We require the natural associativity diagram intertwining \eqref{eqn:vbbreak} and \eqref{eqn:vbrestrict} commutes.
\end{enumerate}
\end{defin}

\begin{rem}
An equivalence of real/complex virtual bundles over a flow $n$-simplex is defined in the natural way. 
\end{rem}

\begin{rem}
Any vector bundle $E$ with stabilization data $\vec r$ determines a virtual bundle $F$ by setting $F^+_{xy} \equiv E_{xy}$ and $F^-_{xy}\equiv \underline\bC^{r_{xy}}$.
\end{rem}

\begin{defin}\label{defin:connection}
Let $E\to\bbX$ be a vector bundle on a flow $n$-simplex with stabilization data $\vec{r}$ or a virtual bundle. A \emph{connection} on $E$ consists of the following data.
\begin{enumerate}
\item For any two $x$ and $y$, a connection $\nabla_{xy}$ resp. $\nabla_{xy,k}$ on $E_{xy}$ resp. $E_{xy,k}$.
\item For any two $x$ and $y$, an equality
    \begin{equation}\label{eqn:connectionbreak}
    \nabla_{xz}\oplus \nabla_{zy}=\varphi_{xzy}^*\nabla_{xy}
    \end{equation}
such that the natural associativity diagram commutes.
\item For any two $x$ and $y$, an equality
    \begin{equation}\label{eqn:connectionrestrict}
    \nabla_{xy,k}=\varphi_{xy,k}^*\nabla_{xy}
    \end{equation}
such that the natural associativity diagram commutes.
\item We require the natural associativity diagram intertwining \eqref{eqn:connectionbreak} and \eqref{eqn:connectionrestrict} commutes.
\end{enumerate}
In \eqref{eqn:connectionbreak} resp. \eqref{eqn:connectionrestrict} (1) in the non-virtual case, we implicitly sum the left hand side with the standard connection on the trivial bundle and (2) in the virtual case, these are equalities of connections on virtual bundles.
\end{defin}

\begin{rem}
Analogously to Definition \ref{defin:connection}, we may define a metric, or a Hermitian connection (on a complex vector bundle/virtual bundle), or a...\footnote{Moreover, a connection on a vector bundle/virtual bundle over a flow $n$-simplex always exists via induction on the dimension of $\bbX_{xy}$ since the space of connections is contractible. Analogously, we may always choose a metric, or a Hermitian connection (on a complex vector bundle/virtual bundle), or a...}
\end{rem}

\begin{defin}
Let $E$ and $E'$ be two (virtual) vector bundles over a flow $n$-simplex. A \emph{concordance} between $E$ and $E'$ is a 1-parameter family of (virtual) vector bundles interpolating between $E$ and $E'$. 
\end{defin}

\begin{lem}
Concordant vector bundles resp. virtual bundles are isomorphic resp. equivalent.
\end{lem}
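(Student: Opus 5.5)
The plan is to run the classical argument, that concordant bundles over a manifold are isomorphic by parallel transport along the interval direction, but to do it coherently over all the $\bbX_{xy}$ at once. Let $\widetilde E\to\bbX\times[0,1]$ be a concordance: a (virtual) vector bundle on the flow $n$-simplex $\bbX\times[0,1]$, whose morphism spaces are $\bbX_{xy}\times[0,1]$, restricting to $E$ at $0$ and to $E'$ at $1$. The breaking isomorphisms \eqref{eqn:vbbreak1} and \eqref{eqn:vbrestrict1} are parametrized by $[0,1]$ and are compatible with the projections to $[0,1]$. By the footnote following Definition \ref{defin:connection}, $\widetilde E$ admits a connection $\widetilde\nabla$ in the sense of Definition \ref{defin:connection}: one chooses $\widetilde\nabla_{xy}$ by induction on $\dim\bbX_{xy}$, and since the space of connections is contractible (indeed affine), the value already prescribed on the boundary strata can be extended.

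Next, define $\Phi_{xy}\colon E_{xy}\to E'_{xy}$ as parallel transport of $\widetilde\nabla_{xy}$ along the paths $t\mapsto(p,t)$, $t\in[0,1]$, for $p\in\bbX_{xy}$. Do the same for the $E_{xy,k}$. This is a smooth bundle isomorphism because it solves a linear ODE depending smoothly on the parameter $p$. The key point is compatibility with breaking. Over $\bbX_{xz}\times\bbX_{zy}\times[0,1]$, equation \eqref{eqn:connectionbreak} says that $\widetilde\nabla_{xy}$ pulls back to $\widetilde\nabla_{xz}\oplus\widetilde\nabla_{zy}\oplus d$, where $d$ is the trivial connection on $\underline\bC^{r_{xy}-r_{xz}-r_{zy}}$. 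Parallel transport along the interval direction commutes with pullback along the boundary inclusion, which preserves the $[0,1]$ coordinate, and it respects direct sums. It is also the identity on the trivial summand. Hence $\Phi_{xy}$ restricts to $\Phi_{xz}\oplus\Phi_{zy}\oplus\mathrm{id}$, and the same reasoning using \eqref{eqn:connectionrestrict} handles the faces $\partial^k$. The associativity diagrams for the isomorphisms $\Phi$ then commute automatically, since those diagrams already commute for $\widetilde E$ and each $\Phi$ is determined by the connection.

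For virtual bundles, apply the construction separately to $E^\pm$, using a connection on the virtual bundle as in Definition \ref{defin:connection2} extended coherently. This gives isomorphisms $E^\pm\cong E'^\pm$ that are compatible with the equivalences \eqref{eqn:vbbreak} and \eqref{eqn:vbrestrict}, and these assemble into an equivalence.

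I expect the main obstacle to be the inductive existence of a connection compatible with all breakings and faces simultaneously, because the prescribed boundary values must agree on corners. My plan there is to use the model structure $\vec{\calP}(x,y)$: the boundary data form a compatible family over the stratification, because of the associativity conditions. Glue them on a collar of $\partial\bbX_{xy}$, using collars compatible with the corner structure, and extend to the interior with a partition of unity. This works because connections form an affine space.
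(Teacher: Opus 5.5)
Your proposal is correct and is the same argument as the paper's, whose proof is the one line ``choose a connection on the concordance and parallel transport in the parameter direction.'' Your extra work — choosing the connection coherently and checking via \eqref{eqn:connectionbreak} and \eqref{eqn:connectionrestrict} that parallel transport intertwines the breaking and face isomorphisms at the two ends — supplies details the paper leaves implicit.
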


\begin{proof}
Choose a connection on the concordance and parallel transport in the parameter direction.
\end{proof}

\subsection{Chern characters}
\subsubsection{The ordinary Chern character}
We provide a brief recap of the Chern-Weil construction of characteristic classes, cf. \cite[Appendix C]{MS74} for a more comprehensive account of the subject.

Let $(E,\nabla)\to M$ be a complex vector bundle over a smooth manifold equipped with a Hermitian connection; we denote by $F^\nabla\in\Omega^2\big(M; \mathrm{End}(E)\big)$ its curvature.

    \begin{defin}
    The \emph{(cochain-level) Chern character} associated to $\nabla$ is
        \begin{equation}
            \ch(E; \nabla) \equiv \mathrm{Tr}\Bigg(\exp\Big(\frac{-1}{2\pi i}F^\nabla\Big)\Bigg) \in \Omega^*(M; \bC).
        \end{equation}
    We normally drop $\nabla$ from the notation.
    \end{defin}

    \begin{rem}
    For a complex virtual bundle $E\equiv(E^+,E^-)\to M$ equipped with a Hermitian connection $\nabla\equiv(\nabla^+,\nabla^-)$, we define 
        \begin{equation}
        \ch(E; \nabla) \equiv \ch(E^+;\nabla^+)-\ch(E^-;\nabla^-)\in\Omega^*(M; \bC).
        \end{equation}
    \end{rem}
    
    Observe, the Chern character has the following properties.
    \begin{enumerate}
        \item $\ch(E)$ is a closed differential form whose cohomology class represents the ordinary Chern character:
        \begin{equation}
            \mathrm{Rank}(E) + c_1(E) + \frac12\big(c_1(E)^2-2c_2(E)\big) + 
            \cdots.
        \end{equation}
        \item For a smooth map $f: M' \to M$, we have:
        \begin{equation}
            \ch(f^*E) = f^*\ch(E).
        \end{equation}
        \item If $(E',\nabla')\to M$ is another complex vector bundle over $M$ equipped with a Hermitian connection, then:
        \begin{align}
        \ch(E\oplus E') &= \ch(E) + \ch(E'), \\
        \ch(E \otimes_\bbC E')&=\ch(E)\wedge\ch(E').
        \end{align}
    \end{enumerate}
    These properties also hold for a complex virtual bundle as well.

\subsubsection{The polynomial-valued Chern character}
Now, let $E\to M$ be a complex vector bundle/virtual bundle over a smooth manifold $M$. We denote by $\ch(E;\bC[\underline b])\in\Omega^0(M;\bC[\underline b])$ the \emph{polynomial-valued Chern character}: 
    \begin{equation}
    \ch(E;\bC[\underline b])\equiv\sum_{\rho\geq0}\ch_\rho(E)b_\rho,
    \end{equation}
where $\ch_\rho(E)$ is defined to be the $2\rho$-th graded piece of the usual Chern character: 
    \begin{equation}
    \ch(E)\equiv\sum_{\rho\geq0}\ch_\rho(E)\in\Omega^{2*}(M;\bbC).
    \end{equation}
We would like to emphasize that we are viewing $\ch(E; \bC[\underline b])$ as a cocycle as opposed to just a cohomology class.

Observe,
    \begin{equation}
    \ch(E\oplus E';\bC[\underline b])=\ch(E;\bC[\underline b])+\ch(E';\bC[\underline b]).
    \end{equation}
In particular, the ingredients in the present section can be cooked into a stew of examples of additively coherent cocycles on flow $n$-simplices. 

\begin{example}
Let $(E,\nabla)\to\bbX$ be a complex vector bundle/virtual bundle, equipped with a Hermitian connection, over a flow $n$-simplex. For any two $x$ and $y$, we define an additively coherent cocycle $\frakc\frakh(E;\bC[\underline b])$ via
    \begin{align}
    \ch(E;{\bC[\underline b]})_{xy}\equiv\ch(E_{xy};\bC[\underline b])\in\Omega^0(\bbX_{xy};\bC[\underline b])&\;\;\textrm{resp.} \\
    \ch(E;{\bC[\underline b]})_{xy,k}\equiv\ch(E_{xy,k};\bC[\underline b])\in\Omega^0(\bbX_{xy,k};\bC[\underline b])&,
    \end{align}
where $\ch(E_{xy};\bC[\underline b])$ resp. $\ch(E_{xy};\bC[\underline b])$ is computed with respect to the Hermitian connection $\nabla_{xy}$ resp. $\nabla_{xy,k}$.    
\end{example}

\section{Universal curves on flow simplices}
\subsection{Universal curves}
\begin{defin}\label{defin:abstractcurve}
An \emph{abstract curve} consists of the following data:
\begin{enumerate}
\item a genus 0 Riemann surface $\Sigma$ with two non-compact ends, where one is labeled \emph{positive} and the other is labeled \emph{negative}; 
\item and holomorphic embeddings $\epsilon^\pm:\bbR_\pm\times S^1\to\Sigma$, with disjoint images, onto neighborhoods of the positive and negative ends, respectively. We refer to these as \emph{tube-like ends} (or \emph{cylindrical ends}).
\end{enumerate}
Moreover, we require that there exists a biholomorphism $\Sigma\cong\bbR\times S^1$ such that both $\epsilon^\pm$ are given by translation in the $\bbR$-direction.
\end{defin}

We may also consider the parametric version.

\begin{defin}
Let $M$ be a smooth manifold. A \emph{(smooth) family of abstract curves} on $M$ consists of a smoothly varying family $\scrC$ of abstract curves parameterized by $M$. Note, such data includes a smooth $\bbR\times S^1$-bundle over $M$; therefore, by an abuse of notation, we simply denote all the aforementioned data by $\pi:\scrC\to M$.
\end{defin}

\begin{construction}
Let $\Sigma_1$ and $\Sigma_2$ be abstract curves and consider $\tau\in\bbR_{>0}$. We define an abstract curve 
    \begin{equation}
    \Sigma_1\#_\tau\Sigma_2
    \end{equation}
by gluing together 
    \begin{equation}
    \Sigma_1-\epsilon^{+,\circ}_1,\;\;[-1/\tau,1/\tau]\times S^1,\;\;\textrm{and}\;\;\Sigma_2-\epsilon^{-,\circ}_2
    \end{equation}
along the appropriate boundaries (here, $\epsilon^{\pm,\circ}_i$ denotes the interior of the image of $\epsilon^\pm_i$). Observe, this construction is associative: 
    \begin{equation}
    (\Sigma_1\#_\tau\Sigma_2)\#_\sigma\Sigma_3=\Sigma_1\#_\tau(\Sigma_2\#_\sigma\Sigma_3).
    \end{equation}
Moreover, this construction works in families: given $\scrC$ and $\scrC'$ over $M$, we obtain a family of abstract curves 
    \begin{equation}
    \scrC\#_\tau\scrC'\to M\times(0,+\infty)
    \end{equation}
with fiber $\scrC_x\#_\tau\scrC_x'$ over $(x,\tau)$; again, this construction is associative.
\end{construction}

Let $\bbX\equiv\bbX_{0\cdots n}$ be a flow $n$-simplex. Recall, we may choose a system of collars on $\bbX$:
    \begin{align}
    \big\{\varphi_{xzy}:\bbX_{xz}\times\bbX_{zy}\times[0,\epsilon)\to\bbX_{xy}\big\}\amalg\big\{\varphi_{xy,k}:\partial^k\bbX_{xy}\times[0,\epsilon)\to\bbX_{xy}\big\},
    \end{align}
where the first index set is over all $x\in\calP_j$, $y\in\calP_{j'}$, and $z\in\calP_k$ satisfying $0\leq j\leq k\leq j'\leq n$ and the second index set is over all $x\in\calP_j$, $y\in\calP_{j'}$, and $k$ satisfying $0\leq j<k<j'\leq n$; we denote by $\bbX^\circ_{xy}\equiv\bbX_{xy}-\partial\bbX_{xy}$ the interior of $\bbX_{xy}$.

\begin{defin}
A \emph{universal curve} over a flow $n$-simplex $\bbX$, written $\pi:\scrC\to\bbX$, consists of the following data.
\begin{enumerate}
\item For any two $x$ and $y$, a smooth family of abstract curves $\pi_{xy}:\scrC_{xy}\to\bbX^\circ_{xy}$ resp. $\pi_{xy,k}:\scrC_{xy,k}\to\big(\partial^k\bbX_{xy}\big)^\circ$.
\item For any three $x$, $y$, and $z$, an isomorphism of abstract curves over $\bbX^\circ_{xz}\times\bbX^\circ_{zy}\times(0,\epsilon)_\tau$:
    \begin{equation}\label{eqn:assocuniversalcurve1}
    \scrC_{xz}\#_{1/\tau}\scrC_{zy}\xrightarrow{\sim}\varphi_{xzy}^*\scrC_{xy}.
    \end{equation}   
We require the natural associativity condition associated to four elements commutes.
\item For any two $x$ and $y$, an isomorphism of abstract curves over $\big(\partial^k\bbX_{xy}\big)^\circ\times(0,\epsilon)$:
    \begin{equation}\label{eqn:assocuniversalcurve2}
    \varphi_{xy,k}^*\scrC_{xy,k}\xrightarrow{\sim}\scrC_{xy}.
    \end{equation}
We require the natural associativity condition associated to two comparable strata commutes.
\item We require the natural associativity condition intertwining \eqref{eqn:assocuniversalcurve1} and \eqref{eqn:assocuniversalcurve2} commutes.
\end{enumerate}
\end{defin}

\begin{rem}
It is tempting to glue all the strata together into a single object mapping to each $\bbX_{xy}$ -- this would perhaps be more deserving of the name ``universal curve''. Though this can provide useful intuition, it seems more convenient to instead work with the above data directly.
\end{rem}

\begin{defin}
Let $\pi:\scrC\to\bbX\equiv\bbX_{0\cdots n}$ be a universal curve over a flow $n$-simplex and $Y$ some space. An \emph{evaluation map} to $Y$ consists of the following data.
\begin{enumerate}
\item For any $x$, a map $f_x:S^1\to Y$.
\item For any two $x$ and $y$, a map $f_{xy}:\scrC_{xy}\to Y$ resp. $f_{xy,k}:\scrC_{xy,k}\to Y$.
\end{enumerate}
We require this data to satisfy the following conditions. 
\begin{enumerate}
\item Over the tube-like ends, $f_{xy}$ resp. $f_{xy,k}$ is independent of the $\bbR_-$-coordinate resp. $\bbR_+$-coordinate and equals $f_x$ resp. $f_y$.
\item For any three $x$, $y$, and $z$, the following diagram commutes: 
    \begin{equation}\label{eqn:universalcurvebreaking}
    \begin{tikzcd}
    \scrC_{xz}\#_{1/\tau}\scrC_{zy}\arrow[r]\arrow[dr,"f_{xz}\#_{1/\tau} f_{zy}",swap] & \scrC_{xy}\arrow[d,"f_{xy}"] \\
    & Y.
    \end{tikzcd}
    \end{equation}
\item For any two $x$ and $y$, the following diagram commutes:
    \begin{equation}\label{eqn:universalcurverestricting}
    \begin{tikzcd}
    \scrC_{xy,k}\arrow[r]\arrow[dr,"f_{xy,k}",swap] & \scrC_{xy}\arrow[d,"f_{xy}"] \\
    & Y.
    \end{tikzcd}
    \end{equation}
\item The natural associativity diagram intertwining \eqref{eqn:universalcurvebreaking} and \eqref{eqn:universalcurverestricting} commutes.
\end{enumerate}
\end{defin}

\begin{rem}
This is similar to the notion of a ``flow category living over the free loop space of $Y$'' defined in \cite[Section 4.9]{PS25b}.
\end{rem}

\subsection{Vector bundles}
\begin{defin}
Let $\bbX\equiv\bbX_{0\cdots n}$ be a flow $n$-simplex. \emph{Embedding data} on $\bbX$, written $\vec{\nu}$, consists of the following data. 
\begin{enumerate}
\item For any two $x$ and $y$, an integer 
    \begin{equation}
    \nu_{xy}\in\bbZ_{\geq0}\;\;\textrm{resp.}\;\;\nu_{xy,k}\in\bbZ_{\geq0}.
    \end{equation}
\item For any three $x$, $y$, and $z$, an injection 
    \begin{equation}\label{eqn:embeddingdatabreak}
    [\nu_{xz}],[\nu_{zy}]\hookrightarrow[\nu_{xy}]
    \end{equation}
such that the natural associativity diagram commutes.
\item For any two $x$ and $y$, an inclusion 
    \begin{equation}\label{eqn:embeddingdatarestrict}
    [\nu_{xy,k}]\hookrightarrow[\nu_{xy}]
    \end{equation}
such that then natural associativity diagram commutes.
\item We require the natural associativity diagram intertwining \eqref{eqn:embeddingdatabreak} and \eqref{eqn:embeddingdatarestrict} commutes.
\end{enumerate}
\end{defin}

\begin{defin}
Let $\pi:\scrC\to\bbX\equiv\bbX_{0\cdots n}$ be a universal curve over a flow $n$-simplex. A \emph{real/complex vector bundle (of rank $\vec{\nu}$, where $\vec{\nu}$ is embedding data on $\bbX$)} on $\scrC$, written $\scrE\to\scrC$, consists of the following data. 
\begin{enumerate}
\item For any $x$ and $y$, a real/complex vector bundle $\scrE_{xy}\to\scrC_{xy}$ resp. $\scrE_{xy,k}\to\scrC_{xy,k}$ such that we have a fixed trivialization $\scrE_{xy}\cong\underline{\bbC}^{\nu_{xy}}$ resp. $\scrE_{xy,k}\cong\underline{\bbC}^{\nu_{xy,k}}$ over the tube-like ends.\footnote{Note, in both the real and complex case, we require the trivialization over the tube-like ends to be to $\underline{\bbC}^{\nu_{xy}}$ resp. $\underline{\bbC}^{\nu_{xy,k}}$.}
\item For any three $x$, $y$, and $z$, an isomorphism 
    \begin{equation}\label{eqn:vectorbundlebreaking}
    (\scrE_{xz}\oplus\underline{\bbC}^{\nu_{xy}-\nu_{xz}})\#_{1/\tau}(\scrE_{zy}\oplus\underline{\bbC}^{\nu_{xy}-\nu_{zy}})\cong\scrE_{xy}
    \end{equation}
covering the abstract gluing map, such that the natural associativity diagram commutes.
\item For any $x$ and $y$, an isomorphism 
    \begin{equation}\label{eqn:vectorbundlerestricting}
    \scrE_{xy,k} \oplus\underline\bC^{\nu_{xy}-\nu_{xy,k}}\cong\scrE_{xy}
    \end{equation}
covering the abstract restricting map, such that the natural associativity diagram commutes.
\item The natural associativity diagram intertwining \eqref{eqn:vectorbundlebreaking} and \eqref{eqn:vectorbundlerestricting} commutes.
\end{enumerate}
\end{defin}

The following two examples are of importance to us.

\begin{example}
Let $f:\scrC\to Y$ be an evaluation map and $E\to Y$ a real/complex vector bundle equipped with a trivialization over each $f_x$: 
    \begin{equation}
    f_x^*E\cong\underline{\bbC}^\nu;
    \end{equation}
there is a real/complex vector bundle $\scrE\equiv f^*E\to\scrC$ given by 
    \begin{equation}
    \scrE_{xy}\equiv f_{xy}^*E\to\scrC_{xy}\;\;\textrm{resp.}\;\;\scrE_{xy,k}\equiv f_{xy,k}^*E\to\scrC_{xy,k}.
    \end{equation}
\end{example}

\begin{example}
The vertical (tangent) bundles of the smooth families of abstract curves defines a vector bundle $\scrE\equiv V\scrC\to\scrC$; explicitly, $\scrE$ is given by
    \begin{equation}
    \scrE_{xy}\equiv V\scrC_{xy}\to\scrC_{xy}\;\;\textrm{resp.}\;\;\scrE_{xy,k}\equiv V\scrC_{xy,k}\to\scrC_{xy,k},
    \end{equation}
where $V\scrC_{xy}$ resp. $V\scrC_{xy,k}$ is the vertical bundle of $\scrC_{xy}\to\bbX_{xy}^\circ$ resp. $\scrC_{xy,k}\to\big(\partial^k\bbX_{xy})^\circ$.
\end{example}

\subsection{de Rham theory}
Unlike for flow $n$-simplices, universal curves will only have one kind of coherent cocycle.

\begin{defin}
Let $\pi:\scrC\to\bbX\equiv\bbX_{0\cdots n}$ be a universal curve over a flow $n$-simplex. A \emph{coherent cochain} on $\scrC$, written $\frakb$, is a choice of differential forms 
    \begin{equation}
    b_{xy}\in\Omega^*(\scrC_{xy};\bC[\underline b])\;\;\textrm{resp.}\;\;b_{xy,k}\in\Omega^*(\scrC_{xy,k};\bC[\underline b]),
    \end{equation}
for any two $x$ and $y$, which vanish over the tube-like ends and satisfy the coherence relations
    \begin{equation}
    b_{xy}\vert_{\scrC_{xz}\#_{1/\tau}\scrC_{zy}}=b_{xz}+b_{zy}\;\;\textrm{and}\;\;b_{xy}\vert_{\scrC_{xy,k}}=b_{xy,k}.\footnote{Note, we may add together the right hand side of the first equality since everything is supported away from the tube-like ends.}
    \end{equation}
We say $\frakb$ is a \emph{coherent cocycle} if each $b_{xy}$ resp. $b_{xy,k}$ is closed.
\end{defin}

\begin{rem}
For a vector bundle $\scrE\to\scrC$, we may define $\scrE$-valued coherent cochains/cocycles in a completely analogous fashion.
\end{rem}

\begin{example}
Let $f:\scrC\to Y$ be an evaluation map to a smooth manifold $Y$ and $b\in\Omega^*(Y;\bC[\underline b])$; there is a coherent cocycle $\frakb\equiv f^*b$ given by 
    \begin{equation}
    b_{xy}\equiv f_{xy}^*b\in\Omega^*(\scrC_{xy};\bC[\underline b])\;\;\textrm{resp.}\;\;b_{xy,k}\equiv f_{xy,k}^*b\in\Omega^*(\scrC_{xy,k};\bC[\underline b]).
    \end{equation}
\end{example}

The previous example shows we may  
``pull''; fortunately, we may also ``push''.

\begin{lem}\label{lem:push}
Let $\pi:\scrC\to\bbX\equiv\bbX_{0\cdots n}$ be a universal curve over a decorated oriented flow $n$-simplex and $\frakb$ a coherent cocycle on $\scrC$. We have that 
    \begin{align}\label{eqn:pushforward}
    (\pi_{xy})_!b_{xy}\equiv\int_{\scrC_{xy}/\bbX_{xy}^\circ}b_{xy}\in\Omega^*(\bbX_{xy}^\circ;\bC[\underline b])&\;\;\textrm{resp.} \\
    (\pi_{xy,k})_!b_{xy,k}\equiv\int_{\scrC_{xy,k}/(\partial^k\bbX_{xy})^\circ}b_{xy,k}\in\Omega^*\Big(\big(\partial^k\bbX_{xy}\big)^\circ;\bC[\underline b]\Big)&
    \end{align}
uniquely extends to a cocycle on $\bbX_{xy}$. In fact, $\fraka\equiv \pi_!\frakb$ given by the previous equation is an additively coherent cocycle on $\bbX$. Its degree is two less than that of $\fraka$.
\end{lem}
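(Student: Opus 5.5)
The plan is to reduce everything to a statement about fiber integration along $\bbR\times S^1$-bundles with compactly supported integrands, and then to analyse the collar regions separately. The fiber $\scrC_{xy}\to\bbX^\circ_{xy}$ is non-compact, but $b_{xy}$ vanishes on the tube-like ends. Locally over $\bbX^\circ_{xy}$ the support of $b_{xy}$ is therefore fiberwise compact, so $(\pi_{xy})_!b_{xy}$ is a well-defined smooth form on the interior. On the interior I will use two standard facts about integration along the fibers of an oriented bundle with fiberwise compactly supported forms.
\begin{itemize}
\item It commutes with $d$ up to a sign, because the fiber $\bbR\times S^1$ has no boundary and the support avoids the ends.
\item It lowers degree by $\dim_\bbR(\bbR\times S^1)=2$.
\end{itemize}
Hence $(\pi_{xy})_!b_{xy}$ is closed of the expected degree, and the same holds for the restricted families $\scrC_{xy,k}$. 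The fibers carry their complex orientation, so no choice is needed there. I will use the decorated orientation on $\bbX_{xy}$ only to fix sign conventions.

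The main work is extension to the boundary, and I will do it inside a collar. Pull back along $\varphi_{xzy}$ and use the gluing isomorphism \eqref{eqn:assocuniversalcurve1}. Over a point $(p,q,\tau)\in\bbX^\circ_{xz}\times\bbX^\circ_{zy}\times(0,\epsilon)$, the fiber is $\scrC_{xz,p}\#_{1/\tau}\scrC_{zy,q}$. By the coherence relation, $b_{xy}$ restricts on it to $b_{xz}+b_{zy}$. Here $b_{xz}$ is supported in $\scrC_{xz,p}$ minus its positive end, $b_{zy}$ is supported in $\scrC_{zy,q}$ minus its negative end, and the neck $[-\tau,\tau]\times S^1$ carries zero. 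The fiber integral therefore splits as
\begin{equation*}
\varphi_{xzy}^*(\pi_{xy})_!b_{xy}=\pi_{\bbX_{xz}}^*(\pi_{xz})_!b_{xz}+\pi_{\bbX_{zy}}^*(\pi_{zy})_!b_{zy}.
\end{equation*}
This expression is independent of $\tau$ and has no $d\tau$ component, so it extends smoothly and uniquely to $\tau=0$, where it equals the additive coherence formula. Near codimension $\geq2$ corners of $\bbX^\circ_{xy}$ I will iterate this argument, using the associativity axioms for the collars and for the gluing maps. For the faces $\partial^k\bbX_{xy}$ I will use \eqref{eqn:assocuniversalcurve2} in the same way. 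There the family is pulled back from the face, and the relation $b_{xy}|_{\scrC_{xy,k}}=b_{xy,k}$ gives $\varphi_{xy,k}^*(\pi_{xy})_!b_{xy}=\pi^*(\pi_{xy,k})_!b_{xy,k}$, which is again constant in the collar coordinate.

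Uniqueness of the extension is automatic, because $\bbX^\circ_{xy}$ is dense in $\bbX_{xy}$. Closedness of the extension follows by continuity.

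The main obstacle is the bookkeeping at higher corners: the formula has to extend smoothly, not only continuously, at a corner where several collars overlap. I would handle it by induction on codimension. On a neighbourhood of a codimension-$m$ stratum, the iterated gluing identifies the family with an $m$-fold glued family. The integral then splits as a sum of pullbacks of integrals over the pieces, and each of these is smooth on the closed factors by the inductive hypothesis. A smaller issue is whether the collars chosen for the universal curve are compatible with the collars on $\bbX$. I will assume this, since the definition of universal curve is phrased using the fixed system $\varphi$.
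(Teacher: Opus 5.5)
Your proposal is correct and takes the same route as the paper. Fiber integration is well defined because $b_{xy}$ vanishes on the tube-like ends, uniqueness follows from density of $\bbX^\circ_{xy}$, and existence comes from taking the extension on each (iterated) collar to be the sum of the pushforwards of the pieces, or the pushforward over the face in the $\partial^k$ case. You add detail the paper only asserts with ``clearly, these patch together'': you derive the collar formula from the coherence relation and the gluing isomorphism, obtain closedness by commuting $d$ with fiber integration, and check smoothness at corners by induction on codimension.
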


\begin{proof}
First, we observe that integration over the fiber makes sense even though $\scrC_{xy}\to\bbX_{xy}^\circ$ has non-compact fibers since $b_{xy}$ is compactly supported. Moreover, uniqueness follows by denseness of $\bbX_{xy}^\circ$ in $\bbX_{xy}$. Finally, the existence of an extension follows by defining the extension over the image of any collar,
    \begin{equation}
    \bbX_{xz_1}^\circ\times\cdots\times\bbX_{z_\mu y}\times[0,\epsilon)^\mu\hookrightarrow\bbX_{xy}\;\;\textrm{resp.}\;\;\big(\partial^{k_\mu}\cdots\partial^{k_1}\bbX_{xy}\big)^\circ\times[0,\epsilon)^\mu\hookrightarrow\bbX_{xy},
    \end{equation}
to simply be 
    \begin{equation}
    (\pi_{xz_1})_!b_{xz_1}+\cdots+(\pi_{z_\mu y})_!b_{z_\mu y}\;\;\textrm{resp.}\;\;(\pi_{xy,k})_!b_{xy,k};
    \end{equation}
clearly, these patch together to give an additively coherent cocycle $\fraka\equiv\pi_!\frakb$.
\end{proof}

\subsection{(Floer-)Cauchy-Riemann operators}
\begin{defin}
Let $\Sigma\cong\bbR\times S^1$ be an abstract curve.
\begin{enumerate}
\item\emph{Puncture datum (of rank $r$)} consists of a translation-invariant $(0,1)$-form $e\in\Omega^{0,1}\big(\bbR\times S^1,\operatorname{End}_\bbC(\bbC^r)\big)$ satisfying the following non-degeneracy condition: 
    \begin{equation}
    \overline{\partial}+e:\Gamma(\bbR\times S^1,\underline\bbC^r)\to\Omega^{0,1}\big(\bbR\times S^1,\operatorname{End}_\bbC(\underline\bbC^r)\big)
    \end{equation}
is an isomorphism.
\item Associated to a puncture datum $e$ is its \emph{Conley-Zehnder index}, denoted ${\rm CZ}(e)$. Moreover, we may define its \emph{degree}, denoted $\deg(e)\equiv r-{\rm CZ}(e)$.\footnote{Note, any two puncture datums of the same rank and degree can be connected through a path of puncture datums of the same rank and degree.}
\item The \emph{reflection} of $e$, written $\overline{e}$, is the pullback of $e$ along the complex-conjugation involution of $S^1$.
\end{enumerate}
\end{defin}

\begin{defin}\label{defin:floercauchyriemann}
Let $\Sigma$ be an abstract curve and $(e,e')$ a pair of puncture datums. A \emph{Floer-Cauchy-Riemann} operator on $\Sigma$ with asymptotes $(e,e')$ consists of the following data.
\begin{enumerate}
\item A complex vector bundle $E\to\Sigma$ which is trivialized over the tube-like ends.
\item A Riemannian metric $g$ on $\Sigma$ and a Hermitian metric $h$ on $E$ which are both trivial over the tube-like ends. 
\item A Hermitian connection $\nabla$ on $E$ which agrees with the standard connection $d$ over the tube-like ends with respect to the given trivializations of $E$ over the tube-like ends. We write $\overline{\partial}_E$ for the $(0,1)$-part of $\nabla$. 
\item A $(0,1)$-form $Y\in\Omega^{0,1}(\Sigma,E)$ which agrees with $e$ resp. $e'$ over the negative resp. positive tube-like end.
\end{enumerate}
From this data, we may define a real (or complex if $Y$ is trivial) Fredholm operator
    \begin{equation}
    D\equiv\overline{\partial}_E+Y:W^{\mu,2}(\Sigma;E)\to W^{\mu-1,2}\big(\Sigma;\Lambda^{0,1}_\Sigma\otimes_\bbC E\big),\;\;\mu\geq2
    \end{equation}
of index $\deg(e)-\deg(e')$.
\end{defin}
\begin{rem}
We refer to a Floer-Cauchy-Riemann operator such that $e$, $e'$, and $Y$ are all trivial simply as a \emph{Cauchy-Riemann} operator.
\end{rem}
Once we fix $E\to\Sigma$ and the puncture datum, every other choice lives in a convex space; hence, we have the following result.

\begin{lem}\label{lem:contractiblefcr}
The space of (Floer-)Cauchy-Riemann operators on $E$ with fixed puncture datum is contractible.
\end{lem}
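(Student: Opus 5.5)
The plan is to show that the space of Floer-Cauchy-Riemann operators on a fixed bundle $E\to\Sigma$, with fixed puncture data $(e,e')$, is a nonempty convex subset of an affine space. Any nonempty convex set is contractible via straight-line homotopy to a chosen point. By Definition \ref{defin:floercauchyriemann}, once $E$, its trivializations over the tube-like ends, and $(e,e')$ are fixed, an operator is determined by four pieces of data: a Riemannian metric $g$ on $\Sigma$, a Hermitian metric $h$ on $E$, a Hermitian connection $\nabla$, and a $(0,1)$-form $Y$. The metric $g$ should be compatible with the complex structure on $\Sigma$, since $\Lambda^{0,1}_\Sigma$ only depends on the conformal class; $\nabla$ is required to agree with $d$ on the ends and $Y$ with $e,e'$ there. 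I would treat the space as the set of tuples $(g,h,\nabla,Y)$ satisfying these conditions, each component topologized in the $C^\infty_{\rm loc}$ sense.

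First, nonemptiness. Choose $g$ and $h$ to be the standard flat metrics on the ends and patch them with arbitrary metrics on the compact complement using a partition of unity; positivity is preserved under convex combinations. Do the same for a connection. Finally take $Y=\beta_-e+\beta_+e'$, where $\beta_\pm$ are cutoff functions supported in the ends.

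Second, convexity of each factor.
\begin{enumerate}
\item Metrics agreeing with the fixed one on the ends form a convex cone-type set, closed under convex combinations.
\item The $(0,1)$-forms $Y$ with prescribed values on the ends form an affine space modelled on compactly supported sections of $\Lambda^{0,1}\otimes\operatorname{End}(E)$.
\end{enumerate}

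The only subtle point is the coupling between $\nabla$ and $h$: Hermitian connections depend on $h$, so the set of pairs $(h,\nabla)$ is not literally convex. I would handle this by fibering. The space of $h$ is convex, hence contractible. Over each $h$, the space of $h$-Hermitian connections equal to $d$ on the ends is an affine space modelled on compactly supported $\mathfrak{u}(E,h)$-valued 1-forms, and these affine fibres vary continuously with $h$. Alternatively, one can use Gram--Schmidt to identify all $(E,h)$ with $(E,h_0)$ by a canonical automorphism (the positive square root of $h_0^{-1}h$), which is the identity on the ends; this trivializes the fibration. Either way the total space is a product of a contractible base with an affine fibre, hence contractible. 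Multiplying the factors, the full space is contractible.

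The main obstacle, such as it is, is precisely this $h$-dependence of the Hermitian condition; the square-root trivialization resolves it. Fredholmness, with index $\deg(e)-\deg(e')$, is not needed for contractibility; it holds for every element of the space because it depends only on the asymptotic data.
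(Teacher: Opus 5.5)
Your argument is correct and is essentially the paper's, whose entire justification is the remark that once $E\to\Sigma$ and the puncture datum are fixed, every remaining choice lives in a convex space, so the space is contractible. Your extra step for the $h$-dependence of the Hermitian condition fills in a detail that remark glosses over: the pairs $(h,\nabla)$ do not literally form a convex set, and your square-root isometry $(h_0^{-1}h)^{1/2}$, which is the identity on the ends, trivializes the fibration over the convex space of metrics.
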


\begin{defin}
A \emph{stabilization (of rank $r$)} of a Floer-Cauchy-Riemann operator $D$ is a real-linear map 
    \begin{equation}
    f:\bbC^r\to W^{\mu-1,2}\big(\Sigma;\Lambda^{0,1}_\Sigma\otimes_\bbC E\big),
    \end{equation}
whose image is contained in $W^{\mu-1,2}$-sections supported away from the tube-like ends, such that $D+f$ is surjective. For a Cauchy-Riemann operator, we require such a stabilization is complex-linear.
\end{defin}

\begin{defin}
Let $M$ be a smooth manifold, $\pi:\scrC\to M$ a family of abstract curves, $E\to\scrC$ a complex vector bundle which is trivialized over the tube-like ends, and $(e,e')$ a pair of puncture datums. A \emph{(smooth) family of (Floer-)Cauchy-Riemann operators} on $\scrC$, with respect to $E$ and asymptotes $(e,e')$, consists of a smoothly-varying family of tuples,
    \begin{equation}
    \big\{(E_p\equiv E\lvert_{\pi^{-1}(p)},g_p,h_p,\nabla_p,Y_p)\big\}_{p\in M},
    \end{equation}
as in Definition \ref{defin:floercauchyriemann} together with a Hermitian connection $\nabla$ on $E$ whose restriction to $E_p$ is $\nabla_p$.
\end{defin}

The space of stabilizations for a fixed (Floer-)Cauchy-Riemann operator is not contractible. However, the following result will allow us to make inductive choices; this is the appropriate analogue of Lemma \ref{lem:contractiblefcr} for stabilizations.

\begin{lem}
Let $D$ be a family of (Floer-)Cauchy-Riemann operator on a complex vector bundle on a family of abstract curves parameterized by the disk $D^{\ell+1}$. For any (smooth) family $\{f_\phi\}_{\phi\in S^\ell}$ of stabilizations (of rank $r$) of $D\vert_{S^\ell}$, there is $r'\geq r$ and a family of stabilizations (of rank $r'$) $\{g_\phi\}_{\phi\in D^{\ell+1}}$ of $D$ such that 
    \begin{equation}
    g_\phi=f_\phi\oplus0,\;\;\forall\phi\in S^\ell.
    \end{equation}
\end{lem}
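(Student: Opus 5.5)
We prove the lemma by extending the stabilization over the disk, adding new directions to kill the cokernel, and gluing with a cutoff. The key inputs are two standard facts: surjectivity is an open condition on the family of operators, and the cokernel of a Fredholm operator can be spanned by compactly supported sections.

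\emph{Step 1: extend the given stabilization over the disk.} Write $D^{\ell+1}$ as the unit disk and use radial coordinates $\phi=t\theta$, with $t\in[0,1]$ and $\theta\in S^\ell$. Trivialize the Banach bundles $W^{\mu,2}$ and $W^{\mu-1,2}$ over the contractible base $D^{\ell+1}$, which is possible because the family of abstract curves and $E$ over $D^{\ell+1}$ are smoothly trivial. Then extend $f$ to $\tilde f_{t\theta}\equiv\beta(t)f_\theta$, where $\beta$ is a cutoff equal to $1$ near $t=1$ and vanishing for $t\le 1/2$. Surjectivity of $D_\phi+f_\phi$ is an open condition, and $S^\ell$ is compact. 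Hence there is a collar $\{t\geq 1-\delta\}$ on which $D_\phi+\tilde f_\phi$ remains surjective, provided $\beta\equiv1$ there.

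\emph{Step 2: kill the cokernel on the rest of the disk.} The region $K\equiv\{t\le 1-\delta/2\}$ is compact. For each $\phi_0\in K$, the operator $D_{\phi_0}+\tilde f_{\phi_0}$ is Fredholm, so its cokernel is finite dimensional. Compactly supported smooth sections are dense in $W^{\mu-1,2}$, so this cokernel is spanned by finitely many sections supported away from the tube-like ends. By openness, these sections continue to span the cokernel for all $\phi$ in a neighborhood of $\phi_0$. Cover $K$ by finitely many such neighborhoods and take all the chosen sections together, say $s_1,\ldots,s_N$. Set $r'\equiv r+N$, rounding up if needed so that the added directions form a complex vector space. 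Define
\[
g_\phi\equiv \tilde f_\phi\oplus \chi(t)\,(s_1,\ldots,s_N),
\]
where $\chi$ is a cutoff equal to $1$ on $K$ and supported in $\{t<1\}$.

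\emph{Step 3: verify the properties.} On $K$, the operator $D_\phi+g_\phi$ is surjective by the choice of the $s_i$. On the collar, $D_\phi+\tilde f_\phi$ is already surjective, and adding extra domain directions preserves surjectivity. At $t=1$ we have $\chi=0$ and $\beta=1$, so $g_\theta=f_\theta\oplus0$, as required. Smoothness of $g$ is clear from the construction.

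\emph{Step 4: the Cauchy--Riemann case.} Here $g$ must be complex-linear. Since $D$ is then complex-linear, its cokernel is a complex subspace. We therefore choose the $s_i$ as complex spanning sets and extend them $\bbC$-linearly; the cutoffs $\beta,\chi$ are real scalars, so they preserve complex-linearity.

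The main obstacle is the openness claim used in Steps 1 and 2 for a \emph{family}. The domain Sobolev space varies with the curve and the metrics, so to compare operators at different parameters we rely on the smooth trivialization of the family over the disk. Once that trivialization is fixed, continuity of $\phi\mapsto D_\phi$ in operator norm makes the openness argument standard.
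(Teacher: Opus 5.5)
Your construction makes the paper's argument explicit. The paper uses Fredholmness to reduce to a finite-dimensional extension lemma: given $h_\phi:F\to G$ surjective over $S^\ell$, find $k_\phi:\underline{\bbR}^{r'}\to G$ with $k_\phi=0$ on $S^\ell$ and $h_\phi\oplus k_\phi$ surjective. Your Steps 1--3 carry out that extension directly, using the cutoffs $\beta,\chi$ and a finite cover of $K$, and that bookkeeping is fine.

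The step that fails as written is the justification in Step 2. Density of compactly supported sections only lets you complement the closed, finite-codimensional image of $D_{\phi_0}+\tilde f_{\phi_0}$ by \emph{compactly supported} sections. The tube-like ends $\epsilon^\pm(\bbR_\pm\times S^1)$ are non-compact half-cylinders, so such sections can still meet them. The definition of a stabilization requires support in the fixed region off the ends, and so does the gluing $f_1\#_\tau f_2$, which discards the interiors of the ends. Sections supported in a fixed compact region are not dense in $W^{\mu-1,2}$, so density proves the wrong statement. This support condition is the genuinely non-formal point of the lemma, not the openness you flag at the end.

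The claim is nevertheless true, by unique continuation. Let $U$ be a nonempty open set disjoint from the tube-like ends. Suppose the image of $D_{\phi_0}$ together with the sections supported in $U$ fails to span $W^{\mu-1,2}$. That sum is closed, since it contains a closed finite-codimensional subspace, so there is a nonzero continuous functional $\eta$ annihilating both. Then $D_{\phi_0}^*\eta=0$ weakly, so $\eta$ is smooth by elliptic regularity and vanishes on $U$. By unique continuation for the first-order elliptic operator $D_{\phi_0}^*$, $\eta$ vanishes identically, a contradiction. Since $\operatorname{coker}(D_{\phi_0}+\tilde f_{\phi_0})$ is a quotient of $\operatorname{coker}D_{\phi_0}$, finitely many sections supported in $U$ span it; in the complex-linear case the same argument gives a complex spanning set.

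You should also choose the trivialization over $D^{\ell+1}$ to identify the tube-like ends fiberwise. Otherwise the transported $f_\theta$ and $s_i$ need not stay supported off the ends at every $\phi$.

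With these two repairs your proof is complete. The paper's one-line passage from ``finite-dimensional cokernel'' to its finite-dimensional lemma needs the same unique-continuation fact, though it does not state it.
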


\begin{proof}
For any family of linear maps of finite-dimensional vector bundles $\{h_\phi:F\to G\}_{\phi\in D^{\ell+1}}$ such that $h_\phi$ is surjective for all $\phi\in S^\ell$, there exists $r'\gg0$ and a family of linear maps of finite-dimensional vector bundles $\{k_\phi:\underline\bbR^{r'}\to G\}_{\phi\in D^{\ell+1}}$ such that $h_\phi\oplus k_\phi$ is surjective for all $\phi\in D^{\ell+1}$ and $k_\phi=0$ for all $\phi \in S^\ell$. The lemma follows since (Floer-)Cauchy-Riemann operators are Fredholm -- thus, have finite-dimensional cokernel.
\end{proof}

The kernel of a stabilized (Floer-)Cauchy-Riemann operator $D+f$ is a vector space of real rank $\deg(e)-\deg(e')+2r$. Given a family of stabilized (Floer-)Cauchy-Riemann operators $D+f$ over a base $M$, taking the kernel of each fiberwise operator yields a real rank $\deg(e)-\deg(e')+2r$ vector bundle
    \begin{equation}
    \ind D\equiv\ker(D+f)\to M;
    \end{equation}
we refer to this as the \emph{index bundle} of $D$. In the case that $D+f$ is a family of stabilized Cauchy-Riemann operators, $\ind D$ is a complex rank $r$ vector bundle. 

\begin{construction}
Let $D_1$ resp. $D_2$ be a (Floer-)Cauchy-Riemann operator on $\Sigma_1$ resp. $\Sigma_2$ such that the positive puncture datum of $D_1$ agrees with the negative puncture datum of $D_2$. Moreover, let $\tau\in\bbR_{>0}$. Recall, we may naturally glue all pieces of the data determining $D_1$ and $D_2$ to obtain a (Floer-)Cauchy-Riemann operator $D_1\#_\tau D_2$ on $\Sigma_1\#_\tau\Sigma_2$. Analogously, we may glue stabilizations $f_1\#_\tau f_2$ since the image of each stabilization lands in compactly supported sections; the glued map, 
    \begin{equation}
    f_1\#_\tau f_2:\bbC^{r_1+r_2}\to W^{\mu-1,2}(\Sigma_1\#_\tau\Sigma_2;E_1\#E_2),
    \end{equation}
is a stabilization for $\tau\gg0$, cf. \cite[Lemma 3.13]{PS24a}. Moreover, we may construct a real/complex-linear map 
    \begin{equation}
    \ker(D_1+f_1)\oplus\ker(D_2+f_2)\to\ker(D_1\#_\tau D_2+f_1\#_\tau f_2),\;\;\tau\gg0
    \end{equation}
using a standard gluing construction, cf. \cite[Section 3.4]{PS24a}; we refer to this construction as a \emph{linear gluing isomorphism}. 
\end{construction}

\begin{defin}
Let $\scrE\to\scrC\xrightarrow{\pi}\bbX$ be a complex vector bundle on a universal curve over a flow $n$-simplex. Assume we are given puncture datum for all objects: 
    \begin{equation}
    \frake\equiv\{e_x\}_{x\in\calP_j,0\leq j\leq n}.
    \end{equation}
A \emph{(smooth) family of Floer-Cauchy-Riemann operators} on $\scrE$ with puncture datum $\frake$, written $D$, consists of the following data.
\begin{enumerate}
\item For any two $x$ and $y$, a family of Floer-Cauchy-Riemann operators $D_{xy}$ resp. $D_{xy,k}$ on $\scrE_{xy}\to\scrC_{xy}$ resp. $\scrE_{xy,k}\to\scrC_{xy,k}$ with puncture datum $e_x$ resp. $e_y$ at the negative resp. positive tube-like end.
\item For any three $x$, $y$, and $z$, an equality 
    \begin{equation}\label{eqn:fcrbreaking}
    D_{xz}\#_{1/\tau} D_{zy}=D_{xy}
    \end{equation}
of families of Floer-Cauchy-Riemann operators on $\scrE_{xz}\#_{1/\tau}\scrE_{zy}\cong\scrE_{xy}$, such that the natural associativity diagram commutes.
\item For any two $x$ and $y$, an equality 
    \begin{equation}\label{eqn:fcrrestrict}
    D_{xy,k}=D_{xy}
    \end{equation}
of families of Floer-Cauchy-Riemann operators on $\scrE_{xy,k}\cong\scrE_{xy}$, such that the natural associativity diagram commutes.
\item The natural associativity diagram intertwining \eqref{eqn:fcrbreaking} and \eqref{eqn:fcrrestrict} commutes.
\end{enumerate}
In \eqref{eqn:fcrbreaking} resp. \eqref{eqn:fcrrestrict}, we implicitly sum the left hand side with standard operators (in the case of \eqref{eqn:fcrbreaking}, this sum occurs before gluing).
\end{defin}

\begin{defin}
Let $\scrE\to\scrC\xrightarrow{\pi}\bbX$ be a complex vector bundle on a universal curve over a flow $n$-simplex. Assume we are given a family of Floer-Cauchy-Riemann operator $D$ on $\scrE$. A \emph{(smooth) family of stabilizations (of rank $\vec{r}$, where $\vec{r}$ is stabilization data)} of $D$, written $f$, consists of the following data.
\begin{enumerate}
\item For any two $x$ and $y$, a family of stabilizations (of rank $r_{xy}$ resp. $r_{xy,k}$) $f_{xy}$ resp. $f_{xy,k}$ of $D_{xy}$ resp. $D_{xy,k}$.
\item For any three $x$, $y$, and $z$, an equality 
    \begin{equation}\label{eqn:stabilizationbreaking}
    f_{xz}\#_{1/\tau} f_{zy}=f_{xy}
    \end{equation}
of families of stabilizations of $D_{xz}\#_{1/\tau} D_{zy}=D_{xy}$, such that the natural associativity diagram commutes.\footnote{Since the gluing of families of stabilizations is not necessarily a family of stabilizations, we are implicitly requiring that the gluing actually is a family of stabilizations.}
\item For any two $x$ and $y$, an equality 
    \begin{equation}\label{eqn:stabilizationrestrict}
    f_{xy,k}=f_{xy}
    \end{equation}
of families of stabilizations of $D_{xy,k}=D_{xy}$, such that the natural associativity diagram commutes.
\item The natural associativity diagram intertwining \eqref{eqn:stabilizationbreaking} and \eqref{eqn:stabilizationrestrict} commutes.
\end{enumerate}
In \eqref{eqn:stabilizationbreaking} resp. \eqref{eqn:stabilizationrestrict}, we implicitly sum the left hand side with the zero stabilization (in the case of \eqref{eqn:stabilizationbreaking}, this sum occurs before gluing).
\end{defin}

\begin{rem}
As before, we refer to a family of Floer-Cauchy-Riemann operators such that $\frake$ and $\{Y_{xy}\}$ are all simply trivial as a family of Cauchy-Riemann operators. Recall, this implies all the corresponding differential operators are complex-linear and all stabilizations are complex-linear.
\end{rem}

\begin{lem}\label{lem:stabilization}
Let $D$ be a family of (Floer-)Cauchy-Riemann operators on a complex vector bundle on a universal curve over a flow $n$-simplex. Suppose $\vec{r}$ is sufficiently large, then there exists a family of stabilizations (of rank $\vec{r}$) $f$ of $D$.
\end{lem}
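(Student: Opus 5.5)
We construct the stabilizations inductively over the strata of the flow $n$-simplex $\bbX$, using the preceding lemma (extension of a family of stabilizations from $S^\ell$ to $D^{\ell+1}$ after enlarging the rank) as the basic step. Fix a total order on pairs $(x,y)$ compatible with the partial order ``$\bbX_{xz}\times\bbX_{zy}$ or $\partial^k\bbX_{xy}$ is a boundary stratum of $\bbX_{xy}$''. For instance, order by $\dim\bbX_{xy}$, and for the $\partial^k$ faces by the number of removed indices. Since each $\bbX_{xy}$ is compact, only finitely many strata lie below a given one. We then choose $f_{xy}$ and $f_{xy,k}$ in this order.

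\textbf{Inductive step.} Suppose stabilizations have been chosen on all strata lower than $\bbX_{xy}$, compatibly with the breaking and restriction equalities \eqref{eqn:stabilizationbreaking} and \eqref{eqn:stabilizationrestrict}. They determine a stabilization on the collar neighbourhood $\varphi(\partial\bbX_{xy}\times[0,\epsilon))$:
\begin{itemize}
\item over a breaking stratum, take $f_{xz}\#_{1/\tau}f_{zy}$, padded with zero to rank $r_{xz}+r_{zy}$ via the injection $[r_{xz}]\amalg[r_{zy}]\hookrightarrow[r_{xy}]$;
\item over a face $\partial^k$, take $f_{xy,k}$ padded by zero.
\end{itemize}
The associativity conditions on the previous choices, together with the associativity of gluing of operators and stabilizations, ensure that these definitions agree on overlaps of collars, i.e.\ at corners. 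This gives a well-defined family over a neighbourhood of $\partial\bbX_{xy}$.

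\textbf{Main obstacle: surjectivity of glued stabilizations.} The glued map $f_{xz}\#_{1/\tau}f_{zy}$ is only guaranteed to be a stabilization for $\tau\gg0$ (by \cite[Lemma 3.13]{PS24a}). Here $1/\tau$ is the gluing length, so surjectivity holds only for collar parameter small. Two points need care:
\begin{itemize}
\item We make the gluing estimate uniform over the compact base $\bbX_{xz}\times\bbX_{zy}$. Surjectivity is an open condition and the linear gluing estimates are uniform on compact families, so this is possible.
\item We shrink the collar width $\epsilon$ at each stage. Only finitely many strata lie below any given one, so the shrinking is finite at each step. If $\bbX$ is infinite, we run the induction over the directed system so that each $\bbX_{xy}$ sees only finitely many constraints.
\end{itemize}

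\textbf{Extension to the interior.} We now have a family of stabilizations on a closed collar neighbourhood $N$ of $\partial\bbX_{xy}$ and must extend it over $\bbX_{xy}$.
\begin{itemize}
\item Choose a relative CW (handle) decomposition of $(\bbX_{xy},N)$ and extend cell by cell. On each cell $D^{\ell+1}$ with the stabilization given on $S^\ell$, apply the preceding lemma: this produces an extension of larger rank, padded by zero on the boundary.
\item Since there are finitely many cells, the required rank increase is finite. Record it by setting $r_{xy}$ to be the resulting rank, extending the injections by the new coordinates.
\item Alternatively, and more simply, work with the finite-rank bundle map $\mathrm{coker}$ directly: over the compact space $\bbX_{xy}$, choose finitely many compactly supported sections spanning all cokernels, cut off by a bump function vanishing on $N$, and add them as extra coordinates.
\end{itemize}
In the Cauchy--Riemann case, all choices can be made complex-linear: use complex spans, and note that complex-linear maps $\bbC^{r'}\to$ sections compose with the lemma's construction.

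The phrase ``$\vec r$ sufficiently large'' is interpreted as: every stabilization datum obtained this way is dominated by $\vec r$, and any larger compatible $\vec r$ also works, by padding with zero. This completes the induction and the proof.
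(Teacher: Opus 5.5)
Your proposal is correct and follows the paper's argument: choose the stabilizations inductively over the dimension of $\bbX_{xy}$, prescribe $f_{xy}$ on the collars by gluing or restriction of the previously chosen data, and extend over the interior. The paper's proof is only a two-line sketch of this induction. You add the details it leaves implicit, namely shrinking the collars so that the glued stabilizations are actually surjective, and the rank bookkeeping by padding with zero.
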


\begin{proof}
For each $D_{xy}$ resp. $D_{xy,k}$, it is straightforward that we have the existence of a family of stabilizations; the problem is associativity. However, by choosing stabilizations inductively over the dimension of $\bbX_{xy}$, we may achieve the desired associativity needed to construct a family of stabilizations $f$ of $D$.
\end{proof}

\begin{lem}\label{lem:indexbundle}
If $D$ is a family of (Floer-)Cauchy-Riemann operators on a complex vector bundle on a universal curve over a flow $n$-simplex and $f$ is a family of stabilizations of $D$, then there exists a virtual bundle $\ind(D,f)$ which is well-defined up to contractible choice. If $D$ is actually a family of Cauchy-Riemann operators, then $\ind(D,f)$ is a complex virtual bundle.
\end{lem}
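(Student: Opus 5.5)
The plan is to define $\ind(D,f)$ levelwise and then check that the gluing and restriction data of a virtual bundle on $\bbX$ (equations \eqref{eqn:vbbreak} and \eqref{eqn:vbrestrict}) come from linear gluing. For any two $x$ and $y$ we set
\begin{equation}
\ind(D,f)_{xy}\equiv\big(\ker(D_{xy}+f_{xy}),\underline{\bbC}^{r_{xy}}\big)\to\bbX_{xy}^\circ,
\end{equation}
and define $\ind(D,f)_{xy,k}$ in the same way. Surjectivity of $D_{xy}+f_{xy}$ makes the kernels form a smooth vector bundle of constant rank; this is the standard fact that the kernels of a continuous family of surjective Fredholm operators form a vector bundle. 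In the Cauchy-Riemann case the operators and the stabilizations are complex-linear, so the kernel is a complex vector bundle and the pair is a complex virtual bundle.

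The first step is to extend these bundles over the boundary collars of $\bbX_{xy}$. On the collar $\varphi_{xzy}$, the coherence \eqref{eqn:stabilizationbreaking} gives $D_{xy}+f_{xy}=(D_{xz}+f_{xz})\#_{1/\tau}(D_{zy}+f_{zy})$, and the linear gluing map from the Construction
\begin{equation}
\ker(D_{xz}+f_{xz})\oplus\ker(D_{zy}+f_{zy})\to\ker(D_{xy}+f_{xy})
\end{equation}
is an isomorphism for $\tau$ small (i.e.\ gluing length $1/\tau\gg0$). Shrinking $\epsilon$ if needed, which is harmless since $\bbX$ is a finite collection of compact pieces, or can be done inductively in the directed-colimit case, we use this isomorphism to define $\ind(D,f)_{xy}$ over $\bbX_{xy}$, setting it equal to the direct sum on the boundary face. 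This gives the equivalence \eqref{eqn:vbbreak}; the $\underline{\bbC}^{r}$ summands match through the stabilization-data injections. Over the faces $\partial^k\bbX_{xy}$, the equality \eqref{eqn:stabilizationrestrict} identifies the operators, so \eqref{eqn:vbrestrict} is literally an identity of kernels, up to the trivial summand $\underline\bbC^{r_{xy}-r_{xy,k}}$ which appears on both sides.

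The main obstacle is the associativity constraints: the two ways of gluing three kernels must agree over the corners. The linear gluing maps are associative only up to homotopy, since they involve cutoff functions and projections onto the kernel. I would handle this by choosing the gluing isomorphisms inductively over the strata, ordered by dimension of $\bbX_{xy}$, in the same way as Lemma \ref{lem:stabilization}. At each stage the required compatibilities fix the isomorphism on the boundary of a collar, and the space of bundle isomorphisms homotopic to the linear gluing map that extend a given boundary datum is contractible. An alternative is to use the $L^2$-orthogonal projection onto the kernel of the glued operator, which makes the maps strictly natural in the corner coordinates. Either way the extension problem has contractible fibres, so it can be solved.

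Finally, well-definedness up to contractible choice follows the same pattern. The choices made are the collar gluing isomorphisms above and the auxiliary metrics and cutoffs used in linear gluing, and each lies in a convex or contractible space, as in Lemma \ref{lem:contractiblefcr}. The space of all coherent choices is therefore an iterated limit of contractible fibrations, hence contractible. Two different choices are joined by a concordance, and the concordance lemma turns this into an equivalence of virtual bundles.
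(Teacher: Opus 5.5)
Your proposal is correct and follows essentially the same route as the paper: kernels of $D_{xy}+f_{xy}$ on the interiors, extended over the collars as direct sums (plus the trivial $\underline{\bbC}^{r_{xy}-r_{xz}-r_{zy}}$ summand) via linear gluing isomorphisms. As in the paper, strict associativity comes from small homotopies of those isomorphisms chosen inductively over $\dim\bbX_{xy}$, cf.\ \cite[Lemma 3.15]{PS24a}. Two small caveats: the contractible space you want is isomorphisms \emph{close to} the linear gluing map, not merely homotopic to it; and the $L^2$-projection alternative would not by itself give strict associativity at corners, so rely on the inductive small-homotopy argument.
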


\begin{proof}
Over each $\bbX_{xy}^\circ$ resp. $\big(\partial^k\bbX_{xy}\big)^\circ$, we define 
    \begin{equation}
    \ind^\circ_{xy}D\equiv\ker(D_{xy}+f_{xy})\;\;\textrm{resp.}\;\;\ind^\circ_{xy,k}D\equiv\ker(D_{xy,k}+f_{xy,k});
    \end{equation}
we must glue these together over the collar neighborhoods to form a vector bundle over the entirety of $\bbX_{xy}$. Over a collar,
    \begin{equation}
    \bbX_{xz}^\circ\times\bbX_{zy}^\circ\times[0,\epsilon)_\tau\hookrightarrow\bbX_{xy}^\circ\;\;\textrm{resp.}\;\;\big(\partial^k\bbX_{xy}\big)^\circ\times[0,\epsilon)_\tau\hookrightarrow\bbX_{xy},
    \end{equation}
we define 
    \begin{equation}
    \ind_{xy}D\equiv\ind^\circ_{xz}D\oplus\ind^\circ_{zy}D\oplus\bbC^{r_{xy}-r_{xz}-r_{zy}}\;\;\textrm{resp.}\;\;\ind_{xy}D\equiv\ind^\circ_{xy,k}D_{xy,k};
    \end{equation}
moreover, we identify $\ind_{xy}D$ to $\ind^\circ_{xy}D$ over a collar neighborhood of the first form via a linear gluing isomorphism. Now, this is not strictly associative on overlaps of different collar neighborhoods; however, the failure of this is fortunately quite small. By performing small homotopies of the linear gluing isomorphisms inductively over the dimension of $\bbX_{xy}$, we can make this construction strictly associative, cf. \cite[Lemma 3.15]{PS24a}.
\end{proof}

\subsection{Concordances and (Floer-)Cauchy-Riemann operators}
\begin{defin}
Let $\scrE\to\scrC\xrightarrow{\pi}\bbX$ be a complex vector bundle on a universal curve over a flow $n$-simplex.
\begin{itemize}
\item Let $D$ and $D'$ be families of (Floer-)Cauchy-Riemann operators on $\scrE$. A \emph{concordance} between $D$ and $D'$ is a 1-parameter family of (Floer-)Cauchy-Riemann operators interpolating between $D$ and $D'$; here, the puncture data is also allowed to vary.\footnote{As mentioned before, the degree of a puncture datum remains constant in a family.} 
\item Let $D$ be a family of (Floer-)Cauchy-Riemann operators on $\scrE$ together with two families of stabilizations $f$ and $f'$ (of rank $\vec{r}$ and $\vec{r}'$, respectively). A \emph{concordance} between $f$ and $f'$ is a 1-parameter family of stabilizations (through stabilizations of constant rank $\vec{r}''\geq\vec{r},\vec{r}'$) interpolating between $f\oplus0$ and $f'\oplus0$.

\item Finally, let $D+f$ and $D'+f'$ be families of stabilized (Floer-)Cauchy-Riemann operators on $\scrE$. A \emph{concordance} between $D+f$ and $D'+f'$ is: a concordance between $D$ and $D'$; and a concordance between $f$ and $f'$. 
\end{itemize}
\end{defin}

In this subsection, we will show, using concordances, that the index bundle of a family of (Floer-)Cauchy-Riemann operators is (1) independent of the stabilization data and (2) always stably complex. We fix a complex vector bundle on a universal curve over a flow $n$-simplex: $\scrE\to\scrC\xrightarrow{\pi}\bbX$.

\begin{lem}
Let $D$ and $D'$ be families of (Floer-)Cauchy-Riemann operators on $\scrE$ with puncture data of the same rank and degree. There exists a concordance between $D$ and $D'$.
\end{lem}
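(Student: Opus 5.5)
We will construct the concordance by induction over the strata of $\bbX$, at each step using that the relevant space of choices is contractible (Lemma \ref{lem:contractiblefcr}) once the puncture data are fixed.

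\textbf{Step 1: connect the puncture data.} For each object $x$, the puncture data $e_x$ and $e'_x$ have the same rank and degree. By the footnote following the definition of puncture datum, they are joined by a path $\{e_{x,t}\}_{t\in[0,1]}$ of puncture data of constant rank and degree. We fix such a path for every $x$ once and for all. Along this path the non-degeneracy condition holds for every $t$, so every operator built from it is Fredholm of constant index.

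\textbf{Step 2: build a family of operators over $\bbX\times[0,1]$.} We regard $\scrE\times[0,1]\to\scrC\times[0,1]\to\bbX\times[0,1]$ as a complex vector bundle on a universal curve over the product family. We then choose a family of Floer-Cauchy-Riemann operators on it with the following properties:
\begin{itemize}
\item puncture datum $e_{x,t}$ at time $t$;
\item it restricts to $D$ at $t=0$ and to $D'$ at $t=1$;
\item it satisfies the breaking and restriction compatibilities \eqref{eqn:fcrbreaking} and \eqref{eqn:fcrrestrict} for each $t$.
\end{itemize}
We do this by induction on $\dim\bbX_{xy}$, in the same spirit as Lemma \ref{lem:stabilization} and the existence of coherent connections.

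For the minimal-dimensional $\bbX_{xy}$, whose boundary is empty, we do the following. Over the tube-like ends the operator is forced to be $\overline\partial+e_{x,t}$ resp. $\overline\partial+e_{y,t}$. The remaining data on each fibre are the metrics $g,h$, the connection $\nabla$ and the form $Y$. These range over a convex set once the boundary values are prescribed, so a linear interpolation in the parameter direction works. We cut this off near the ends and patch it with the prescribed operators on the ends.

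For the inductive step, the operators are already specified over $\partial\bbX_{xy}\times[0,1]$ and over $\bbX_{xy}\times\{0,1\}$. On the collars, the prescription is given by gluing: we use $D_{xz,t}\#_{1/\tau}D_{zy,t}$ over $\varphi_{xzy}$, and the restriction data over $\varphi_{xy,k}$. The associativity conditions already imposed on lower strata ensure these prescriptions agree on overlaps of collars. We then extend over the interior using convexity, namely a partition of unity combining the collar prescription with an arbitrary interior interpolation. This is legitimate because all the data entering a Floer-Cauchy-Riemann operator live in affine spaces, and we work away from the tube-like ends where nothing is fixed.

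\textbf{Main obstacle.} The delicate point is compatibility of the interpolation with gluing near the ends. The glued operator $D_{xz,t}\#_{1/\tau}D_{zy,t}$ contains a neck $[-\tau,\tau]\times S^1$ where it equals $\overline\partial+e_{z,t}$. The interior interpolation must therefore respect that the puncture datum on internal necks also varies with $t$. We handle this by requiring the convex combinations to be taken only among data agreeing with the prescribed $t$-dependent puncture data on all ends and necks.

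Concretely, for each $t$ we subtract a fixed reference operator with the correct asymptotics $e_{x,t},e_{y,t}$. The residual data are then compactly supported, so they form a genuinely convex set compatible with gluing, and the induction goes through. The resulting family over $\bbX\times[0,1]$ is the desired concordance.
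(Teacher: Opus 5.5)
Your proposal is correct and takes essentially the same route as the paper. Both arguments connect the puncture data by paths of constant rank and degree, use the contractibility of the space of (Floer-)Cauchy--Riemann operators with fixed puncture data (Lemma~\ref{lem:contractiblefcr}) to interpolate each $D_{xy}$ to $D'_{xy}$, and choose these interpolations inductively over $\dim\bbX_{xy}$ so that they are compatible with breaking and restriction; your subtraction of reference operators carrying the $t$-dependent asymptotics simply makes that appeal to contractibility explicit.
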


\begin{proof}
Recall, any two puncture datum of the same rank and degree can be connected through a 1-parameter family. Thus, Lemma \ref{lem:contractiblefcr} implies that there exists a concordance between each $D_{xy}$ and $D_{xy}'$ separately; by choosing these concordances inductively over the dimension of $\bbX_{xy}$, we may achieve the desired associativity for a concordance between $D$ and $D'$. 
\end{proof}

The following two lemmata follow completely analogously to Lemma \ref{lem:stabilization}.

\begin{lem}
Let $f$ and $f'$ be stabilizations (of ranks $\vec{r}$ and $\vec{r}'$, respectively) of a family of (Floer-)Cauchy-Riemann operators $D$ on $\scrE$. There exists a concordance between $f$ and $f'$.
\end{lem}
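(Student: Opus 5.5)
We construct the concordance by induction on the dimension of the $\bbX_{xy}$, following the proof of Lemma \ref{lem:stabilization}, with $\bbX\times[0,1]$ in place of $\bbX$. We regard the family $D$, pulled back along the projection $\bbX\times[0,1]\to\bbX$, as a constant family of (Floer-)Cauchy-Riemann operators on the universal curve $\scrC\times[0,1]$. We then fix stabilization data $\vec{r}''$ with $r''_{xy}\geq r_{xy}+r'_{xy}$, chosen compatibly with all the injections \eqref{eqn:stabilizationdatabreak} and \eqref{eqn:stabilizationdatarestrict}. For instance, we may take the disjoint union of the data $\vec r$ and $\vec r'$ together with extra room $[s_{xy}]$, also assembled by disjoint unions, so that $f\oplus 0$ and $f'\oplus 0$ are both stabilizations of rank $\vec r''$. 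A concordance is then a family of stabilizations $F$ of rank $\vec r''$ over $\bbX\times[0,1]$, coherent with respect to breaking and restriction, with $F|_{t=0}=f\oplus0$ and $F|_{t=1}=f'\oplus 0$.

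The key steps are as follows. First, order the pairs $(x,y)$, together with the faces $\partial^k\bbX_{xy}$, by $\dim\bbX_{xy}$. Second, assume $F$ has already been constructed on all lower-dimensional strata. Then on $\partial(\bbX_{xy}\times[0,1])$ it is already prescribed: on $\bbX_{xy}\times\{0,1\}$ it equals $f_{xy}\oplus0$ resp. $f'_{xy}\oplus0$; on the collars $\bbX_{xz}\times\bbX_{zy}\times[0,1]$ it is the glued stabilization $F_{xz}\#_{1/\tau}F_{zy}$; and on the faces $\partial^k\bbX_{xy}\times[0,1]$ it is $F_{xy,k}$. The associativity conditions from the previous steps ensure these prescriptions agree on overlaps. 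Third, extend this boundary datum to all of $\bbX_{xy}\times[0,1]$. Extending merely as a real/complex-linear map into compactly supported sections is unobstructed, since the space of such maps is convex; we cut off with the collar coordinate. The only issue is surjectivity of $D_{xy}+F_{xy}$ in the interior. Here we apply the relative extension lemma proved above, the one for families parameterized by $D^{\ell+1}$ with prescribed data on $S^\ell$, in its evident version for the compact manifold with corners $\bbX_{xy}\times[0,1]$ relative to its boundary. Because $D$ is Fredholm and $\bbX_{xy}\times[0,1]$ is compact, the cokernel is finite-dimensional, so finitely many extra compactly supported directions $\bbC^{s_{xy}}$, vanishing near the boundary, restore surjectivity. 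These directions are absorbed into $r''_{xy}$ by enlarging the stabilization data, which we are free to do because only finitely many strata occur in any finite step.

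The main obstacle is the same as in Lemma \ref{lem:stabilization}: the glued stabilization $F_{xz}\#_{1/\tau}F_{zy}$ is surjective only for $\tau$ small, that is, for gluing parameter $1/\tau\gg0$. So we must first shrink the collar width $\epsilon$ uniformly over the compact family $\bbX_{xz}\times\bbX_{zy}\times[0,1]$, using \cite[Lemma 3.13]{PS24a}, so that the boundary prescription is genuinely a stabilization. We must also check that enlarging $\vec r''$ at a later inductive step is compatible with the already-chosen lower strata; this works because new directions are added with zero components there, in the form $\oplus 0$. For a non-finite $\bbX$, we run the induction over the directed system of finite pieces.
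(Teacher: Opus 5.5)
Your proposal is correct and follows the paper's argument. The paper proves this lemma ``completely analogously to Lemma \ref{lem:stabilization}'': it chooses the stabilizations over $\bbX\times[0,1]$ inductively on $\dim\bbX_{xy}$, relative to the already-fixed endpoint, collar and face data, exactly as you do. (As an aside, with $\vec r''=\vec r\amalg\vec r'$ the explicit path $\cos(\pi t/2)\,(f\oplus 0)+\sin(\pi t/2)\,(0\oplus f')$ already works. Surjectivity at each $t$ and compatibility with breaking and restriction follow from those of $f$ and $f'$ by linearity, so the extra room and the collar-shrinking are not actually needed.)
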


\begin{lem}
Let $D$ and $D'$ be two families of (Floer-)Cauchy-Riemann on $\scrE$ and let $f$ and $f'$ be stabilizations (of ranks $\vec{r}$ and $\vec{r}'$, respectively) of $D$ and $D'$, respectively. Suppose there is a concordance between $D$ and $D'$, then there exists a stabilization of the aforementioned concordance which is a concordance between $f$ and $f'$.
\end{lem}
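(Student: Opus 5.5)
The statement to prove: given families $D,D'$ of (Floer-)Cauchy-Riemann operators on $\scrE$ that are concordant, with stabilizations $f$ of $D$ and $f'$ of $D'$, there is a stabilization of the concordance which restricts to $f\oplus 0$ and $f'\oplus 0$ at its two ends. The plan is to repeat the inductive argument of Lemma \ref{lem:stabilization} over the enlarged parameter space. Write the concordance as a family $\widetilde D$ over the flow $n$-simplex $\bbX\times[0,1]$. Concretely, each $\bbX_{xy}$ is replaced by $\bbX_{xy}\times[0,1]$, and the universal curve $\scrC$ and the bundle $\scrE$ are pulled back along the projection. The data $f$ and $f'$ give a family of stabilizations of $\widetilde D$ over the ends $\bbX_{xy}\times\{0,1\}$. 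We want to extend it to all of $\bbX_{xy}\times[0,1]$, compatibly with breaking \eqref{eqn:stabilizationbreaking} and restriction \eqref{eqn:stabilizationrestrict}, at the cost of enlarging the ranks to some $\vec r''\geq\vec r,\vec r'$.

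First, fix the ranks. We increase $\vec r''$ as we go, always padding $f\oplus0$ and $f'\oplus0$ by zero. The padding is arranged via injections $[r_{xy}],[r'_{xy}]\hookrightarrow[r''_{xy}]$ that are compatible with the injections \eqref{eqn:stabilizationdatabreak} and \eqref{eqn:stabilizationdatarestrict}; these can be chosen inductively as well.

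Next, induct on $\dim\bbX_{xy}$, and within a fixed dimension proceed one stratum at a time. Assume the stabilization has been constructed on $\bbX_{ab}\times[0,1]$ for all lower-dimensional pairs. The boundary data \eqref{eqn:stabilizationbreaking} and \eqref{eqn:stabilizationrestrict} then determine a stabilization on
\begin{equation}
\partial(\bbX_{xy}\times[0,1])=\partial\bbX_{xy}\times[0,1]\cup\bbX_{xy}\times\{0,1\}.
\end{equation}
Over $\partial\bbX_{xy}\times[0,1]$ the glued stabilizations $f_{xz}\#_{1/\tau}f_{zy}$ are indeed surjective, because we work in the collars with $\tau$ small, as in \cite[Lemma 3.13]{PS24a}. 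Over $\bbX_{xy}\times\{0,1\}$ the data are $f_{xy}\oplus 0$ and $f'_{xy}\oplus 0$. The two descriptions agree on the corners by the associativity built into $f$ and $f'$.

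Then extend from $\partial(\bbX_{xy}\times[0,1])$ to the interior using the relative extension lemma preceding Lemma \ref{lem:stabilization}. That lemma is stated for a disk relative to its boundary sphere, but its proof applies to any compact manifold with corners relative to a closed subset. First extend the stabilization arbitrarily, say by a cutoff function supported in a collar, keeping it surjective near the boundary. Then add a $k$-term, $\underline\bbC^{s}\to W^{\mu-1,2}$, which vanishes near the boundary and restores surjectivity in the interior. Compactness of $\bbX_{xy}\times[0,1]$ together with finite-dimensionality of the cokernels makes $s$ finite. In the Cauchy-Riemann case everything should be taken complex-linear. Finally, increasing the rank of the stabilization attached to $\bbX_{xy}$ forces us to pad every higher $\bbX_{x'y'}$ containing it by zero. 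Since the flow simplices considered are finite, or directed colimits handled level by level, $\vec r''$ stays finite on each $\bbX_{xy}$.

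The main obstacle is the compatibility at the corners of $\partial\bbX_{xy}\times[0,1]$. There the glued stabilization must remain a stabilization uniformly over the parameter $[0,1]$. It must also match the collar identifications used for $D$ (coming from the concordance of operators) exactly, not just up to homotopy. I would handle this by first shrinking the collar width $\epsilon$, uniformly in $t\in[0,1]$ by compactness, so that gluing preserves surjectivity. After that the extension is purely an interior problem, and the added terms can be cut off away from the collars.
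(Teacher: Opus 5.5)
Your proposal is correct and takes the paper's route. The paper proves this lemma only by saying it ``follows completely analogously to Lemma~\ref{lem:stabilization}'': choose the stabilizations over $\bbX_{xy}\times[0,1]$ inductively on $\dim\bbX_{xy}$, relative to the data already fixed on the boundary and the ends, using the relative extension lemma for stabilizations. You supply more detail than the paper (padding by zero, cutting the added terms off near the boundary, shrinking collars so glued stabilizations stay surjective), but the argument is the same.
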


The following lemma follows completely analogously to Lemma \ref{lem:indexbundle}.

\begin{lem}
Let $D+f$ and $D'+f'$ be concordant families of stabilized (Floer-)Cauchy-Riemann operators on $\scrE$. There exists a concordance between $\ind(D,f)$ and $\ind(D',f')$.
\end{lem}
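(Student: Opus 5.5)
The plan is to transport the given concordance between $D+f$ and $D'+f'$ to a single family over the enlarged base $\bbX\times[0,1]$, and then apply the construction of Lemma \ref{lem:indexbundle} to that family.

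First, I will set up the enlarged base. A concordance $\{(D_t,f_t)\}_{t\in[0,1]}$ is precisely a family of stabilized (Floer-)Cauchy-Riemann operators on the pulled-back bundle $\scrE\times[0,1]\to\scrC\times[0,1]$. Here $\scrC\times[0,1]$ is viewed as a universal curve over the ``flow $n$-simplex with a parameter'', whose morphism spaces are $\bbX_{xy}\times[0,1]$. The associativity data (gluing isomorphisms, collars, restriction to faces $\partial^k$) are all pulled back along the projection to $\bbX$, so the coherence conditions hold verbatim. The stabilization ranks are the constant $\vec r''$ from the definition of concordance. I will first replace $f$ and $f'$ by $f\oplus 0$ and $f'\oplus 0$ so that both ends have rank $\vec r''$. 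This replacement changes $\ind(D,f)$ only by adding a trivial summand $\underline{\bbC}^{r''-r}$ to both $E^\pm$, which does not change the virtual bundle up to equivalence.

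Next, I will redo the proof of Lemma \ref{lem:indexbundle} on this parametrized family. Over $(\bbX_{xy}^\circ)\times[0,1]$ I take $\ker(D_{t,xy}+f_{t,xy})$. Since $D_t+f_t$ is surjective for every $t$, these kernels have constant rank and form a smooth vector bundle. Over the collars I use the direct sums $\ind_{xz}\oplus\ind_{zy}\oplus\underline{\bbC}^{r''_{xy}-r''_{xz}-r''_{zy}}$, identified with the interior kernels via the linear gluing isomorphisms, which now depend smoothly on $t$.

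The key step, and the main obstacle, is making these gluing isomorphisms strictly associative \emph{relative to the ends} $t=0,1$. At the ends they have already been chosen, namely as the choices made in constructing $\ind(D,f)$ and $\ind(D',f')$. I will perform the small homotopies of \cite[Lemma 3.15]{PS24a} inductively over $\dim\bbX_{xy}$, exactly as in Lemma \ref{lem:indexbundle}, but as a relative extension problem over $\bbX_{xy}\times[0,1]$ rel $\bbX_{xy}\times\{0,1\}$. This works because the space of corrections is contractible: it is convex near the identity, which is also why the index bundle is well-defined up to contractible choice. The resulting virtual bundle over $\bbX\times[0,1]$ restricts to $\ind(D,f)$ and $\ind(D',f')$ at the two ends, which is the desired concordance. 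In the Cauchy-Riemann case, the operators and stabilizations are complex-linear throughout, so the concordance is one of complex virtual bundles.
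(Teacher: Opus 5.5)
Your proposal is correct and matches the paper's argument. The paper's proof says only that the lemma ``follows completely analogously to Lemma \ref{lem:indexbundle}'', meaning the index-bundle construction is run over the concordance, i.e.\ over $\bbX\times[0,1]$. Your version carries out that construction relative to the ends $t=0,1$, padding the stabilizations to the common rank $\vec{r}''$ first, which is exactly the detail the paper leaves implicit.
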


\begin{cor}
The index bundle of a family of (Floer-)Cauchy-Riemann operators on $\scrE$ only depends on $\scrE$ and the degrees of the puncture datums. In particular, if all degrees of all puncture datums vanish, then the index bundle is stably complex.
\end{cor}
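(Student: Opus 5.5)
The statement is a corollary of the three preceding lemmata, and I would prove its two claims in turn.

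\emph{First claim: dependence only on $\scrE$ and the degrees.} Fix two choices of data on the same $\scrE\to\scrC\xrightarrow{\pi}\bbX$. These are families of (Floer-)Cauchy-Riemann operators $D$ and $D'$ whose puncture data $\frake$ and $\frake'$ have the same rank and degree at every object, together with stabilizations $f$ of $D$ and $f'$ of $D'$.
\begin{enumerate}
\item The first lemma of this subsection gives a concordance between $D$ and $D'$.
\item The lemma on stabilizing concordances upgrades it to a concordance between $D+f$ and $D'+f'$. If one wants the same $D$ with two different stabilizations, the lemma producing a concordance between $f$ and $f'$ applies directly.
\item The final lemma then gives a concordance between the virtual bundles $\ind(D,f)$ and $\ind(D',f')$ over $\bbX$.
\item The lemma stating that concordant virtual bundles are equivalent (proved by parallel transport in the parameter direction) shows these index bundles are equivalent as virtual bundles over the flow simplex.
\end{enumerate}
The equivalences respect the breaking and restriction isomorphisms, since the concordance is itself coherent data over $\bbX\times[0,1]$. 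So the equivalence class depends only on $\scrE$ and the degrees, as claimed.

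\emph{Second claim: stable complexity when all degrees vanish.} The puncture datum $e=0$ of rank $\nu$ makes $\overline{\partial}:\Gamma(\bbR\times S^1,\underline\bbC^\nu)\to\Omega^{0,1}$ a non-degenerate operator with the trivial asymptotics. Its degree is $\nu-{\rm CZ}(0)$, which I would normalize to be $0$; this matches the convention making the index of $D$ equal to $\deg(e)-\deg(e')$. Suppose every $e_x$ has degree $0$. Then each $e_x$ can be connected, through puncture data of the same rank and degree, to the trivial datum. Using the first claim, I would replace $D$ by a family with all $e_x$ trivial and $Y_{xy}=0$, that is, a family of honest Cauchy-Riemann operators. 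Such families are complex-linear and admit complex-linear stabilizations by Lemma \ref{lem:stabilization}, applied within the complex-linear class. By Lemma \ref{lem:indexbundle}, the resulting $\ind(D,f)$ is a complex virtual bundle, and it is compatible with the gluing and restriction data. Combined with the first claim, the original index bundle is equivalent to a complex virtual bundle, i.e.\ it is stably complex.

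\emph{Main obstacle.} The delicate step is the deformation to trivial puncture data. One has to check that the normalization of $\deg$ and ${\rm CZ}$ really makes the trivial datum have degree $0$. One must also make sure the path of puncture data can be chosen uniformly per object $x$, so that the resulting concordance of families is coherent under breaking. Since the puncture data are attached to objects and the operators on each $\bbX_{xy}$ are then chosen inductively over $\dim\bbX_{xy}$, I expect this to go through. The same inductive-over-dimension argument used in Lemma \ref{lem:stabilization} handles the remaining associativity issues.
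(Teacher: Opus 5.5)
Your argument is the same as the paper's. The paper gives no separate proof: it reads the corollary off from the four concordance lemmata and the lemma that concordant virtual bundles are equivalent, and, for the second part, from the clause of Lemma \ref{lem:indexbundle} that honest Cauchy--Riemann families have complex index bundles.

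One correction: your claim that $e=0$ is non-degenerate is false. On $\bbR\times S^1$ the constants lie in the kernel of the asymptotic operator, so $\overline{\partial}$ is not an isomorphism there, and the trivial datum is not literally a puncture datum. You should either adopt the paper's implicit convention that Cauchy--Riemann operators count as having degree-$0$ asymptotics, or deform instead to a non-degenerate complex-linear datum of degree $0$, such as $\epsilon\,\mathrm{Id}\otimes d\bar z$ with $\epsilon$ small and of the appropriate sign. Either choice leaves the rest of your argument unchanged.
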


Finally, we see that the condition on the degree of the puncture datums in the previous corollary is not necessary. 

\begin{prop}\label{prop:stablecomplex}
The index bundle of a family of (Floer-)Cauchy-Riemann operators on $\scrE$ is stably complex.
\end{prop}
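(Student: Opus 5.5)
The plan is to reduce to the degree-zero case of the previous corollary. We do this by absorbing the non-zero degrees of the puncture data into the objects, using a coherent ``capping'' trick. For each object $x$, fix a reference puncture datum $e^0_x$ of the same rank as $e_x$ but of degree $0$; for instance, take $e^0_x$ to be a complex-linear datum whose Conley-Zehnder index equals the rank. Then choose, for each $x$, an auxiliary Floer-Cauchy-Riemann operator $D_x$ on the trivial bundle $\underline\bC^{\nu}$ over a single cylinder, with asymptotes $(e^0_x, e_x)$. By index additivity it has index $-\deg(e_x)$, and we choose a stabilization for it, so that $\ker(D_x+f_x)$ is a fixed real vector space $U_x$ attached to the object $x$.

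Next, for any two $x$ and $y$, form the glued family of operators $D_x\#D_{xy}\#\overline{D}_y$ over $\bbX_{xy}$. Here $\overline{D}_y$ is the analogous cap with asymptotes $(e_y,e^0_y)$, with kernel $U'_y$. This glued family has degree-zero puncture data at both ends, so by the previous corollary its index bundle is stably complex. The linear gluing isomorphism of the construction preceding Lemma \ref{lem:indexbundle} gives, up to contractible choice,
\begin{equation}
\ind(D_x\#D_{xy}\#\overline{D}_y)\cong \underline{U}_x\oplus\ind(D_{xy},f_{xy})\oplus\underline{U}'_y .
\end{equation}
We have $U_x\oplus U'_x\cong\ker(D_x\#\overline{D}_x)$, which is the kernel of a degree-zero-to-degree-zero operator on a cylinder. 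By the corollary (applied over a point) it is therefore complex up to stabilization. It follows that $\ind(D_{xy},f_{xy})$ is stably equivalent to a complex virtual bundle: add $\underline{U}'_x$ and $\underline{U}_y$, and cancel using this complex structure on $U_x\oplus U'_x$ and on $U_y\oplus U'_y$. Since these corrections are constant vector spaces attached to objects, they fit the $V_x$-type terms in Definition \ref{def: cx flow cat}.

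The main obstacle is coherence. The identifications must be compatible with breaking $\bbX_{xz}\times\bbX_{zy}\hookrightarrow\bbX_{xy}$, and the middle caps $\overline{D}_z\# D_z$ appearing there must be concorded to the standard degree-zero data coherently. First, I would choose the capping operators once per object, so the gluing is strictly associative by the associativity of $\#$. Then I would choose the concordances from $\overline{D}_z\#D_z$ (a loop-like configuration with degree-zero ends) to a complex-linear Cauchy-Riemann operator. These choices would be made inductively over the dimension of $\bbX_{xy}$, as in the proofs of Lemma \ref{lem:stabilization} and Lemma \ref{lem:indexbundle}. This uses contractibility (Lemma \ref{lem:contractiblefcr}) together with the fact that concordant virtual bundles are equivalent, so the resulting complex structures assemble into a complex virtual bundle on $\bbX$.
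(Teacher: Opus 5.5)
Capping each object is a reasonable alternative in spirit, but the coherence step, which is where all the content lies, does not work as you have set it up.

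\textbf{The capped family is not a flow-category family.} The preceding corollary applies only to a family in the sense of the definition: one with $D_{xz}\#_{1/\tau}D_{zy}=D_{xy}$ in the collars, and with the object datum at $z$ sitting on the neck. Your capped family $C_{xy}\equiv D_x\#D_{xy}\#\overline{D}_y$ is not of this form.
\begin{itemize}
\item Near $\bbX_{xz}\times\bbX_{zy}$, the family $C_{xy}$ degenerates along a neck asymptotic to $e_z$.
\item By contrast, $C_{xz}\#_{1/\tau}C_{zy}=D_x\#D_{xz}\#\overline{D}_z\#_{1/\tau}D_z\#D_{zy}\#\overline{D}_y$ contains the extra piece $\overline{D}_z\#D_z$ and breaks along $e^0_z$.
\end{itemize}
Fixing the caps once per object and using associativity of $\#$ does not remove this discrepancy. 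So ``by the previous corollary its index bundle is stably complex'' only gives a complex structure one $\bbX_{xy}$ at a time, with no compatibility across strata.

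\textbf{The proposed repair targets the wrong object.} $\overline{D}_z\#D_z$ has asymptotes $(e_z,e_z)$; it is its neck, not its ends, that carries the degree-zero datum. So it cannot in general be concorded to a complex-linear Cauchy-Riemann operator. Concordances preserve the degree of the asymptotic data, and when $\deg(e_z)$ is odd no complex-linear asymptotic datum has that degree. Even when such a concordance exists, it would not turn the $e_z$-neck of $C_{xy}$ into an $e^0_z$-neck. Likewise, the complex structure on $U_x\oplus U'_x$ coming from $D_x\#\overline{D}_x$ is not the relevant datum, since that composite never appears in the boundary of any $\bbX_{xy}$.

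\textbf{What the capping route actually needs.} You need a concordance, with fixed ends $e_z$, from the translation-invariant (invertible) operator on the neck to $\overline{D}_z\#D_z$. This must be inserted into the collars of every $\bbX_{xy}$, inductively over strata and compatibly at corners where several necks occur. The result is a genuine flow-category family $D'$ with object data $e^0_z$, to which the corollary does apply coherently. The same concordance identifies the stabilized kernel $U'_z\oplus U_z$ with the trivial stabilization domain. That identification is what makes $\ind D'_{xy}\simeq U_x\oplus\ind D_{xy}\oplus U'_y$ compatible with breaking, so that the caps can be absorbed into the $V_x$ terms.

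\textbf{How the paper avoids this.} The paper doubles instead of capping: it adds a family $\overline{D}$ on the trivial bundle $\underline{\bbC}^{\vec\nu}$ with the reflected data $\overline{e}_x$. Direct sum commutes with gluing, so $D\oplus\overline{D}$ is automatically a family over the flow simplex with degree-zero data, and the corollary applies verbatim. Moreover, $\ind\overline{D}$ is stably trivial up to the object-wise constants $\bbR^{\deg(e_x)}$ and $\bbR^{\deg(e_y)}$, which become the $V_x$.
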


\begin{proof}
Let $D$ be the given family of (Floer-)Cauchy-Riemann operators on $\scrE$ with puncture datums $\{e_x\}_{x\in\calP_j,0\leq j\leq n}$. Let $\vec\nu$ be the embedding data of $\scrE$ and $\overline{D}$ a family of (Floer-)Cauchy-Riemann operators on the trivial bundle $\underline{\bbC}^{\vec{\nu}}\to\scrC$ with puncture datums $\{\overline{e}_x\}_{x\in\calP_j,0\leq j\leq n}$. Since $\underline{\bbC}^{\vec{\nu}}$ is trivial, $\ind\overline{D}$ is stably trivial; in fact, it is straightforward to see that
    \begin{equation}
    \underline{\bbR}^{\delta_{xy}+\deg(e_y)}\oplus\ind\overline{D}_{xy}\cong\underline{\bbR}^{\delta_{xy}+\deg(e_x)}
    \end{equation}
for an appropriately chosen collection of sufficiently large integers $\{\delta_{xy}\}$.\footnote{In particular, $\delta_{xy}$ is the rank of the stabilization used to define $\ind\overline{D}_{xy}$; since we may induct on the dimension of $\bbX_{xy}$ to define stabilizations, we have that $\{\delta_{xy}\}$ satisfies the natural associativity requirement.} Now, $D\oplus\overline{D}$ is a family of (Floer-)Cauchy-Riemann operators on $\scrE$ with puncture datums of degree 0; hence, by the previous corollary, $\ind(D\oplus\overline{D})$ is stably complex. But, 
    \begin{equation}
    \ind (D\oplus\overline{D})\cong\ind D\oplus\ind\overline{D};
    \end{equation}
the proposition follows.
\end{proof}

\subsection{Grothendieck-Riemann-Roch flow categories}\label{sec: GRR flow cats}
Recall, a part of the data of an abstract curve $\Sigma$ is the choice of a biholomorphism which is well-defined up to translation in the $\bR$-direction:
    \begin{equation}
    \Sigma\cong\bbR\times S^1.
    \end{equation}
We denote by 
    \begin{equation}
    \ell_\Sigma\subset\Sigma
    \end{equation}
the line which is the image of $\bbR\times\{0\}$ under the aforementioned biholomorphism. Now, let $\scrC\to M$ be a family of abstract curves over a smooth manifold; this is a fiber bundle whose fiber is an abstract curve $\Sigma$. We denote by 
    \begin{equation}
    \ell_\scrC\subset\scrC
    \end{equation}
the subbundle whose fiber is the line $\ell\subset\Sigma$.

\begin{defin}
Let $\pi:\scrC\to\bbX$ be a universal curve over a flow $n$-simplex. The \emph{universal line} of $\scrC$, written $\pi:\ell_\scrC\to\bbX$, consists of the following data.
\begin{enumerate}
\item For any two $x$ and $y$, the fiber bundle $\pi_{xy}:\ell_{\scrC_{xy}}\to\bbX^\circ_{xy}$ resp. $\pi_{xy,k}:\ell_{\scrC_{xy,k}}\to\big(\partial^k\bbX_{xy}\big)^\circ$.
\item For any three $x$, $y$, and $z$, the isomorphism
    \begin{equation}\label{eqn:line1}
    \ell_{\scrC_{xz}}\#_{1/\tau}\ell_{\scrC_{zy}}\cong\varphi_{xzy}^*\ell_{\scrC_{xy}}.
    \end{equation}
By construction, the natural associativity condition already commutes.
\item For any two $x$ and $y$, the isomorphism
    \begin{equation}\label{eqn:line2}
    \ell_{\scrC_{xy,k}}\cong\varphi_{xy,k}^*\ell_{\scrC_{xy}}.
    \end{equation}
By construction, the natural associativity condition already commutes.
\item By construction, the natural associativity condition intertwining \eqref{eqn:line1} and \eqref{eqn:line2} already commutes.
\end{enumerate}
\end{defin}

\begin{rem}
We continue to work in the setup of the previous definition. Observe, there is straightforward notion of a map $\sigma:\scrC\to\ell_\scrC$ given by defining 
    \begin{equation}
    \sigma_{xy}:\scrC_{xy}\to\ell_{\scrC_{xy}}\;\;\textrm{resp.}\;\;\sigma_{xy,k}:\scrC_{xy,k}\to\ell_{\scrC_{xy,k}}
    \end{equation}
to be the map which exhibits the domain as an $S^1$-bundle over the codomain; these maps, essentially by definition, will satisfy the natural associativity diagrams.
\end{rem}

\begin{defin}\label{defin:grrvb}
Let $\scrE\to\scrC\xrightarrow{\pi}\bbX$ be a complex vector bundle on a universal curve over a flow $n$-simplex. We say $\scrE$ is a \emph{Grothendieck-Riemann-Roch (GRR) complex vector bundle} if a complex trivialization of $\scrE\vert_{\ell_\scrC}$ exists and we have fixed such a complex trivialization which is compatible with the already required complex trivialization over the tube-like ends.
\end{defin}

\begin{defin}
A \emph{GRR flow $n$-simplex} is a flow $n$-simplex $\bbX\equiv\bbX_{0\cdots n}$ together with the following data.
\begin{enumerate}
\item A universal curve $\scrC\to\bbX$.
\item A GRR complex vector bundle $\scrE\to\bbX$.
\item A family of (Floer-)Cauchy-Riemann operators $D$ on $\scrE$.
\item For any two $x$ and $y$, an equivalence of virtual bundles
    \begin{equation}\label{eqn:grrbreak}
    T\bbX_{xy}+\underline{\bbR}\cong\ind D_{xy}+\underline{\bbR}^{\lvert\{j+1,\ldots,j'\}\rvert}
    \end{equation}
such that the natural associativity diagram commutes.
\item For any two $x$ and $y$, an equivalence of virtual bundles 
    \begin{equation}\label{eqn:grrrestrict}
    T\partial^k\bbX_{xy}+\underline{\bbR}\cong\ind D_{xy,k}+\underline{\bbR}^{\lvert\{j+1,\ldots,\widehat{k},\ldots,j'\}\rvert}
    \end{equation}
such that the natural associativity diagram commutes. 
\item We require that the natural associativity diagram intertwining \eqref{eqn:grrbreak} and \eqref{eqn:grrrestrict} commutes.
\end{enumerate}
\end{defin}

The proof of \cite[Theorem 1.6]{AB24} implies that there is a stable $\infty$-category $\flow^{\rm GRR}$ whose $n$-simplices consist of GRR flow $n$-simplices. 

The following result produces a ``flow-categorification'' of the index bundle.

\begin{prop}\label{prop:grrtocomplex}
    There is a functor
    \begin{equation}\label{eqn:indfunctor}
    \ind(\bbX):\flow^{\rm GRR}\to\flow^{\rm decU}
    \end{equation}
which is an equivalence of stable $\infty$-categories.
\end{prop}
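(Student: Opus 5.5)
The plan is to construct $\ind(\bbX)$ simplex-by-simplex and then prove it is an equivalence by the same generation argument used in Lemma \ref{lem:obviousforgetful}. The equivalence argument reduces to a bordism computation on the endomorphism ring spectrum of the unit.

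\emph{Construction.} Let $\bbX$ be a GRR flow $n$-simplex with data $(\scrC,\scrE,D)$. I first choose coherent stabilizations of $D$ using Lemma \ref{lem:stabilization}, inducting on $\dim\bbX_{xy}$, and form the index bundles via Lemma \ref{lem:indexbundle}. Proposition \ref{prop:stablecomplex} then gives a complex virtual bundle $I(x,y)$, built from the index of $D\oplus\overline{D}$ on $\scrE\oplus\underline{\bbC}^{\vec\nu}$, together with a real stabilizing bundle $W(x,y)$. This $W(x,y)$ is the index of $\overline{D}$, which is coherently stably trivial by the construction in the proof of that proposition. I set $V_x\equiv\underline{\bbR}^{\deg(e_x)}$ with its standard orientation, shifted by the common stabilization ranks $\delta$.

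The equivalences \eqref{eqn:grrbreak} and \eqref{eqn:grrrestrict}, summed with the identity on $W(x,y)$ and the trivializations $\underline{\bbR}^{\delta_{xy}+\deg(e_y)}\oplus\ind\overline{D}_{xy}\cong\underline{\bbR}^{\delta_{xy}+\deg(e_x)}$, produce both \eqref{eqn:wxy2complex} and \eqref{eqn:wxy1complex}. Their compatibility with boundary strata follows from the linear gluing isomorphisms being strictly associative, which is arranged as in Lemma \ref{lem:indexbundle}. All auxiliary choices (stabilizations, operators, small homotopies of gluing maps) live in spaces that are contractible or can be made so after further stabilization, by Lemma \ref{lem:contractiblefcr} and the concordance lemmas. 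I would therefore implement the construction on the $\infty$-category of GRR simplices equipped with these choices. The forgetful map from that category to $\flow^{\rm GRR}$ is a trivial fibration, and composing with a section yields a functor on simplices compatible with faces. This gives the functor \eqref{eqn:indfunctor} at the level of quasi-categories, as in \cite[Theorem 1.6]{AB24}.

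\emph{Equivalence.} As in Lemma \ref{lem:obviousforgetful}, both $\flow^{\rm GRR}$ and $\flow^{\rm decU}$ are module categories over the endomorphism spectra of their units, by \cite[Section 8]{AB24}. The GRR unit is a single object with, say, trivial puncture datum. The functor sends unit to unit and preserves cones and disjoint unions, since these are formed identically on both sides. It therefore suffices to check that
\begin{equation*}
\pi_*\flow^{\rm GRR}(\unit,\unit)\to\pi_*\flow^{\rm decU}(\unit,\unit)\cong\Omega^{\rm U}_*
\end{equation*}
is an isomorphism. The left side is the bordism group of closed manifolds $Y$ carrying a family of curves $\scrC\to Y$, a GRR bundle $\scrE$, a Cauchy--Riemann family, and a stable isomorphism $TY\cong\ind D$. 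Since the universal line is contractible fiberwise, the GRR triviality condition is essentially vacuous.

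For surjectivity, take a stably complex $Y$ with $TY\oplus\underline{\bbC}^a\cong F-\underline{\bbC}^b$ for some complex bundle $F$. I need to realize $[F]$ as a family index over $Y\times(\bbR\times S^1)$ with a bundle trivial near the ends. I would use the clutching construction: compactify to $\bbC P^1$ and take $\scrE=F\boxtimes(\mathcal{O}(-1)\text{-type twist})$ glued to the trivial bundle near the ends. Families index theory (Bott periodicity) then shows the index is $F$ minus a trivial bundle, and the stabilizations realize the equality on the nose.

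Injectivity is the relative version of the same argument. Given a complex-oriented null-bordism $B$ of $\ind$ of a GRR manifold, I extend the curve family, the bundle, and the operator over $B$ using contractibility (Lemma \ref{lem:contractiblefcr}) together with the clutching realization applied relative to $\partial B$. Any mismatch is absorbed into stabilizations.

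I expect the main obstacle to be this relative realization step: exhibiting an arbitrary stable complex structure on a bordism as an index bundle extending the given one on the boundary. The difficulty is that the map from vector bundles on the curve family to $K$-theory classes must be surjective relative to the boundary, not merely up to homotopy. If the clutching approach proves awkward, my fallback is to show that the space of GRR structures (universal curve, $\scrE$, $D$, stabilizations) over a point is equivalent to $\bbZ\times BU$ via the index map. That would make the GRR bordism theory tautologically $\Omega^{\rm U}_*$.
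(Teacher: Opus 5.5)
The step that carries all the content is broken by your assertion that the GRR condition is \emph{essentially vacuous}. Your reduction to the unit via generation is otherwise a legitimate alternative to the paper's route. The paper instead shows that the fiber of $\ind$ over every decorated complex-oriented flow simplex is non-empty and contractible, so $\ind$ is an equivalence of semi-simplicial sets, and then upgrades via \cite[Theorem 1.4]{Tan18}. Your route needs $\flow^{\rm GRR}$ to be generated by $\unit$, but only a closed-manifold bordism computation.

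The GRR condition is not vacuous. Fiberwise $\ell_\scrC\cong\bbR$ is contractible, but the trivialization of $\scrE\vert_{\ell_\scrC}$ must match the \emph{fixed} trivializations over both tube-like ends. On a single fiber the space of such choices is a torsor for $\Omega U(N)$. Over a family parametrized by $Y$, existence is obstructed by the relative clutching class of $\scrE$ along $\ell$, an element of $K^{-1}(Y)$. Equivalently, bundles on $\bbR\times S^1$ trivialized near the ends are classified by pointed maps $(\bbR\times S^1)/(\text{ends})\simeq S^2\vee S^1\to BU$. So the space of data (curve, $\scrE$, $D$, stabilizations) in your fallback is stably $(\bbZ\times BU)\times U$, not $\bbZ\times BU$. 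The index does not see the $U$-factor: changing the trivialization at one end by a map $Y\to U(N)$ leaves the operator, and hence the index bundle, literally unchanged.

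Consequently your injectivity step fails as written. Two bundles on $\partial B\times(\bbR\times S^1)$ with identical index bundles can still differ by such a $K^{-1}$-class, and stabilizing by trivial summands cannot remove it. So ``any mismatch is absorbed into stabilizations'' is false without the extra datum. This extra datum is exactly what separates pre-GRR from GRR structures, and is why the paper needs $\calG$ at all.

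The fix is to keep the trivialization over $\ell_\scrC$ as part of the data and use it. Since $(\bbR\times S^1)/(\text{ends}\cup\ell)\simeq S^2$, a GRR bundle over $Y$ is the same as a class in $\widetilde{K}(\Sigma^2Y_+)$. The index map is then the Bott isomorphism $\widetilde{K}(\Sigma^2Y_+)\cong K^0(Y)$, realized in Atiyah's Cauchy--Riemann model as in the proof of Lemma \ref{lem:inc grr}.

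Applied relative to $\partial B$, this gives surjectivity, via your clutching construction with $\cH$ trivialized over $\ell$ and the ends. It also gives injectivity. Applied relative to the boundary of an arbitrary flow simplex, it is precisely the paper's claim that GRR bundles realizing a given $I(x,y)$ form a non-empty contractible space. Together with the contractibility of universal curves, that claim gives the paper's more direct proof.
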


\begin{proof}
First, we will construct \eqref{eqn:indfunctor}. Let $\bbX\equiv\bbX_{0\cdots n}$ be a GRR flow $n$-simplex; the goal is to build a decorated complex-oriented structure on $\bbX$, and we proceed as follows. Recall, the proof of Proposition \ref{prop:stablecomplex} showed $\ind D\oplus\overline{D}$ is stably complex, i.e., there exists a trivial vector bundle $\underline{\bbC}^{\vec{r}}\to\bbX$, where $\vec{r}$ is stabilization data on $\bbX$, such that $\ind D\oplus\ind\overline{D}\oplus\underline{\bbC}^{\vec{r}}$ is complex. We define the vector bundle $I\to\bbX$ via
    \begin{equation}
    I_{xy}\equiv I(x,y)\equiv\ind D_{xy}\oplus\ind \overline{D}_{xy}\oplus\underline{\bbC}^{r_{xy}}\to\bbX_{xy}.
    \end{equation}
Moreover, the proof of Proposition \ref{prop:stablecomplex} showed we have an isomorphism
    \begin{equation}
    \underline{\bbR}^{\delta_{xy}+\deg(e_y)}\oplus\ind\overline{D}_{xy}\cong\underline{\bbR}^{\delta_{xy}+\deg(e_x)}
    \end{equation}
for an appropriately chosen collection of sufficiently large integers $\{\delta_{xy}\}$.\footnote{The scrupulous reader may now reasonably object that the choice of $\{\delta_{xy}\}$ may not be compatible with face maps; we may immediately remedy this as follows. First, define $\ind(\bbX)$ on all 0-simplices. Second, define $\ind(\bbX)$ on all 1-simplices making sure to choose $\{\delta_{xy}\}_{x\in\calP_0,y\in\calP_1}$ in a way that is compatible with the already established choice of $\{\delta_{xy}\}_{x,y\in\calP_0}$ and $\{\delta_{xy}\}_{x,y\in\calP_1}$; we may do this because, as already mentioned, $\delta_{xy}$ is the rank of the stabilization used to define $\ind\overline{D}_{xy}$ and we may induct on the dimension of $\bbX_{xy}$, $x\in\calP_0$ and $y\in\calP_1$, to define stabilizations. We proceed in the same fashion for higher-dimensional simplices.} We can rewrite this as an equivalence of virtual bundles: 
    \begin{equation}
    \Big(\ind\overline{D}_{xy}\oplus W(x,y),\underline{\bbR}^{\deg(e_x)}\Big)\cong\Big(W(x,y),\underline{\bbR}^{\deg(e_y)}\Big),\;\;W(x,y)\equiv\underline{\bbR}^{\delta_{xy}}.
    \end{equation}
Finally, we define the oriented virtual vector space
    \begin{equation}
    V_x\equiv\Big(0,\bbR^{\deg(e_x)}\Big)
    \end{equation}
with the standard orientation. In particular, we have the following equivalence of virtual bundles:
    \begin{equation}
    T\bbX_{xy}+\underline{V}_y+\underline{\bbR}+W(x,y)\cong I(x,y)+W(x,y)+\underline{\bbR}^{\lvert\{j+1,\ldots,\widehat{k},\ldots,j'\}\rvert}+\underline{V}_x.
    \end{equation}
Moreover, it is straightforward to see the aforementioned equivalence satisfies all of the requirements to make $\bbX$ into a decorated complex-oriented flow category, and this construction is compatible with face maps. Thus, we have constructed $\ind(\bbX)$ as a map of semi-simplicial sets.

Second, we will show $\ind(\bbX)$ is an equivalence of semi-simplicial sets by showing the fiber over any decorated complex-oriented flow $n$-simplex is non-empty and contractible.

\begin{claim}
The space of universal curves over a decorated complex-oriented flow $n$-simplex is non-empty and contractible. 
\end{claim}
\begin{proof}[Proof of claim]
This follows because, in Definition \ref{defin:abstractcurve}, we required that $\Sigma$ admits (1) tube-like ends and (2) a global biholomorphism to $\bbR\times S^1$ such that the tube-like ends are given by $\bbR$-translation. In particular, a universal curve over any base is a fiber bundle with fiber equivalent to $\bbR\times S^1$ and structure group biholomorphisms of $\bbR\times S^1$ which fix the standard tube-like ends up to $\bbR$-translation, i.e., the structure group is contractible.
\end{proof}

\begin{claim}
Let $\pi:\scrC\to\bbX$ be a universal curve over a decorated complex-oriented flow $n$-simplex.
The space of GRR complex vector bundles $\scrE\to\scrC$ which satisfy the condition succeeding this sentence is non-empty and contractible. Suppose $D$ is a family of (Floer-)Cauchy-Riemann operators on $\scrE$ and $\vec{r}$ is stabilization data on $\bbX$, then, for any two $x$ and $y$, we have an isomorphism
    \begin{equation}\label{eqn:grrclaim}
    I(x,y)\cong\ind D_{xy}\oplus\ind\overline{D}_{xy}\oplus\underline{\bbC}^{r_{xy}}
    \end{equation}
such that the natural associativity diagram associated to any codimension 1 boundary stratum of $\bbX_{xy}$ commutes.
\end{claim}

\begin{proof}[Proof of claim]
Observe, \eqref{eqn:grrclaim} is equivalent to asking for an isomorphism 
    \begin{equation}
    I(x,y)\cong\ind (D_{xy}\oplus\overline{D}_{xy})\oplus\underline{\bbC}^{r_{xy}}
    \end{equation}
satisfying the appropriate natural associativity diagrams. In particular, it suffices to assume $D$ is a Cauchy-Riemann operator. Therefore, we are asking for $I(x,y)$ to be given by the index bundle of a family of Cauchy-Riemann operators on $\scrC_{xy}$, at least stably, and that such a choice lives in a contractible space of choices; this simply amounts to Bott periodicity and the fact that the homotopy fiber of the Bott map $\Omega^2{\rm BU}\xrightarrow{\sim}{\rm BU}$ is contractible (explicitly, using a model for Bott periodicity based on Cauchy-Riemann operators, analogous to that of \cite{Atiyah}). Meanwhile, the condition that the natural associativity diagrams commute amounts to asking if we can choose lifts compatibly with breaking and restricting; but this follows by inducting over the dimension of $\bbX_{xy}$.
\end{proof}

Finally, we may upgrade $\ind(\bbX)$ to an equivalence of stable $\infty$-categories as follows. \cite[Theorem 1.4]{Tan18} shows we may upgrade $\ind(\bbX)$ to a functor of stable $\infty$-categories (the condition in \emph{loc. cit.}, which requires that degenerate 1-simplices are sent to equivalences, is easily verified since the diagonal decorated complex-oriented flow bimodule lifts to the diagonal GRR flow bimodule, and both represent the identity in their respective category, cf. \cite[Subsection 6.2]{AB24}). Therefore, $\ind(\bbX)$ is a functor between stable $\infty$-categories which is an equivalence of semi-simplicial sets, hence an equivalence of stable $\infty$-categories.
\end{proof}

\subsection{Pre-Grothendieck-Riemann-Roch flow categories}
Unfortunately, spectral symplectic cohomology does not immediately yield a GRR flow category since the natural complex vector bundle over the natural universal curve does not come with a fixed trivialization over the associated universal line; this leads us to the following definition.
\begin{defin}
A \emph{pre-GRR flow $n$-simplex} is a flow $n$-simplex $\bbX\equiv\bbX_{0\cdots n}$ together with the following data.
\begin{enumerate}
\item A universal curve $\scrC\to\bbX$.
\item A complex vector bundle $\scrE\to\bbX$.
\item A family of (Floer-)Cauchy-Riemann operators $D$ on $\scrE$.
\item For any two $x$ and $y$, an equivalence of virtual bundles
    \begin{equation}\label{eqn:pregrrbreak}
    T\bbX_{xy}+\underline{\bbR}\cong\ind D_{xy}+\underline{\bbR}^{\lvert\{j+1,\ldots,j'\}\rvert}
    \end{equation}
such that the natural associativity diagram commutes.
\item For any two $x$ and $y$, an equivalence of virtual bundles 
    \begin{equation}\label{eqn:pregrrrestrict}
    T\partial^k\bbX_{xy}+\underline{\bbR}\cong\ind D_{xy,k}+\underline{\bbR}^{\lvert\{j+1,\ldots,\widehat{k},\ldots,j'\}\rvert}
    \end{equation}
such that the natural associativity diagram commutes. 
\item We require that the natural associativity diagram intertwining \eqref{eqn:pregrrbreak} and \eqref{eqn:pregrrrestrict} commutes.
\end{enumerate}
\end{defin}

The proof of \cite[Theorem 1.6]{AB24} implies that there is a stable $\infty$-category $\flow^{\rm preGRR}$ whose $n$-simplices consist of pre-GRR flow $n$-simplices. 

\begin{construction}
We will now produce a functor 
    \begin{equation}
    \calG:\flow^{\rm preGRR}\to\flow^{\rm GRR}.
    \end{equation}
First, let $\scrE\to\scrC\xrightarrow{\pi}M$ be a complex vector bundle over a family of abstract curves over a smooth manifold. Consider the complex vector bundle $\scrE\vert_{\ell_\scrC}\to\ell_\scrC$. We may choose an embedding 
    \begin{equation}
    \scrE\vert_{\ell_\scrC}\hookrightarrow\bbC^N,\;\;N\gg0
    \end{equation}
to find a complex vector bundle $\scrF\to\ell_\scrC$ which satisfies 
    \begin{equation}
    \scrE\vert_{\ell_\scrC}\oplus\scrF\cong\underline{\bbC}^K,\;\;K\gg0.
    \end{equation}
Second, let $M$ have non-empty boundary $\partial M$ and consider $\partial\scrC\equiv\scrC\vert_{\partial M}\to\partial M$. Suppose we already have a choice of embedding 
    \begin{equation}
    \scrE\vert_{\ell_{\partial\scrC}}\hookrightarrow\underline{\bbC}^N,\;\;N\gg0
    \end{equation}
in order to construct $\scrF\to\partial\scrC$ satisfying 
    \begin{equation}
    \scrE\vert_{\ell_{\partial\scrC}}\oplus\scrF\cong\underline{\bbC}^K,\;K\gg0,
    \end{equation}
then, by stabilizing and extending the embedding to an embedding 
    \begin{equation}
    \scrE\vert_{\ell_\scrC}\hookrightarrow\underline{\bbC}^{N+K'},\;\;N,K'\gg0,
    \end{equation}
we may find a complex vector bundle $\scrF'\to\ell_\scrC$ which satisfies 
    \begin{equation}
    \scrF\oplus\underline{\bbC}^{K'}\cong\scrF'\vert_{\ell_{\partial\scrC}}\;\;\textrm{and}\;\;\scrE\vert_{\ell_{\partial\scrC}}\oplus\scrF'\cong\underline{\bbC}^{K+K'},\;K,K'\gg0.
    \end{equation}
Finally, we may build $\calG$ as follows. For any pre-GRR flow $n$-simplex $\bbX\equiv\bbX_{0\cdots n}$, we may build a complex vector bundle $\scrF\to\ell_\scrC$ such that $\scrE\vert_{\ell_\scrC}\oplus\scrF\to\ell_\scrC$ is trivial by inducting over both the dimension of $\bbX_{xy}$ and $n$. In particular, we define $\calG(\bbX)$ to be the GRR flow simplex with all the same data as $\bbX$, except we replace $\scrE\to\scrC$ by $\scrE\oplus\sigma^*\scrF\to\scrC$, where $\sigma:\scrC\to\ell_\scrC$; by construction, this is compatible with face maps and hence defines $\calG$ as a map of simplicial sets. We may upgrade $\calG$ to a functor between stable $\infty$-categories by appealing to \cite[Theorem 1.4]{Tan18} (the condition in \emph{loc. cit.}, which requires that degenerate 1-simplices are sent to equivalences, is easily verified since the diagonal pre-GRR flow bimodule is sent to the diagonal GRR flow bimodule, and both represent the identity in their respective category).
\end{construction}

We will return to properties of this functor in the sequel.

\section{Homotopy coherent Grothendieck-Riemann-Roch}
\subsection{Algebraic preliminaries}\label{sec:alg}

    We briefly recap some parts of the theory of DG-categories, cf. \cite{keller,Tabuada:Une} for more detailed background. We assume all our DG-categories are small and contained in some fixed universe, cf. \cite[Section 2]{Toen} for further discussion on this point. By convention, all of our chain complexes are assumed to be $\bC$-linear and (homologically) $\bZ$-graded.
    \begin{defin}
        A \emph{DG-category} $\cC$ is a small category enriched in chain complexes (over $\bC$). Explicitly, this means each morphism set $\cC(x,y)$ is a chain complex and each composition $\cC(y,z) \otimes \cC(x,y) \to \cC(x,z)$ is a chain map.
        
        \begin{itemize} 
            \item A \emph{DG-functor} $f: \cC \to \cD$ is a functor of enriched categories, i.e., each map $f(x,y):\cC(x,y) \to \cD\big(f(x),f(y)\big)$ is a chain map. DG-natural transformations are defined similarly.
            \item A \emph{left module} over $\cC$ is a DG-functor from $\cC$ to chain complexes, and a \emph{right module} is a left module over $\cC^{\rm op}$. Equivalently, a left module $\cL$ over $\cC$ consists of chain complexes $\cL(x)$, for all $x \in \cC$, along with chain maps $\cC(x,y) \otimes \cL(x) \to \cL(y)$ satisfying a suitable associativity condition. Given $x \in \cC$, the \emph{(left) Yoneda module} of $x$ is given by $\cC(-, x)$.

            \item The \emph{homotopy category} $\Ho(\cC)$ of $\cC$ is the category with the same objects as $\cC$ and morphisms $\Ho(\cC)(x,y) \equiv H_0\big(\cC(x,y)\big)$; this is naturally enriched in $\bC$-vector spaces. 

            \item A DG-functor is an \emph{equivalence} if it induces (1) an equivalence on homotopy categories and (2) quasi-isomorphisms on all mapping complexes.
            \item $\cC$ is \emph{pretriangulated} if the image of the Yoneda embedding, from $\cC$ to its category of modules, is closed (up to quasi-isomorphism) under taking shifts and cones.
        \end{itemize}
    \end{defin}
    The homotopy category of a pretriangulated DG-category is naturally triangulated since its image under the (fully faithful) Yoneda embedding is. The underlying category $\Ho(\cC)$ only recovers $0$-th homology directly, but the shift functor from the triangulated structure allows us to recover all homology groups of the mapping complexes.
    \begin{example}
        Let $R$ be a DG-algebra. The category of $R$-modules, denoted $\Mod_R$, is naturally a DG-category; for $A, B \in \Mod_R$, $\Mod_R(A, B)_i$ consists of the $R$-linear maps $\Sigma^i A \to B$ equipped with the usual differential on mapping complexes:
            \begin{equation}
            df \equiv d_B \circ f - (-1)^i f \circ d_A,\;\;f\in\Mod_R(A,B)_i.
            \end{equation}
        Observe, $\Mod_R$ is pretriangulated. More generally, the module category of any DG-category $\cC$, denoted $\Mod_\cC$, is itself a DG-category.
    \end{example}
    \begin{example}
    Let $R$ be a DG-algebra. The full subcategory of \emph{perfect complexes}, denoted $\perf_R$, is the smallest subcategory of $\Mod_R$ containing the left module $R$ which is closed under shifts, cones, and quasi-isomorphisms. We call the object $R\in\perf_R$ the \emph{unit} module; we denote it by $\unit_R$. Note, its DG-algebra of endomorphisms is exactly $R$.
    \end{example}
    \begin{rem}
    An important special case is when $R$ is a graded ring, e.g. $R={\bC[\underline b]}$. In particular, may view $R$ as a DG-algebra with the trivial differential.
    \end{rem}
    Essential to relating DG-categories and $\infty$-categories is the following construction from \cite[Construction 1.3.1.6]{HA} (although, we follow the indexing conventions of \cite[Section 2.5]{HLS}).
    \begin{construction}
        Let $\cC$ be a DG-category. Its \emph{DG-nerve} $N^{\rm dg}(\cC)$ is the simiplicial set $N^{\rm dg}(\cC)_\bullet$ defined as follows. Objects (i.e., $0$-simplices) are defined to be the objects of $\cC$. For $n \geq 0$, an $n$-simplex $\sigma$ consists of a sequence of objects $x_0, \ldots, x_n$ along with the data of, for each subset $I=\{j_0, \ldots, j_\mu\}\subset\{0,\ldots,n\}$, an element $\sigma_I \in \cC(x_{j_0},x_{j_\mu})_{\mu-1}$ such that the following identity holds:
        \begin{equation}
            d\sigma_I = \sum\limits_{0<l<\mu} (-1)^{\mu-l} \left(\sigma_{I-\{j_l\}} - \sigma_{I_{\geq j_l}} \circ \sigma_{I_{\leq j_l}}\right),
        \end{equation}
        where $I_{\leq j_l} = \{j_0, \ldots, j_l\}$ and $I_{\geq j_l} = \{j_l, \ldots, j_\mu\}$.
    
    \end{construction}
    \begin{rem}
    $N^{\rm dg}(\cC)$ is always an $\infty$-category; moreover, $N^{\rm dg}(-)$ can be made to be functorial in $\cC$. 
    \end{rem}
    \begin{defin}\label{defin:generatedg}
        Let $\cC$ be a DG-category and $A$  a collection of objects in $\cC$. The subcategory \emph{generated by $A$} is the smallest full subcategory $\cA\subset\cC$ containing the objects in $A$ such that the image of $\cA$ under the Yoneda embedding of $\cC$ is closed under shifts, cones, and quasi-isomorphisms.
    \end{defin}

    \begin{rem}
    If a full subcategory $\cA$ generates $\cC$, then the restriction along the inclusion $\cA \to \cC$ defines an equivalence $\Mod_\cC \to \Mod_\cA$. 
    \end{rem}

    \begin{example}
    By construction, $\perf_R$ is generated by $\unit_R$.
    \end{example}

    \begin{warning}
    Note, generation does not allow infinite direct sums; because of this, $\Mod_R$ is \emph{not} generated by $\unit_R$.
    \end{warning}

    Observe, the homotopy category (in the $\infty$-categorical sense) of $N^{\rm dg}(\cC)$ agrees with the homotopy category (in the DG-categorical sense) of $\cC$, cf. \cite[Remark 1.3.1.11]{HA}. From this, we see that if $\cA$ generates $\cC$ (in the DG-categorical sense), then the objects of $N^{\rm dg}(\cA)$ generate $N^{\rm dg}(\cC)$ under finite colimits. 

    \begin{rem}
    Although $\unit_R$ does not generate $\Mod_R$ in the sense of Definition \ref{defin:generatedg}, $\unit_R$ does generate $N^{\rm dg}(\Mod_R)$ under all (not-necessarily-finite) colimits.
    \end{rem}

\subsection{Twisted Morse complexes}
\subsubsection{For flow categories}
Let $\frakm=\exp\fraka$ be a multiplicatively coherent cocycle on a flow category $\bbX\equiv\bbX_0$. Observe, since $\bC[\underline b]$ is evenly graded and $\fraka$ is degree 0 by assumption, it follows that the differential form grading of each $m_{xy}$ is even:
    \begin{equation}
    m_{xy}\in\bigoplus_p\Omega^{2p}(\bbX_{xy};\bC[\underline b]_{-2p}).
    \end{equation}
We will assume that $\bbX$ is decorated oriented -- this implies integration of differential forms over each $\bbX_{xy}$ makes sense. Note, by the evenness of the differential form grading, $\int_{\bbX_{xy}}m_{xy}$ can only ever possibly be non-trivial when $\dim\bbX_{xy}$ is even. Finally, as our ultimate application will be to complex-oriented flow categories, we will furthermore assume each $\dim I(x,y)$ is even. 

We may define 
    \begin{equation}
    CM_*(\bbX,\frakm;\bC[\underline b])\equiv\bigoplus_{x\in\calP}\bC[\underline b]\langle x\rangle,
    \end{equation}
where we use the total grading given by summing the grading on $\bbX$ and minus the grading on $\bC[\underline b]$, together with
    \begin{equation}
    \partial^\frakm:CM_*(\bbX,\frakm;\bC[\underline b])\to CM_*(\bbX,\frakm;\bC[\underline b])
    \end{equation}
defined via the $\bC[\underline b]$-linear extension of 
    \begin{equation}\label{eqn:twistedcodiff}
    x\mapsto\sum_{y\in\calP}\Bigg(\int_{\bbX_{xy}}m_{xy}\Bigg)y.
    \end{equation}

\begin{lem}
$(\partial^\frakm)^2=0$.
\end{lem}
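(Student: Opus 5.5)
The plan is to compute $(\partial^\frakm)^2$ coefficientwise and show each coefficient is zero by Stokes' theorem. Since $\partial^\frakm$ is the $\bC[\underline b]$-linear extension of \eqref{eqn:twistedcodiff}, the coefficient of $y$ in $(\partial^\frakm)^2x$ is
\begin{equation*}
\sum_{z\in\calP}\pm\Bigg(\int_{\bbX_{xz}}m_{xz}\Bigg)\Bigg(\int_{\bbX_{zy}}m_{zy}\Bigg),
\end{equation*}
where the sign $\pm$ comes from moving the coefficient $\int_{\bbX_{xz}}m_{xz}$ past the generator. Because every $m$ has even form degree and even $\bC[\underline b]$-degree (it is $\exp$ of a degree-$0$ cochain), these coefficients have even total degree. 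I therefore expect this sign to be $+1$, or at least to depend only on $\abs{x},\abs{z},\abs{y}$ in a way that is absorbed below. I will also note that only finitely many $z$ contribute; this is implicit in the definition of $\partial^\frakm$, since for fixed $x,y$ there are finitely many nonempty $\bbX_{xz}\times\bbX_{zy}$.

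Next I fix $x,y$ and apply Stokes to the closed form $m_{xy}$ on the compact oriented manifold with corners $\bbX_{xy}$, giving $0=\int_{\bbX_{xy}}dm_{xy}=\int_{\partial\bbX_{xy}}m_{xy}$. For a flow category ($n=0$) the codimension-one boundary strata are exactly the products $\bbX_{xz}\times\bbX_{zy}$; there are no faces of the form $\partial^k$. Multiplicative coherence gives $m_{xy}\vert_{\bbX_{xz}\times\bbX_{zy}}=\pi^*m_{xz}\wedge\pi^*m_{zy}$. Fubini then converts each boundary integral into the product $\int_{\bbX_{xz}}m_{xz}\cdot\int_{\bbX_{zy}}m_{zy}$, up to the sign comparing the boundary orientation with the product orientation. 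Fubini introduces no additional Koszul sign, because the forms have even degree. Only terms where both $\bbX_{xz}$ and $\bbX_{zy}$ are even-dimensional can be nonzero, since otherwise an integral of even-degree forms over an odd-dimensional manifold vanishes. The $\dim I$ are even by the standing assumption, so Corollary~\ref{cor:koszulsign1} applies to every contributing term. With $n=0$ we have $j=k=j'=0$, so the sign is $(-1)^{j'-k+1}=-1$, uniformly in $z$.

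Hence $0=-\sum_z\big(\int_{\bbX_{xz}}m_{xz}\big)\big(\int_{\bbX_{zy}}m_{zy}\big)$, which is exactly the vanishing of the $y$-coefficient of $(\partial^\frakm)^2x$. The only delicate point is the sign bookkeeping. One must check that the sign in the composite $\partial^\frakm\circ\partial^\frakm$ is independent of $z$, which the even-degree observation handles, and that the boundary-versus-product sign is uniform in $z$, which Corollary~\ref{cor:koszulsign1} handles once the vanishing odd-dimensional terms are discarded. Everything else is Stokes plus coherence.
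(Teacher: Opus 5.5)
Your proposal is correct and follows essentially the same route as the paper. Both expand $(\partial^\frakm)^2x$ coefficientwise and use multiplicative coherence and Fubini to rewrite each term as an integral over the boundary stratum $\bbX_{xz}\times\bbX_{zy}$. Both then apply Corollary~\ref{cor:koszulsign1} with $j=k=j'=0$ to get the uniform sign $-1$, and conclude by Stokes since $m_{xy}$ is closed. Your remark that terms with an odd-dimensional factor vanish makes explicit a point the paper leaves implicit: it guarantees the evenness hypotheses of Corollary~\ref{cor:koszulsign1} hold for every contributing $z$.
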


\begin{proof}
We see
    \begin{align}
    (\partial^\frakm)^2x&=\sum_{z,y\in\calP}\Bigg(\int_{\bbX_{xz}}m_{xz}\Bigg)\Bigg(\int_{\bbX_{zy}}m_{zy}\Bigg)y \\
    &=\sum_{z,y\in\calP}\Bigg(\int_{\bbX_{xz}\times\bbX_{zy}}\pi_{\bbX_{xy}}^*m_{xz}\wedge\pi_{\bbX_{zy}}^*m_{zy}\Bigg)y.
    \end{align}
Using Corollary \ref{cor:koszulsign1}, we have that, for a fixed $y$-component, the right hand side becomes
    \begin{equation}
    \sum_{z\in\calP}\Bigg(-\int_{\bbX_{xz}\times\bbX_{zy}}m_{xy}\vert_{\bbX_{xz}\times\bbX_{zy}}\Bigg)=-\int_{\partial\bbX_{xy}}m_{xy}\vert_{\partial\bbX_{xy}}=-\int_{\bbX_{xy}}dm_{xy}=0,
    \end{equation}
where the last equality holds since $m_{xy}$ is closed.
\end{proof}

\begin{defin}
We refer to the chain complex
    \begin{equation}
    \big(CM_*(\bbX,\frakm;\bC[\underline b]),\partial^\frakm\big)
    \end{equation}
as the \emph{$\frakm$-twisted chain complex} of $\bbX$.
\end{defin}

\subsubsection{For flow simplices}\label{subsubsection:twistedmorsesimplex}
Now, let $\frakm=\exp\fraka$ be a multiplicatively coherent cocycle on a flow $n$-simplex $\bbX\equiv\bbX_{0\cdots n}$. Again, the differential form grading on each $m_{xy}$ is even. Observe, $\frakm$ induces a multiplicatively coherent cocycle $\frakm_\sigma=\exp\fraka_\sigma$ on $\partial^\sigma\bbX$ for each $\partial^\sigma\Delta^n\subset\Delta^n$. We will assume $\bbX$ (and, by extension, each $\partial^\sigma\bbX$) is decorated oriented; hence, $\int_{\bbX_{xy}}m_{xy}$ is defined and can only ever be possibly non-trivial when $\dim\bbX_{xy}$ is even. Finally, as our ultimate application will be to complex-oriented flow simplices, we will furthermore assume each $\dim I(x,y)$ is even.

For $\mu\geq1$, consider
    \begin{equation}
    \partial^{\langle j_0\cdots j_\mu\rangle}\Delta^n\subset\Delta^n,\;\;0\leq j_0<\cdots< j_\rho<\cdots<j_\mu\leq n
    \end{equation}
together with
    \begin{equation}
    H^{\frakm_{\langle j_0\cdots j_\mu\rangle}}:CM_*(\bbX_{j_0},\frakm_{\langle j_0\rangle};\bC[\underline b])\to CM_*(\bbX_{j_\mu},\frakm_{\langle j_\mu\rangle};\bC[\underline b]),\;\;\bbX_{j_\rho}\equiv\partial^{\langle j_\rho\rangle}\bbX
    \end{equation}
defined via the $\bbC[\underline{b}]$-linear extension of 
    \begin{equation}
    x\mapsto\sum_{y\in\calP_{j_\mu}}\Bigg(\int_{\partial^{\langle j_0\cdots j_\mu\rangle}\bbX_{xy}}m_{xy}\Bigg)y.
    \end{equation}
For $\mu=0$, we take 
    \begin{equation}
    H^{\frakm_{j_0}}\equiv\partial^{\frakm_{j_0}}.
    \end{equation}

\begin{lem}\label{lem:dgnerveequation}
    \begin{multline}\label{eqn:dgnerveequation}
    \partial^{\frakm_{\langle j_\mu\rangle}}H^{\frakm_{\langle j_0\cdots j_\mu\rangle}}-(-1)^\mu H^{\frakm_{\langle j_0\cdots j_\mu\rangle}}\partial^{\frakm_{\langle j_0\rangle}}= \\
    \sum_{0<\rho<\mu}(-1)^{\mu-\rho}\big(H^{\frakm_{\langle j_0\cdots\widehat{j_\rho}\cdots j_\mu\rangle}}-H^{\frakm_{\langle j_\rho\cdots j_\mu\rangle}}\circ H^{\frakm_{\langle j_0\cdots j_\rho\rangle}}\big).
    \end{multline}
\end{lem}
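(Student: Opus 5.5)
Fix $x\in\calP_{j_0}$ and $y\in\calP_{j_\mu}$ and compare the $y$-coefficients of both sides applied to $x$. The approach is Stokes' theorem on $\partial^{\langle j_0\cdots j_\mu\rangle}\bbX_{xy}$, exactly as in the proof that $(\partial^\frakm)^2=0$. Write $\bbY\equiv\partial^{\langle j_0\cdots j_\mu\rangle}\bbX$, a flow $\mu$-simplex with decorated oriented structure. Since $m_{xy}$ is closed,
\begin{equation}
0=\int_{\bbY_{xy}}dm_{xy}=\int_{\partial\bbY_{xy}}m_{xy}\vert_{\partial\bbY_{xy}},
\end{equation}
with $\partial\bbY_{xy}$ carrying the Stokes (outward normal) orientation.

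By the definition of a flow simplex, the codimension one boundary of $\bbY_{xy}$ splits into two kinds of strata. The first kind is the breaking strata $\bbY_{xz}\times\bbY_{zy}$ with $z\in\calP_{j_\rho}$, $0\le\rho\le\mu$. The second kind is the face strata $\partial^{j_\rho}\bbY_{xy}$ for $0<\rho<\mu$. On a breaking stratum the coherence relation gives $m_{xy}\vert=\pi^*m_{xz}\wedge\pi^*m_{zy}$, so the integral factors as a product. The three cases contribute as follows:
\begin{itemize}
\item $\rho=\mu$ gives the $y$-coefficient of $H^{\frakm_{\langle j_0\cdots j_\mu\rangle}}$ followed by $\partial^{\frakm_{\langle j_\mu\rangle}}$.
\item $\rho=0$ gives $\partial^{\frakm_{\langle j_0\rangle}}$ followed by $H$.
\item $0<\rho<\mu$ gives $H^{\frakm_{\langle j_\rho\cdots j_\mu\rangle}}\circ H^{\frakm_{\langle j_0\cdots j_\rho\rangle}}$.
\end{itemize}
The face strata carry $\frakm_{\langle j_0\cdots\widehat{j_\rho}\cdots j_\mu\rangle}$ by restriction, so they give $H^{\frakm_{\langle j_0\cdots\widehat{j_\rho}\cdots j_\mu\rangle}}$. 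Each term is integrated with the product or face orientation, and these must be compared with the Stokes orientation.

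For that comparison I would invoke Corollary \ref{cor:koszulsign1} and Corollary \ref{cor:koszulsign2}, reindexed to the simplex $\bbY$ so that positions become $0,\dots,\mu$. Only even-dimensional $\bbX_{xy}$ contribute, because $m_{xy}$ has even form degree, and $\dim I$ is even by assumption, so the corollaries' hypotheses hold on every contributing term.
\begin{itemize}
\item A breaking stratum with $z$ at position $\rho$ has boundary-versus-product sign $(-1)^{\mu-\rho+1}$. For $\rho=\mu$ this is $-1$, for $\rho=0$ it is $(-1)^{\mu+1}$, and for interior $\rho$ it is $(-1)^{\mu-\rho+1}$.
\item A face stratum has sign $(-1)^{\mu-\rho}$.
\end{itemize}
Substituting into the vanishing boundary integral gives
\begin{equation}
0=-\partial H+(-1)^{\mu+1}(\pm H\partial)+\sum_{0<\rho<\mu}(-1)^{\mu-\rho}\big(H_{\widehat{\rho}}-H_{\ge\rho}\circ H_{\le\rho}\big),
\end{equation}
which rearranges to \eqref{eqn:dgnerveequation}.

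The main obstacle is the sign bookkeeping. One issue is the composition-order convention: $\partial H$ denotes $H$ first and then $\partial$, while the product integral is written $\int_{\bbY_{xz}}\int_{\bbY_{zy}}$. The other is the extra sign from moving $\bC[\underline b]$-coefficients of odd total degree past the operators in $H\partial^{\frakm_{\langle j_0\rangle}}$. Here $H$ has degree $\mu-1$, which accounts for the $(-1)^\mu$ in the statement. I would check these two points carefully, using that the coefficients are evenly graded so that only the form-degree and dimension parities matter, and I expect the $\rho=0$ term to be the delicate one.
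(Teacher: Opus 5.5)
Your strategy is the paper's own: apply Stokes on $\partial^{\langle j_0\cdots j_\mu\rangle}\bbX_{xy}$, split the boundary into breaking strata (factored by multiplicative coherence) and face strata, and use Corollaries~\ref{cor:koszulsign1} and~\ref{cor:koszulsign2} to pass to product and face orientations. The gap is the last step, where you leave $(-1)^{\mu+1}(\pm H\partial)$ and assert that it rearranges to \eqref{eqn:dgnerveequation}. There is no further sign to find at $\rho=0$:
\begin{itemize}
\item $\partial^{\frakm}$ and $H^{\frakm}$ are plain $\bC[\underline b]$-linear extensions.
\item $\bC[\underline b]$ is commutative and concentrated in even degrees, so nothing of odd degree is ever moved past an operator.
\item The Fubini sign in $\int_{\mathbb{Y}_{xz}\times\mathbb{Y}_{zy}}\pi^*m_{xz}\wedge\pi^*m_{zy}=\big(\int m_{xz}\big)\big(\int m_{zy}\big)$ is trivial, since only even-dimensional factors contribute.
\end{itemize}
So the $\pm$ is $+$, and the signs you quote give
\[
\partial H+(-1)^{\mu}H\partial=\sum_{0<\rho<\mu}(-1)^{\mu-\rho}\big(H_{\widehat{\rho}}-H_{\geq\rho}\circ H_{\leq\rho}\big),
\]
not $\partial H-(-1)^{\mu}H\partial$. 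Your own degree remark points this way. For $\lvert H\rvert\equiv\mu-1$, the mapping-complex differential $\partial H-(-1)^{\lvert H\rvert}H\partial$ equals $\partial H+(-1)^{\mu}H\partial$. That is exactly the DG-nerve relation the following corollary needs. At $\mu=1$ it says $H$ is a chain map, whereas the displayed statement would make $H$ anticommute with the differentials. The $-(-1)^{\mu}$ in the statement is a sign slip; the paper's proof quotes the same signs and has the same mismatch. You should prove the $+(-1)^{\mu}$ version rather than look for a compensating sign.

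Second, your claim that the corollaries' hypotheses hold on every contributing term is false for Corollary~\ref{cor:koszulsign2}. A face stratum contributes only if it is even-dimensional, so the ambient $\mathbb{Y}_{xy}$ is then odd-dimensional. That corollary, by contrast, assumes the ambient is even-dimensional. Evaluate the exponent of Lemma~\ref{lem:koszulsign2} in the correct parity, $\lvert x\rvert-\lvert y\rvert\equiv\mu$ mod $2$. It gives $(-1)^{\mu-\rho+1}$, the same as the interior breaking sign, which would produce $H_{\widehat{\rho}}+H_{\geq\rho}\circ H_{\leq\rho}$ instead of the difference. So the face sign $(-1)^{\mu-\rho}$ that you and the paper use is the one the DG-nerve relation requires, but the result you cite does not supply it. It has to be derived directly from the boundary compatibility of \eqref{eqn:wxy2oriented} along $\partial^{k}$ in the odd-dimensional case. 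Unlike Lemma~\ref{lem:koszulsign1}, the exponent stated in Lemma~\ref{lem:koszulsign2} carries no dimension-dependent term, so that computation is the one to recheck.
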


\begin{proof}
Since $m_{xy}$ is closed, we have that 
    \begin{align}
    0&=\int_{\partial^{\langle j_0\cdots j_\mu\rangle}\bbX_{xy}}dm_{xy} \\
    &=\int_{\partial(\partial^{\langle j_0\cdots j_\mu\rangle}\bbX_{xy})}m_{xy}\vert_{\partial\bbX_{xy}} \\
    &=\sum_{\substack{z\in\calP_{j_\rho} \\ 0\leq\rho\leq\mu}}\int_{\partial^{\langle j_0\cdots j_\mu\rangle}\bbX_{xz}\times\partial^{\langle j_0\cdots j_\mu\rangle}\bbX_{zy}}m_{xy}\vert_{\partial^{\langle j_0\cdots j_\mu\rangle}\bbX_{xz}\times\partial^{\langle j_0\cdots j_\mu\rangle}\bbX_{zy}}+ \nonumber \\
    &\hspace{30ex}\sum_{0<\rho<\mu}\int_{\partial^{\langle j_0\cdots\widehat{j_\rho}\cdots j_\mu\rangle}\bbX_{xy}}m_{xy}\vert_{\partial^{\langle j_0\cdots\widehat{j_\rho}\cdots j_\mu\rangle}\bbX_{xy}}.
    \end{align}
Observe, by Corollaries \ref{cor:koszulsign1} and \ref{cor:koszulsign2}, the last line becomes 
    \begin{multline}
    \sum_{\substack{z\in\calP_{j_\rho} \\ 0\leq\rho\leq\mu}}(-1)^{\mu-\rho+1}\Bigg(\int_{\partial^{\langle j_0\cdots j_\mu\rangle}\bbX_{xz}}m_{xz}\Bigg)\Bigg(\int_{\partial^{\langle j_0\cdots j_\mu\rangle}\bbX_{zy}}m_{zy}\Bigg)+ \\
    \sum_{0<\rho<\mu}(-1)^{\mu-\rho}\int_{\partial^{\langle j_0\cdots\widehat{j_\rho}\cdots j_\mu\rangle}\bbX_{xy}}m_{xy}\vert_{\partial^{\langle j_0\cdots\widehat{j_\rho}\cdots j_\mu\rangle}\bbX_{xy}}
    \end{multline}
(in particular, we would like to emphasize that, in the second summand, we have swapped from the boundary orientation to the face orientation, hence the Koszul sign). But this is precisely a single $y$-component of \eqref{eqn:dgnerveequation}; the lemma follows.
\end{proof}

The following corollary is a straightforward consequence of the previous lemma.

\begin{cor}
The assignment 
    \begin{equation}
    (\partial^{\langle j_0\cdots j_\mu\rangle}\bbX,\frakm_{\langle j_0\cdots j_\mu\rangle})\rightsquigarrow H^{\frakm_{\langle j_0\cdots j_\mu\rangle}},
    \end{equation}
with the obviously induced face maps, yields an $n$-simplex in $N^{\rm dg}(\Mod_{\bC[\underline b]})$; moreover, this assignment commutes with the face maps.
\end{cor}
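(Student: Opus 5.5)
To prove the corollary I would simply unwind the definition of an $n$-simplex of $N^{\rm dg}(\Mod_{\bC[\underline b]})$ and match it term by term with Lemma \ref{lem:dgnerveequation}.

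The vertices are the modules $x_\rho \equiv CM_*(\bbX_\rho,\frakm_{\langle\rho\rangle};\bC[\underline b])$, $0\leq\rho\leq n$. For each subset $I=\{j_0<\cdots<j_\mu\}$ with $\mu\geq1$, I set $\sigma_I\equiv H^{\frakm_{\langle j_0\cdots j_\mu\rangle}}$. For $\mu=0$, the vertex differential $H^{\frakm_{j_0}}=\partial^{\frakm_{j_0}}$ is simply part of the data of the object.

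\emph{Degree check.} One must see that $\sigma_I$ lies in $\Mod_{\bC[\underline b]}(x_{j_0},x_{j_\mu})_{\mu-1}$. The coefficient of $y$ in $\sigma_I(x)$ is $\int_{\partial^{\langle j_0\cdots j_\mu\rangle}\bbX_{xy}} m_{xy}$. Only the part of $m_{xy}$ of form degree $\dim\partial^{\langle j_0\cdots j_\mu\rangle}\bbX_{xy}$ contributes, and it carries polynomial degree equal to that dimension in $\bC[\underline b]$; this is because $\fraka$ has total degree $0$ and the form and polynomial gradings cancel. From the decorated equivalence \eqref{eqn:wxy2complex}, this dimension is $\abs{x}-\abs{y}+\dim I(x,y)+(\mu-1)$ up to the standard normalization. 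Combining this with the total grading convention (the grading on $\bbX$ minus the grading on $\bC[\underline b]$) gives a map of degree $\mu-1$. The main thing to be careful about is that the sign and degree conventions of the nerve (following \cite[Section 2.5]{HLS}) agree with the paper's; I would double-check this on the case $\mu=1$, where $\sigma_I$ should be a chain map of degree $0$.

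\emph{Nerve identity.} The defining relation of the DG-nerve is
\[
d\sigma_I=\sum_{0<l<\mu}(-1)^{\mu-l}\bigl(\sigma_{I-\{j_l\}}-\sigma_{I_{\geq j_l}}\circ\sigma_{I_{\leq j_l}}\bigr).
\]
The differential on the mapping complex is $d\sigma_I=\partial\circ\sigma_I-(-1)^{\mu-1}\sigma_I\circ\partial$. Up to the sign of the second term, which has to be matched against the paper's convention, this is exactly the left-hand side of \eqref{eqn:dgnerveequation}, whose right-hand side is the nerve relation. So the identity is precisely Lemma \ref{lem:dgnerveequation}. The only real obstacle is this sign bookkeeping, which Corollaries \ref{cor:koszulsign1} and \ref{cor:koszulsign2} are designed to absorb.

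\emph{Compatibility with faces.} The face $d_i$ of the nerve simplex keeps the $\sigma_I$ with $I\subset\{0,\ldots,\widehat i,\ldots,n\}$, reindexed. On the other side, the face $\partial^{\langle 0\cdots\widehat i\cdots n\rangle}\bbX$ carries the restricted cocycle $\frakm_\sigma$, and its own faces $\partial^{\langle j_0\cdots j_\mu\rangle}$ are literally the same stratified manifolds with the same restricted forms; this uses the identity $\partial^{j'}\partial^j=\partial^{j-1}\partial^{j'}$. The face orientations also agree, because they are induced by the same decorated equivalences. Hence the integrals defining the two simplices coincide, and the assignment commutes with face maps.
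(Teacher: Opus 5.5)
Your proposal is correct and follows the paper's argument, which simply observes that Lemma \ref{lem:dgnerveequation} is the defining relation of the DG-nerve, with face compatibility holding by construction because $\frakm$ restricts to $\frakm_\sigma$ on each face. On the sign you flagged, read it off the proof of Lemma \ref{lem:dgnerveequation}: by Corollary \ref{cor:koszulsign1}, the $z\in\calP_{j_0}$ boundary term carries $(-1)^{\mu+1}$ and the $z\in\calP_{j_\mu}$ term carries $-1$. This gives $\partial H+(-1)^{\mu}H\partial$, which is exactly $d\sigma_I$ for a map of degree $\mu-1$, so the $-(-1)^{\mu}$ in the displayed statement of the lemma is a typo rather than something the Koszul corollaries still need to absorb.
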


\subsection{Chern characters, revisited}\label{sec: chern revisited}
In this subsection, we will produce a ``flow-categorification'' of the Chern character, i.e., we will build the functor 
    \begin{equation}\label{eqn:cherncharacterfunctor}
    \ch(\bbX):\flow^{\rm decU}\to\dgnerve(\Mod_{\bC[\underline b]}).
    \end{equation}

\begin{construction}\label{construction:chernfunctor}
First, recall that every decorated complex-oriented flow $n$-simplex $\bbX\equiv\bbX_{0\cdots n}$ has a fixed complex vector bundle over it, denoted $I\to\bbX$, defined by 
    \begin{equation}
    I_{xy}\equiv I(x,y)\to\bbX_{xy}.
    \end{equation}
Now, we may fix a choice of Hermitian connection on any $I\to\bbX$ which is compatible with face maps by inducting over the dimension of $\bbX_{xy}$ and $n$. Consider the multiplicatively coherent cocycle $\exp\big(\frakc\frakh(I)\big)$ on $\bbX$; by performing the construction in Part \ref{subsubsection:twistedmorsesimplex}, we obtain an $n$-simplex in $\dgnerve(\Mod_{\bC[\underline b]})$, i.e., we have the assignment
    \begin{equation}
    \Big(\partial^{\langle j_0\cdots j_\mu\rangle}\bbX,\exp\big(\frakc\frakh(I)\big)_{\langle j_0\cdots j_\mu\rangle}\Big)\rightsquigarrow H^{\exp(\frakc\frakh(I))_{\langle j_0\cdots j_\mu\rangle}},\;\;0\leq j_0<\cdots<j_\mu\leq n.
    \end{equation}
Since our choice of Hermitian connections, and hence our polynomial-valued Chern characters, are compatible with face maps, we have constructed \eqref{eqn:cherncharacterfunctor} as a map of semi-simplicial sets. We may upgrade $\ch(\bbX)$ to a functor between stable $\infty$-categories by appealing to \cite[Theorem 1.4]{Tan18}. We must check the condition that degenerate 1-simplices are sent to equivalences; the argument is as follows. So, consider a decorated complex-oriented flow category $\bbX$ together with its diagonal decorated complex-oriented flow bimodule $s\bbX:\bbX\to\bbX$ which represents the identity; topologically, we have that 
    \begin{equation}
    s\bbX_{xy}\cong\begin{cases}
    \bbX_{xy}\times[0,1], & x\neq y \\
    *, & \textrm{otherwise};
    \end{cases}
    \end{equation}
however, as a stratified smooth manifold with corners, $s\bbX_{xy}$ is considerably more complicated. Fortunately, the topological type of $s\bbX_{xy}$ is enough to deduce that the map of twisted Morse complexes induced by $s\bbX_{xy}$ under $\ch(\bbX)$,
    \begin{equation}
    CM_*\Big(\bbX,\exp\big(\frakc\frakh(I)\big);{\bC[\underline b]}\Big)\to CM_*\Big(\bbX,\exp\big(\frakc\frakh(I)\big);{\bC[\underline b]}\Big),
    \end{equation}
is an upper triangular matrix.\footnote{This follows because it is a formal consequence of the definition of a flow category that $\bbX_{xy}$ and $\bbX_{yx}$ cannot both be non-empty, cf. \cite[Remark 1.4]{AB24}.} Therefore, the aforementioned map is a quasi-isomorphism of chain complexes, as desired.
\end{construction}

\begin{prop}\label{prop: rpoejgwpierngore}
The map induced by $\ch(\bbX)$ (on generators, after base change),
    \begin{equation}
        \pi_*\flow^{\rm decU}(\unit, \unit)\otimes_\bbZ \bC \to \pi_*N^{\rm dg}(\Mod_{\bC[\underline b]})(\unit_{\bC[\underline b]}, \unit_{\bC[\underline b]}) \cong H_*({\bC[\underline b]}) \cong {\bC[\underline b]},
    \end{equation}
    is an isomorphism. 
\end{prop}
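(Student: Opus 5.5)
We first make the map on homotopy groups explicit. By Lemma \ref{L219}, $\pi_d\flow^{\rm decU}(\unit,\unit)\cong\Omega^{\rm U}_d$. A class here is represented by a decorated complex-oriented flow bimodule $\Sigma^d\unit\to\unit$. Concretely, this is a single closed manifold $Y\equiv\bbX_{*\ast}$ of dimension $d$, together with a stable isomorphism $TY\cong I$ with $I$ a complex virtual bundle; this is exactly a stably complex structure on $Y$. We then unwind Construction \ref{construction:chernfunctor} on this $1$-simplex. The induced map $CM_*(\Sigma^d\unit;\bC[\underline b])=\bC[\underline b]\langle *\rangle\to\bC[\underline b]\langle *\rangle$ is multiplication by
    \begin{equation}
    \int_Y\exp\big(\ch(I;\bC[\underline b])\big)=\sum_{q_1,q_2,\ldots\geq0}\frac{1}{q_1!q_2!\cdots}\Bigg(\int_Y\ch_1(Y)^{q_1}\ch_2(Y)^{q_2}\cdots\Bigg)b_1^{q_1}b_2^{q_2}\cdots,
    \end{equation}
where we use $\ch_0 b_0$ with $b_0$ a degree-zero term. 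We should check that the constant-rank part either is absent or contributes only a harmless unit factor; if necessary we normalize so that it contributes a factor $e^{{\rm rank}}$, which is invertible. The resulting expression is a closed-form Chern number, so it depends only on the cobordism class of $Y$, consistent with the map being well defined.

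The content of the statement is then that the ring map
    \begin{equation}
    \Phi:\Omega^{\rm U}_*\otimes_\bbZ\bC\to\bC[\underline b],\;\;[Y]\mapsto\int_Y\exp\big(\textstyle\sum_\rho\ch_\rho(Y)b_\rho\big),
    \end{equation}
is an isomorphism. We first verify that $\Phi$ is a ring map. It is additive under disjoint union. It is multiplicative because $\ch$ is additive on $T(Y_1\times Y_2)=TY_1\oplus TY_2$, $\exp$ turns sums into products, and Fubini applies. We also need compatibility with the ring structure on $\pi_*\flow^{\rm decU}(\unit,\unit)$ given by composition; this is the same product-of-integrals computation used in the proof that $(\partial^\frakm)^2=0$.

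Next we note that both sides are graded polynomial $\bC$-algebras with one generator in each even degree $2k$. For the source this is Milnor--Quillen; the target is $\bC[\underline b]$ by definition. A graded algebra map between such algebras is an isomorphism as soon as it induces an isomorphism on indecomposables $Q_{2k}$ for every $k$. The $b_k$-coefficient of $\Phi([Y])$ for $\dim Y=2k$ is exactly $\int_Y\ch_k(Y)$, the $s_k$-number up to the factor $k!$. Every other monomial in the $b_\rho$ with $\rho$ summing to $k$ is a product of lower $b$'s, hence decomposable in the target. So the map on indecomposables is $[Y]\mapsto\int_Y\ch_k(Y)\,b_k$.

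The remaining task, and the key point, is to show that the Milnor number $s_k$ is nonzero on the indecomposables of $\Omega^{\rm U}_{2k}\otimes\bC$. This is Milnor's criterion. The characteristic number $s_k$ vanishes on decomposables because the power-sum Chern class is additive. There exist generators with $s_k\neq0$; for example, $s_k[\bC P^k]=k+1$. Hence $Q_{2k}(\Phi)$ is nonzero, and therefore an isomorphism between one-dimensional spaces. This gives the result.

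The main obstacle we expect is in the first paragraph: the sign and orientation bookkeeping. We must confirm that the twisted differential applied to the bimodule $\Sigma^d\unit\to\unit$ really yields $\int_Y$ with the complex orientation and no stray Koszul sign, and that composition in $\flow^{\rm decU}$ matches multiplication. Signs would not affect bijectivity, but they would affect the ring-map claim. To handle this, we would rely on Corollaries \ref{cor:koszulsign1} and \ref{cor:koszulsign2} with all dimensions even.
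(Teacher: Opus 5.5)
Your proposal is correct, but after the first step it takes a different route from the paper.

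\textbf{Shared step.} The paper unwinds Construction \ref{construction:chernfunctor} on a bimodule $\Sigma^d\unit\to\unit$ exactly as you do. This gives the formula $[Y]\mapsto\sum_q\frac{1}{q_1!q_2!\cdots}\big(\int_Y\ch_1(Y)^{q_1}\ch_2(Y)^{q_2}\cdots\big)b_1^{q_1}b_2^{q_2}\cdots$.

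\textbf{The paper's route.} It then asserts bijectivity by citing the Thom isomorphism together with \cite[Theorem 21.4.3]{MP12}. In effect, rationally $\pi_*{\rm MU}\otimes\bC\cong H_*({\rm MU};\bC)\cong H_*({\rm BU};\bC)$. Under this identification the formula is pairing against the monomials in the $\ch_\rho$, which form a basis of $H^*({\rm BU};\bC)$. This is a purely linear statement and uses no ring structures.

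\textbf{Your route.} You use that the exponential makes the formula multiplicative for Cartesian products. You then reduce to indecomposables using the Milnor--Quillen structure theorem. Finally you detect the indecomposables via $\int_Y\ch_k(Y)=s_k[Y]/k!$ and $s_k[\bC P^k]=k+1\neq0$.

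\textbf{Comparison.} Your argument is more elementary and self-contained. It also makes transparent why only the linear $b_k$-coefficient, i.e.\ $\ch_k$, matters. The paper's argument avoids the ring structure of $\Omega^{\rm U}_*$ and the polynomial structure theorem altogether. The price is that it leaves implicit the identification of the formula with the rational Hurewicz map followed by the Thom isomorphism. In particular, it does not address the tangent-versus-normal bundle discrepancy, which amounts to the automorphism $b_\rho\mapsto-b_\rho$.

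\textbf{Unnecessary worries.} The obstacle you anticipate does not actually arise. Bijectivity is a degreewise statement, so the induced map only needs to agree with your $\Phi$ up to a nonzero scalar in each degree. This covers Koszul signs and any $e^{\rm rank}$ factor from a $\ch_0$ term. You need $\Phi$ to be a ring map only for the Cartesian product on $\Omega^{\rm U}_*$, and that is exactly your Fubini computation. So you never need to compare with composition of flow bimodules in $\flow^{\rm decU}$. In any case, multiplicativity on the flow side would be automatic from functoriality of $\ch(\bbX)$.
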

\begin{proof}
Using the isomorphism $\pi_d \flow^{\rm decU}(\unit, \unit) \cong \Omega^U_d$, we see that $\ch(\bbX)$, after base change to $\bbC$, sends the class of a stably complex closed smooth manifold $Y$ to
    \begin{equation}
    \sum_{q_1,q_2,\ldots\geq 0} \frac{1}{q_1!q_2!\cdots}\Bigg(\int_Y\ch_1(Y)^{q_1}\ch_2(Y)^{q_2}\cdots\Bigg)(b^{q_1}_1b^{q_2}_2\cdots)\in \bC[\underline b].
    \end{equation}
It is standard that this is an isomorphism; for instance, it follows from the Thom isomorphism combined with \cite[Theorem 21.4.3]{MP12}.
\end{proof}
\begin{cor}\label{cor:firpg-iegn}
    For any $\bX \in \flow^{\rm decU}$, we have the following isomorphism:
    \begin{equation}
        \pi_*\bX \otimes_\bZ \bC \cong H_*\big(\ch(\bX)\big)=\Big(\bbX,\exp\big(\frakc\frakh(I)\big);{\bC[\underline b]}\Big),
    \end{equation}
where the second equality is by definition.
\end{cor}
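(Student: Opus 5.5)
We prove Corollary \ref{cor:firpg-iegn} by a generation argument. Regard both sides as homological functors of $\bbX$ and compare them through a natural transformation. By \cite[Theorem 1.6]{AB24} and Remark \ref{rem:equivalence}, the mapping spectrum $\flow^{\rm decU}(\unit,\unit)$ is an associative ring spectrum $R$ with $\pi_*R\cong\Omega^{\rm U}_*$ (Lemma \ref{L219}), and $\flow^{\rm decU}\simeq\Mod_R$ with $\unit\mapsto R$. In particular $\pi_*\bbX\equiv\pi_*\flow^{\rm decU}(\unit,\bbX)$.

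The functor $\ch(\bbX)$ of Construction \ref{construction:chernfunctor} is exact and sends $\unit$ to $\unit_{\bC[\underline b]}$. Indeed, $CM_*$ of the unit flow category is a single copy of $\bC[\underline b]$ with zero differential. It therefore induces, for every $\bbX$, a map
\[
\Phi_\bbX:\pi_*\flow^{\rm decU}(\unit,\bbX)\otimes_\bbZ\bC\to\pi_*\dgnerve(\Mod_{\bC[\underline b]})\big(\unit_{\bC[\underline b]},\ch(\bbX)\big)\cong H_*\big(\ch(\bbX)\big),
\]
natural in $\bbX$.

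The key steps are as follows.
\begin{enumerate}
\item For $\bbX=\Sigma^d\unit$, $\Phi_\bbX$ is exactly the map of Proposition \ref{prop: rpoejgwpierngore} in shifted degree, hence an isomorphism.
\item Both source and target of $\Phi$ are homological functors. The source is a homological functor because $\bC$ is flat over $\bbZ$. The target is one because $\ch$ is exact, so cofiber sequences in $\flow^{\rm decU}$ go to cofiber sequences of DG-modules and hence to long exact sequences in homology. A five-lemma induction then shows that $\Phi_\bbX$ is an isomorphism for every $\bbX$ in the thick subcategory generated by $\unit$, i.e.\ for finite flow categories.
\item For a general $\bbX$, write it as a filtered (directed) colimit of finite flow subcategories; this is the hypothesis appearing in Theorem \ref{thm:tech}. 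Then check that both sides commute with such colimits. On the left this holds because homotopy groups commute with filtered colimits. On the right, $CM_*$ of the colimit is literally the union of the twisted Morse complexes, since the integrals $\int_{\bbX_{xy}}m_{xy}$ only involve finitely many objects at a time. Filtered colimits are exact, so homology commutes with the union.
\end{enumerate}

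The main obstacle is step 2: we must verify that $\ch$ really is exact at the level used, namely that it sends the mapping-cone cofiber sequence of a flow bimodule to a cofiber sequence in $\Mod_{\bC[\underline b]}$. Tabuada's criterion \cite[Theorem 1.4]{Tan18} gives a functor of $\infty$-categories, but not obviously an exact one. I would first try to check directly that the twisted Morse complex of the cone on a bimodule $\frakb:\bbX\to\bbY$ is, on the nose, the algebraic cone of the chain map $H^{\frakm}$ induced by $\frakb$. Block decomposition of the generators, with the cross term given by the bimodule integrals, would give this, just as in Lemma \ref{lem:dgnerveequation}. Preservation of finite colimits then follows from preservation of cones and the zero object.

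If instead one only wants to compare abelian groups, a cleaner route is the following. Since $\ch$ is a functor out of $\Mod_R$ sending $R$ to $\bC[\underline b]$, on finite objects it agrees with the base change $-\otimes_R\bC[\underline b]$ along the ring map $R\to\bC[\underline b]$. That ring map is a rational equivalence by Proposition \ref{prop: rpoejgwpierngore}, which yields the isomorphism $\pi_*\bbX\otimes\bC\cong H_*(\ch\bbX)$ directly.
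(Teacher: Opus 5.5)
Your argument is correct and is essentially the paper's. The paper records this as an immediate consequence of Proposition \ref{prop: rpoejgwpierngore}, using that $\ch(\bbX)$ is a functor of stable $\infty$-categories sending $\unit$ to $\unit_{\bC[\underline b]}$ and that $\flow^{\rm decU}$ is generated by $\unit$. Your five-lemma induction over finite objects, followed by passage to directed colimits, is how the paper makes such generation arguments precise elsewhere (cf.\ the induction over cones in the strictification step and the reduction to finite flow categories in the proof of Theorem \ref{thm:tech}). The exactness of $\ch(\bbX)$ that you flag is asserted rather than proved when the paper invokes Tanaka's criterion. Your plan to check that the twisted Morse complex of the cone on a flow bimodule is literally the algebraic cone is the natural way to fill this in, mirroring how the paper checks colimit preservation in Lemma \ref{lem:obviousforgetful}.
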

\subsection{Bulk-deformations, a first pass}\label{subsec:derhamfirstpass}
In this subsection, we will produce a ``flow-categorification'' of bulk-deformations (using a de Rham model), i.e., we will build the functor 
    \begin{equation}\label{eqn:ahat}
    \smallint{}_{\scrC/\bbX}\ch(\scrE):\flow^{\rm GRR}\to\dgnerve(\Mod_{\bC[\underline b]}).
    \end{equation}

\begin{construction}\label{construction:ahatfunctor}
First, recall that every GRR flow $n$-simplex $\bbX\equiv\bbX_{0\cdots n}$ has a fixed complex vector bundle over its universal curve $\pi:\scrC\to\bbX$, denoted $\scrE\to\scrC$. Now, we may fix a choice of Riemann metric on any $\scrC\to\bbX$ which is compatible with face maps by inducting over the dimension of $\bbX_{xy}$ and $n$. Moreover, recall $\scrE$ already comes with a choice of Hermitian connection which is compatible with face maps. The upshot is as follows. 
\begin{enumerate}
\item Consider $\scrE_{xy}\to\scrC_{xy}$; we define 
    \begin{equation}
    \ch(\scrE_{xy};{\bC[\underline b]})\equiv\sum_{\rho\geq0}\ch_{\rho+2}(\scrE_{xy})b_{\rho+1}\in\Omega^2(\scrC_{xy};{\bC[\underline b]}),
    \end{equation}
where $\ch_{\rho+2}(\scrE_{xy})$ is the $(2\rho+4)$-nd graded piece of the usual Chern character. We may make the analogous considerations for $\scrE_{xy,k}\to\scrC_{xy,k}$. The reason for the shift by 1 between the indexing of $\ch$ and the indexing of $b$, in this case, will soon be apparent.

\item We have a coherent cocycle $\frakc\frakh(\scrE)$ on $\scrC$ given by 
    \begin{align}
    \ch(\scrE;{\bC[\underline b]})_{xy}\equiv\ch(\scrE_{xy};{\bC[\underline b]})\in\Omega^2(\scrC_{xy};{\bC[\underline b]})&\;\;\textrm{resp.} \\
    \ch(\scrE;{\bC[\underline b]})_{xy,k}\equiv\ch(\scrE_{xy,k};{\bC[\underline b]})\in\Omega^2(\scrC_{xy,k};{\bC[\underline b]})&.
    \end{align}
Note, since $\scrE$ is trivialized, as a complex vector bundle, over the tube-like ends, $\ch(\scrE_{xy};{\bC[\underline b]})$ resp. $\ch(\scrE_{xy,k};{\bC[\underline b]})$ vanishes over the tube-like ends. 
\item Lemma \ref{lem:push} shows that $\pi_!\frakc\frakh(\scrE)$ is an additively coherent cocycle on $\bbX$ of degree 0. (This is the reason for the shift by 1 in (1), i.e., in this case, fiber integration decreases the differential form grading by 2.)
\end{enumerate}
We now have the multiplicatively coherent cocycle $\exp\big(\pi_!\frakc\frakh(\scrE)\big)$ on $\bbX$; by proceeding in the exact same way as Construction \ref{construction:chernfunctor} (using $\exp\big(\pi_!\frakc\frakh(\scrE)\big)$ instead of $\frakc\frakh(I)$), we have constructed \eqref{eqn:ahat} as a functor of stable $\infty$-categories.
\end{construction}

\begin{rem}
Observe, Construction \ref{construction:ahatfunctor} implicitly incorporates the relative $\widehat{A}$-genus of the various fiber bundles $\scrC_{xy}\to\bbX_{xy}$ resp. $\scrC_{xy,k}\to\big(\partial^k\bbX_{xy}\big)^\circ$; fortunately, the relative $\widehat{A}$-genus of a topologically trivial $(\bbR\times S^1)$-bundle is simply 1.
\end{rem}

The maps on endomorphisms of units induced by $\ch(\bbX)$ and $\smallint{}_{\scrC/\bbX}\ch(\scrE)$ are compatible in the following sense; this should be viewed as a form of the classical Grothendieck-Riemann-Roch theorem.

\begin{lem}\label{lem:inc grr}
    The following diagram commutes:
    \begin{equation}
        \begin{tikzcd}
            \pi_*\flow^{\rm GRR}(\unit,\unit) 
            \arrow[rr]
            \arrow[dr,bend right]
            &&
            \pi_*\flow^{\rm decU}(\unit,\unit)
            \arrow[dl,bend left]
            \\
            &
            H_*({\bC[\underline b]})
            &
        \end{tikzcd}
    \end{equation}
\end{lem}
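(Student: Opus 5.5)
The plan is to check commutativity on representatives. Both composites are ring maps out of $\pi_*\flow^{\rm GRR}(\unit,\unit)$. By Proposition \ref{prop:grrtocomplex} together with Lemma \ref{L219}, this group is identified with $\Omega^{\rm U}_*$, so it suffices to evaluate the two composites on a single GRR flow bimodule $\Sigma^d\unit\to\unit$. Unwinding the definitions, such an object is a closed manifold $Y$ of dimension $d$ together with the following data:
\begin{itemize}
\item a universal curve $\pi:\scrC\to Y$;
\item a GRR complex vector bundle $\scrE\to\scrC$;
\item a family of (Floer-)Cauchy-Riemann operators $D$ on $\scrE$ (with trivial puncture data at the single object);
\item an equivalence $TY+\underline{\bbR}\cong\ind D+\underline{\bbR}$.
\end{itemize}
The left composite sends $[Y]$ to $\int_Y\exp\big(\pi_!\frakc\frakh(\scrE)\big)$. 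The right composite first applies $\ind$ and then sends the result to $\int_Y\exp\big(\frakc\frakh(I)\big)$, where $I=\ind D\oplus\ind\overline D\oplus\underline\bbC^{r}$. Since $Y$ is closed and both integrands are closed forms, only their de Rham classes matter. The whole lemma therefore reduces to the cohomological identity
\begin{equation*}
\ch_\rho(I)=\pi_!\ch_{\rho+1}(\scrE)\in H^{2\rho}(Y;\bbC),\qquad \rho\geq1,
\end{equation*}
together with a check that the degree-zero terms and the orientation conventions agree. The orientation check is automatic: both integrals use the orientation of $Y$ induced by the decoration.

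To prove the identity, I would first simplify $I$. The summand $\ind\overline D$ is the index of an operator on a trivial bundle and is stably trivial, as shown in the proof of Proposition \ref{prop:stablecomplex}, so its higher Chern character vanishes. The trivial summand $\underline\bbC^r$ likewise contributes nothing in positive degree. Next, by the concordance lemmas, I may deform $D$ to a genuine complex-linear Cauchy-Riemann operator with trivial puncture data without changing the stable class of $\ind D$. The claim is then $\ch_\rho(\ind D)=\pi_!\ch_{\rho+1}(\scrE)$ for a family of $\overline\partial$-operators on the cylinders.

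For this I would compactify. By the first claim in the proof of Proposition \ref{prop:grrtocomplex}, the bundle $\scrC\to Y$ has contractible structure group, so $\scrC\cong Y\times(\bbR\times S^1)$. Fill in the two ends to obtain $\overline\scrC=Y\times\bbC P^1$. Since $\scrE$ is trivialized over the tube-like ends, it extends to a bundle $\overline\scrE$ that is trivial near $0$ and $\infty$. With trivial puncture data, the index on the cylinder agrees stably with the index of $\overline\partial$ on $\bbC P^1$, up to a trivial summand $\bbC^\nu$ coming from constant sections. The classical families index theorem (Grothendieck-Riemann-Roch for a $\bbC P^1$-bundle) then gives
\begin{equation*}
\ch(\ind\overline\partial_{\overline\scrE})=\pi_!\big(\ch(\overline\scrE)\,{\rm Td}(T_{\bbC P^1})\big)=\pi_!\ch(\overline\scrE)+\pi_!\big(\ch(\overline\scrE)\,{\rm pr}^*[\mathrm{pt}]\big).
\end{equation*}
The first term is exactly $\pi_!\ch(\scrE)$. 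In degree $2\rho$ it is $\pi_!\ch_{\rho+1}(\scrE)$, because fiber integration lowers degree by $2$; this explains the index shift in Construction \ref{construction:ahatfunctor}.

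The Todd correction term is where I expect the main obstacle, and it is where the GRR structure (Definition \ref{defin:grrvb}) enters. The point is that $\scrE$ is trivialized along the universal line $\ell_\scrC$. I would represent $[\mathrm{pt}]$ by a bump form supported near a point of $\ell$, and choose the Hermitian connection so that it is flat, hence trivial, on a neighborhood of $\ell$; this is possible because the connections were built inductively. Then $\ch_{\geq1}(\overline\scrE)\wedge{\rm pr}^*[\mathrm{pt}]=0$ at the level of forms, so the correction term contributes only the rank, in degree $0$. Hence for $\rho\geq1$ we get $\ch_\rho(\ind D)=\pi_!\ch_{\rho+1}(\scrE)$, and the degree-zero parts play no role since $\bC[\underline b]$ has no $b_0$ variable. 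Exponentiating and integrating over $Y$ gives equal elements of $\bC[\underline b]$. The remaining care is to make sure the stabilizations and the trivial summands introduced by $\ind$ do not alter these classes, which is again a concordance argument.
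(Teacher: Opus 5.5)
Your argument is correct in substance, but it takes a different route from the paper's. The paper does not use the families index theorem. Instead it trivializes $\scrC\cong Y\times(\bbR\times S^1)$ and builds a line bundle $\cH$ on $\bbR\times S^1$ from the tautological bundle on $\bC P^1$, trivialized over $\ell$ and the ends. It then quotes Atiyah's proof of Bott periodicity to get a stable isomorphism $\scrE\cong(\cH-\underline\bC)\boxtimes_\bbC\ind(D)$ rel $\ell$ and the ends. Multiplicativity of $\ch$, together with the fact that $\ch(\cH-\underline\bC)$ is the compactly supported fundamental class of $\bbR\times S^1$, gives $\pi_!\ch_{i+1}(\scrE)=\ch_i(\ind D)$ directly.

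The two routes buy different things:
\begin{itemize}
\item The paper's route is more elementary: it uses only Bott periodicity, which is itself the $\overline\partial$-on-$S^2$ case of the index computation. It also yields an integral $K$-theoretic identity. And it uses the trivialization along $\ell$ exactly as designed, since that trivialization is what places $[\scrE]$ in $\widetilde K(Y_+\wedge S^2)\cong K(Y)$, where Bott periodicity applies.
\item Your route uses the heavier Atiyah--Singer families theorem but needs less structure. The Todd correction is $\pi_!\big(\ch(\overline\scrE)\,{\rm pr}^*[{\rm pt}]\big)=\ch\big(\overline\scrE\vert_{Y\times\{p\}}\big)$ for any $p\in\bC P^1$. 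Taking $p=0$ or $p=\infty$, the end trivializations already make this trivial.
\end{itemize}
So, contrary to what you expected, the GRR trivialization along $\ell$ is not needed for this lemma; it is needed only for $\ind$ to be an equivalence. You also do not need a connection that is flat near $\ell$, because on closed $Y$ only cohomology classes matter.

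One point needs repair. For $d\neq0$, $\Sigma^d\unit$ is realized in $\flow^{\rm GRR}$ through the degree of the puncture datum (compare $V_x=(0,\bbR^{\deg(e_x)})$ in the proof of Proposition \ref{prop:grrtocomplex}). The asymptotics of $D$ at the two ends therefore have degrees differing by $d=\dim Y$, so the puncture data is not trivial. Consequences:
\begin{itemize}
\item $D$ cannot be deformed to a complex-linear operator.
\item $\ind D$ and $\ind\overline D$ carry no complex structures separately, so the expression $\ch(\ind\overline D)$ does not make sense.
\end{itemize}
The fix is to run your capping-off argument on a complex-linear family concordant to $D\oplus\overline D$ on $\scrE\oplus\underline\bC^{\nu}$, which has degree-$0$ puncture data. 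That concordance is exactly what defines the complex structure on $I$, and the trivial summand $\underline\bC^{\nu}$ is invisible to $\pi_!\ch_{\geq2}$. To compare the index on the cylinder with the index on $\bC P^1$, linear gluing of caps suffices: the caps sit on the trivialized ends and are constant in $Y$, so their indices are trivial bundles. The paper's own proof glosses over the same puncture-data point.
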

\begin{proof}
We deduce this from an existing form of (not necessarily homotopy coherent) Bott periodicity.

Let $\bX$ represent a class in $\pi_d\flow^{\rm GRR}(\unit,\unit) $. Explicitly, this consists of the following data: a closed smooth $d$-manifold $X$, a universal curve $\scrC \to X$, a complex vector bundle $\scrE\to\scrC$ compatibly trivialized over $\ell_\scrC$ and the tube-like ends (together with appropriate extra data), and a stable isomorphism between the index bundle of a family of Cauchy-Riemann operators $D$ on $\scrE$ and $TX+\underline{\bbR}$. Since $X$ has no boundary, we may choose a trivialization $\scrC \cong X \times (\bR \times S^1)$.

Let $H \to \bC\bP^1$ be the tautological line bundle. Choosing a trivialization over $[0,+\infty]$ and then removing $\{0,+\infty\}$, we obtain a complex line bundle $\cH$ over $\bR \times S^1$ with (compatible) trivializations over $\ell\equiv\bbR\times\{1\}$ and the tube-like ends.

The proof of Bott periodicity in \cite[Sections 1-3]{Atiyah} shows that there is a stable isomorphism of vector bundles over $X \times (\bR \times S^1)$,
    \begin{equation}
    (\cH - \underline\bC) \boxtimes_\bbC \ind(D)\cong\scrE,
    \end{equation}
which is trivialized over $\ell$ and the tube-like ends. Since $\ch(\cH-\underline\bC)$ represents the fundamental class $[\bR \times S^1]$ of $\bR \times S^1$ in compactly supported cohomology, we see
    \begin{equation}
    [\bR \times S^1] \smile \ch\big(\ind(D)\big)=\ch(\scrE),
    \end{equation}
and hence that 
    \begin{equation}
    \ch_i\big(\ind(D)\big) = \pi_! \ch_{i+1}(\scrE),
    \end{equation}
whence the claim
\end{proof}

\begin{prop}\label{prop: rpoejgwpierngoreeerger}
The map induced by $\smallint{}_{\scrC/\bbX}\ch(\scrE)$ (on generators, after base change),
    \begin{equation}
        \pi_*\flow^{\rm GRR}(\unit, \unit)\otimes_\bbZ \bC \to \pi_*N^{\rm dg}(\Mod_{\bC[\underline b]})(\unit_{\bC[\underline b]}, \unit_{\bC[\underline b]}) \cong H_*({\bC[\underline b]})\cong\bC[\underline b],
    \end{equation}
    is an isomorphism. 
\end{prop}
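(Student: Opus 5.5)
The plan is to reduce Proposition \ref{prop: rpoejgwpierngoreeerger} to the two results immediately preceding it, namely Proposition \ref{prop: rpoejgwpierngore} and Lemma \ref{lem:inc grr}. The top horizontal arrow in the diagram of Lemma \ref{lem:inc grr} is the map on endomorphism spectra of the unit induced by the functor $\ind(\bbX)$ of Proposition \ref{prop:grrtocomplex}. The first step is to check that $\ind(\bbX)$ sends the unit GRR flow category (one object, no morphisms, trivial universal curve data) to the unit $\unit$ of $\flow^{\rm decU}$. This is immediate from the construction: for a single object $*$ with puncture datum of degree $0$ we get $V_*=0$. Hence $\ind(\bbX)$ induces a map
\[
\flow^{\rm GRR}(\unit,\unit)\to\flow^{\rm decU}(\unit,\unit).
\]
Since $\ind(\bbX)$ is an equivalence of stable $\infty$-categories, this map is an equivalence of mapping spectra, so it is an isomorphism on $\pi_*$ and remains one after $-\otimes_\bbZ\bbC$.

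Next we use the commutativity of Lemma \ref{lem:inc grr}. It says that the map induced by $\smallint{}_{\scrC/\bbX}\ch(\scrE)$ on $\pi_*$ of unit endomorphisms factors as the map induced by $\ind(\bbX)$ followed by the map induced by $\ch(\bbX)$. After complexifying, the first factor is an isomorphism by the previous step, and the second is an isomorphism by Proposition \ref{prop: rpoejgwpierngore}. The composite is therefore an isomorphism onto $H_*(\bbC[\underline b])\cong\bbC[\underline b]$.

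One point needs care: the identification of both functors' targets, $\pi_*\dgnerve(\Mod_{\bC[\underline b]})(\unit_{\bC[\underline b]},\unit_{\bC[\underline b]})$, must be the same. Each of $\ch(\bbX)$ and $\smallint{}_{\scrC/\bbX}\ch(\scrE)$ sends the unit flow category to the twisted Morse complex with a single generator and zero differential, which is literally $\bC[\underline b]=\unit_{\bC[\underline b]}$. So the two identifications agree, and they also agree with the one used in Lemma \ref{lem:inc grr}.

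I expect the main obstacle to be making sure that Lemma \ref{lem:inc grr} really compares the maps on \emph{all} of $\pi_*$ as the same maps, rather than only up to a reindexing of the $b_\rho$. Concretely, for a closed GRR representative $X$ with index bundle $\ind D\cong TX+\underline\bbR$ stably, one has to check two things:
\begin{itemize}
\item the $\ch(\bbX)$ side uses the Chern character of $I=\ind D\oplus\ind\overline D\oplus\underline\bbC^{r}$, where $\ind\overline D$ and $\underline\bbC^r$ are stably trivial and so contribute nothing to $\ch_{\rho}$ for $\rho\geq1$;
\item the identity $\ch_i(\ind D)=\pi_!\ch_{i+1}(\scrE)$ matches the shifted indexing $b_{\rho+1}\leftrightarrow\ch_{\rho+2}$ of Construction \ref{construction:ahatfunctor} against $b_\rho\leftrightarrow\ch_\rho$ in Construction \ref{construction:chernfunctor}.
\end{itemize}
There is a sticking point in the second check: the $\rho=0$ term ($\ch_1(\ind D)$, paired with $b_1$ via $\ch_2(\scrE)$) lines up, but the degree-$0$ rank term must be confirmed to contribute consistently on both sides. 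This is a bookkeeping check using the explicit formula in the proof of Proposition \ref{prop: rpoejgwpierngore}.
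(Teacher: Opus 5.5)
Your proposal is correct and matches the paper's proof, which simply combines Proposition \ref{prop:grrtocomplex}, Proposition \ref{prop: rpoejgwpierngore}, and Lemma \ref{lem:inc grr} exactly as you do: the equivalence $\ind(\bbX)$ gives an isomorphism on complexified unit endomorphisms, and commutativity of the triangle then transfers the isomorphism for $\ch(\bbX)$ to $\smallint{}_{\scrC/\bbX}\ch(\scrE)$. The indexing point you flag is already absorbed into Lemma \ref{lem:inc grr} and causes no trouble. The shift in Construction \ref{construction:ahatfunctor} omits $\ch_1(\scrE)$, whose pushforward would be the rank term $\ch_0(\ind D)$, and this matches the absence of any $b_0$ term in the formula from Proposition \ref{prop: rpoejgwpierngore}.
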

\begin{proof}
Combine Propositions \ref{prop:grrtocomplex} and \ref{prop: rpoejgwpierngore} with Lemma \ref{lem:inc grr}.
\end{proof}

\subsection{Pre-Grothendieck-Riemann-Roch and bulk-deformations}
Construction \ref{construction:ahatfunctor} did not require that $\scrE$ was a GRR complex vector bundle, i.e., by performing the exact same construction, we obtain a functor
    \begin{equation}\label{eqn:ahat2}
    \smallint{}_{\scrC/\bbX}\ch(\scrE):\flow^{\rm GRR}\to\dgnerve(\Mod_{\bC[\underline b]}).
    \end{equation}
This subsection is dedicated to collecting the following result.

\begin{lem}
We have a commutative diagram: 
    \begin{equation}
    \begin{tikzcd}
    & \flow^{\rm GRR}\arrow[ddr,"\smallint{}_{\scrC/\bbX}\ch(\scrE)",bend left] & \\
    & & \\
    \flow^{\rm preGRR}\arrow[uur,"\calG",bend left]\arrow[rr,"\smallint{}_{\scrC/\bbX}\ch(\scrE)"] & & \dgnerve(\Mod_{\bC[\underline b]}).
    \end{tikzcd}
    \end{equation}
\end{lem}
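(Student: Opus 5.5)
We need the composite $\smallint{}_{\scrC/\bbX}\ch(\scrE)\circ\calG$ to agree with $\smallint{}_{\scrC/\bbX}\ch(\scrE)$ on $\flow^{\rm preGRR}$. Note that \eqref{eqn:ahat2} is meant to be defined on $\flow^{\rm preGRR}$, since no trivialization over $\ell_\scrC$ is used. The plan is to check this at the level of semi-simplicial sets, simplex by simplex. We then invoke \cite[Theorem 1.4]{Tan18}, or the naturality of that upgrade, to conclude the $\infty$-functors agree.

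Fix a pre-GRR flow $n$-simplex $\bbX$. By definition $\calG(\bbX)$ has the same underlying flow simplex, universal curve, operators and index identifications as $\bbX$; only $\scrE$ is replaced by $\scrE\oplus\sigma^*\scrF$, with $\scrF\to\ell_\scrC$ and $\sigma:\scrC\to\ell_\scrC$ the $S^1$-bundle projection. Both functors produce twisted Morse complexes of the form $CM_*\big(\bbX,\exp(\pi_!\frakc\frakh(-));\bC[\underline b]\big)$ together with the higher maps $H^{\frakm_{\langle j_0\cdots j_\mu\rangle}}$. So it suffices to show
\[
\pi_!\frakc\frakh(\scrE\oplus\sigma^*\scrF)=\pi_!\frakc\frakh(\scrE)
\]
as additively coherent cocycles on $\bbX$, for compatible choices of Hermitian connections.

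Choose the connection on $\scrE\oplus\sigma^*\scrF$ to be $\nabla_\scrE\oplus\sigma^*\nabla_\scrF$, where $\nabla_\scrF$ is a Hermitian connection on $\scrF$ chosen inductively over $\dim\bbX_{xy}$ and $n$ so that it is compatible with breaking and restriction. Additivity of the cochain-level Chern character then gives
\[
\frakc\frakh(\scrE\oplus\sigma^*\scrF)=\frakc\frakh(\scrE)+\sigma^*\frakc\frakh(\scrF).
\]
The term $\sigma^*\ch_{\rho+2}(\scrF)$ is pulled back from $\ell_{\scrC_{xy}}$, whose fibres over $\bbX_{xy}$ are $1$-dimensional, so fibre integration over the $2$-dimensional fibres of $\scrC_{xy}\to\bbX_{xy}$ kills it. Indeed $\pi_!\sigma^*=(\pi|_\ell)_!\sigma_!$, and $\sigma_!\sigma^*=0$ on forms because the $S^1$-fibre integral of a basic form vanishes.

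One subtlety must be handled. Coherent cocycles are required to vanish on the tube-like ends, while $\scrF$ is only defined over all of $\ell_\scrC$. We therefore arrange $\nabla_\scrF$ to be trivial on the parts of $\ell_\scrC$ lying in the tube-like ends, where $\scrE$ and hence $\scrF$ are already trivialized. This step needs the most care: the choice of $\nabla_\scrF$ must be made simultaneously compatible with gluing, restriction to faces, and triviality on the ends. This is another induction over strata, using that the space of such connections is contractible.

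Finally, the Chern-character functor may depend on auxiliary choices of connection. If the connection on $\calG(\bbX)$ is not chosen as the direct-sum connection, we instead exhibit a concordance between the two choices over $\bbX\times[0,1]$. This gives a degenerate-type $1$-simplex, an upper triangular quasi-isomorphism as in Construction \ref{construction:chernfunctor}, relating the two outputs. That yields commutativity up to natural equivalence, which is what is needed downstream.
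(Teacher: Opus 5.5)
Your argument is correct and is essentially the paper's proof. The paper also reduces the lemma to the identity $\pi_!\frakc\frakh(\scrE\oplus\sigma^*\scrF;\bC[\underline b])=\pi_!\frakc\frakh(\scrE;\bC[\underline b])$ of additively coherent cocycles, and gets it from the vanishing of fibre integrals of pulled-back forms along the $S^1$-bundle $\sigma:\scrC\to\ell_\scrC$ (phrased there via the projection formula with $\alpha=1$). Your extra bookkeeping on the connections and the tube-like ends only fills in details the paper leaves implicit; note that your displayed identity should read $\pi_!=(\pi|_\ell)_!\sigma_!$, not $\pi_!\sigma^*=(\pi|_\ell)_!\sigma_!$.
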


\begin{proof}
The proof relies on the following elementary result.

\begin{claim}
Let $\pi:C\to M$ be a fiber bundle over a smooth manifold with compact oriented fibers. For any $\beta\in\Omega^*(M)$, we have that
    \begin{equation}
    \int_{C/M}\pi^*\beta=0.
    \end{equation}
\end{claim}

\begin{proof}[Proof of claim]
Recall, we have the projection formula:
    \begin{equation}
    \int_{C/M}\alpha\wedge\pi^*\beta=\Bigg(\int_{C/M}\alpha\Bigg)\wedge\beta,
    \end{equation}
where $\alpha\in\Omega^*(C)$ has vertical compact support and $\beta\in\Omega^*(M)$; the claim follows by using the case $\alpha=1$ of \emph{idem}.
\end{proof}

Consider a pre-GRR flow $n$-simplex $\bbX\equiv\bbX_{0\cdots n}$ Now, the lemma follows by observing
    \begin{equation}
    \pi_!\frakc\frakh(\scrE\oplus\sigma^*\scrF;{\bC[\underline b]})=\pi_!\frakc\frakh(\scrE;{\bC[\underline b]}),
    \end{equation}
where $\scrF\to\ell_\scrC$ is the complex vector bundle used in the construction of $\calG(\bbX)$, as additively coherent cocycles on $\calG(\bbX)$; this utilizes the aforementioned claim.
\end{proof}

Moreover, we may define a functor 
    \begin{equation}
    \ind(\bbX):\flow^{\rm preGRR}\to\flow^{\rm decU}
    \end{equation}
as the composition 
    \begin{equation}
    \flow^{\rm preGRR}\xrightarrow{\calG}\flow^{\rm GRR}\xrightarrow{\ind(\bbX)}\flow^{\rm decU}.
    \end{equation}

\begin{rem}
Consider the diagram:
\begin{equation}\label{eq:diag}
    \begin{tikzcd}
        \flow^{\rm GRR}
        \arrow[rr, "\ind(\bX)"]
        \arrow[ddr, swap, "\smallint{}_{\scrC/\bbX}\ch(\scrE)",bend right]
        &&
        \flow^{\rm decU}
        \arrow[ddl, "\ch(\bX)",bend left]
        \\&&\\
        &N^{\rm dg}(\Mod_{\bC[\underline b]}).
    \end{tikzcd}
\end{equation}
If this diagram were to commute, Theorem \ref{thm:tech} would follow immediately and, in fact, we would prove a stronger chain-level result, cf. Remark \ref{rem:eopjfpwnwGN}.
\end{rem}

\subsection{Grothendieck-Riemann-Roch, revisited}
Putting all of the previous parts together, we are ready to prove Theorem \ref{thm:tech}. Since the details of the proof are rather long-winded, we first provide a sketch.

    \begin{proof}[Sketch proof of Theorem \ref{thm:tech}]

        Since both sides of (\ref{eq: eiongwiepngreping}) commute with directed colimits, it suffices to restrict our discussion to finite flow categories (we indicate restriction to the full subcategory of finite flow categories by a subscript $(-)_{\rm fin}$); in this case, both functors land in the subcategory $N^{\rm dg}(\perf_{\bC[\underline b]})$.
        
        $\flow^{\rm decU}_{\rm fin}$ and $\flow^{\rm GRR}_{\rm fin}$ are stable $\infty$-categories and should be viewed as $\infty$-categories enriched in $\bS$-modules (i.e., spectra); moreover, $N^{\rm dg}(\perf_{\bC[\underline b]})$ should be viewed as a $\bC$-linear category. All three categories: $\flow^{\rm decU}_{\rm fin}$, $\flow^{\rm GRR}_{\rm fin}$ and $N^{\rm dg}(\perf_{\bC[\underline b]})$, each have a preferred generator, and the functors between them send preferred generators to preferred generators. Combined with Propositions \ref{prop:grrtocomplex}, \ref{prop: rpoejgwpierngore}, and \ref{prop: rpoejgwpierngoreeerger}, all three arrows in \eqref{eq:diag} are equivalences after base changing the top two categories to $\bbC$. I.e., the failure for \eqref{eq:diag} to commute is classified by an autoequivalence of $\perf_{\bC[\underline b]}$.

        By a result of To\"en \cite{Toen}, DG-endofunctors of $\perf_{\bC[\underline b]}$ are classified by ${\bC[\underline b]}$-${\bC[\underline b]}$ bimodules. In fact, autoequivalences of $\perf_{\bC[\underline b]}$ preserving the preferred generator correspond to bimodules isomorphic to ${\bC[\underline b]}$ as a left ${\bC[\underline b]}$-module, but with right module action conjugated by an autoequivalence $\psi$ of ${\bC[\underline b]}$; it follows from Lemma \ref{lem:inc grr} that $\psi$ acts as the identity on homology -- this is sufficient to conclude the theorem. 
    \end{proof}
    The rest of this section provides a more detailed account of this argument, starting with a closer look at how to relate DG-categories and stable $\infty$-categories.

\subsubsection{$\infty$-categorical preliminaries}
    Let $\Cat_\infty$ be the $\infty$-category of $\infty$-categories, and, given any $\cC \in \Cat_\infty$, we denote by $\Ho(\cC)$ its underlying homotopy category (this is an ordinary category). We write $\Cat^\Ex_\infty\subset\Cat_\infty$ for the subcategory of stable $\infty$-categories whose morphisms consist of exact functors; this should be thought of as the $\infty$-category of $\infty$-categories $\cC$, such that $\Ho(\cC)$ is a triangulated category, whose morphisms are functors which induce triangulated functors on homotopy categories. Cf. \cite[Section 2.2]{BGT} for a more detailed discussion. The inclusion $\Cat^\Ex_\infty \to \Cat_\infty$ is conservative and a right adjoint by \cite[Theorem 1.1.4.4]{HA}. 
\subsubsection{DG-nerve, revisited} 
    Recall the construction $N^{\rm dg}(-)$ from Section \ref{sec:alg}. As already mentioned, \cite[Proposition 1.3.1.20]{HA} shows this construction can be upgraded to a functor: 
        \begin{equation}
        N^{\rm dg}: \Cat_{\rm dg} \to \Cat_\infty,
        \end{equation}
    where $\Cat_{\rm dg}$ is the category of DG-categories.

    \begin{rem}
        There is a technical point here: $\Cat_{\rm dg}$ forms a model category rather than an $\infty$-category, so the correct technical statement is that there is a functor to $\Cat_\infty$ from the $\infty$-category $N(\Cat_{\rm dg}^{\rm cof})[W^{-1}]$ induced by the model category structure on $\Cat_{\rm dg}$. Explicitly, this is obtained from $\Cat_{\rm dg}$ by taking the full subcategory of cofibrant objects, taking the nerve, and formally inverting all the weak equivalences. To avoid clutter in the sequel, we will abuse notation and write $\Cat_{\rm dg}$ to also mean $N(\Cat_{\rm dg}^{\rm cof})[W^{-1}]$ where appropriate, and similarly for other model categories arising. We are primarily interested in the homotopy categories of these categories, and applying $N\big((-)^{\rm cof}\big)[W^{-1}]$ does not affect the homotopy category. Cf. \cite[Section 2.2]{BGT} for further discussion.
    \end{rem}

    \begin{lem}
       After restricting to the full subcategory of pretriangulated DG-categories, $N^{\rm dg}$ admits a factorization $N^{\rm dg}_\infty$ through $\Cat^\Ex_\infty$:
        \begin{equation}
            \begin{tikzcd}
                \Cat_{\rm dg}^{\rm pretri}
                \arrow[r]
                \arrow[d, "N^{\rm dg}_\infty",swap]
                &
                \Cat_{\rm dg}
                \arrow[d, "N^{\rm dg}"]
                \\
                \Cat^\Ex_\infty
                \arrow[r]
                &
                \Cat_\infty.
            \end{tikzcd}
        \end{equation}
    \end{lem}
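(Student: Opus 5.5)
The plan is to identify the essential image of $N^{\rm dg}$ on pretriangulated DG-categories inside the stable $\infty$-categories. Since $\Cat^\Ex_\infty\to\Cat_\infty$ is a (non-full) subcategory inclusion, a factorization is the same as two checks. First, for each pretriangulated $\cC$, the $\infty$-category $N^{\rm dg}(\cC)$ must be stable. Second, for each DG-functor $f:\cC\to\cD$ between pretriangulated DG-categories, $N^{\rm dg}(f)$ must be exact. Compatibility with weak equivalences is then automatic, because we restrict an already-existing functor out of $N(\Cat_{\rm dg}^{\rm cof})[W^{-1}]$. Inclusion into a subcategory of $\Cat_\infty$ containing the image is a property, not extra data.

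For stability, I would invoke \cite[Proposition 1.3.1.10]{HA}: $N^{\rm dg}(\cC)$ is stable whenever $\cC$ is pretriangulated. The argument has three parts.
\begin{enumerate}
\item The zero object of $\cC$ gives a zero object of $N^{\rm dg}(\cC)$.
\item Homotopy pushout squares in $N^{\rm dg}(\cC)$ can be computed via mapping cones. The cone of $g:x\to y$ is realized, up to quasi-isomorphism, by an object of $\cC$, because $\cC$ is pretriangulated and so its Yoneda image is closed under cones.
\item The suspension functor is computed by the DG-shift, which is invertible up to equivalence because the image is closed under shifts in both directions.
\end{enumerate}
These facts together show the homotopy category is triangulated with the expected shift, and that cofiber sequences agree with fiber sequences, which is stability.

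For exactness, a functor between stable $\infty$-categories is exact iff it preserves the zero object and cofiber sequences. A DG-functor is enriched, so it preserves the zero object up to equivalence. It also preserves cones up to quasi-isomorphism, since it commutes with the formation of twisted complexes $\operatorname{Cone}(g)$ (these are finite colimits built from the enriched structure). Concretely, $N^{\rm dg}(f)$ sends the $2$-simplex exhibiting a cofiber sequence $x\to y\to\operatorname{Cone}(g)$ to a $2$-simplex of the same form. Such simplices are pushout squares with a zero corner, so they stay cofiber sequences. This is the one place where some care is needed: "cone in $\cC$" must mean the representing object, not merely a module, so one has to check that $f$ carries the representing object to something quasi-isomorphic to the cone of $f(g)$ in $\cD$. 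I would handle this via the Yoneda characterization, namely the natural quasi-isomorphism $\cC(-,\operatorname{Cone}(g))\simeq\operatorname{Cone}(\cC(-,x)\to\cC(-,y))$ and its image under $f$.

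Finally, the factorization is functorial, since it is a restriction of $N^{\rm dg}$ landing in the subcategory. I expect the only real obstacle to be the exactness check above, and the plan is simply to cite \cite[Proposition 1.3.1.10]{HA} together with the cone-preservation argument; alternatively one could defer to \cite[Section 2.2]{BGT}, where this compatibility is recorded.
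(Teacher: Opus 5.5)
Your proposal follows the paper's route: stability of $N^{\rm dg}(\cC)$ on objects, exactness of $N^{\rm dg}(f)$ on morphisms, and no coherence to check because $\Cat^\Ex_\infty\to\Cat_\infty$ is an inclusion on objects and morphisms. One correction on the reference. \cite[Proposition 1.3.1.10]{HA} only asserts that $N^{\rm dg}(\cC)$ is an $\infty$-category, and Lurie proves stability only for dg-nerves of chain complexes over an additive category. So the key fact you use for general pretriangulated $\cC$, that cofiber sequences in $N^{\rm dg}(\cC)$ are computed by cones, should be cited from \cite[Theorem 4.3.1]{Faonte:Nerve}, as the paper does; that same argument also gives your exactness step.
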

    \begin{proof}
        That there is a factorization at the level of objects is proved in \cite[Theorem 4.3.1]{Faonte:Nerve} (cf. also \cite[Theorem 5.1]{Ornaghi:Nerve}). The same argument as in \cite{Faonte:Nerve} shows $N^{\rm dg}$ sends DG-functors (between pretriangulated DG-categories) to exact functors, thus giving a factorization at the level of morphisms. Note, no coherence needs checking, since $\Cat^\Ex_\infty \to \Cat_\infty$ is an inclusion of simplicial sets which is an inclusion at the level of objects and at the level of morphisms (essentially by definition, cf. \cite[Definition 2.12]{BGT}).
    \end{proof}
    
\subsubsection{Recollections on enriched categories}
    One should think of $\infty$-categories as very close to categories enriched in spaces, and stable $\infty$-categories as very close to categories enriched in spectra. In the sequel, we recall some more precise statements in this direction.

    Let $\mathrm{Spc}$ be the category of simplicial sets (``spaces'') and $\Cat_{\rm Spc}$ the category of categories enriched in simplicial sets. We write $\Cat_\Sp$ for the category of small categories enriched in symmetric spectra equipped with the model category structure of \cite[Section 2.1]{BGT} (cf. also \cite{Tabuada:SpectralCats}). These are closely related to stable $\infty$-categories: there is a ``Yoneda'' functor of $\infty$-categories,
        \begin{equation}
        \bY:\Cat_\infty \to \Cat_{\rm Sp},\footnote{As already mentioned, we drop $N\big((-)^{\rm cof}\big)[W^{-1}]$ from the notation.}
        \end{equation}
    which is a fully faithful right adjoint, cf. \cite[Definition 4.7, Theorem 4.22]{BGT}. Any $\cC \in \Cat_\infty$ (functorially) determines a category enriched in spaces which we denote as $\mathfrak{C}(\cC)$ (cf. \cite[Section 1.1.5]{HTT} and \cite[Section 2.2]{BGT}); this yields an equivalence of $\infty$-categories  
        \begin{equation}
        \mathfrak{C}: \Cat_\infty \to \Cat_{\rm Spc}
        \end{equation}
    which is left adjoint to the simplicial nerve, and $\bY(\cC)$ is defined to be the full spectral subcategory of $\mathrm{Fun}\big(\mathfrak{C}(\cC)^{\rm op}, \Sp\big)$ generated by stably representable functors (on bifibrant objects). From the construction, we find that $\Ho(\cC) \cong \Ho\big(\bY(\cC)\big)$ as triangulated categories. We will later restrict to the full subcategory of pretriangulated spectral categories $\Cat_\Sp^{\rm pretri}$; one could equivalently instead work with a model category on $\Cat_\Sp$ whose equivalences are functors inducing equivalences on pretriangulated closures, cf. \cite[Section 2.1]{BGT} for further discussion. Crucially, $\bY$ restricts to an equivalence $\Cat^\Ex_\infty \to \Cat_\Sp^{\rm pretri}$, cf. \cite[Remark 4.24]{BGT}.

    There is an adjunction $\Sigma^\infty: {\rm Spc} \rightleftharpoons \mathrm{Sp}: \Omega^\infty$ inducing one on enriched categories. Observe, we have the following commutative diagram (essentially by applying Yoneda):

    \begin{equation}\label{eq: weiorgheaouf}
        \begin{tikzcd}
            \Cat^\Ex_\infty 
            \arrow[r, "\bY"]
            \arrow[d]
            &
            \Cat_\Sp^{\rm pretri} 
            \arrow[d, "\Omega^\infty"]
            \\
            \Cat_\infty 
            \arrow[r, "\mathfrak{C}"]
            &
            \Cat_{\rm Spc}.
        \end{tikzcd}
    \end{equation}

    Let $H\bC$ be the $\bbC$-Eilenberg-Maclane spectrum, $\Cat_{H\bC}$ the category of small categories enriched in the monoidal model category of $H\bC$-modules, and $\Cat_{H\bC}^{\rm pretri}$ the full subcategory of pretriangulated ones, cf. \cite[Section 4.2]{Tabuada:THHTC}. By applying $\pi_*$ to the mapping spectra, the homotopy category $\Ho(\cC)$, for any $\cC \in \Cat_{H\bC}$, is naturally enriched in graded $\bC$-modules. There is an adjunction induced by the unit map $\bS \to H\bC$:
    \begin{equation}\label{eq: irehgupoebgouerbg}
        \begin{tikzcd}
            \Cat_\Sp
            \arrow[rr, "-\otimes_\bS H\bC",shift left=2, "{\textrm{\rotatebox[origin=c]{270}{$\dashv$}}}" swap]
            &&
            \arrow[ll, "\bF", shift left=2]
            \Cat_{H\bC};
        \end{tikzcd}
    \end{equation}
    this is a Quillen adjunction and hence induces an adjunction on the corresponding homotopy categories, cf.  \cite[Appendix A]{Tabuada:THHTC}. The left adjoint does not preserve the property of being pretriangulated, however, post-composing with the functor which sends $\cC\in \Cat_{H\bC}$ to its pretriangulated closure in the sense of \cite[Section 5]{BM:LocTHHTC} (note, this pretriangulation functor induces a left adjoint on homotopy categories), we obtain a similar adjunction to \eqref{eq: irehgupoebgouerbg} between the full subcategories of pretriangulated objects on both sides; we denote this adjunction the same way.

    Here, $-\otimes_\bS H\bC$ sends a category $\cC\in\Cat_{\rm Sp}$ to the pretriangulated closure of a category with (1) the same objects as $\cC$ and (2) mapping spaces the mapping spectra in $\cC$ smashed with $H\bC$. Moreover, for any $\cC\in\Cat_{\rm Sp}$, $\cC \otimes_\bS H\bC$ has homotopy category generated (in the triangulated sense) by the linear subcategory $\Ho(\cC) \otimes_\bZ \bC$. Meanwhile, the forgetful functor $\bF$ on $\cC\in\Cat_{H\bbC}$ (1) preserves objects and (2) takes the underlying mapping spectrum of the $H\bC$-module mapping spectrum. Finally, note that, for DG-categories that are already pretriangulated, $(-)^{\rm pretri}$ preserves homotopy categories.

    There is a functor which induces an isomorphism on homotopy categories (in fact, the functor is the right adjoint of a Quillen equivalence):
    \begin{equation}
        \bT:\Cat_{\rm dg}\xrightarrow{\sim} \Cat_{H\bC},
    \end{equation} 
    cf.  \cite[Section 9]{Tabuada:THHTC}, which similarly restricts to an equivalence on full subcategories of pretriangulated objects. Strictly speaking, $\bT$ is a zig-zag consisting of three Quillen adjunctions. The important aspect of this for the present article is that $\bT$ induces an isomorphism on homotopy categories and preserves homotopy categories on objects: 
        \begin{equation}
        \Ho(\cC) \cong \Ho\big(\bT(\cC)\big),\;\;\cC\in\Cat_{\rm dg},
        \end{equation}
    as categories enriched in graded $\bC$-modules.
\subsubsection{Adjoints for the DG-nerve}
    \begin{lem}
        The following diagram commutes at the level of homotopy categories:
        \begin{equation}\label{eq: righesoprbg}
            \begin{tikzcd}
                \Cat^{\rm pretri}_{\rm dg}
                \arrow[r, "\bT"]
                \arrow[dd, "N^{\rm dg}",swap]
                &
                \Cat^{\rm pretri}_{H\bC} 
                \arrow[d, "\bF"]
                \\
                &
                \Cat^{\rm pretri}_\Sp 
                \arrow[d, "\Omega^\infty"]
                \\
                \Cat_\infty 
                \arrow[r, "\mathfrak{C}"]
                &
                \Cat_{\rm Spc}.
            \end{tikzcd}
        \end{equation}
    \end{lem}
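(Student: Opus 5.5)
The plan is to compare both composites through the intermediate square \eqref{eq: weiorgheaouf} and to reduce everything to a statement about underlying spaces of mapping objects. Fix a pretriangulated DG-category $\cC$. Going around the top and right of \eqref{eq: righesoprbg} yields the simplicially enriched category with the same objects as $\cC$, whose mapping spaces are $\Omega^\infty$ of the underlying spectrum of the $H\bC$-module $\bT(\cC)(x,y)$. Since $\bT$ is built from the Dold--Kan correspondence (up to the zig-zag of Quillen equivalences in \cite[Section 9]{Tabuada:THHTC}), this spectrum is the Eilenberg--MacLane spectrum $H\cC(x,y)$ of the chain complex $\cC(x,y)$. Its infinite loop space is then the Dold--Kan space $\mathrm{DK}\big(\tau_{\geq0}\cC(x,y)\big)$. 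Going around the left and bottom, the mapping spaces of $\mathfrak{C}\big(N^{\rm dg}(\cC)\big)$ are equivalent to the mapping spaces of the $\infty$-category $N^{\rm dg}(\cC)$. By \cite[Remark 1.3.1.12]{HA} (or the explicit description of $2$-simplices in the DG-nerve), these are exactly $\mathrm{DK}\big(\tau_{\geq0}\cC(x,y)\big)$. So, on objects, both composites produce simplicial categories with the same objects and weakly equivalent mapping spaces.

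The key steps, in order, are as follows. First, I will identify $\Omega^\infty\circ\bF\circ\bT$ with the functor $\mathrm{DK}\circ\tau_{\geq0}$ applied to mapping complexes, naturally in DG-functors. This is a statement about the three Quillen adjunctions constituting $\bT$, each of which preserves underlying chain complexes up to natural quasi-isomorphism. Second, I will recall Lurie's comparison showing that $N^{\rm dg}(\cC)$ is equivalent to the homotopy coherent nerve of the simplicial category $\mathrm{DK}\big(\tau_{\geq0}\cC\big)$. The comparison goes through the intermediate construction in \cite[Construction 1.3.1.13 and Proposition 1.3.1.17]{HA}, which is natural in $\cC$. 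Applying the left adjoint $\mathfrak{C}$ and using that the counit $\mathfrak{C}N\to\identity$ is a weak equivalence on fibrant simplicial categories, I obtain a natural weak equivalence $\mathfrak{C}\big(N^{\rm dg}(\cC)\big)\simeq\mathrm{DK}\big(\tau_{\geq0}\cC\big)$. Third, I will check that both identifications are natural with respect to DG-functors and send quasi-equivalences to Dwik--Kan equivalences. Together these give the required isomorphism of functors on homotopy categories.

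The main obstacle is the first step. $\bT$ is only a zig-zag, passing through categories enriched in simplicial $\bZ$-modules or symmetric spectra built from them, so a strictly natural comparison with the Dold--Kan construction must be extracted at each stage rather than asserted. I would handle this by working only in homotopy categories, which is all the lemma claims. At each Quillen adjunction, I check that the derived functor, composed with the forgetful functor to simplicially enriched categories, is naturally weakly equivalent to the previous stage's Dold--Kan construction. This reduces to the classical fact that the Eilenberg--MacLane functor from $H\bZ$-modules (or $H\bC$-modules) to spectra has infinite loop space the Dold--Kan space of the connective cover.
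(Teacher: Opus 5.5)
Your proposal is correct in outline, but it takes a genuinely different route from the paper. The paper never works with the right adjoints directly. It notes that every functor in the square is a right adjoint, so it suffices to compare the two composites of left adjoints. The left adjoint running through $\mathfrak{C}$ and $N^{\rm dg}$ is simplicial chains $C_*(-;\bC)$ applied to mapping spaces, cf.\ \cite{RiveraZeinalian}. The other is $\Sigma^\infty_+$, then $-\otimes_\bS H\bC$, then Dold--Kan. These two monoidal, colimit-preserving functors $\mathrm{Spc}\to\mathrm{Ch}(\bC)$ agree because $\mathrm{Spc}$ is generated by the point and both send the point to $\bC$.

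You instead compute both right-adjoint composites and identify each with Lurie's simplicial category $\cC_\Delta$ of Dold--Kan spaces $\mathrm{DK}(\tau_{\geq0}\cC(x,y))$. On the left-hand side you use \cite[Proposition 1.3.1.17]{HA} together with the counit $\mathfrak{C}N\to\identity$ on fibrant simplicial categories. This gives an explicit description of the mapping spaces. It also avoids two things the paper relies on: identifying the left adjoint of $N^{\rm dg}$, and the pretriangulated closure that $-\otimes_\bS H\bC$ forces on the left-adjoint side.

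The cost is that the whole burden moves to the comparison through the zig-zag $\bT$ of \cite{Tabuada:THHTC}, and here your final sentence understates what is needed. The classical fact $\Omega^\infty HV\simeq\mathrm{DK}(\tau_{\geq0}V)$ identifies the mapping spaces only one at a time. A Dwyer--Kan equivalence of simplicially enriched categories also requires these identifications to respect composition. Concretely, at each stage of the zig-zag you need a (zig-zag of) monoidal natural transformation(s) between the lax monoidal structures involved: the Alexander--Whitney structure on Lurie's Dold--Kan side versus the structures in Shipley's zig-zag. The paper gets this multiplicative coherence for free from the universal property of $\mathrm{Spc}$. In your plan it is the one step that needs a genuine argument rather than a citation.
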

    \begin{proof}
        All functors in this diagram are right adjoints. Therefore, it suffices to check that the diagram induced by passing to left adjoints commutes. The adjoint functor obtained by going up and then left takes a category enriched in spaces and sends it to the pretriangulated DG-category with (1) the same objects and (2) morphisms obtained by applying the functor $\mathrm{Spc} \to \mathrm{Ch}(\bC)$ given by: passing to suspension spectra, smashing with $H\bC$, and applying the Dold-Kan correspondence $\Mod_{H\bC} \simeq \mathrm{Ch}(\bC)$. Meanwhile, the adjoint functor going left and then up (1) again preserves objects and (2) applies the functor $C_*(-; \bC)$, given by taking simplicial chains, levelwise to the morphism spaces, cf. \cite[Proposition 4.5]{RiveraZeinalian}. That these two monoidal functors $\mathrm{Spc} \to \mathrm{Ch}(\bC)$ are equivalent follows from the facts that: both can be upgraded to colimit-preserving Quillen functors of monoidal model categories (since all intermediate steps in their construction can), $\mathrm{Spc}$ is generated by a point, and both functors send the point to $\bC$.
    \end{proof}
    \begin{prop}
        The following diagram commutes at the level of homotopy categories:
        \begin{equation}\label{eq: oirewgsoeahg}
            \begin{tikzcd}
                \Cat^{\rm pretri}_{\rm dg}
                \arrow[d, "N^{\rm dg}_\infty",swap]
                \arrow[r, "\bT"]
                &
                \Cat_{H\bC}^{\rm pretri}
                \arrow[d, "\bF"]
                \\
                \Cat^\Ex_\infty 
                \arrow[r, "\bY"]
                &
                \Cat_\Sp^{\rm pretri}.
            \end{tikzcd}
        \end{equation}
    \end{prop}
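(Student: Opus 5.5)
We deduce the square \eqref{eq: oirewgsoeahg} from the square \eqref{eq: righesoprbg} of the previous lemma together with the square \eqref{eq: weiorgheaouf}. Concretely, we paste the two diagrams. Going around \eqref{eq: righesoprbg}, we have
\begin{equation}
\Omega^\infty\circ\bF\circ\bT\simeq\mathfrak{C}\circ N^{\rm dg}.
\end{equation}
Since $N^{\rm dg}$ restricted to pretriangulated DG-categories factors through $N^{\rm dg}_\infty$, we may rewrite $\mathfrak{C}\circ N^{\rm dg}$ as $\mathfrak{C}\circ\iota\circ N^{\rm dg}_\infty$, where $\iota:\Cat^\Ex_\infty\to\Cat_\infty$ is the inclusion. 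By \eqref{eq: weiorgheaouf}, this in turn equals $\Omega^\infty\circ\bY\circ N^{\rm dg}_\infty$. Hence, at the level of homotopy categories,
\begin{equation}
\Omega^\infty\circ(\bF\circ\bT)\simeq\Omega^\infty\circ(\bY\circ N^{\rm dg}_\infty)
\end{equation}
as functors $\Cat^{\rm pretri}_{\rm dg}\to\Cat_{\rm Spc}$. So the whole proof reduces to cancelling $\Omega^\infty$.

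The key step is to show that $\Omega^\infty:\Cat^{\rm pretri}_\Sp\to\Cat_{\rm Spc}$ is faithful enough to detect the identification. Equivalently, we need to recover a pretriangulated spectral category from its underlying space-enriched category. The plan is to use that $\bY$ restricts to an equivalence $\Cat^\Ex_\infty\simeq\Cat^{\rm pretri}_\Sp$, with inverse given by the underlying $\infty$-category. Via $\mathfrak{C}$, this inverse is exactly $\Omega^\infty$ followed by the simplicial nerve. Thus, on pretriangulated spectral categories, $\mathfrak{C}^{-1}\circ\Omega^\infty$ is an inverse equivalence to $\bY$ (this is \eqref{eq: weiorgheaouf} read backwards). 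Applying $\bY\circ\mathfrak{C}^{-1}$ to both sides of the displayed identity then gives
\begin{equation}
\bF\circ\bT\simeq\bY\circ N^{\rm dg}_\infty
\end{equation}
on homotopy categories, provided both $\bF\circ\bT$ and $\bY\circ N^{\rm dg}_\infty$ land in $\Cat^{\rm pretri}_\Sp$. This holds for $\bY$ by construction. For $\bF\circ\bT$ it holds because $\bT$ preserves pretriangulatedness and $\bF$ does not change the underlying homotopy category or its triangulated structure.

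The main obstacle is this cancellation of $\Omega^\infty$. A priori, $\Omega^\infty$ forgets the stable enrichment, and is not faithful on arbitrary spectral categories, since connective covers lose information. What makes it work is the restriction to pretriangulated objects: there, negative homotopy of mapping spectra is recovered from shifts, so the underlying $\infty$-category determines the stable one. I would first verify this directly. Both $\bF\bT(\cC)$ and $\bY N^{\rm dg}_\infty(\cC)$ have the same objects and a natural comparison map of mapping spectra. We already know this map is an equivalence on $\Omega^\infty$, and it is compatible with suspension. Applying it to $\Sigma^{-n}$ of the source object then identifies all negative homotopy groups, so the map is an equivalence of spectra.

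A secondary point is naturality in $\cC$. It follows because every identification used comes from adjunction units and counits, which were already natural in the previous lemma.
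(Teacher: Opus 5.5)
Your proposal is correct and is essentially the paper's argument: the paper also observes that \eqref{eq: righesoprbg} is the pasting of \eqref{eq: oirewgsoeahg} with \eqref{eq: weiorgheaouf}, and then cancels the downward maps of \eqref{eq: weiorgheaouf} because they are injective on isomorphism classes of objects and on homotopy classes of maps; your left inverse $\bY\circ\mathfrak{C}^{-1}$ to $\Omega^\infty$ on pretriangulated spectral categories is an explicit form of that injectivity. Your supplementary mapping-spectrum check can be dropped, since the cancellation already follows from $\bY$ being an equivalence onto $\Cat^{\rm pretri}_\Sp$ together with \eqref{eq: weiorgheaouf}, and that check presupposes a natural comparison map of mapping spectra that you never construct.
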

    \begin{proof}
        Observe, \eqref{eq: righesoprbg} is the concatenation of \eqref{eq: oirewgsoeahg} and \eqref{eq: weiorgheaouf}. The claim follows because the downwards maps in \eqref{eq: weiorgheaouf} are injective on the sets of isomorphism classes of objects and homotopy classes of maps.
    \end{proof}
    \begin{cor}\label{cor: repiwghewigh}
        The functor induced by $N^{\rm dg}_\infty$ on homotopy categories has a left adjoint which we denote by $\bL$:
        \begin{equation}
            \begin{tikzcd}
                \Ho(\Cat^\Ex_\infty)
                \arrow[rr, "\bL",shift left=2, "{\textrm{\rotatebox[origin=c]{270}{$\dashv$}}}" swap]
                &&
                \arrow[ll, "N^{\rm dg}_\infty", shift left=2]
                \Ho(\Cat^{\rm pretri}_{\rm dg})
            \end{tikzcd}
        \end{equation}
    \end{cor}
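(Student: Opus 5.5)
We want a left adjoint to the functor $\Ho(\Cat^{\rm pretri}_{\rm dg})\to\Ho(\Cat^\Ex_\infty)$ induced by $N^{\rm dg}_\infty$. The plan is to use the preceding Proposition, which says that, at the level of homotopy categories, $\bY\circ N^{\rm dg}_\infty\cong\bF\circ\bT$. Each functor on the right-hand side will be shown to have a left adjoint on homotopy categories, and $\bL$ is then assembled from these adjoints.

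First, $\bT$ induces an equivalence $\Ho(\Cat^{\rm pretri}_{\rm dg})\simeq\Ho(\Cat^{\rm pretri}_{H\bC})$. It is the right adjoint of a zig-zag of Quillen equivalences, and it restricts to the pretriangulated subcategories. Let $\bT^{-1}$ denote an inverse; it is in particular a left adjoint to $\bT$.

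Second, $\bF:\Ho(\Cat^{\rm pretri}_{H\bC})\to\Ho(\Cat^{\rm pretri}_\Sp)$ has the left adjoint $(-\otimes_\bS H\bC)^{\rm pretri}$. This follows from the Quillen adjunction \eqref{eq: irehgupoebgouerbg}, followed by pretriangulated closure. The closure is a left adjoint on homotopy categories to the inclusion of pretriangulated objects, and it preserves pretriangulated categories up to equivalence. The composite of these two left adjoints is therefore left adjoint to the restriction of $\bF$.

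Third, $\bY$ restricts to an equivalence $\Cat^\Ex_\infty\simeq\Cat^{\rm pretri}_\Sp$, hence also on homotopy categories. We fix an inverse $\bY^{-1}$.

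Combining these, $N^{\rm dg}_\infty\cong\bY^{-1}\circ\bF\circ\bT$ on homotopy categories. We define
\begin{equation}
\bL\equiv\bT^{-1}\circ(-\otimes_\bS H\bC)^{\rm pretri}\circ\bY.
\end{equation}
For $\cC\in\Cat^\Ex_\infty$ and $\cD\in\Cat^{\rm pretri}_{\rm dg}$ this gives the natural bijections
\begin{align}
\Ho(\Cat^{\rm pretri}_{\rm dg})(\bL\cC,\cD)&\cong\Ho(\Cat^{\rm pretri}_{H\bC})\big((\bY\cC\otimes_\bS H\bC)^{\rm pretri},\bT\cD\big)\\
&\cong\Ho(\Cat^{\rm pretri}_\Sp)(\bY\cC,\bF\bT\cD)\\
&\cong\Ho(\Cat^\Ex_\infty)(\cC,\bY^{-1}\bF\bT\cD)\\
&\cong\Ho(\Cat^\Ex_\infty)(\cC,N^{\rm dg}_\infty\cD).
\end{align}
Naturality in $\cD$ uses that the isomorphism of the previous Proposition is natural.

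The main obstacle is the pretriangulation step. Left adjointness of the closure functor on homotopy categories has to be checked against the model structures of \cite{BGT,Tabuada:THHTC}. One must also check that the adjunction really restricts to the pretriangulated subcategories, i.e.\ that $\bF$ of a pretriangulated $H\bC$-category is pretriangulated as a spectral category. I would verify this directly: cones and shifts in $\bF(\cC)$ are computed by the same Yoneda modules, since $\bF$ does not change the underlying mapping spectra. For the adjunction itself I would cite \cite[Section 5]{BM:LocTHHTC}.
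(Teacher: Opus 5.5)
Your proposal is correct and follows the paper's argument. The paper also defines $\bL$ as $\bT^{-1}\circ(-\otimes_{\bS} H\bC)\circ\bY$, with the pretriangulated closure already built into its notation for $-\otimes_{\bS} H\bC$, and gets the adjunction from the stated adjunctions, full faithfulness of $\bY$, and a diagram chase around the commutative square $\bY\circ N^{\rm dg}_\infty\cong\bF\circ\bT$; your chain of bijections is that chase written out, and your remarks on naturality and on $\bF$ preserving pretriangulatedness make explicit points the paper leaves implicit.
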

    \begin{proof}
        Explicitly, the left adjoint is given by $\bT^{-1} \circ(-\otimes_\bS H\bC)\circ \bY$. That this is a left adjoint follows from: the adjunctions stated above, fully faithfulness of $\bY$, and a diagram chase around \eqref{eq: oirewgsoeahg}.
    \end{proof}
    Combining the above discussions on homotopy categories, we we obtain the following result.
    \begin{cor}\label{cor: pirehgpiegnpi}
        For $\cC \in \Cat^\Ex_\infty$, there is a fully faithful embedding 
            \begin{equation}
            \Ho(\cC) \otimes_{\bZ} \bC \to \Ho\big(\bL(\cC)\big)
            \end{equation}
        whose image generates (here, we tensor with $\bC$ on each morphism group).
    \end{cor}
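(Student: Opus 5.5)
The plan is to use the explicit formula for $\bL$ from Corollary \ref{cor: repiwghewigh}, namely $\bL\simeq\bT^{-1}\circ(-\otimes_\bS H\bC)\circ\bY$, and track homotopy categories through each of the three functors in turn. For $\cC\in\Cat^\Ex_\infty$, the first step is $\bY(\cC)\in\Cat^{\rm pretri}_\Sp$. As recalled above, there is an isomorphism of triangulated categories $\Ho(\cC)\cong\Ho\big(\bY(\cC)\big)$, under which the morphism groups of $\Ho(\cC)$ become $\pi_0$ of the mapping spectra of $\bY(\cC)$, and shifting recovers the remaining $\pi_*$.

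The second step applies $-\otimes_\bS H\bC$. Before pretriangulated closure, this gives a spectral category $\cC'$ with the same objects and with mapping spectra $\bY(\cC)(x,y)\wedge H\bC$. I would use the standard rationalization fact that, for any spectrum $E$,
\begin{equation}
\pi_*(E\wedge H\bC)\cong\pi_*(E)\otimes_\bZ\bC,
\end{equation}
which holds because $H\bC$ is flat over $\bS$ rationally; more concretely, $\pi_*(E\wedge H\bbQ)\cong\pi_*E\otimes\bbQ$ and then one tensors up to $\bC$. This identifies $\Ho(\cC')$ with $\Ho(\cC)\otimes_\bZ\bC$ on objects and morphisms, and the identification is compatible with composition because the smash product is lax monoidal. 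Passing to the pretriangulated closure $\cC'\to(\cC')^{\rm pretri}$ is fully faithful, since the closure is computed inside modules via the Yoneda embedding, and by definition its image generates. This is the statement, already recalled above, that $\Ho(\cC\otimes_\bS H\bC)$ is generated by the linear subcategory $\Ho(\cC)\otimes_\bZ\bC$.

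The final step is to apply $\bT^{-1}$. Since $\bT$ induces isomorphisms on homotopy categories that are compatible with the graded $\bC$-enrichment and preserve objects, the fully faithful embedding and the generation property both transfer to $\Ho\big(\bL(\cC)\big)$.

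I expect the main obstacle to be bookkeeping rather than substance: $\bL$ is only defined on homotopy categories, and $\bT$ is a zig-zag of Quillen equivalences. One therefore has to check that each step acts as described on the relevant models, namely cofibrant replacements, which do not change homotopy categories. One also has to check that the flatness identification is natural in the mapping spectra, so that it respects composition. I would handle this by verifying the rational Künneth map on the spectral level and then citing the stated properties of $\bT$ and $(-)^{\rm pretri}$.
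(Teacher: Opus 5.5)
Your proposal is correct and follows the same route as the paper. The paper gives no separate proof: it combines $\bL\simeq\bT^{-1}\circ(-\otimes_\bS H\bC)\circ\bY$ with three facts it recalls without proof, namely that $\bY$ and $\bT$ preserve homotopy categories and that $\Ho(\cC\otimes_\bS H\bC)$ is generated by $\Ho(\cC)\otimes_\bZ\bC$. Your use of $\pi_*(E\wedge H\bC)\cong\pi_*(E)\otimes_\bZ\bC$, together with the full faithfulness of pretriangulated closure, simply spells out the third fact.
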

\subsubsection{Strictification} \label{sec: riehgpioejhngpejrg}
In this part, we return to the diagram \eqref{eq:diag} and use the adjunction we have constructed to obtain a diagram of DG-categories.
    
Recall, both $\flow^{\rm GRR}$ and $\flow^{\rm decU}$ are generated under colimits and shifts by $\unit$; in both cases, the subcategory of finite flow categories is generated by $\unit$ under finite colimits. Similarly, recall $\perf_{\bC[\underline b]}$ is generated as a DG-category by $\unit_{\bC[\underline b]}$.
    
The functors $\smallint{}_{\scrC/\bbX}\ch(\scrE)$ and $\ch(\bbX)$ send finite flow categories to perfect ${\bC[\underline b]}$-modules, and so restrict to functors:
    \begin{align}
        \smallint{}_{\scrC/\bbX}\ch(\scrE): \flow^{\rm GRR}_{\rm fin} 
        &\to 
        N^{\rm dg}_\infty(\perf_{\bC[\underline b]}),
        \\
        \ch(\bbX): \flow^{\rm decU}_{\rm fin}
        &\to 
        N^{\rm dg}_\infty(\perf_{\bC[\underline b]}).
    \end{align}
    
    \begin{lem}
    The adjoint functors of DG-categories,
        \begin{align}
            \bL\smallint{}_{\scrC/\bbX}\ch(\scrE): \bL\flow^{\rm GRR}_{\rm fin} 
            &\to 
            \perf_{\bC[\underline b]},
            \\
            \bL\ch(\bbX): \bL\flow^{\rm decU}_{\rm fin}
            &\to 
            \perf_{\bC[\underline b]},
        \end{align}
        are equivalences.
    \end{lem}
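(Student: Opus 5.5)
The plan is to show that each adjoint DG-functor is fully faithful on a generator and essentially surjective onto a generating set; since both sides are pretriangulated, this suffices. I treat $\bL\ch(\bbX)$ in detail; the case of $\bL\smallint{}_{\scrC/\bbX}\ch(\scrE)$ is identical, with Proposition \ref{prop: rpoejgwpierngoreeerger} in place of Proposition \ref{prop: rpoejgwpierngore}.

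First, I would identify the functor $\bL\ch(\bbX)$. By Corollary \ref{cor: repiwghewigh}, $\bL\ch(\bbX)$ is the adjunct of $\ch(\bbX):\flow^{\rm decU}_{\rm fin}\to N^{\rm dg}_\infty(\perf_{\bC[\underline b]})$. It is computed as the composite
\begin{equation}
\bL\flow^{\rm decU}_{\rm fin}\to \bL N^{\rm dg}_\infty(\perf_{\bC[\underline b]})\to \perf_{\bC[\underline b]},
\end{equation}
where the second arrow is the counit. On homotopy categories, Corollary \ref{cor: pirehgpiegnpi} shows that $\Ho(\flow^{\rm decU}_{\rm fin})\otimes_\bZ\bC$ embeds fully faithfully into $\Ho(\bL\flow^{\rm decU}_{\rm fin})$, with generating image. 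Unwinding the adjunction, the restriction of $\bL\ch(\bbX)$ to this subcategory is the $\bC$-linear extension of $\Ho(\ch(\bbX))$. In particular, it sends $\unit$ to $\unit_{\bC[\underline b]}$, because Construction \ref{construction:chernfunctor} assigns to $\unit$ the complex $\bC[\underline b]$ with zero differential.

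Second, I would check full faithfulness on the generator. The graded endomorphisms of $\unit$ in $\Ho(\bL\flow^{\rm decU}_{\rm fin})$, including all shifts, are
\begin{equation}
\pi_*\flow^{\rm decU}(\unit,\unit)\otimes_\bZ\bC\cong\Omega^{\rm U}_*\otimes_\bZ\bC.
\end{equation}
The induced map to $H_*\big(\perf_{\bC[\underline b]}(\unit,\unit)\big)\cong\bC[\underline b]$ is exactly the map of Proposition \ref{prop: rpoejgwpierngore}, so it is an isomorphism. Hence $\bL\ch(\bbX)$ induces a quasi-isomorphism on the endomorphism complex of $\unit$.

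Third, I would pass from the generator to the whole category. The objects on which $\bL\ch(\bbX)$ is fully faithful form a full subcategory, and a dévissage via five-lemma arguments on long exact sequences shows that this subcategory is closed under shifts, cones and quasi-isomorphisms. Since $\unit$ generates $\bL\flow^{\rm decU}_{\rm fin}$ (the finite flow categories are built from $\unit$ under finite colimits), the functor is fully faithful everywhere. Its essential image is closed under shifts and cones and contains $\unit_{\bC[\underline b]}$, which generates $\perf_{\bC[\underline b]}$. The image is also closed under quasi-isomorphism, since we work up to equivalence. Hence the functor is essentially surjective, and therefore a DG-equivalence.

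The main obstacle is the first step: checking that the adjunct really restricts on $\Ho(\cC)\otimes\bC$ to the $\bC$-linearization of $\ch(\bbX)$, compatibly with the grading and shifts. This requires the homotopy-commutativity of \eqref{eq: oirewgsoeahg} and the explicit formula $\bL=\bT^{-1}\circ(-\otimes_\bS H\bC)\circ\bY$. Concretely, one has to trace the counit $\bL N^{\rm dg}_\infty(\perf)\to\perf$ on morphism spectra and see that it is induced by the $H\bC$-module structure, so that on $\pi_*$ it is the obvious map $\pi_*(-)\otimes\bC\to\pi_*(-)$. I would argue this via the triangle identities applied to the unit of the adjunction $-\otimes_\bS H\bC\dashv\bF$.
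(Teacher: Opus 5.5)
Your proposal is correct and follows essentially the same route as the paper. The paper's steps are the same as yours: the generator $\unit$ is sent to $\unit_{\bC[\underline b]}$, Proposition~\ref{prop: rpoejgwpierngore} (resp.\ Proposition~\ref{prop: rpoejgwpierngoreeerger}) gives the isomorphism on homology of endomorphisms, and an induction over cones finishes. Your first step makes explicit a point the paper uses without comment, namely that the adjunct restricts on $\Ho(\cC)\otimes_\bZ\bC$ to the $\bC$-linearization of $\ch(\bbX)$, which you check via the counit of $-\otimes_\bS H\bC\dashv\bF$.
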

    \begin{proof}
    We consider the case of $\bL\ch(\bbX)$; the other case is identical. Observe, $\bL \ch(\bbX)$ sends the generator $\unit$ to the generator $\unit_{\bC[\underline b]}$. It follows from Proposition \ref{prop: rpoejgwpierngore} that $\bL\ch(\bbX)$ induces an isomorphism on homology of the endomorphisms of these objects. Finally, an induction over cones implies $\bL\ch(\bbX)$ is an equivalence.
    \end{proof}
    
    Since $\ind(\bbX)$ is an equivalence, so is $\bL\ind(\bbX)$. In particular, we obtain a zig-zag of equivalences in $\Cat_{\rm dg}$:
    \begin{equation}\label{eq: the zig zag}
        \perf_{\bC[\underline b]} \xleftarrow[\simeq]{\bL\ch(\bbX)} \bL\flow^{\rm decU}_{\rm fin} \xleftarrow[\simeq]{ \ind(\bbX)} \bL\flow^{\rm GRR}_{\rm fin} \xrightarrow[\simeq]{\bL\smallint{}_{\scrC/\bbX}\ch(\scrE)} \perf_{\bC[\underline b]};
    \end{equation}
    thus, we obtain an autoequivalence 
        \begin{equation}
        F: \perf_{\bC[\underline b]} \to \perf_{\bC[\underline b]}
        \end{equation}
    in $\Ho(\Cat_{\rm dg})$.

    By the above discussion on generators, $F(\unit_{\bC[\underline b]}) = \unit_{\bC[\underline b]}$. Identifying endomorphisms of $\unit_{\bC[\underline b]}$ with $\unit_{\bC[\underline b]}$, we obtain a map 
        \begin{equation}
        F_*: {\bC[\underline b]} \to {\bC[\underline b]}
        \end{equation}
    of DG-algebras in the derived category of DG-algebras (note, $F_*$ may be a zig-zag). By Lemma \ref{lem:inc grr}, the induced map on homology, $H_*(F_*)$, is the identity.

\subsubsection{Derived category of DG-categories}

    A result of To\"en (cf. also \cite[Page 738]{HA} for the $\infty$-categorical analogue) describes morphisms in the homotopy category of DG-categories explicitly.
    \begin{thm}[{\cite[Corollary 1.2]{Toen}}]
        Let $\cC$ and $\cD$ be DG-categories. There is a natural bijection between functors $\cC \to \cD$ in the homotopy category of DG-categories and quasi-isomorphism classes of $\cC$-$\cD$ bimodules:
        \begin{equation}
            \Ho(\Cat_{\rm dg})(\cC,\cD) \cong \ob \Ho(\Mod_{\cC \otimes \cD^{\rm op}})^{\rm rqr},
        \end{equation}
        where the superscript $(-)^{\rm rqr}$ denotes the full subcategory of \emph{right quasi-representable} bimodules; these are the bimodules $M$ such that, for each $c \in \cC$, the right $\cD$-module $M(c,-)$ is quasi-isomorphic to a Yoneda module $\cD(-, d)$ for some $d \in \cD$.\footnote{In \textit{loc. cit.}, a cofibrant replacement $Q\cC$ of $\cC$ is taken (this is equivalent to working with $\cC \otimes^\bL \cD^{\rm op}$-modules); this is not necessary in the present article since our ground ring is a field, cf. \cite{Toen:Lectures}.} 
    \end{thm}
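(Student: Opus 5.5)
The plan is to reduce to the Morita-theoretic description of DG-functors via the derived Hom, following Toën's strategy. First I would show that $\Cat_{\rm dg}$, localized at quasi-equivalences, has an internal Hom $\mathbb{R}\underline{\mathrm{Hom}}(\cC,\cD)$. Its set of isomorphism classes of objects should be identified with $\Ho(\Cat_{\rm dg})(\cC,\cD)$. I would then identify this internal Hom with the full subcategory of right quasi-representable objects in $\Mod_{\cC\otimes^{\mathbb L}\cD^{\rm op}}$. Over the field $\bC$, every DG-category is h-flat, so $\cC\otimes^{\mathbb L}\cD^{\rm op}\simeq\cC\otimes\cD^{\rm op}$ and no cofibrant replacement is needed.

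The key steps, in order, are as follows.

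\emph{Step 1: functor to bimodules.} Define the map from DG-functors to bimodules. A DG-functor $f:Q\cC\to\cD$, with $Q\cC$ a cofibrant replacement, goes to the bimodule $(c,d)\mapsto\cD\big(d,f(c)\big)$. This bimodule is right quasi-representable by construction, and it is invariant under homotopy of functors. Hence the assignment descends to $\Ho(\Cat_{\rm dg})(\cC,\cD)$.

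\emph{Step 2: bimodules to functors.} Construct an inverse. Given a right quasi-representable bimodule $M$, consider the DG-category $\Gamma_M$ whose objects are triples $(c,d,u)$ with $u\in M(c,d)$ a closed degree-zero element inducing a quasi-isomorphism $\cD(-,d)\to M(c,-)$. This is the graph of $M$, built from the model structure on $\Mod_{\cC\otimes\cD^{\rm op}}$. The projection $\Gamma_M\to\cC$ is a quasi-equivalence, precisely because $M$ is quasi-representable. The projection $\Gamma_M\to\cD$ then gives the morphism $\cC\leftarrow\Gamma_M\to\cD$ in $\Ho(\Cat_{\rm dg})$.

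\emph{Step 3: the two constructions are inverse.} Check that they are mutually inverse on isomorphism and homotopy classes. This uses the Quillen model structure on $\Cat_{\rm dg}$ (Tabuada), where every object is fibrant, together with a path-object argument: two maps $Q\cC\to\cD$ are homotopic iff their associated bimodules are quasi-isomorphic.

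The main obstacle is Step 3, and specifically showing that quasi-isomorphic bimodules yield homotopic functors. My first attempt would be to build a path object for $\cD$ whose objects are homotopy equivalences between pairs of objects of $\cD$. I would then use the quasi-isomorphism of bimodules to produce a map into this path object, working levelwise on the cofibrant (cellular) $Q\cC$ by lifting along generating cofibrations.

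Since this is exactly \cite[Corollary 1.2]{Toen}, in the paper I would simply cite it rather than reprove it.
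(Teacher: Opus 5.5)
Your bottom line, which is to cite \cite[Corollary 1.2]{Toen} and note that over $\bC$ one may work with $\cC\otimes\cD^{\rm op}$ in place of $\cC\otimes^{\bL}\cD^{\rm op}$, is exactly what the paper does: it gives no argument of its own beyond the citation and a footnote making that same point. The sketch is therefore optional, but if you want it self-contained, give $\Gamma_M$ homotopy-pullback morphism complexes (triples $(\alpha,\beta,h)$ with $h\in M(c',d)$ a homotopy between $\alpha\cdot u$ and $u'\cdot\beta$); then the hard half of your Step 3 becomes essentially formal and needs no cell-by-cell lifting into a path object, because $s_f(c)=(c,f(c),\mathrm{id}_{f(c)})$ is a section of the projection to $Q\cC$ with $p_{\cD}\circ s_f=f$, and any quasi-isomorphism $M\to M'$ induces a functor $\Gamma_M\to\Gamma_{M'}$ commuting with both projections, so the resulting span depends only on the quasi-isomorphism class of $M$ and recovers $f$.
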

    
    In the case of endofunctors of $\perf_{\bC[\underline b]}$, we may see this bijection explicitly following \cite[Pages 632-633]{Toen}. Since $\unit_{\bC[\underline b]}$ generates $\perf_{\bC[\underline b]}$, $\unit_{\bC[\underline b]} \otimes \unit_{\bC[\underline b]}$ generates $\perf_{\bC[\underline b]} \otimes \perf_{\bC[\underline b]}^{\rm op}$. It follows that restriction of bimodules defines an equivalence from $\perf_{\bC[\underline b]}$-$\perf_{\bC[\underline b]}$ bimodules to ${\bC[\underline b]}$-${\bC[\underline b]}$ bimodules. Thus, we have an injection:
    \begin{equation}\label{eq: rowegjuepiwhg}
        p: \Ho(\Cat_{\rm dg})(\perf_{\bC[\underline b]}, \perf_{\bC[\underline b]}) \to \ob\ho(\Mod_{{\bC[\underline b]} \otimes {\bC[\underline b]}^{\rm op}}).
    \end{equation}
    Let $M$ be a ${\bC[\underline b]}$-${\bC[\underline b]}$ bimodule and choose an equivalent bimodule $M'$ which is right flat (i.e., flat as a right ${\bC[\underline b]}$-module); this exists since we may take $M'$ to be a semi-free resolution of $M$ as a ${\bC[\underline b]}\otimes {\bC[\underline b]}^{\rm op}$-module, via \cite[Lemmas 13.3 \& 14.8]{Drinfeld}, and using the fact that ${\bC[\underline b]} \otimes {\bC[\underline b]}^{\rm op}$ is right flat. 

    We may define a DG-functor 
    \begin{equation}
    M \otimes^\bL_{\bC[\underline b]}-: \perf_{\bC[\underline b]} \to \Mod_{\bC[\underline b]}
    \end{equation}
    via the formula $B \mapsto M' \otimes_{\bC[\underline b]} B$, on objects, and $f \mapsto \identity_{M'} \otimes f$, on morphisms. If $M'$ and $M''$ are two right flat bimodules which are both equivalent to $M$, then such an equivalence induces an equivalence of functors as just defined; hence, $M \otimes^\bL_{\bC[\underline b]}-$ only depends on $M$ up to equivalence.
    
    \begin{lem}
        The functor $M \otimes^\bL_{\bC[\underline b]}-$ lands in $\perf_{\bC[\underline b]}$ if and only if $M$ is left perfect (i.e., perfect as a left ${\bC[\underline b]}$-module).
    \end{lem}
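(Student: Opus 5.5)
The argument has two directions, and in both we test the functor on the generator $\unit_{\bC[\underline b]}$. Since $M'$ is a right flat replacement of $M$, we have
    \begin{equation}
    M\otimes^\bL_{\bC[\underline b]}\unit_{\bC[\underline b]}\simeq M'\otimes_{\bC[\underline b]}\bC[\underline b]\cong M',
    \end{equation}
and the right-hand side is quasi-isomorphic to $M$ as a left $\bC[\underline b]$-module.

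For the forward direction, suppose $M\otimes^\bL_{\bC[\underline b]}-$ lands in $\perf_{\bC[\underline b]}$. Applying it to $\unit_{\bC[\underline b]}$ shows that $M$, viewed as a left module, is quasi-isomorphic to an object of $\perf_{\bC[\underline b]}$. Since $\perf_{\bC[\underline b]}$ is closed under quasi-isomorphisms, $M$ is left perfect.

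For the converse, suppose $M$ is left perfect. Consider the full subcategory $\cA\subset\perf_{\bC[\underline b]}$ of objects $B$ such that $M'\otimes_{\bC[\underline b]}B$ is perfect. We check three closure properties.
\begin{enumerate}
\item $\cA$ contains $\unit_{\bC[\underline b]}$, by the computation above.
\item $\cA$ is closed under shifts, because $M'\otimes_{\bC[\underline b]}-$ commutes with shifts strictly.
\item $\cA$ is closed under cones: the tensor product commutes with mapping cones, $M'\otimes\mathrm{Cone}(f)\cong\mathrm{Cone}(\identity_{M'}\otimes f)$, and $\perf_{\bC[\underline b]}$ is closed under cones.
\end{enumerate}
It remains to see that $\cA$ is closed under quasi-isomorphisms. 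It suffices to know that $M'\otimes_{\bC[\underline b]}-$ preserves quasi-isomorphisms between perfect modules. Perfect modules are built from $\unit_{\bC[\underline b]}$ by shifts and cones, so a quasi-isomorphism between them can be compared, up to cone arguments, with quasi-isomorphisms of semi-free (hence K-projective) modules. Alternatively, one can use right flatness of $M'$ together with the fact that a quasi-isomorphism $B\to B'$ has acyclic cone, and tensoring the acyclic cone with the flat $M'$ stays acyclic; over the field-like setting of $\bC[\underline b]$-modules, a flat DG-module in the semi-free sense of \cite{Drinfeld} does preserve acyclicity. Since $\perf_{\bC[\underline b]}$ is the smallest such subcategory containing $\unit_{\bC[\underline b]}$, we conclude $\cA=\perf_{\bC[\underline b]}$, and the functor lands in $\perf_{\bC[\underline b]}$.

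The main obstacle is the quasi-isomorphism invariance step. It depends on knowing that the semi-free replacement $M'$ makes $M'\otimes_{\bC[\underline b]}-$ preserve acyclic modules. This should follow from $M'$ being semi-free over $\bC[\underline b]\otimes\bC[\underline b]^{\rm op}$, hence semi-free (K-flat) as a right module, so I would cite \cite[Lemmas 13.3 \& 14.8]{Drinfeld} for this. Everything else is formal induction over the generation of $\perf_{\bC[\underline b]}$.
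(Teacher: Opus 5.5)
Your proof is correct and follows the paper's argument. The forward direction tests the functor on $\unit_{\bC[\underline b]}$, and the converse inducts over the generation of $\perf_{\bC[\underline b]}$ by shifts, cones, and quasi-isomorphisms. The one step you spell out that the paper leaves implicit is quasi-isomorphism invariance, and your final justification for it is the right one: $M'$ is semi-free over $\bC[\underline b]\otimes\bC[\underline b]^{\rm op}$, hence semi-free and so K-flat as a right module. You should drop the ``field-like'' remark, since $\bC[\underline b]$ is not field-like and the argument does not need it.
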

    \begin{proof}
        If $M'$ is left perfect, $M' \otimes_{\bC[\underline b]} {\bC[\underline b]}$ is perfect; inducting over the generation length of $B$ implies $M' \otimes_{\bC[\underline b]} B$ is perfect for all perfect $B$.

        Meanwhile, if $M \otimes^\bL_{\bC[\underline b]}-$ lands in $\perf_{\bC[\underline b]}$, then 
            \begin{equation}
            M \otimes^\bL_{\bC[\underline b]} {\bC[\underline b]} \simeq M
            \end{equation}
        must be perfect.
    \end{proof}
    Finally, Inspection of Toen's construction shows the following result.

    \begin{prop}
        Any DG-functor $f: \perf_{\bC[\underline b]} \to \perf_{\bC[\underline b]}$ is equivalent to one of the form $M \otimes^\bL_{\bC[\underline b]}-$, for some left perfect ${\bC[\underline b]}$-${\bC[\underline b]}$ bimodule $M$. Moreover, $M$ is uniquely determined by $f$ up to quasi-isomorphism. Finally, this construction provides an inverse to \eqref{eq: rowegjuepiwhg}
    \end{prop}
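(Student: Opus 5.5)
The plan is to make To\"en's bijection explicit for $\cC=\cD=\perf_{\bC[\underline b]}$, and to check that the functor $M\otimes^\bL_{\bC[\underline b]}-$ is its inverse. Given a DG-functor $f:\perf_{\bC[\underline b]}\to\perf_{\bC[\underline b]}$, first replace it if necessary by a strict DG-functor out of a cofibrant model. Over a field no replacement is needed, as in the footnote to To\"en's theorem. Then define the bimodule
\begin{equation}
M_f\equiv f(\unit_{\bC[\underline b]}).
\end{equation}
The left action is the module structure of the object $f(\unit_{\bC[\underline b]})\in\perf_{\bC[\underline b]}$. The right action comes from the DG-algebra map
\begin{equation}
{\bC[\underline b]}=\mathrm{End}(\unit_{\bC[\underline b]})\xrightarrow{f}\mathrm{End}\big(f(\unit_{\bC[\underline b]})\big),
\end{equation}
which makes $M_f$ a ${\bC[\underline b]}$-${\bC[\underline b]}$ bimodule. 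It is left perfect because $f$ lands in $\perf_{\bC[\underline b]}$. This $M_f$ is exactly the image of $f$ under the restriction map $p$ of \eqref{eq: rowegjuepiwhg}: restrict the graph bimodule $(B,C)\mapsto\perf_{\bC[\underline b]}(C,f(B))$ to $\unit_{\bC[\underline b]}\otimes\unit_{\bC[\underline b]}$.

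Next I will build a natural transformation $\eta:M_f'\otimes_{\bC[\underline b]}B\to f(B)$, where $M_f'$ is a right-flat semi-free replacement of $M_f$. The formula is
\begin{equation}
m\otimes\beta\mapsto f(\beta)(m),
\end{equation}
viewing $\beta\in B$ as a map $\unit_{\bC[\underline b]}\to B$. This is well defined because the right action on $M_f$ is through $f$. It is a natural DG-transformation in $B$.

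On the generator $B=\unit_{\bC[\underline b]}$, $\eta$ is the quasi-isomorphism $M_f'\otimes_{\bC[\underline b]}{\bC[\underline b]}\simeq M_f$. Both source and target functors are exact DG-functors: they preserve shifts, cones and quasi-isomorphisms. So an induction over the generation length of $B$, using the five lemma on cones, shows $\eta_B$ is a quasi-isomorphism for every perfect $B$. This proves $f\simeq M_f\otimes^\bL_{\bC[\underline b]}-$. The preceding lemma then gives that $M_f$ is left perfect, and we already knew this directly.

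For uniqueness, evaluate $M\otimes^\bL_{\bC[\underline b]}-$ at $\unit_{\bC[\underline b]}$. This returns $M$ with its bimodule structure up to quasi-isomorphism, so $p(M\otimes^\bL_{\bC[\underline b]}-)\simeq M$. Combined with the previous paragraph, $p(f)\simeq M_f$ and $M_f\otimes^\bL-\simeq f$, so the two constructions are mutually inverse, and $p$ is a bijection onto left perfect bimodules.

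The main obstacle is bookkeeping of homotopy coherence. Equivalences of DG-functors in $\Ho(\Cat_{\rm dg})$ are zig-zags, so one must check that $\eta$ gives an isomorphism in $\Ho(\Cat_{\rm dg})$ and not merely objectwise quasi-isomorphisms. I would handle this by invoking To\"en's theorem itself: morphisms in $\Ho(\Cat_{\rm dg})$ correspond to quasi-isomorphism classes of right quasi-representable bimodules. It then suffices to compare the graph bimodules of $f$ and $M\otimes^\bL-$ and show they are quasi-isomorphic, which is exactly what $\eta$, read as a map of graph bimodules, provides.
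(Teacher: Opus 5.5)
\textbf{Gap: your argument only covers strict DG-functors.} For a strict DG-functor $f:\perf_{\bC[\underline b]}\to\perf_{\bC[\underline b]}$ the argument is fine, provided $\perf_{\bC[\underline b]}$ is modelled by semi-free modules so that $f$ preserves the quasi-isomorphisms in your induction. It is also more explicit than the paper's appeal to inspection of To\"en's construction. The gap is the sentence that disposes of every other case.

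The footnote to To\"en's theorem only says that over a field $\cC\otimes\cD^{\rm op}$ already computes the derived tensor product. That is, the \emph{bimodule} side needs no cofibrant replacement; the footnote says nothing about the \emph{functor} side. Working over a field makes Hom complexes cofibrant, but it does not make $\perf_{\bC[\underline b]}$ a cofibrant DG-category. So an element of $\Ho(\Cat_{\rm dg})(\perf_{\bC[\underline b]},\perf_{\bC[\underline b]})$, which is the domain of $p$ in \eqref{eq: rowegjuepiwhg}, is in general only a strict functor $Q\perf_{\bC[\underline b]}\to\perf_{\bC[\underline b]}$ out of a cofibrant replacement, or a zig-zag. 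This is exactly the case the paper needs: its autoequivalence $F$ is produced from \eqref{eq: the zig zag} by inverting quasi-equivalences.

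For such $f$ both of your constructions break down:
\begin{itemize}
\item The right action on $f(\unit_{\bC[\underline b]})$ now comes from $\mathrm{End}_{Q\perf}(\unit_{\bC[\underline b]})$. This is only a DG-algebra $\widetilde R$ quasi-isomorphic to $\bC[\underline b]$, so $f(\unit_{\bC[\underline b]})$ is a $\bC[\underline b]$-$\widetilde R$ bimodule.
\item The formula $m\otimes\beta\mapsto f(\beta)(m)$ requires $\beta$ to be a morphism of $Q\perf_{\bC[\underline b]}$, not of $\perf_{\bC[\underline b]}$.
\end{itemize}
Your last paragraph does not close this. The map of graph bimodules you invoke is $\eta$ itself, which exists only in the strict case.

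\textbf{Repair.} Run your d\'evissage on bimodules rather than on a strict functor, which is the paper's route.
\begin{itemize}
\item By To\"en, $f$ corresponds to a right quasi-representable bimodule $\Phi_f$, and $p(f)=\Phi_f(\unit_{\bC[\underline b]},\unit_{\bC[\underline b]})$.
\item Because $\unit_{\bC[\underline b]}\otimes\unit_{\bC[\underline b]}$ generates $\perf_{\bC[\underline b]}\otimes\perf_{\bC[\underline b]}^{\rm op}$, restriction to it is an equivalence of derived bimodule categories. This is your induction on cones, done once for bimodules. Hence $p$ is injective.
\item Your evaluation computation shows that the graph bimodule of $M\otimes^\bL_{\bC[\underline b]}-$ restricts to $M$. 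So $p$ hits every left perfect bimodule, and $M\mapsto M\otimes^\bL_{\bC[\underline b]}-$ is its inverse.
\end{itemize}
Existence and uniqueness of $M$ then follow without ever needing a strict representative of $f$.

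Alternatively, you can keep your functor-level argument on $Q\perf_{\bC[\underline b]}$ and transport the resulting $\bC[\underline b]$-$\widetilde R$ bimodule along $\widetilde R\xrightarrow{\sim}\bC[\underline b]$. That works, at the cost of more bookkeeping.
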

    
\subsubsection{Computing the functor}\label{sec: oeirhngouebgoue}
    Let 
        \begin{equation}
        F: \perf_{\bC[\underline b]} \to \perf_{\bC[\underline b]}
        \end{equation}
    be the autoequivalence constructed in Part \ref{sec: riehgpioejhngpejrg}; we denote by $M$ the corresponding ${\bC[\underline b]}$-${\bC[\underline b]}$ bimodule. As already mentioned, $F$ sends $\unit_{\bC[\underline b]}$ to $\unit_{\bC[\underline b]}$. In terms of $M$, $F$ sends $\unit_{\bC[\underline b]}$ to $M$ viewed as a left ${\bC[\underline b]}$-module, i.e., 
        \begin{equation}
        \unit_{\bC[\underline b]}\simeq M
        \end{equation}
    as left ${\bC[\underline b]}$-modules.

    Fix an isomorphism 
        \begin{equation}
        \theta: H_*({\bC[\underline b]})\xrightarrow{\sim} H_*(M)
        \end{equation}
    of left $H_*({\bC[\underline b]})$-modules. The ring of left $H_*({\bC[\underline b]})$-module endomorphisms of $H_*({\bC[\underline b]})$ is isomorphic to $H_*({\bC[\underline b]})^{\rm op}$ (via sending an element to the endomorphism given by multiplication on the right), so the right $H_*({\bC[\underline b]})$-module structure on $H_*(M)$ is determined by a map of rings 
        \begin{equation}
        \psi: H_*({\bC[\underline b]})^{\rm op} \to H_*({\bC[\underline b]})^{\rm op}.
        \end{equation}
    \begin{lem}\label{lem: reiohgeiwoaprp}
        $\psi$ is an isomorphism.
    \end{lem}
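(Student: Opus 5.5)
The plan is to use that $F$ is an autoequivalence, so it has an inverse $G$ in $\Ho(\Cat_{\rm dg})$. The structure of $H_*(M)$ as a right module should then be invertible.

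First, identify the right action. By the explicit description of Toën's correspondence, the right $\bC[\underline b]$-module structure on $M$ is the map on endomorphism algebras induced by $F$:
\begin{equation}
\mathrm{End}(\unit_{\bC[\underline b]})\to\mathrm{End}(F\unit_{\bC[\underline b]})\simeq\mathrm{End}_{\bC[\underline b]}(M).
\end{equation}
Here $M$ is viewed as a left module and $F(\unit)=M$. After passing to homology and using $\theta$ to identify $H_*(M)$ with $H_*(\bC[\underline b])$ as left modules, this composite is exactly $\psi$. This is because $\mathrm{End}_{\rm left}(H_*\bC[\underline b])\cong H_*(\bC[\underline b])^{\rm op}$ via right multiplication, and since $M\simeq\unit$ as left modules, homology of endomorphisms is computed on homology. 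Thus $\psi$ is, up to the identification by $\theta$, the map $F_*$ on $H_*\mathrm{End}(\unit)$.

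Second, $F$ induces quasi-isomorphisms on all mapping complexes, since it is an equivalence in $\Ho(\Cat_{\rm dg})$ (this is the definition of a DG-equivalence recalled in Section \ref{sec:alg}). Hence $H_*(\mathrm{End}(\unit))\to H_*(\mathrm{End}(F\unit))$ is bijective. Composing with the isomorphism $H_*\mathrm{End}(M)\cong H_*\mathrm{End}(\unit)$ coming from $M\simeq\unit$ as left modules shows that $\psi$ is bijective. It is a ring map by construction, so it is a ring isomorphism.

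Alternatively, and more concretely, we would note from Part \ref{sec: riehgpioejhngpejrg} that $H_*(F_*)$ is the identity by Lemma \ref{lem:inc grr}, which gives $\psi=\mathrm{id}$ once the identification above is made. The main obstacle is this first step: checking that the right module structure extracted from Toën's bimodule really agrees with the action of $F$ on $\mathrm{End}(\unit)$, and keeping track of the op conventions and of $\theta$. I would do this by evaluating the bimodule $M(c,d)=\mathrm{Hom}(d,F c)$ at $c=d=\unit$, following \cite[Pages 632--633]{Toen}. The zig-zag nature of $F_*$ does not matter, since only homology is used.
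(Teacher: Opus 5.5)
Your proof is correct, but it takes a different route from the paper's. The paper argues by contradiction at the level of objects. Because each $H_i(\bC[\underline b])$ is finite-dimensional, a non-bijective $\psi$ must kill some $x\neq 0$. Then $F$ would send $\unit_{\bC[\underline b]}\oplus\Sigma\unit_{\bC[\underline b]}$ and $\mathrm{Cone}(r_x)$ to equivalent objects, even though these have different homology, contradicting that $F$ is an equivalence.

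You instead identify $\psi$ with the map $H_*\mathrm{End}(\unit_{\bC[\underline b]})\to H_*\mathrm{End}(F\unit_{\bC[\underline b]})$ induced by $F$, and then apply full faithfulness directly. The step you flag as the main obstacle is immediate from the paper's description $F\simeq M\otimes^\bL_{\bC[\underline b]}-$ with $f\mapsto\mathrm{id}_{M'}\otimes f$. Indeed, $F(r_y)$ is right multiplication by $y$ on $F(\unit_{\bC[\underline b]})=M'$, which on homology is $\psi(y)$ under $\theta$.

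Your route gives bijectivity in one step and needs no finite-dimensionality. It also front-loads the identification that the paper only makes in the following lemma. There the paper writes $H_*(F_*)=\psi^{-1}$; your form $H_*(F_*)=\psi$ is what the direct computation gives, and it does not presuppose invertibility, so it is the form usable at this point. Note also that the paper's argument implicitly needs the same computation of $F(r_x)$: it is required to conclude $\mathrm{Cone}(F(r_x))\simeq M\oplus\Sigma M$ when $\psi(x)=0$. So your version is strictly more economical.
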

    \begin{proof}
        Suppose not. Since $H_i({\bC[\underline b]})$ is finite-dimensional for each $i$, $\psi$ must fail to be injective. Let  $x \neq 0 \in H_i({\bC[\underline b]})$ be an element in $\psi$'s kernel and consider 
            \begin{equation}
            C \equiv \unit_{\bC[\underline b]} \oplus \Sigma \unit_{\bC[\underline b]},\;\; C' \equiv \mathrm{Cone}(\unit_{\bC[\underline b]} \xrightarrow{r_x} \unit_{\bC[\underline b]}) \in \perf_{\bC[\underline b]},
            \end{equation}
            where $r_x$ is multiplication on the right by $x$. Since $C$ and $C'$ have distinct homology, they cannot be quasi-isomorphic; but, $F(C)\simeq F(C')$, contradicting that $F$ is an equivalence.
    \end{proof}

    \begin{cor}\label{cor: reihgeuirbohiubgsepg}
    $M \simeq {\bC[\underline b]}$ as right ${\bC[\underline b]}$-modules.
    \end{cor}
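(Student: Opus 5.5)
We need $M \simeq \bC[\underline b]$ as right $\bC[\underline b]$-modules. Since $\bC[\underline b]$ is a graded ring with zero differential and we work over a field, the plan is to use a homology-level computation together with the fact that free modules are easy to map out of. By Lemma \ref{lem: reiohgeiwoaprp}, the composite
\begin{equation}
H_*(\bC[\underline b]) \xrightarrow{\theta} H_*(M)
\end{equation}
intertwines the right action on $H_*(\bC[\underline b])$, twisted by the isomorphism $\psi$, with the right action on $H_*(M)$. Thus $H_*(M)$ is a free right $H_*(\bC[\underline b])$-module of rank one. A generator is $\theta(1)$, which sits in degree $0$: $\theta$ preserves degree and $\psi$ is bijective, so $\theta(1)\cdot H_*(\bC[\underline b]) = \theta(\psi(H_*(\bC[\underline b]))) = H_*(M)$.

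To lift this to a quasi-isomorphism, choose a cycle $m \in M_0$ representing $\theta(1)$. Define the right-module map
\begin{equation}
\phi: \bC[\underline b] \to M, \qquad a \mapsto m\cdot a.
\end{equation}
This is a chain map because $\bC[\underline b]$ has zero differential and $m$ is closed; the Leibniz rule gives $d(m a) = (dm)a \pm m\,da = 0$, which agrees with $\phi(da)=0$. On homology, $\phi$ sends $a$ to $[m]\cdot a = \theta(\psi(a))$, up to the identification of the twisted action, so $H_*(\phi) = \theta\circ\psi$ (or its opposite-ring analogue). This is a composite of isomorphisms, hence $\phi$ is a quasi-isomorphism of right modules.

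The main obstacle is bookkeeping: making sure that the right $H_*(\bC[\underline b])$-action on $H_*(M)$ really is $x\cdot a = \theta(\theta^{-1}(x)\psi(a))$. This requires that $\theta$ be left-linear, that left and right actions commute on $H_*(M)$ (so that right multiplication is a left-module endomorphism of the free rank-one left module), and that $\bC[\underline b]$ is commutative, so the op's are harmless. Once this is settled, $\theta(1)$ generates $H_*(M)$ freely as a right module by bijectivity of $\psi$. If the paper's $M$ is only defined up to zig-zag, we first replace it by the right flat model $M'$ chosen above, and then run the same argument.
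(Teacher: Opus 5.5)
Your proposal is correct and is the paper's argument: choose a cycle representing a generator of $H_*(M)$ as a right $H_*(\bC[\underline b])$-module (available because $\psi$ is an isomorphism by Lemma \ref{lem: reiohgeiwoaprp}) and use right multiplication by it as the quasi-isomorphism $\bC[\underline b]\to M$. You also spell out the checks the paper leaves implicit: that this map is a right-linear chain map, and that on homology it equals $\theta\circ\psi$ and is therefore bijective.
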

    \begin{proof}
        Let $x \in M$ represent a class in $H_*(M)$ which is a generator as a right $H_*({\bC[\underline b]})$-module. Now, multiplication by $x$ explicitly produces the desired quasi-isomorphism.
    \end{proof}
    \begin{lem}\label{lem: rwoihgoewuhriog}
        $\psi$ is the identity.
    \end{lem}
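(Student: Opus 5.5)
The plan is to identify $\psi$ with the map $H_*(F_*)$ induced by $F$ on endomorphisms of the unit. Lemma \ref{lem:inc grr} already tells us, as recorded at the end of Part \ref{sec: riehgpioejhngpejrg}, that $H_*(F_*)$ is the identity. So the work is to check that the bimodule description of $F$ recovers $F_*$ as $\psi$, up to the choice of $\theta$.

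\textbf{Step 1: $F$ on endomorphisms of the unit.} By the preceding Proposition, $F\simeq M\otimes^\bL_{\bC[\underline b]}-$, with $M$ replaced by a right flat model $M'$. For $x\in\bC[\underline b]=\mathrm{End}(\unit_{\bC[\underline b]})$, the endomorphism $r_x$ is sent to $\identity_{M'}\otimes r_x$. Under $M'\otimes_{\bC[\underline b]}\bC[\underline b]\cong M'$, this is right multiplication by $x$ on $M'$, which is a left $\bC[\underline b]$-linear endomorphism. Hence, on homology, the map
\begin{equation}
H_*(\mathrm{End}(\unit_{\bC[\underline b]}))\to H_*(\mathrm{End}_{\rm left}(M))
\end{equation}
induced by $F$ sends $x$ to the operator ``right action of $x$'' on $H_*(M)$.

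\textbf{Step 2: transport along $\theta$.} Transporting along $\theta$ (and along the quasi-isomorphism $F(\unit)\simeq\unit$ fixed in Part \ref{sec: riehgpioejhngpejrg}), left module endomorphisms of $H_*(M)$ become right multiplications on $H_*(\bC[\underline b])$. By definition, $\psi(x)$ is the element whose right multiplication corresponds to the right action of $x$. Comparing with Step 1 gives $\psi=H_*(F_*)$ under the identification $\mathrm{End}_{\rm left}(H_*\bC[\underline b])\cong H_*(\bC[\underline b])^{\rm op}$.

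\textbf{Step 3: independence of choices.} The identification $F(\unit)\simeq\unit$ used to define $F_*$ may differ from the one induced by $\theta$ by a left module automorphism of $H_*(\bC[\underline b])$. Such an automorphism is right multiplication by a homogeneous unit, and the units are $\bC^\times$. It therefore conjugates $\psi$ by a central element, which changes nothing, since $\bC[\underline b]$ is commutative and evenly graded so no signs arise. Combined with Lemma \ref{lem:inc grr}, this gives $\psi=\identity$.

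The main obstacle is the bookkeeping in Steps 1--2. One must check that the identification of $F$ with $M\otimes^\bL-$ coming from To\"en's correspondence is compatible with the specific zig-zag defining $F_*$ in Part \ref{sec: riehgpioejhngpejrg}, so that ``$F$ on morphisms'' and ``right action on $M$'' agree on homology rather than merely up to some further automorphism. I would handle this by unwinding the inverse of \eqref{eq: rowegjuepiwhg}: $M$ is the restriction of the bimodule $(c,d)\mapsto\perf(d,F(c))$ to $\unit\otimes\unit$, whose right action is by definition precomposition via $F$ on $\mathrm{End}(\unit)$.
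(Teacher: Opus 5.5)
Your proposal is correct and takes the paper's route: unwind $F\simeq M\otimes^\bL_{\bC[\underline b]}-$ on $\mathrm{End}(\unit_{\bC[\underline b]})$ to identify $H_*(F_*)$ with $\psi$, then conclude from Lemma \ref{lem:inc grr} that $H_*(F_*)$ is the identity. The paper writes $\psi^{-1}$ where you write $\psi$; this is a convention difference and does not affect the conclusion. Your Step 3, on the choice of identification $F(\unit_{\bC[\underline b]})\simeq\unit_{\bC[\underline b]}$, which only changes things by a central unit in $\bC^\times$, is a useful detail that the paper leaves implicit.
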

    \begin{proof}
        Since we may describe $F$ in terms of $M$, we may chase through the construction of $\psi$ to find that 
            \begin{multline}
            H_*(F_*): H_*({\bC[\underline b]}) \cong H_*\big(\perf_{\bC[\underline b]}(\unit_{\bC[\underline b]}, \unit_{\bC[\underline b]})\big) \to \\
            H_*\big(\perf_{\bC[\underline b]}(\unit_{\bC[\underline b]},\unit_{\bC[\underline b]})\big) \cong H_*({\bC[\underline b]})
            \end{multline}
        is exactly $\psi^{-1}$; but recall, via Lemma \ref{lem:inc grr}, we saw that $H_*(F_*)$ is the identity, whence the claim.
    \end{proof}

    \begin{cor}\label{cor: rieowhgesehgsrethouhbg}
        $H_*(M) \cong H_*({\bC[\underline b]})$ as $H_*({\bC[\underline b]})$-$H_*({\bC[\underline b]})$ bimodules.
    \end{cor}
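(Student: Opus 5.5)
The plan is to assemble the two module structures already established and check that they agree. First, from the proof of Corollary~\ref{cor: reihgeuirbohiubgsepg}, fix a cycle $x\in M$ whose class generates $H_*(M)$ as a right $H_*(\bC[\underline b])$-module. Right multiplication $r\colon \bC[\underline b]\to M$, $a\mapsto x\cdot a$, is then a quasi-isomorphism of right modules. Passing to homology, $H_*(r)$ is an isomorphism of right $H_*(\bC[\underline b])$-modules $H_*(\bC[\underline b])\to H_*(M)$.

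Second, we need compatibility with the left action. By Lemma~\ref{lem: rwoihgoewuhriog}, we may choose the left-module isomorphism $\theta\colon H_*(\bC[\underline b])\to H_*(M)$ so that the induced ring map $\psi$ is the identity. In terms of the bimodule $H_*(M)$ this says
\begin{equation}
\theta(1)\cdot a=\theta(a)=a\cdot\theta(1)\quad\text{for all } a\in H_*(\bC[\underline b]).
\end{equation}
So $\theta(1)$ is a central generator: it generates on both sides and the two actions on it agree.

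The step needing most care is matching the two choices. I would take $x$ to be a representative of $\theta(1)$. This is allowed because $\theta(1)$ generates $H_*(M)$ as a right module: its right multiples $\theta(1)\cdot a=\theta(a)$ exhaust $H_*(M)$, since $\theta$ is surjective. With this choice $H_*(r)=\theta$, so $\theta$ is simultaneously left- and right-linear.

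Explicitly, left linearity holds because
\begin{equation}
\theta(ba)=b\cdot\theta(a),
\end{equation}
which is just the statement that $\theta$ is a map of left modules. Right linearity holds because
\begin{equation}
\theta(ab)=\theta(1)\cdot ab=\theta(a)\cdot b.
\end{equation}
Hence $\theta$ is an isomorphism of $H_*(\bC[\underline b])$-$H_*(\bC[\underline b])$ bimodules, proving the corollary.

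Commutativity of $\bC[\underline b]$ is not needed for this argument; one only has to be careful with the order of multiplication in the ${\rm op}$ conventions when identifying $\psi$.
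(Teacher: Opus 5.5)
Your proposal is correct and is essentially the paper's argument. The paper's one-line proof says exactly that $\psi=\mathrm{id}$ (Lemma~\ref{lem: rwoihgoewuhriog}) makes the left-module isomorphism $\theta$ intertwine the right actions, which is what your identity $\theta(1)\cdot a=\theta(a)=a\cdot\theta(1)$ and the following linearity check spell out; the detour through the cycle $x$ and the quasi-isomorphism $r$ is harmless but unnecessary, since $\theta(c)\cdot a=\theta(c\,\psi(a))$ with $\psi=\mathrm{id}$ already gives right-linearity.
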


    \begin{proof}
    Follows from the above discussion along with Lemma \ref{lem: rwoihgoewuhriog}.
    \end{proof}
    
    \begin{lem}\label{lem: rioehgeoubgoeshrg}
        Consider $B \in \perf_{\bC[\underline b]}$. We have that 
            \begin{equation}
            H_*(B) \cong H_*(F(B))
            \end{equation}
        as left $H_*({\bC[\underline b]})$-modules.
    \end{lem}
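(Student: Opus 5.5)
The plan is to reduce to the description $F\simeq M\otimes^\bL_{\bC[\underline b]}-$ from the preceding proposition and compute homology with a Künneth-type spectral sequence, then use the bimodule identification of Corollary \ref{cor: rieowhgesehgsrethouhbg}. Choose a right flat model $M'$ of $M$. By Corollary \ref{cor: reihgeuirbohiubgsepg}, $M\simeq\bC[\underline b]$ as right modules. I would rather avoid spectral sequences by a more direct route: induct over the generation length of $B$ inside $\perf_{\bC[\underline b]}$, as in the proof that $M\otimes^\bL_{\bC[\underline b]}-$ preserves perfect objects.

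For the base case $B=\Sigma^i\unit_{\bC[\underline b]}$, we have $F(B)\simeq\Sigma^i M$. The left $H_*(\bC[\underline b])$-module structure on $H_*(F(B))$ is then the left structure on $H_*(M)$. By Corollary \ref{cor: rieowhgesehgsrethouhbg} this agrees with $H_*(\bC[\underline b])$, as required.

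For the inductive step, $B=\mathrm{Cone}(g:B_1\to B_2)$, and $F(B)\simeq\mathrm{Cone}(F(g))$. The long exact sequences in homology for the two cones are then compared. The difficulty is that homology of a cone is not determined by the homology of its pieces and the map on homology; only an extension is. So a naive induction does not immediately give an isomorphism.

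The key step, and the main obstacle, is therefore to upgrade the statement. I would aim to show that $M\simeq\bC[\underline b]$ as \emph{bimodules} (at the chain level), not merely on homology. Then $F\simeq\identity$ and the lemma is immediate.

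To get this, I would exploit that $\bC[\underline b]$ is a polynomial ring with zero differential over a field of characteristic zero. Take the right-module quasi-isomorphism $\bC[\underline b]\to M$, $r\mapsto xr$, from Corollary \ref{cor: reihgeuirbohiubgsepg}. The left action of each generator $b_n$ on $M$ then transports to an endomorphism of $\bC[\underline b]$ as a right module, i.e.\ left multiplication by some element. This identifies the left action up to homotopy with a ring map $\bC[\underline b]\to\bC[\underline b]$ which, by Lemma \ref{lem: rwoihgoewuhriog}, is the identity on homology. Since $\bC[\underline b]$ is formal with no differential, a map inducing the identity on homology is the identity up to homotopy. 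Polynomial rings are free commutative, so the $A_\infty$-structure of the bimodule is controlled by the commuting homotopies of the generators' actions.

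If this rigidification proves too hard, the fallback is to prove only the stated lemma. Any left $\bC[\underline b]$-module $N$ with $H_*(N)$ computed from the bimodule $H_*(M)\cong H_*(\bC[\underline b])$ satisfies $H_*(M\otimes^\bL B)\cong\mathrm{Tor}^{H_*(\bC[\underline b])}_*(H_*(M),H_*(B))$ via the Künneth spectral sequence. This holds because $H_*(M)$ is free as a right $H_*(\bC[\underline b])$-module, so the spectral sequence collapses at $E_2$ with only the $\mathrm{Tor}_0$ column, and there are no extension problems. The result is $H_*(M)\otimes_{H_*(\bC[\underline b])}H_*(B)\cong H_*(B)$ as left modules, using the bimodule isomorphism of Corollary \ref{cor: rieowhgesehgsrethouhbg}. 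This fallback argument is the one I would actually write down.
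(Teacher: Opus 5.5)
Your fallback argument is correct, and it rests on the same two inputs as the paper's proof. The first is freeness of rank one of $H_*(M)$ as a right $H_*(\bC[\underline b])$-module (Corollary \ref{cor: reihgeuirbohiubgsepg}). This kills higher $\mathrm{Tor}$ and gives $H_*(F(B))\cong H_*(M)\otimes_{H_*(\bC[\underline b])}H_*(B)$. The second is the bimodule identification of Corollary \ref{cor: rieowhgesehgsrethouhbg}, which turns this into $H_*(B)$ as a left module.

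The paper avoids spectral sequences. It fixes a cycle $x\in M'$ representing a generator and uses the chain map $g_B\colon B\to M'\otimes_{\bC[\underline b]}B$, $c\mapsto x\otimes c$. This is a quasi-isomorphism for $B=\unit_{\bC[\underline b]}$. Because $g_B$ is natural in $B$ and compatible with shifts and cones, the five lemma propagates this to all of $\perf_{\bC[\underline b]}$. That is how the paper gets around the extension problem you raised for a naive induction: it inducts on the statement that a fixed natural map is a quasi-isomorphism, rather than comparing homologies abstractly. Left-linearity is then checked by hand, $[x\otimes rc]=[xr\otimes c]=[rx\otimes c]=r[x\otimes c]$ for cycles $c$, using $[xr]=[rx]$.

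In your version the corresponding step is that the K\"unneth edge isomorphism $[m]\otimes[c]\mapsto[m\otimes c]$ is left $H_*(\bC[\underline b])$-linear. This is immediate from the formula, but it is the real content of the lemma, so state it explicitly. Composing your isomorphism with $[c]\mapsto[x]\otimes[c]$ gives exactly $H_*(g_B)$, so both routes produce the same map. Yours makes the role of right-freeness transparent, at the cost of invoking convergence of the K\"unneth spectral sequence; the paper's is more elementary.

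Separately, do not rely on the proposed ``key step'' of rigidifying $M\simeq\bC[\underline b]$ as bimodules. Transporting the left action only yields an $A_\infty$-map $\bC[\underline b]\to\bC[\underline b]$. Since $\bC[\underline b]$ is not free as an associative algebra, inducing the identity on homology does not force such a map to be homotopic to the identity. The paper deliberately leaves this chain-level statement conjectural (Section \ref{sec: opwjgfoebgjorbtgojrbt}), where symmetric monoidal structures would upgrade it to an $\bE_\infty$-statement. Your written argument does not use this step, so it is not a gap in your proof.
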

    
    \begin{proof}
        Let $M'$ be a bimodule equivalent to $M$ which is right flat and $x\in M$ represent a class in $H_*(M)$ which is a generator as a left (equivalently right) $H_*({\bC[\underline b]})$-module. The map 
            \begin{equation}
            g_B: B \to F(B) = M' \otimes_{\bC[\underline b]} B
            \end{equation}
         sending $c \mapsto x \otimes c$ is a chain map. In fact, $g_B$ is a quasi-isomorphism for $B=\unit_{\bC[\underline b]}$, so an inductive argument shows $g_B$ is a quasi-isomorphism for all $B \in \perf_{\bC[\underline b]}$.
        
        Now, although $g_B$ might not be a ${\bC[\underline b]}$-module map, $H_*(g_B)$ is an $H_*({\bC[\underline b]})$-module map, as follows: for $c \in B$ and $r \in {\bC[\underline b]}$, 
            \begin{multline}
            H_*(g_B)(rc) = [x \otimes rc] = [xr \otimes c] = \\
            [xr] \otimes [c] = [rx] \otimes [c] = r[x \otimes c] = rH_*(g_B)(c)
            \end{multline}
        (the identity $[rx]=[xr]$ follows from Corollary \ref{cor: rieowhgesehgsrethouhbg}).
    \end{proof}
    
\begin{proof}[Proof of Theorem \ref{thm:tech}]
    Since the constructions in both sides of \eqref{eq: eiongwiepngreping} commute with directed colimits, it suffices to consider the case $\bbX$ is a finite flow category. By construction of $F$, we have that:
    \begin{equation}
        H_*\Big(\smallint{}_{\scrC/\bbX}\ch(\scrE)\Big) = H_*\bigg(F\Big(\ch\big(\ind(\bbX)\big)\Big)\bigg).
    \end{equation}
    The conclusion follows from Lemma \ref{lem: rioehgeoubgoeshrg}.
\end{proof}
\subsection{Chain-level enrichment: a conjectural outline}\label{sec: opwjgfoebgjorbtgojrbt}
We briefly outline an argument that suggests \eqref{eq:diag} does indeed commute (after restricting to finite flow categories and perfect $\bbC[\underline{b}]$-modules, as before); however, the details are beyond the scope of the present article. In particular, this uses, in an essential way, symmetric monoidality of $\flow^{\rm decU}$ and $N^{\rm dg}(\Mod_{\bC[\underline b]})$; recall, symmetric monoidality in the fomer case is, at present, only conjectural.

    \begin{proof}[Conjectural outline of proof of commutativity of \eqref{eq:diag}]
        Now, $\flow^{\rm GRR}$, as defined in the present article, is not obviously symmetric monoidal since it is constructed from the gluing of Riemann surfaces; however, one can modify the construction so that, instead of gluing of Riemann surfaces at tube-like ends, it takes direct sums of the vector bundles over a fixed Riemann surface (cf. \cite[Section 5.4]{PS25b} for an Eckmann-Hilton-type argument relating these two approaches). The upshot is that we may assume \eqref{eq:diag} is a diagram of symmetric monoidal stable $\infty$-categories.
    
        After restricting to finite flow categories and perfect ${\bC[\underline b]}$-modules, we may use the same argument as Part \ref{sec: oeirhngouebgoue} to reduce commutativity of \eqref{eq:diag} to an automorphism $\psi$ of ${\bC[\underline b]}$. Since all of our functors are symmetric monoidal, $\psi$ is an $\bE_\infty$-autoequivalence of ${\bC[\underline b]}$.\footnote{In Part \ref{sec: oeirhngouebgoue}, we constructed $\psi$ as an autoequivalence of $H_*({\bC[\underline b]})$ as a graded ring, but similar methods could have upgraded this to an $A_\infty$-autoequivalence of ${\bC[\underline b]}$ itself; in the presence of symmetric monoidal structures, this can be further upgraded to an $\bE_\infty$-autoequivalence.} Now, ${\bC[\underline b]}$ is a free $\bE_\infty$-algebra on generators $b_1, b_2, \ldots$, and so its autoequivalences are determined uniquely by their action on homology; therefore, the same computation implies $\psi$ is homotopic to the identity as a map of $\bE_\infty$, and hence also $A_\infty$, algebras.
    \end{proof}

\section{Bulk-deformations of symplectic cohomology via Morse theory}
In the remainder of the present article, we will work in the following geometric context. We consider a Liouville domain, i.e., a compact exact symplectic $2n$-manifold $\big(\widehat{M},\widehat{\omega}=d\widehat{\theta}\big)$ with necessarily non-empty boundary $\partial\widehat{M}$ whose Liouville vector field $\widehat{Z}$, defined via
    \begin{equation}
    \widehat{\omega}(\widehat{Z},-)=\widehat{\theta},
    \end{equation}
points outwards along $\partial\widehat{M}$. We may complete $\big(\widehat{M},\widehat{\omega}=d\widehat{\theta}\big)$ to a Liouville manifold $(M,\omega=d\theta)$ by gluing on the positive-half of the symplectization of the contact boundary $\big(\partial\widehat{M},\ker\widehat{\theta}\vert_{\partial\widehat{M}}\big)$:
    \begin{equation}
    M\equiv\widehat{M}\cup_{\partial\widehat{M}}\big([1,+\infty)\times\partial\widehat{M}\big);
    \end{equation}
we denote the radial coordinate on the cylindrical end by $r$. We will assume $M$ is \emph{graded}, i.e., $2c_1(M)=0$; in particular, we fix an $\bbR$-polarization of $\Lambda^n_\bbC TM$, i.e.,
    \begin{equation}
    \Lambda^n_\bbC TM\cong\Lambda\otimes_\bbR\underline{\bbC}.
    \end{equation}

\subsection{Morse theory}\label{sec:51}
Let $f\in C^\infty(\widehat{M})$ be a Morse function and $g$ a (sufficiently generic) Riemannian metric on $M$; we will assume $\grad f$ points strictly outwards on $\partial\widehat{M}$. Given any critical point $a\in\crit(f)$, we denote by $W^u(a)$ resp. $W^s(a)$ the unstable resp. stable manifold of $a$. 

Given any two $a,b\in\crit(f)$, we consider the moduli space $\widetilde{\scrM}_{ab}$ of Morse trajectories connecting $a$ to $b$, i.e., maps 
    \begin{equation}
    \gamma:\bbR_s\to\widehat{M}
    \end{equation}
satisfying 
    \begin{equation}
    \begin{cases}
    \partial_s\gamma+\grad f(\gamma)=0 & \\
    \lim_{s\to-\infty}\gamma(s)=a & \\
    \lim_{s\to+\infty}\gamma(s)=b.
    \end{cases}
    \end{equation}
This is, for generic data, a smooth manifold of dimension $I(a)-I(b)$, where $I(-)$ denotes the Morse index, whose tangent bundle is classified by the index bundle of the family of surjective Fredholm operators 
    \begin{equation}
    D\Xi\equiv\big\{D\Xi_\gamma:W^{1,2}(\bbR;\gamma^*T\widehat{M})\to L^2(\bbR;\gamma^*T\widehat{M})\big\}
    \end{equation}
given by linearizing. When $a\neq b$, there is a free proper $\bbR$-action on $\widetilde{\scrM}_{ab}$ given by time-shift; we denote by $\scrM_{ab}$ the $\bbR$-quotient. We denote by $\bbM_{ab}$ the Gromov-compactification of $\scrM_{ab}$ given by allowing breakings at critical points; also, we define $\bbD\bbM_{ba}\equiv\bbM_{ab}$. Let $\frako_a$ be the orientation line associated to $a$, i.e., the determinant line of the invertible self-adjoint operator $D\Xi_a$ given by linearizing at $a$. Recall, $D\Xi_a$ is the asymptotic operator of $D\Xi_\gamma$ at $-\infty$ for any gradient flow $\gamma$ starting at $a$. We denote by $\abs{\frako_a}$ the free $\bbZ$-module generated by the two possible orientations of $\frako_a$ modulo the relation that the sum of opposite orientations vanishes. 
\begin{enumerate}
\item The $(f,g)$-Morse chain complex 
    \begin{equation}
    CM_j(M;\bbZ)\equiv\bigoplus_{I(a)=j}\abs{\frako_a}
    \end{equation}
has the differential whose $\abs{\frako_a}-\abs{\frako_b}$ component is given by 
    \begin{equation}
    \partial_{a,b}\equiv\sum_{\substack{\gamma\in\bbM_{ab} \\ I(b)=I(a)-1}}\mu_\gamma,
    \end{equation}
where $\mu_\gamma:\frako_a\xrightarrow{\sim}\frako_b$ is the isomorphism induced on orientation lines by $\gamma$; we denote by $HM_*(M;\bbZ)$ the $(f,g)$-Morse homology. Note, we have an identification 
    \begin{equation}
    HM_*(M;\bbZ)\cong H_*(\widehat{M},\partial\widehat{M};\bbZ).
    \end{equation}
\item The $(f,g)$-Morse cochain complex 
    \begin{equation}
    CM^j(M;\bbZ)\equiv\bigoplus_{I(a)=j}\abs{\frako_a}
    \end{equation}
has the codifferential whose $\abs{\frako_b}-\abs{\frako_a}$ component is given by 
    \begin{equation}
    \delta_{b,a}\equiv\sum_{\substack{\gamma\in\bbD\bbM_{ba} \\ I(a)=I(b)+1}}\mu_\gamma,
    \end{equation}
where $\mu_\gamma:\frako_b\xrightarrow{\sim}\frako_a$ is the isomorphism induced on orientation lines by $\gamma$; we denote by $HM^*(M;\bbZ)$ the $(f,g)$-Morse cohomology. Note, we have an identification 
    \begin{equation}
    HM^*(M;\bbZ)\cong H^*(\widehat{M};\bbZ)\cong H^*(M;\bbZ).
    \end{equation}
\end{enumerate}

\subsection{Symplectic cohomology}
\subsubsection{Floer data}\label{sec:521}
Let $H\in C^\infty(M)$ be a Hamiltonian. We will consider Hamiltonians which are linear at infinity, i.e., there exists a constant $\kappa\in\bbR_+$ such that, along the cylindrical end, we have
    \begin{equation}
    H(r,p)=\kappa p,\;\;r\gg1.
    \end{equation}
Also, we will assume $\kappa$ is not the length of a Reeb orbit of $\partial\widehat{M}$. We denote by $X_H$ the associated Hamiltonian vector field: 
    \begin{equation}
    \omega(X_H,-)=-dH.
    \end{equation}
Moreover, we denote by $\phi_H^t$ the associated Hamiltonian flow (i.e., the flow generated by $X_H$). Let $\chi(H)$ be the set of 1-periodic Hamiltonian orbits, i.e., 1-periodic orbits of $\phi_H^t$; generically, each $x\in\chi(H)$ is non-degenerate. 

Meanwhile, let $J$ be an $\omega$-compatible almost complex structure on $M$. We will consider almost complex structures which are of contact type at infinity, i.e., along the cylindrical end, $J$ takes $\partial_r$ to the Reeb field of $\widehat{\theta}\vert_{\partial\widehat{M}}$.

\subsubsection{The construction}
Now, let $H\equiv\{H_t\}_{t\in S^1}$ be a non-degenerate $S^1$-family of linear at infinity Hamiltonians, all of the same slope $\kappa$, and $J\equiv\{J_t\}_{t\in S^1}$ an $S^1$-family of $\omega$-compatible almost complex structures which are of contact type at infinity; we call such a pair $(H,J)$ \emph{admissible}.

Given $x\in\chi(H)$, we may define its degree:
    \begin{equation}
    \deg(x)\equiv n-\operatorname{CZ}(x),
    \end{equation}
where $\operatorname{CZ}(x)$ is the Conley-Zehnder index of the family of symplectic matrices $\Psi_x$ associated to the linearization of the Hamiltonian flow at $x$ using our fixed trivialization of $x^*\Lambda^n_\bbC TM$. (In particular, there is a homotopically unique trivialization of $x^*\Lambda^n_\bbC TM$ such that $x^*\Lambda$ is Maslov 0 resp. 1 if it is orientable resp. non-orientable; this induces a homotopically unique trivialization of $x^*TM$.) Observe, $\deg(x)$ is the index of a Fredholm operator
    \begin{equation}
    D_{\Psi_x}:W^{k,p}(\bbC;\bbC^n)\to W^{k-1,p}(\bbC;\bbC^n),
    \end{equation}
where $\bbC$ has a negative cylindrical end and $D_{\Psi_x}$ has the asymptotic $\Psi_x$ along this negative cylindrical end.

We define $\Theta\equiv\bbR_s\times S^1_t$. Given any two $x,y\in\chi(H)$, we consider the moduli space $\widetilde{\scrF}_{yx}$ of Floer trajectories connecting $x$ to $y$, i.e., cylinders $u:\Theta\to M$ satisfying
    \begin{equation}
    \begin{cases}
    \partial_su+J_t\big(\partial_t-X_H(u)\big)=0 \\
    \lim_{s\to-\infty}u(s,t)=x(t) \\
    \lim_{s\to+\infty}u(s,t)=y(t).
    \end{cases}
    \end{equation}
This is, for generic data, a smooth manifold of dimension $\deg(x)-\deg(y)$ whose tangent bundle is classified by the index bundle of the family of surjective Fredholm operators 
    \begin{equation}
    D\overline{\partial}_{H,J}\equiv\big\{D(\overline{\partial}_{H,J})_u:W^{k,p}(\Theta;u^*TM)\to W^{k-1,p}(\Theta;u^*TM)\big\},\;\; kp>2
    \end{equation}
given by linearizing. When $x\neq y$, there is a free proper $\bbR$-action on $\widetilde{\scrF}_{yx}$ given by time-shift in the $s$-coordinate; we denote by $\scrF_{yx}$ the $\bbR$-quotient. We denote by $\bbF_{yx}$ the Gromov-compactification of $\scrF_{yx}$ given by allowing breakings at Hamiltonian orbits. Let $\frako_x$ be the orientation line associated to $x$, i.e., the determinant line of the invertible self-adjoint operator $D(\overline{\partial}_{H,J})_x$ given by linearizing at $x$. Again, $D(\overline{\partial}_{H,J})_x$ is the asymptotic operator of $D(\overline{\partial}_{H,J})_u$ at $-\infty$ for any Floer trajectory $u$ starting at $x$. The $(H,J)$-Floer cochain complex 
    \begin{equation}
    CF^j(M;H,J;\bbZ)\equiv\bigoplus_{\deg(x)=j}\abs{\frako_x}
    \end{equation}
has the codifferential whose $\abs{\frako_y}-\abs{\frako_x}$ component is given by 
    \begin{equation}
    \delta_{y,x}\equiv\sum_{\substack{u\in\bbF_{yx} \\ \deg(x)=\deg(y)+1}}\mu_u,
    \end{equation}
where $\mu_u:\frako_y\xrightarrow{\sim}\frako_x$ is the isomorphism induced on orientation lines by $u$; we denote by $HF^*(M;H,J;\bbZ)$ the $(H,J)$-Floer cohomology.

Now, consider two pairs of regular Floer data, $(H,J)$ and $(H',J')$, satisfying $\kappa<\kappa'$. We choose monotonic Floer continuation data $(H_s,J_s)$ connecting $(H',J')$ to $(H,J)$, i.e., 
\begin{itemize}
\item a non-degenerate $\bbR$-dependent family $\{H_s\}$ of $S^1$-families of linear at infinity Hamiltonians of slopes $\kappa_s$ which agrees with $(H',\kappa')$ resp. $(H,\kappa)$ for $s\ll0$ resp. $s\gg0$;
\item and an $\bbR$-dependent family $\{J_s\}$ of $S^1$-families of $\omega$-compatible almost complex structures of contact type which agrees with $J'$ resp. $J$ for $s\ll0$ resp. $s\gg0$.
\end{itemize}
Given any two $x\in\chi(H)$ and $y'\in\chi(H')$, we consider the moduli space $\frakc_{\kappa,\kappa'}(x,y')$ of Floer continuation cylinders connecting $x$ to $y'$, i.e., cylinders $u:\Theta\to M$ satisfying 
    \begin{equation}
    \begin{cases}
    \partial_tu+J_{s,t}\big(\partial_t-X_{H_s}(u)\big)=0 \\
    \lim_{s\to-\infty}u(s,t)=y'(t) \\
    \lim_{s\to+\infty}u(s,t)=x(t).
    \end{cases}
    \end{equation}
Again, this is, for generic data, a smooth manifold of dimension $\deg(y')-\deg(x)$. We denote by $\overline{\frakc}_{\kappa,\kappa'}(x,y')$ the Gromov-compactification given by allowing breakings at Hamiltonian orbits. We may define a cochain map 
    \begin{equation}
    \overline{\frakc}_{\kappa,\kappa'}:CF^*(M;H,J;\bbZ)\to CF^*(M;H',J';\bbZ)
    \end{equation}
whose $\abs{\frako_x}$ -- $\abs{\frako_{y'}}$ component is given by
    \begin{equation}
    (\overline{\frakc}_{\kappa,\kappa'})_{x,y'}\equiv\sum_{\substack{u\in\overline{\frakc}_{\kappa,\kappa'}(x,y') \\ \deg(x)=\deg(y')}}\mu_u,
    \end{equation}
where $\mu_u:\frako_x\xrightarrow{\sim}\frako_{y'}$ is the isomorphism induced on orientation lines by $u$.

We define symplectic cohomology via
    \begin{equation}
    SH^*(M;\bbZ)\equiv\varinjlim_{\ell}HF^*(M;H^\ell,J^\ell;\bbZ),
    \end{equation}
where the directed system is over a family of regular Floer data $\big\{(H^\ell,J^\ell)\big\}_{\ell\in\bbZ_{\geq0}}$ satisfying 
    \begin{equation}
    \kappa_\ell<\kappa_{\ell'},\;\;\forall\ell<\ell'.
    \end{equation}
It is known $SH^*(M;\bbZ)$ is independent of all auxiliary choices in the construction.

\subsection{Moduli spaces for domains}
The next two subsections follow \cite[Section 4.2]{Sie21}. However, \emph{loc. cit.} builds bulk-deformed symplectic cohomology using relative cycles of $(\widehat{M},\partial\widehat{M})$, whereas the present article uses Morse cycles of $(\widehat{M},\partial\widehat{M})$. A similar Morse-theoretic treatment of bulk-deformations in the context of relative Fukaya categories appears also in \cite{Sheridan}. The difference is essentially cosmetic. 

In order to actually construct bulk-deformed symplectic cohomology, we must first spend some time constructing the relevant moduli spaces of domains, as follows.

\subsubsection{For differentials}
Let $q\in\bbZ$, $q\geq1$, and consider the moduli space $\calC_{q+2}$ of genus 0 Riemann surfaces with $q+2$ ordered marked points, where the first marked point has an asymptotic marker, modulo biholomorphisms. We declare the first marked point to be negative and call it the \emph{output}; we declare the second marked point to be positive and call it the \emph{input}; and we declare the remaining $q$ marked points to be positive. We may think of $\calC_{q+2}$ as
    \begin{equation}
    \calC_{q+2}\equiv\operatorname{Conf}_q(\bbR\times S^1)/\bbR,
    \end{equation}
where the first marked point corresponds to $-\infty$, the second marked point corresponds to $+\infty$, and the asymptotic marker corresponds to $1\in S^1$. Let $\calC^{univ}_{q+2}\to\calC_{q+2}$ be the universal family: 
    \begin{equation}
    \calC^{univ}_{q+2}\equiv\operatorname{Conf}_q(\bbR\times S^1)\times_\bbR(\bbR\times S^1),
    \end{equation}
where $\bbR$ acts diagonally. Finally, we observe that a choice of asymptotic marker at the first marked point induces a choice of asymptotic marker at the remaining marked points.

Meanwhile, consider the moduli space $T_{\calC_{q+2}}$ of stable trees with $q+2$ ordered external edges.\footnote{Recall, a tree is stable if each vertex has valency at least 3.} We call the first external edge the \emph{output}; we call the second external edge the \emph{input}; we call any internal edge lying on the unique path between the output vertex and the input vertex a \emph{plain} edge; and we call all internal edges which are not plain \emph{round}. We orient all internal edges to point away from the output. We denote by $E_{pl}(T)$ the set of all plain edges of $T$, and $E_{rd}(T)$ the set of all round edges of $T$. Similarly, we denote by $V_{pl}(T)$ the set of all internal vertices contained in the subtree determined by the plain edges, and $V_{rd}(T)$ the set of all internal vertices which are not plain. We adopt the usual shorthand and define 
    \begin{equation}
    E_{pl}(T_\calC)\equiv\coprod_{q\geq1}\coprod_{T\in\calC_{q+2}}E_{pl}(T);
    \end{equation}
we analogously define 
    \begin{equation}
    E_{rd}(T_\calC),\;\;V_{pl}(T_\calC),\;\;\textrm{and}\;\;V_{rd}(T_\calC).
    \end{equation}

Given $v\in V_{pl}(T_\calC)$, let $E(v)$ be the set of all internal edges adjacent to $v$; we consider the moduli space $\calC_{E(v)}$ defined in the same way as $\calC_{\operatorname{val}(v)}$, except instead of ordering the marked points we do the following: index the output by the incoming plain edge at $v$, index the input by the outgoing plain edge at $v$, and index the remaining marked points by the remaining (necessarily round) edges at $v$.

Also, given $v\in V_{rd}(T_\calC)$, we will consider the moduli space $\calN_{E(v)}$ of genus 0 Riemann surfaces with $\operatorname{val}(v)$ marked points, where the marked points are indexed by $E(v)$ (the marked point indexed by the incoming edge is declared negative and the marked point index by the outgoing edge is declared positive), modulo biholomorphisms. We may think of $\calN_{E(v)}$ as 
    \begin{equation}
    \calN_{E(v)}\equiv\operatorname{Conf}_{E(v)}(\bbC P^1)/PSL(2,\bbC),
    \end{equation}
where the notation $\operatorname{Conf}_{E(v)}$ indicates we index the marked points by $E(v)$. Let $\calN_{E(v)}^{univ}\to\calN_{E(v)}$ be the universal family:
    \begin{equation}
    \calN^{univ}_{E(v)}=\operatorname{Conf}_{E(v)}(\bbC P^1)\times_{PSL(2,\bbC)}\bbC P^1,
    \end{equation}
where $PSL(2,\bbC)$ acts diagonally.

For any $v\in V_{rd}(T_\calC)$, we endow the universal family $\calN_{E(v)}^{univ}\to\calN_{E(v)}$ with fiberwise cylindrical ends, i.e., fiberwise disjoint fiberwise holomorphic embeddings 
    \begin{equation}
    \epsilon^\calN_e:\calN_{E(v)}\times\bbR_{\pm}\times S^1\to\calN^{univ}_{E(v)},\;\;e\in E(v),
    \end{equation}
where we use $\bbR_-$ resp. $\bbR_+$ if $e$ is an incoming resp. outgoing edge and the map 
    \begin{equation}
    t\mapsto\lim_{s\to\pm\infty}\epsilon^\calN_e(s,t)
    \end{equation}
is the constant map at the marked point indexed by $e$. 

Similarly, for any $v\in V_{pl}(T_\calC)$, we endow the universal family $\calC_{E(v)}^{univ}\to\calC_{E(v)}$ with fiberwise cylindrical ends:
    \begin{equation}
    \epsilon^\calC_e:\calC_{E(v)}\times\bbR_{\pm}\times S^1\to\calC^{univ}_{E(v)},\;\;e\in E(v),
    \end{equation}
where we use $\bbR_-$ resp. $\bbR_+$ if $e$ is an incoming resp. outgoing edge and the map 
    \begin{equation}
    t\mapsto\lim_{s\to\pm\infty}\epsilon^\calC_e(s,t)
    \end{equation}
is the constant map at the marked point indexed by $e$.

Now, for any $T\in T_{\calC_{q+2}}$ and $\epsilon\in\bbR_{>0}$ sufficiently small, we define 
    \begin{align}
    \overline{\calC}_T&\equiv\Bigg(\prod_{v\in V_{pl}(T)}\calC_{E(v)}\Bigg)\times\Bigg(\prod_{v\in V_{rd}(T)}\calN_{E(v)}\Bigg), \\
    \overline{\calC}^\epsilon_T&\equiv\overline{\calC}_T\times(-\epsilon,0]^{E_{pl}(T)}\times\bbD^{E_{rd}(T)}_\epsilon,
    \end{align}
where $\bbD_\epsilon\subset\bbC$ is the 2-disk of radius $\epsilon$. Observe, our fiberwise cylindrical ends induce gluing maps: 
    \begin{equation}
    \phi_{T,e}:\big\{T'\in\overline{\calC}^\epsilon_T:\rho_e\neq0\big\}\to\overline{\calC}^\epsilon_{T/e},
    \end{equation}
where $\rho_e$ denotes the coordinate in $(-\epsilon,0]^{E_{pl}(T)}$ resp. $\bbD^{E_{rd}(T)}_\epsilon$ if $e$ is plain resp. round and $T/e$ is the tree obtained by collapsing $e$. (When $e$ is plain, we glue by aligning the asymptotic markers; when $e$ is round, the $S^1$-factor corresponds to the angle between the asymptotic markers.) Finally, we define the topological space
    \begin{equation}
    \overline{\calC}_{q+2}\equiv\Bigg(\prod_{T\in\calC_{q+2}}\overline{\calC}^\epsilon_T\Bigg)/\sim,
    \end{equation}
where the equivalence relation is given by $T'\sim\phi_{T,e}(T')$. 

\begin{rem}
Throughout the remainder of the present article, we will assume that all of our choices of fiberwise cylindrical ends are consistent in the usual sense.
\end{rem}

\subsubsection{For continuation maps}
In order to define continuation maps, we need a mechanism to ``break the translational symmetry''; this comes in the form of ``sprinkles''. This notion, in the open-string case, first appeared in \cite[Section 2d]{AS10a}; the following is the closed-string analogue. Let $S\in\calC_{q+2}$ be equipped with its cylindrical ends $\epsilon^\calC_0,\ldots,\epsilon^\calC_{q+1}$, induced by the universal family, and consider any biholomorphism $\Psi:S\xrightarrow{\sim}\bbR\times S^1$ sending the first marked point to $-\infty$, the second marked point to $+\infty$, and the asymptotic marker to $1\in S^1$.

\begin{defin}
A \emph{popsicle stick} for $S$ is a line $L\subset S$, $L\cong\bbR$, such that:%https://en.wikipedia.org/wiki/Fab_(brand)#/media/File:FAB_frozen_dessert.jpg
\begin{itemize}
\item $\big(\epsilon^\calC_0\big)^{-1}(L)$ agrees with $\{1\}\times\bbR_-$ near $s=-\infty$,
\item $\big(\epsilon^\calC_1\big)^{-1}(L)$ agrees with $\{1\}\times\bbR_+$ near $s=+\infty$,
\item and $\Psi(L)$ is of the form $\Big\{\big(s,\beta(s)\big)\Big\}$, where $\beta:\bbR\to S^1$ is some smooth map whose image is contained in a sufficiently small neighborhood of $1\in S^1$.
\end{itemize}
\end{defin}

\begin{defin}
Given a popsicle stick $L\subset S$, a \emph{sprinkle} for $S$ is simply a choice of point on $L$.
\end{defin}

For any $v\in V_{pl}(T_\calC)$, we endow the universal family $\calC^{univ}_{E(v)}\to\calC_{E(v)}$ with fiberwise popsicle sticks which are consistent with respect to gluing. We now consider the moduli space $\calD_{q+2}$, $q\geq1$, defined as
    \begin{equation}
    \big\{(S,p):S\in\calC_{q+2},p\in L_S\big\},
    \end{equation}
where $L_S$ is the popsicle stick for $S$ induced by the universal family. In this way, we have a map
    \begin{equation}
    \calD_{q+2}\to\calC_{q+1},\;\;q\geq2,
    \end{equation}
given by forgetting the sprinkle, whose fiber is $L_S$. In particular, we see $\calD_{q+2}$ ``breaks the translational symmetry'' since it increases the dimension of $\calC_{q+1}$ by one. We define $\calD_2$ by endowing $\bbR\times S^1$ with the standard popsicle stick $\bbR\times\{1\}$.

In order to define $\overline{\calD}_{q+2}$, we need to describe popsicle sticks for $\calC_{E(v)}$; this in turn requires us to choose our fiberwise cylindrical ends a bit more carefully.\footnote{One could say our fiberwise cylindrical ends need to be chosen to be \emph{delicious}.} Consider the moduli space $T_{\calD_{q+2}}$ defined in the same way as $T_{\calC_{q+2}}$, except that exactly one plain vertex $v_{spr}$ is declared the \emph{sprinkle vertex} and is allowed to be valency two. Similarly to before, we define 
    \begin{equation}
    E_{pl}(T_\calD),\;\;E_{rd}(T_\calD),\;\;V_{pl}(T_\calD),\;\;\textrm{and}\;\;V_{rd}(T_\calD).
    \end{equation}
\begin{enumerate}
\item For $v\in V_{pl}(T_\calD)$ with $\operatorname{val}(v)\geq3$, we define $\calC_{E(v)}$ as before and equip the universal family $\calC^{univ}_{E(v)}\to\calC_{E(v)}$ with fiberwise cylindrical ends and popsicle sticks as follows: 
    \begin{itemize}
    \item if $v$ is a vertex of $T\in T_{\calD_{q+2}}$ such that $T$'s sprinkle vertex is valency at least 3, then we forget the sprinkle vertex in order to view $T$ as an element of $T_{\calC_{q+2}}$ and take the induced fiberwise cylindrical ends and popsicle sticks from the universal family $\calC^{univ}_{E(v)}\to\calC_{E(v)}$, where $v$ is viewed as an element of $V_{pl}(T_\calC)$;

    \item otherwise, if $T$'s sprinkle vertex is valency 2 (in particular, distinct from $v$ itself), we contract the incoming edge at the sprinkle in order to view $T$ as an element of $T_{\calC_{q+2}}$ and take the induced fiberwise cylindrical ends and popsicle sticks from the universal family $\calC^{univ}_{E(v)}\to\calC_{E(v)}$, where $v$ is viewed as an element of $V_{pl}(T_\calC)$.
    \end{itemize}
\item We perform the analogous procedure for $v\in V_{rd}(T_\calD)$ to induce fiberwise cylindrical ends for the universal family $\calN^{univ}_{E(v)}\to\calN_{E(v)}$. 
\item For $v_{spr}\in V_{pl}(T_\calD)$ with $\operatorname{val}(v_{spr})\geq3$, we consider the moduli space $\calD_{E(v_{spr})}$ defined in the same way as $\calD_{\operatorname{val}(v_{spr})}$, except instead of ordering the marked points we index by $E(v_{spr})$. We endow the universal family $\calD^{univ}_{E(v_{spr})}\to\calD_{E(v_{spr})}$ with fiberwise cylindrical ends and popsicle sticks induced from the universal family $\calC^{univ}_{E(v_{spr})}\to\calC_{E(v_{spr})}$. 
\item Finally, if $\operatorname{val}(v_{spr})=2$, we define $\calD_{E(v_{spr})}$ by endowing $\bbR\times S^1$ with the standard popsicle stick $\bbR\times\{1\}$.
\end{enumerate}

Now, for any $T\in T_{\calD_{q+2}}$ and $\epsilon\in\bbR_{>0}$ sufficiently small, we define 
    \begin{align}
    \overline{\calD}_T&\equiv\calD_{E(v_{spr})}\times\Bigg(\prod_{v\in V_{pl}(T),v\neq v_{spr}}\calC_{E(v)}\Bigg)\times\Bigg(\prod_{v\in V_{rd}(T)}\calN_{E(v)}\Bigg), \\
    \overline{\calD}^\epsilon_T&\equiv\overline{\calD}_T\times(-\epsilon,0]^{E_{pl}(T)}\times\bbD^{E_{rd}(T)}_\epsilon.
    \end{align}
Observe, our fiberwise cylindrical ends induce gluing maps,
    \begin{equation}
    \phi_{T,e}:\big\{T'\in\overline{\calD}^\epsilon_T:\rho_e\neq0\big\}\to\overline{\calD}^\epsilon_{T/e},
    \end{equation}
since they play nicely with popsicle sticks. Finally, we define the topological space
    \begin{equation}
    \overline{\calD}_{q+2}\equiv\Bigg(\prod_{T\in\calD_{q+2}}\overline{\calD}^\epsilon_T\Bigg)/\sim,
    \end{equation}
where the equivalence relation is given by $T'\sim\phi_{T,e}(T')$. 

\begin{rem}
Throughout the remainder of the present article, we will assume that all of our choices of fiberwise cylindrical ends and popsicle sticks are consistent in the usual sense.
\end{rem}

\subsubsection{Consistent universal Floer data}
We will now discuss domain-dependent perturbations. Once and for all, we fix a reference $\omega$-compatible almost complex structure $J_{\rm ref}$ of contact type at infinity. Consider an admissible pair $(H_0,J_0)$.

\begin{defin}
Let $S\in\calC_{q+2}$ be equipped with its cylindrical ends $\epsilon^\calC_-$ resp. $\epsilon^\calC_+$ at the output resp. input induced from the universal family. A \emph{perturbation datum} for $S$ consists of a pair $(K,J)$, where: $K=H\otimes\gamma$ for an $S$-dependent Hamiltonian of slope $\widetilde{\kappa}:S\to\bbR_+$, $\gamma$ is a closed 1-form such that $d(\widetilde{\kappa}\gamma)\leq0$, and $J$ is an $\omega$-compatible almost complex structure which agrees with $J_{\rm ref}$ at infinity. We require $(K,J)$ to satisfy the following two conditions. 
\begin{enumerate}
\item $\big(\epsilon^\calC_\pm\big)^*K=H_0\otimes dt$ and $\big(\epsilon^\calC_\pm\big)^*J=J_0$.
\item On each disk-like neighborhood of a marked point of $S$ which is not the output or input (i.e., the image of a ``sub-cylinder'' of the corresponding cylindrical end), we have $K=0$ and $J=J_{\rm ref}$.
\end{enumerate}
\end{defin}

Given an admissible pair $(H_0,J_0)$, we choose fiberwise perturbation data for the universal family 
    \begin{equation}
    \calC^{univ}_{E(v)}\to\calC_{E(v)}\;\;\textrm{resp.}\;\;\calN^{univ}_{E(v)}\to\calN_{E(v)},
    \end{equation}
where $v\in V_{pl}(T_\calC)$ resp. $v\in V_{rd}(T_\calC)$. 

\begin{rem}
Throughout the remainder of the present article, we will assume that all of our choices of fiberwise perturbation data are consistent in the usual sense.
\end{rem}

Similarly, consider admissible $(H_-,J_-)$ and $(H_+,J_+)$, where $\kappa_-\geq\kappa_+$.

\begin{defin}
Let $S\in\calD_{q+2}$ be equipped with its cylindrical ends $\epsilon^\calC_-$ resp. $\epsilon^\calC_+$ at the output resp. input induced from the universal family. A \emph{perturbation datum} for $S$ consists of a pair $(K,J)$, where: $K=H\otimes\gamma$ for an $S$-dependent Hamiltonian of slope $\widetilde{\kappa}:S\to\bbR_+$, $\gamma$ is a closed 1-form such that $d(\widetilde{\kappa}\gamma)\leq0$, and $J$ is an $\omega$-compatible almost complex structure which agrees with $J_{\rm ref}$ at infinity. We require $(K,J)$ to satisfy the following two conditions. 
\begin{enumerate}
\item $\big(\epsilon^\calC_\pm\big)^*K=H_\pm\otimes dt$ and $\big(\epsilon^\calC_\pm\big)^*J=J_\pm$.
\item On each disk-like neighborhood of a marked point of $S$ which is not the output or input (i.e., the image of a ``sub-cylinder'' of the corresponding cylindrical end), we have $K=0$ and $J=J_{\rm ref}$.
\end{enumerate}
In addition, we require the following two conditions.
\begin{enumerate}
\item $\big(\epsilon^\calC_\pm\big)^*H=H_\pm$ and $\big(\epsilon^\calC_\pm\big)^*\gamma=dt$.
\item $\gamma=\eta/\widetilde{\kappa}$ for a closed 1-form $\eta$ such that $\big(\epsilon^\calC_\pm\big)^*\eta=\kappa_\pm dt$ and $d\eta\leq0$.
\end{enumerate}
\end{defin}

\begin{rem}
In fact, such an $\eta$ exists since $\kappa_-\geq\kappa_+$.
\end{rem}

Now, we are already given perturbation data relative to both $(H_-,J_-)$ and $(H_+,J_+)$. We choose fiberwise perturbation data for the universal family 
    \begin{equation}
    \calC^{univ}_{E(v)}\to\calC_{E(v)}\;\;\textrm{resp.}\;\;\calN^{univ}_{E(v)}\to\calN_{E(v)},
    \end{equation}
where $v\in V_{pl}(T_\calD)$ is a non-sprinkle vertex resp. $v\in V_{rd}(T_\calD)$, by inducing from the choice for $(H_-,J_-)$ resp. $(H_+,J_+)$ if $v$ comes before resp. after the sprinkle vertex. We also choose fiberwise perturbation data for the universal family for each sprinkle vertex:
    \begin{equation}
    \calD^{univ}_{E(v_{spr})}\to\calD_{E(v_{spr})}.
    \end{equation}

\begin{rem}
Throughout the remainder of the present article, we will assume that all of our choices of fiberwise perturbation data are consistent in the usual sense.
\end{rem}

\subsection{Bulk-deformed symplectic cohomology}\label{subsec:bulkdeformed}
We are now ready to begin the construction of bulk-deformed symplectic cohomology. Given $\mu \geq 1$, let
    \begin{equation}
    \frakU_\rho\equiv\sum_{1\leq j_\rho\leq k_\rho}\frakU_{\rho,j_\rho}a_{\rho,j_\rho}\in CM_{2n-\ell_\rho}(M;\bbC),\;\;\frakU_{\rho,j_\rho}\in\bbC,a_{\rho,j_\rho}\in\crit(f),1\leq\rho\leq\mu
    \end{equation}
be a sequence of Morse cycles of degree $2n-\ell_\rho$, $\ell_\rho\geq3$. Of course, if we consider the Poincar\'e dual Morse cochain to $\frakU_\rho$ given by sending $f$ to $-f$, still denoted
    \begin{equation}
    \frakU_\rho\in CM^{\ell_\rho}(M;\bbC),
    \end{equation}
this is a Morse cocycle.

First, let $S$ be a Riemann sphere with $q\geq3$ ordered marked points equipped with perturbation data $(K,J)$ induced from the universal family. We define $Y$ to be the $\omega$-dual of $K$. We denote by $\scrM_S$ the moduli space of pseudoholomorphic spheres with domain $S$, i.e., spheres $u:S\to\widehat{M}$ satisfying
    \begin{equation}
    (du-Y)^{0,1}=0.
    \end{equation}
We define 
    \begin{equation}
    \scrM_q\equiv\Big\{(S,u):S\in\calN_q,u\in\scrM_{\calN^{univ}_q\vert_S}\Big\},\;\;q\geq3.
    \end{equation}
Of course, these spaces are not particularly interesting since $(M,\omega)$ is exact; that so, we still require them. Analogously, we may define 
    \begin{equation}
    \scrM_{E(v)},\;\;v\in V_{rd}(T_\calC),V_{rd}(T_\calD).
    \end{equation}

Now, consider an admissible pair $(H_0,J_0)$. Let $S\in\calC_{q+2}$ be equipped with its cylindrical ends $\epsilon^\calC_\pm$ and perturbation data $(K,J)$ induced from the universal family. For any two $x,y\in\chi(H_0)$, we denote by $\scrF_S(y,x)$ the moduli space of maps $u:S\to\widehat{M}$ satisfying 
    \begin{equation}
    \begin{cases}
    (du-Y)^{0,1}=0, \\
    \lim_{s\to-\infty}\big(u\circ\epsilon^\calC_-\big)(s,t)=x(t), \\
    \lim_{s\to+\infty}\big(u\circ\epsilon^\calC_+\big)(s,t)=y(t).
    \end{cases}
    \end{equation}
We define 
    \begin{equation}
    \scrF_q(y,x)\equiv\Big\{(S,u):S\in\calC_{q+2},u\in\scrF_{\calC^{univ}_{q+2}\vert_S}(y,x)\Big\},\;\;q\geq1.
    \end{equation}
We set 
    \begin{equation}
    \scrF_0(y,x)\equiv\scrF_{yx}.
    \end{equation}
Also, we may analogously define
    \begin{equation}
    \scrF_{E(v)}(y,x),\;\;v\in V_{pl}(T_\calC).
    \end{equation}

Consider the moduli space $T^{semi}_{\calC_{q+2}}$ defined in the same way as $T_{\calC_{q+2}}$, except that we now allow plain vertices of valency 2. 

\begin{defin}
Given any two $x,y\in\chi(H_0)$, a \emph{stable broken Floer cylinder} connecting $x$ to $y$ with $q$ interior marked points consists of the following data:
\begin{enumerate}
\item $T\in T^{semi}_{\calC_{q+2}}$;
\item $z_e\in\chi(H_0)$ for each $e\in E_{pl}(T)$, such that $z_e=x$ resp. $z_e=y$ if $e$ is the output resp. input edge;
\item $u_v\in\scrF_{E(v)}(z_{e_+},z_{e_-})$ for each $v\in V_{pl}(T)$, where $e_-$ resp. $e_+$ is the incoming resp. outgoing plain edge at $v$;
\item $u_v\in\scrM_{E(v)}$ for each $v\in V_{rd}(T)$;
\item for each $e\in E_{rd}(T)$, say with endpoints corresponding to the marked points $p$ and $p'$ of $u_{e_I(v)}$ and $u_{e_T(v)}$, respectively, where $e_I(v)$ resp. $e_T(v)$ is the intitial resp. terminal vertex of $e$, we have 
    \begin{equation}
    u_{e_I(v)}(p)=u_{e_T(v)}(p')
    \end{equation}
(i.e., bubbles connect).
\end{enumerate}
\end{defin}

We denote by $\bbF_T(y,x)$ the moduli space of stable broken Floer cylinders connecting $x$ to $y$ with $q$ interior marked points which are modeled on $T\in T^{semi}_{\calC_{q+2}}$. By evaluating at the marked points indexed by the round leaves, we have a map 
    \begin{equation}
    \eval_q:\bbF_T(y,x)\to \widehat{M}^q.
    \end{equation}
    
Let $\frakU\equiv(\frakU_1,\ldots,\frakU_\mu)$ be our $\mu$-tuple of Morse cycles from before and consider a partition 
    \begin{align}
    \{1,\ldots,q\}&=\{1,\ldots,q_1\}\amalg\{q_1+1,\ldots,q_2\}\amalg\cdots\amalg\{q_{\mu-1}+1,\ldots,q_\mu\} \\
    &\equiv\frakq_1\amalg\cdots\amalg\frakq_\mu;
    \end{align}
and for each $\rho$, choose 
    \begin{equation}
    j_{\rho_1},\ldots,j_{\rho_{\abs{\frakq_\rho}}}
    \end{equation}
between 1 and $k_\rho$. (We would like to emphasize that we do not require a relation between $j_{\rho_1},\ldots,j_{\rho_{\abs{\frakq_\rho}}}$.) We may define
    \begin{equation}\label{eqn:bulkdeformedaux}
    \bbF_{\frakq_1,\ldots,\frakq_\mu,\frakU}(y,x)\equiv\coprod_{a_{\frakq_1},\ldots,a_{\frakq_\mu}}\bbF_{a_{\frakq_1},\ldots,a_{\frakq_\mu}}(y,x),
    \end{equation}
where $a_{\frakq_\rho}\equiv(a_{\rho,j_{\rho_1}},\ldots,a_{\rho,j_{\abs{\frakq_\rho}}})$ is a $\abs{\frakq_\rho}$-tuple of terms appearing in $\frakU_\rho$ and 
    \begin{multline}
    \bbF_{a_{\frakq_1},\ldots,a_{\frakq_\mu}}(y,x)\equiv\coprod_{T\in T^{semi}_{\calC_{q+2}}}\bbF_T(y,x)\times_{\widehat{M}^q} \\
    \big(W^s(a_{1,j_1})\times\cdots\times W^s(a_{1,j_{\abs{\frakq_1}}})\times\cdots\times W^s(a_{\mu,j_1})\times\cdots\times W^s(a_{\mu,j_{\abs{\frakq_\mu}}})\big)
    \end{multline}
equipped with the Gromov topology. We use the shorthand
    \begin{multline}
    \bbF_{T,a_{\frakq_1},\ldots,a_{\frakq_\mu}}(y,x)\equiv\bbF_T(y,x)\times_{\widehat{M}^q} \\
    \big(W^s(a_{1,j_1})\times\cdots\times W^s(a_{1,j_{\abs{\frakq_1}}})\times\cdots\times W^s(a_{\mu,j_1})\times\cdots\times W^s(a_{\mu,j_{\abs{\frakq_\mu}}})\big).
    \end{multline}

\begin{rem}
Perhaps some discussion is warranted for exactly what elements of $\bbF_{a_{\frakq_1},\ldots,a_{\frakq_\mu}}(y,x)$ look like. First, we have a stable broken Floer cylinder connecting $x$ to $y$; this comes equipped with $q$ marked points. Since we are bulk-deforming by $(\frakU_1,\ldots,\frakU_\mu)$, we partition these $q$ marked points into ``clusters'' $\frakq_1,\ldots,\frakq_\mu$; the marked points in the cluster $\frakq_\rho$ are the ones which are allowed to interact with $\frakU_\rho$. Finally, since $\frakU_\rho$ is a Morse cycle, we must further specify which $a_{\rho,j_\rho}$ in the sum $\frakU_\rho=\sum\frakU_{\rho,j_\rho}a_{\rho,j_\rho}$ the marked points in the cluster $\frakq_\rho$ are allowed to interact with; this is the role of $j_{\rho_1},\ldots,j_{\rho_{\abs{\frakq_\rho}}}$.
\end{rem}

The basic Gromov-compactness and gluing result in this context is the following.

\begin{prop}[{\cite[Proposition 4.15]{Sie21}}]\label{prop:bulkdeformedcodiff}
We have the following two results.
\begin{enumerate}
\item When $\deg(x)=\deg(y)+\sum_{\rho=1}^\mu\abs{\frakq_\rho}(\ell_\rho-2)+1$, $\bbF_{\frakq_1,\ldots,\frakq_\mu,\frakU}(y,x)$ is a finite set of points.
\item  When $\deg(x)=\deg(y)+\sum_{\rho=1}^\mu\abs{\frakq_\rho}(\ell_\rho-2)+2$, $\bbF_{a_{\frakq_1},\ldots,a_{\frakq_\mu}}(y,x)$ is a compact 1-manifold with boundary given by
    \begin{align}
    \partial\bbF_{a_{\frakq_1},\ldots,a_{\frakq_\mu}}(y,x)=&\coprod_T\bbF_{T,a_{\frakq_1},\ldots,a_{\frakq_\mu}}(y,x) \\
    &\coprod_{T,\rho,\sigma}\coprod_{\substack{b\in\crit(f) \\ \deg(b)=\deg(a_{\rho,j_\sigma})+1}}\bbF_{T,a_{\frakq_1},\ldots,a_{\frakq_{\rho-1}},b_{\frakq_\rho,\sigma},\ldots,a_{\frakq_\mu}}(y,x)\times\bbM_{ba_{\rho,j_\sigma}}, \nonumber
    \end{align}
where: the first disjoint union only uses $T\in T_{\calC_{q+2}}$ having exactly one plain edge and no round edges, the second disjoint union only uses the unique $T\in T_{\calC_{q+2}}$ having exactly no plain edges and no round edges, and 
    \begin{equation}
    b_{\frakq_\rho,\sigma}\equiv(a_{\rho,j_1},\ldots,a_{\rho,j_{\sigma-1}},b,\widehat{a}_{\rho,j_\sigma},\ldots,a_{\rho,j_{\abs{\frakq_\rho}}}).
    \end{equation}
In particular, $\bbF_{\frakq_1,\ldots,\frakq_\mu,\frakU}(y,x)$ is a compact 1-manifold with boundary given by
    \begin{equation}
    \partial\bbF_{\frakq_1,\ldots,\frakq_\mu,\frakU}(y,x)\equiv\coprod_{a_{\frakq_1},\ldots,a_{\frakq_\mu}}\partial\bbF_{a_{\frakq_1},\ldots,a_{\frakq_\mu}}(y,x).
    \end{equation}
\end{enumerate}
\end{prop}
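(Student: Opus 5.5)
The statement is cited from \cite[Proposition 4.15]{Sie21}. Our plan is to follow the standard Gromov compactness plus gluing argument for Floer cylinders with interior marked points constrained to stable manifolds. We adapt it from relative cycles to the Morse cycles $W^s(a_{\rho,j})$.

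\textbf{Step 1: transversality and dimension.} For generic consistent perturbation data $(K,J)$ on the universal families over $\calC_{E(v)}$ and $\calN_{E(v)}$, and generic $(f,g)$, we will show that each $\bbF_T(y,x)$ is cut out transversally and that $\eval_q$ is transverse to the products of stable manifolds. Since $M$ is exact, every nonconstant sphere has zero energy, so each $u_v$ with $v\in V_{rd}(T)$ is constant. This is where we need $K=0$ and $J=J_{\rm ref}$ near the non-input/output marked points, and we must also treat these ghost components.

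The dimension of the top stratum is
\begin{equation}
\deg(x)-\deg(y)+2q-1-\sum_{\rho}\abs{\frakq_\rho}\,\ell_\rho ,
\end{equation}
using $\dim\calC_{q+2}=2q-1$ and $\codim W^s(a)=\deg(a)$ with $a$ of Morse index $\ell_\rho$ via the Poincar\'e dual convention. This equals $\deg(x)-\deg(y)-\sum\abs{\frakq_\rho}(\ell_\rho-2)-1$, which gives dimension $0$ resp.\ $1$ in cases (1) resp.\ (2).

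\textbf{Step 2: excluding bad strata.} Strata with round edges consist of ghost bubbles carrying at least two marked points. These are constant, so two constrained points map to the same point of $\widehat M$. Such a stratum loses $2$ real parameters for each collapsed configuration but gains no new freedom. The hypothesis $\ell_\rho\ge 3$ then ensures the fiber product with the diagonal in $\widehat M^q$ has negative expected dimension, hence is empty. This is the main obstacle, and we expect it to be the step requiring the most care.

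To handle it, we would first try to choose $K$ and $J$ domain-dependent only away from these neighbourhoods, and make the evaluation maps generic relative to the stratified diagonals using the freedom in $(f,g)$. Strata with two or more plain edges, or a plain edge together with a Morse breaking, have codimension at least $2$, so they are absent in dimensions $\le 1$. Compactness itself follows from the a priori energy bound given by the action difference together with the maximum principle, which uses $d(\widetilde\kappa\gamma)\le0$ and the contact-type $J$. Degenerations are therefore limited to Floer breaking at orbits, bubbling (ghost only), and breaking of Morse flow lines at the stable-manifold ends. The last gives the factors $\bbM_{ba_{\rho,j_\sigma}}$ with $\deg(b)=\deg(a_{\rho,j_\sigma})+1$.

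\textbf{Step 3: gluing.} We will show that each listed boundary stratum occurs exactly once as an end. For a single plain edge this is standard Floer gluing with the plain gluing parameter $\rho_e\in(-\epsilon,0]$, and it is compatible with the aligned asymptotic markers in $\overline{\calC}_{q+2}$. For the Morse breaking we use Morse gluing of a flow line onto the stable manifold. Together these identify $\bbF_{a_{\frakq_1},\ldots,a_{\frakq_\mu}}(y,x)$ as a compact $1$-manifold with the stated boundary. Summing over the tuples $a_{\frakq_\rho}$ gives the final assertion, and part (1) follows from the same compactness with the dimension-zero count.
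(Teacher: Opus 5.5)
Your Steps 1 and 3 are the standard outline behind the cited result. Step 2, which you rightly flag as the crux, is argued incorrectly, and the remedy you propose cannot work.

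With a single Morse--Smale pair $(f,g)$, stable manifolds of distinct critical points are disjoint. So the collisions that matter are between marked points carrying the \emph{same} label $a=a_{\rho,j}$, which the construction explicitly allows (the $j_{\rho_1},\ldots,j_{\rho_{\abs{\frakq_\rho}}}$ are unrelated). For these, $W^s(a)\times\cdots\times W^s(a)$ meets the diagonal in the diagonal copy of $W^s(a)$, maximally non-transversally. A ghost bubble carrying $k\ge 2$ such points therefore imposes only $u(p)\in W^s(a)$, of codimension $\ell_\rho$ rather than $k\ell_\rho$. Its expected dimension is $(\text{main})-2+(k-1)\ell_\rho$, which is \emph{positive} because $\ell_\rho\ge 3$: the hypothesis works against you. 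No perturbation of the one pair $(f,g)$ can help, since both points see the same submanifold, and the choice of $K,J$ is irrelevant to this.

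Nor is the collision harmless. Write $z_2=z_1+re^{i\theta}$ and blow up along $z_1=z_2$. The closure of the main stratum then acquires a face at $r=0$, cut out by $u(z_1)\in W^s(a)$ and $du_{z_1}(e^{i\theta})\in TW^s(a)$. This face has dimension exactly one less than the main stratum. So in the one-dimensional spaces of part (2) you get a finite set of extra ends, with no reason to be empty, and they are not among the listed boundary strata.

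The missing idea is to decouple the constraints at different marked points. For instance, require $u(z_i)$ to be the starting point of a half-infinite trajectory of a perturbed field $-\grad f+\xi_{i,S}$ converging to $a$. Here $\xi_{i,S}$ is generic, supported in finite time, and depends on the label $i$ and on the domain $S$, consistently with the gluing maps defining $\overline{\calC}_{q+2}$.

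Asymptotically this is still the flow of $(f,g)$, so Morse breaking still produces exactly the factors $\bbM_{ba_{\rho,j_\sigma}}$ in the statement. But the constraint cycles at colliding points now intersect transversally. A ghost tree carrying $k\ge 2$ marked points then imposes the full codimension while the domain loses two real dimensions, so every round-edge stratum has expected dimension $(\text{main})-2<0$ and is empty in parts (1) and (2).

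This step uses no lower bound on $\ell_\rho$. The role of $\ell_\rho\ge 3$ is rather that $\deg\hbar_\rho<0$, which is what makes only finitely many $q$ contribute. Some such decoupling must be implicit in the cited statement, whose notation writes the fiber product with the unperturbed $W^s(a_{\rho,j})$.
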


\begin{rem}
When $q=0$, the Gromov-compactification of $\scrF_0(y,x)=\scrF_{yx}$ is simply $\bbF_{yx}$.
\end{rem}

Meanwhile, consider the moduli space $T^{semi}_{\calD_{q+2}}$ defined in the same way as $T_{\calD_{q+2}}$, except that we now allow plain vertices of valency 2; moreover, consider admissible $(H_-,J_-)$ and $(H_+,J_+)$, where $\kappa_-\geq\kappa_+$. In a straightforward manner, given any two $x_-\in\chi(H_-)$ and $y_+\in\chi(H_+)$, we may define the moduli space 
    \begin{equation}
    \overline{\frakc}_{\frakq_1,\ldots,\frakq_\mu,\frakU,\kappa_\pm}(x_-,y_+)
    \end{equation}
analogously to \eqref{eqn:bulkdeformedaux}. The basic Gromov-compactness and gluing result in this context is the following.

\begin{prop}[{\cite[Proposition 4.16]{Sie21}}]\label{prop:bulkdeformedcont}
We have the following two results.
\begin{enumerate}
\item When $\deg(x_-)=\deg(y_+)+\sum_{\rho=1}^\mu\abs{\frakq_\rho}(\ell_\rho-2)$, $\overline{\frakc}_{\frakq_1,\ldots,\frakq_\mu,\frakU,\kappa_\pm}(x_-,y_+)$ is a finite set of points.
\item  When $\deg(x_-)=\deg(y_+)+\sum_{\rho=1}^\mu\abs{\frakq_\rho}(\ell_\rho-2)+1$, $\overline{\frakc}_{a_{\frakq_1},\ldots,a_{\frakq_\mu},\kappa_\pm}(x_-,y_+)$ is a compact 1-manifold with boundary given by
    \begin{align}
    \partial\overline{\frakc}_{a_{\frakq_1},\ldots,a_{\frakq_\mu},\kappa_\pm}(x_-,y_+)=&\coprod_T\overline{\frakc}_{T,a_{\frakq_1},\ldots,a_{\frakq_\mu},\kappa_\pm}(x_-,y_+) \\
    &\coprod_{T,\rho,\sigma}\coprod_{\substack{b\in\crit(f) \\ \deg(b)=\deg(a_{\rho,j_\sigma})+1}}\overline{\frakc}_{T,a_{\frakq_1},\ldots,a_{\frakq_{\rho-1}},b_{\frakq_\rho,\sigma},\ldots,a_{\frakq_\mu},\kappa_\pm}(x_-,y_+)\times\bbM_{ba_{\rho,j_\sigma}}, \nonumber
    \end{align}
where: the first disjoint union only uses $T\in T_{\calD_{q+2}}$ having exactly one plain edge and no round edges, the second disjoint union only uses the unique $T\in T_{\calD_{q+2}}$ having exactly no plain edges and no round edges, and 
    \begin{equation}
    b_{\frakq_\rho,\sigma}\equiv(a_{\rho,j_1},\ldots,a_{\rho,j_{\sigma-1}},b,\widehat{a}_{\rho,j_\sigma},\ldots,a_{\rho,j_\rho}).
    \end{equation}
In particular, $\overline{\frakc}_{\frakq_1,\ldots,\frakq_\mu,\frakU,\kappa_\pm}(x_-,y_+)$ is a compact 1-manifold with boundary given by
    \begin{equation}
    \partial\overline{\frakc}_{\frakq_1,\ldots,\frakq_\mu,\frakU,\kappa_\pm}(x_-,y_+)\equiv\coprod_{a_{\frakq_1},\ldots,a_{\frakq_\mu}}\partial\overline{\frakc}_{a_{\frakq_1},\ldots,a_{\frakq_\mu},\kappa_\pm}(x_-,y_+).
    \end{equation}
\end{enumerate}
\end{prop}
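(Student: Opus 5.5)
The plan is to run the same argument as for Proposition \ref{prop:bulkdeformedcodiff}, now on the sprinkled domains $\overline{\calD}_{q+2}$ and with the translational symmetry broken. This explains the shift by one in both dimension formulas. I would prove compactness, transversality and gluing in turn, and then read off the codimension-one strata from the tree combinatorics of $T_{\calD_{q+2}}$.

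\textbf{Step 1: a priori bounds and compactness.} First, I would show that all solutions in $\overline{\frakc}_{a_{\frakq_1},\ldots,a_{\frakq_\mu},\kappa_\pm}(x_-,y_+)$ stay in a compact subset of $M$. This comes from the maximum principle, using that $J$ agrees with $J_{\rm ref}$ (contact type) at infinity and that $K=H\otimes\gamma$ with $\gamma=\eta/\widetilde{\kappa}$, $d\eta\leq 0$. Second, I would bound the energy uniformly in terms of the actions of $x_-$ and $y_+$. The condition $d(\widetilde{\kappa}\gamma)\leq 0$ makes the curvature term of the perturbation nonpositive, which gives the usual monotone-continuation energy estimate.

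With these bounds, Gromov compactness applies to the family over $\overline{\calD}_{q+2}$. The possible limits are:
\begin{enumerate}
\item Floer breaking at orbits of $H_-$ or $H_+$ (plain edges before or after the sprinkle vertex);
\item nodal degenerations of the domain, namely collisions of marked points (round edges);
\item sphere bubbles;
\item Morse breaking of the half-infinite gradient trajectories in $W^s(a_{\rho,j})$.
\end{enumerate}
Since $M$ is exact, there are no nonconstant pseudoholomorphic spheres (for $K=0$ near the marked points, and in general by exactness of the perturbed equation). So every round vertex carries a constant sphere, equivalently a ghost component carrying at least two bulk insertions.

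\textbf{Step 2: transversality and dimension count.} For generic consistent perturbation data and a generic Morse--Smale pair $(f,g)$, every stratum $\overline{\frakc}_{T,a_{\frakq_1},\ldots,a_{\frakq_\mu},\kappa_\pm}(x_-,y_+)$ with $T$ stable is cut out transversely. Its virtual dimension is
\begin{equation}
\deg(x_-)-\deg(y_+)-\sum_{\rho}\abs{\frakq_\rho}(\ell_\rho-2)-\#E_{pl}(T)-2\#E_{rd}(T).
\end{equation}
To see this, note that each marked point contributes $+2$ from its position and $-\ell_\rho$ from the codimension of the fiber-product constraint with $W^s$. The sprinkle contributes $+1$ and removes the $\bbR$-quotient, which is why the count lacks the $-1$ appearing in Proposition \ref{prop:bulkdeformedcodiff}.

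In the strata with ghost bubbles, the bubble point lies on $W^s(a)\cap W^s(a')$. Because $\ell_\rho\geq 3$, this intersection has codimension at least $3$ more than the two marked points it replaces. So, together with the $-2$ from the round edge, these strata have negative expected dimension in the range considered and are empty. The same count rules out more than one plain edge and broken Morse flows together with other degenerations.

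Consequently, in dimension $0$ the compactified space equals the interior stratum with no edges, which is a finite set of points; this is part (1). In dimension $1$ the only codimension-one degenerations are a single plain-edge breaking and a single Morse breaking $\bbM_{ba_{\rho,j_\sigma}}$ at one insertion, as listed in part (2).

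\textbf{Step 3: gluing.} Each such boundary point must be shown to be the end of exactly one component of the $1$-manifold. For plain-edge breaking this is standard Floer gluing along the cylindrical ends. The needed compatibility is exactly the consistency of fiberwise cylindrical ends, popsicle sticks and perturbation data assumed above: data before the sprinkle is induced from $(H_-,J_-)$ and data after it from $(H_+,J_+)$. For Morse breaking it is gluing of gradient half-trajectories, applied to the stable manifold factor of the fiber product. The final union over the tuples $a_{\frakq_\rho}$ is formal.

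I expect the main obstacle to be Step 2. Ruling out ghost-bubble strata and collisions of the sprinkle with marked points requires a careful codimension count, and transversality for constant components has to be obtained by perturbing the Morse data (the evaluation constraints) rather than $J$. Here I would follow \cite[Section 4.2]{Sie21} verbatim, replacing relative cycles by the stable manifolds $W^s(a_{\rho,j})$.
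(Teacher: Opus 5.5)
Your overall route (a priori bounds, Gromov compactness over $\overline{\calD}_{q+2}$, transversality, gluing, then reading off the codimension-one strata from the trees) is the standard one. The paper gives no argument of its own and simply imports the result from \cite[Proposition 4.16]{Sie21}. Your count $\deg(x_-)-\deg(y_+)+2q-\sum_\rho\abs{\frakq_\rho}\ell_\rho$ for the top stratum is correct.

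The step that fails is how you dispose of the ghost-bubble strata in Step 2. For a single Morse--Smale pair $(f,g)$, stable manifolds of distinct critical points are disjoint, so for $a\neq a'$ these strata are empty for a trivial reason. The case that matters is $a=a'$, which the construction explicitly allows: the indices $j_{\rho_1},\ldots,j_{\rho_{\abs{\frakq_\rho}}}$ are unrestricted. There $W^s(a)\cap W^s(a)=W^s(a)$ is as far from transverse as possible.

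Concretely, two insertions at $a$ on a constant component impose only the single codimension-$\ell_\rho$ condition $u(p)\in W^s(a)$ at the node. So this stratum has expected dimension $D_0-2+\ell_\rho\geq D_0+1$, where $D_0\equiv\deg(x_-)-\deg(y_+)-\sum_\rho\abs{\frakq_\rho}(\ell_\rho-2)$ is the dimension of the top stratum. (Even for transverse constraints a round edge lowers the dimension by exactly $2$; the bound $\ell_\rho\geq 3$ buys nothing extra there.) When $D_0=0$, this stratum is a moduli space of the same type with one insertion fewer and dimension $\ell_\rho-2\geq 1$. There is no reason for it to be empty, and it belongs to $\overline{\frakc}_{a_{\frakq_1},\ldots,a_{\frakq_\mu},\kappa_\pm}(x_-,y_+)$ as you have set it up. So your argument does not give (1). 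The gluing at round edges needed for (2) presupposes a transversely cut out nodal stratum, so it is unavailable as well.

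What is missing is a mechanism that decouples the constraints at different insertion points. For example, impose the constraint at each marked point $z$ via the flow of $-\grad f$, perturbed on a finite initial segment. The perturbation should be generic data depending on the position of $z$ in the universal curve over $\overline{\calD}_{q+2}$, including its position on ghost components, and chosen consistently with gluing. On a ghost bubble the two special points then see transverse perturbations of $W^s(a)$. Round edges then genuinely cost codimension $2$ and drop out in dimensions $0$ and $1$, while Morse breaking at the far end still produces the unperturbed spaces of Morse trajectories.

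Your closing remark, that transversality at constant components must come from perturbing the evaluation constraints rather than $J$, points the right way. But no single generic pair $(f,g)$ can achieve it: the perturbation has to vary with the marked point. The moduli spaces are then no longer the literal fiber products with $\prod W^s(a_{\rho,j})$ appearing in the statement.
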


We define $\bbL\equiv\bbC[\hbar_1,\ldots,\hbar_\mu]$, where $\hbar_\rho$ is a formal variable of degree $2-\ell_\rho$. The \emph{$\frakU$-bulk-deformed $(H,J)$-Floer cochain complex}
    \begin{equation}
    CF^j_{\frakU}(M;H,J;\bbL)\equiv\bigoplus_{\deg(x)=j}\bbL\abs{\frako_x}
    \end{equation}
has the codifferential whose $\bbL\abs{\frako_y}-\bbL\abs{\frako_x}$ component is given by
    \begin{equation}\label{eq: reihgeropiubgeopubrg}
    \delta_{\frakU;y,x}\equiv\sum_{q\geq0}\sum_{\frakq_1,\ldots,\frakq_\mu}\dfrac{1}{\abs{\frakq_1}!\cdots\abs{\frakq_\mu}!}\hbar_1^{\abs{\frakq_1}}\cdots\hbar_\mu^{\abs{\frakq_\mu}}\delta_{\frakq_1,\ldots,\frakq_\mu,\frakU;y,x},
    \end{equation}
where 
    \begin{multline}
    \delta_{\frakq_1,\ldots,\frakq_\mu,\frakU;y,x}\equiv \\ \sum_{\substack{a_{\frakq_1},\ldots,a_{\frakq_\mu} \\ u\in\bbF_{a_{\frakq_1},\ldots,a_{\frakq_\mu}}(y,x) \\ \deg(x)=\deg(y)+\sum_{\rho=1}^\mu\abs{\frakq_\rho}(\ell_\rho-2)+1}}(\frakU_{1,j_1}\cdots\frakU_{1,j_{\abs{\frakq_1}}}\cdots\frakU_{\mu,j_1}\cdots\frakU_{\mu,j_{\abs{\frakq_\mu}}})\mu_u,
    \end{multline}
where $\mu_u:\frako_y\xrightarrow{\sim}\frako_x$ is the isomorphism induced on orientation lines by $u$. Two things to observe are the following: 
\begin{enumerate}
\item since $\chi(H)$ is finite, $\delta_{\frakq_1,\ldots,\frakq_\mu,\frakU;y,x}$ vanishes for $q\gg0$ and so (\ref{eq: reihgeropiubgeopubrg}) is a finite sum;
\item and Proposition \ref{prop:bulkdeformedcodiff}, plus the assumption that each $\frakU_\rho$ is a Morse cycle, shows $\delta_\frakU^2=0$.
\end{enumerate}
We denote by $HF^*_\frakU(M;H,J;\bbL)$ the \emph{$\frakU$-bulk-deformed $(H,J)$-Floer cohomology}. Because it will be useful in the sequel, we will sketch the proof of observation (2). 

\begin{lem}\label{lem:observation1}
$\delta_\frakU^2=0$.
\end{lem}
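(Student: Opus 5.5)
The plan is the standard one: read off the $\bbL\abs{\frako_y}$--$\bbL\abs{\frako_x}$ component of $\delta_\frakU^2$ from the boundary of the one-dimensional moduli spaces in Proposition \ref{prop:bulkdeformedcodiff}(2), and show the terms cancel in two groups. Fix $x,y$. Expanding $\delta_\frakU^2$ using \eqref{eq: reihgeropiubgeopubrg} gives a sum over intermediate orbits $z$ and over pairs of cluster partitions $(\frakq'_\rho)$, $(\frakq''_\rho)$. Each summand carries the weight
\[
\frac{\hbar^{\frakq'}\hbar^{\frakq''}}{\abs{\frakq'_1}!\cdots\abs{\frakq''_\mu}!}.
\]
We regroup by the total cluster sizes $\abs{\frakq_\rho}=\abs{\frakq'_\rho}+\abs{\frakq''_\rho}$. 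Each choice of splitting a cluster of size $\abs{\frakq_\rho}$ into an ordered pair of sub-clusters is counted $\binom{\abs{\frakq_\rho}}{\abs{\frakq'_\rho}}$ times, once for each distribution of the labelled marked points between the two components. The multinomial coefficients therefore convert the product weight into $\hbar^{\frakq}/\abs{\frakq_1}!\cdots\abs{\frakq_\mu}!$. This agrees with the weight of the one-dimensional moduli space $\bbF_{\frakq_1,\ldots,\frakq_\mu,\frakU}(y,x)$ with $\deg(x)=\deg(y)+\sum\abs{\frakq_\rho}(\ell_\rho-2)+2$. The coefficients $\frakU_{\rho,j}$ also multiply correctly, since the tuple $a_{\frakq_\rho}$ is the concatenation of the two sub-tuples.

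Next we identify these regrouped terms with the first family of boundary strata in Proposition \ref{prop:bulkdeformedcodiff}(2), namely the trees $T$ with exactly one plain edge and no round edges. Such a stratum is precisely a product $\bbF_{a_{\frakq'}}(z,x)\times\bbF_{a_{\frakq''}}(y,z)$, with the marked points distributed between the two cylinders. Hence $(\delta_\frakU^2)_{y,x}$ equals the signed count of this part of $\partial\bbF_{\frakq_1,\ldots,\frakq_\mu,\frakU}(y,x)$, summed over all $\frakq$. The second family of boundary strata involves a Morse flow line breaking off at a constraint $W^s(a_{\rho,j_\sigma})$ into some $b$ with $\deg b=\deg a_{\rho,j_\sigma}+1$. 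For fixed remaining data, this family contributes
\[
\sum_{j}\frakU_{\rho,j}\,\#\bbM_{b a_{\rho,j}}\cdot(\cdots),
\]
which is the $b$-coefficient of the Morse (co)differential applied to $\frakU_\rho$ (with the $f\leftrightarrow -f$ duality). It vanishes because $\frakU_\rho$ is a Morse cycle.

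Since the signed count of boundary points of a compact oriented $1$-manifold is zero, we conclude
\[
0=(\delta_\frakU^2)_{y,x}+(\text{Morse-cycle terms})=(\delta_\frakU^2)_{y,x}.
\]

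The main obstacle is signs and orientations. We must check three things:
\begin{itemize}
\item the boundary orientation of the broken-cylinder strata matches the composition of $\mu_u$ maps up to a uniform sign;
\item the orientation of the Morse-breaking strata produces exactly the Morse differential sign, so that the Morse-cycle terms cancel as a group rather than term by term;
\item reordering marked points within a cluster introduces no sign. This holds because each constraint $W^s$ has even real codimension shift $\ell_\rho-2$ in degree only when parity permits; otherwise one must track the Koszul signs from permuting odd-degree $\hbar_\rho$.
\end{itemize}
I would handle this by orienting $\bbF_{a_{\frakq}}(y,x)$ as a fibre product in a fixed order and comparing with \cite[Section 4.2]{Sie21}, whose conventions we follow up to the cosmetic Morse-versus-relative-cycle change. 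Everything else is formal.
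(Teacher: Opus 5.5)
Your proposal is correct and follows essentially the same argument as the paper. You expand the $x$-component of $\delta_\frakU^2$, use the combinatorial factor $Q!/(q!\,\widetilde{q}!)$ to match products of codifferential counts with the broken-cylinder boundary strata of the one-dimensional spaces in Proposition~\ref{prop:bulkdeformedcodiff}, and kill the Morse-breaking strata using that $\frakU$ is a Morse (co)cycle; the paper writes this out only for $\mu=1$ and, like you, leaves orientation signs unchecked, deferring to \cite{Sie21} and \cite{Sheridan}.
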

We provide a proof for completeness, but cf. \cite[Section 4.1.8]{Sie21} and \cite[Lemma 4.18]{Sheridan} for similar arguments.

\begin{proof}
To avoid clutter, we will consider the $\mu=1$ case. First,
    \begin{equation}
    \delta^2_{\frakU}(y)=\Bigg(\sum_z\sum_{q\geq0}\dfrac{1}{q!}\hbar^q\delta_{q,\frakU;y,z}\Bigg)\Bigg(\sum_x\sum_{\widetilde{q}\geq0}\dfrac{1}{\widetilde{q}!}\hbar^q\delta_{\widetilde{q},\frakU;z,x}\Bigg)x.
    \end{equation}
We will now immediately restrict to (1) a single $x$-component and (2) a single $\hbar^Q$-component for some fixed $Q$, i.e., we consider 
    \begin{equation}
    \sum_{z,Q=q+\widetilde{q}}\dfrac{1}{q!\widetilde{q}!}\delta_{q,\frakU;y,z}\delta_{\widetilde{q},\frakU;z,x}.
    \end{equation}
Expanding, this expression becomes
    \begin{equation}\label{eqn:observation2proofsketch1}
    \sum_{z,Q=q+\widetilde{q}}\dfrac{1}{q!\widetilde{q}!}\Bigg(\sum_{a_q}(\frakU_{j_1}\cdots\frakU_{j_q})\abs{\bbF_{a_q}(y,z)}_\bbZ\Bigg)\Bigg(\sum_{a_{\widetilde{q}}}(\frakU_{j_1}\cdots\frakU_{j_{\widetilde{q}}})\abs{\bbF_{a_{\widetilde{q}}}(z,x)}_\bbZ\Bigg),
    \end{equation}
where
    \begin{equation}
    \abs{\bbF_{a_q}(y,z)}_\bbZ\;\;{\rm resp.}\;\;\abs{\bbF_{a_{\widetilde{q}}}(z,x)}_\bbZ
    \end{equation}
denotes the signed count. In particular, this becomes
    \begin{equation}
    \sum_{Q=q+\widetilde{q}}\dfrac{1}{q!\widetilde{q}!}\sum_{a_Q}\frakU_{j_1}\cdots\frakU_{j_Q}\sum_{z,a_Q=a_q+a_{\widetilde{q}}}\abs{\bbF_{a_q}(y,z)}_\bbZ\cdot\abs{\bbF_{a_{\widetilde{q}}}(z,x)}_\bbZ.
    \end{equation}
By Proposition \ref{prop:bulkdeformedcodiff}, for fixed $a_Q$, we see that
    \begin{equation}
    \sum_{z, a_Q=a_q+a_{\widetilde{q}}}\dfrac{Q!}{q!\widetilde{q}!}\abs{\bbF_{a_q}(y,z)}_\bbZ\cdot\abs{\bbF_{a_{\widetilde{q}}}(z,x)}_\bbZ+\sum_{b,\sigma}\abs{\bbF_{b_{Q,\sigma}}(y,x)}_\bbZ\cdot\abs{\bbM_{ba_{j_\sigma}}}_\bbZ=0.
    \end{equation}
We now substitute the previous equation into \eqref{eqn:observation2proofsketch1}; this yields
    \begin{equation}
    -\sum_{Q=q+\widetilde{q}}\dfrac{1}{Q!}\sum_{a_Q}\frakU_{j_1}\cdots\frakU_{j_Q}\sum_{b,\sigma}\abs{\bbF_{b_{Q,\sigma}}(y,x)}_\bbZ\cdot\abs{\bbM_{ba_{j_\sigma}}}_\bbZ,
    \end{equation}
which we rewrite as
    \begin{equation}
    -\sum_{Q=q+\widetilde{q}}\dfrac{1}{Q!}\sum_{b,\sigma, a_Q-\{a_\sigma\}}\frakU_{j_1}\cdots\widehat{\frakU_{j_\sigma}}\cdots\frakU_{j_Q}\abs{\bbF_{b_{Q,\sigma}}(y,x)}_\bbZ\sum_{a_\sigma}{\frakU_{j_\sigma}}\abs{\bbM_{ba_{j_\sigma}}}_\bbZ.
    \end{equation}
Observe, the previous expression vanishes since $\frakU$ is a Morse cocycle, as desired.
\end{proof}

Finally, Proposition \ref{prop:bulkdeformedcont} shows we may define a cochain map
    \begin{equation}
    \overline{\frakc}_{\frakU,\kappa,\kappa'}:CF^*_\frakU(M;H,J;\bbL)\to CF^*_\frakU(M;H',J';\bbL),
    \end{equation}
where $\kappa'\geq\kappa$, whose $\bbL\abs{\frako_y}-\bbL\abs{\frako_x}$ component is given by 
    \begin{equation}
    \overline{\frakc}_{\frakU,\kappa,\kappa';y',x}=\sum_{q\geq0}\sum_{\frakq_1,\ldots,\frakq_\mu}\dfrac{1}{\abs{\frakq_1}!\cdots\abs{\frakq_\mu}!}\hbar_1^{\abs{\frakq_1}}\cdots\hbar_\mu^{\abs{\frakq_\mu}}\overline{\frakc}_{\frakq_1,\ldots,\frakq_\mu,\frakU,\kappa,\kappa';y,x},
    \end{equation}
where 
    \begin{multline}
    \overline{\frakc}_{\frakq_1,\ldots,\frakq_\mu,\frakU,\kappa,\kappa';y,x}\equiv \\
    \sum_{\substack{a_{\frakq_1},\ldots,a_{\frakq_\mu} \\ u\in\overline{\frakc}_{a_{\frakq_1},\ldots,a_{\frakq_\mu},\kappa,\kappa'}(x,y') \\ \deg(x)=\deg(y)+\sum_{\rho=1}^\mu\abs{\frakq_\rho}(\ell_\rho-2)}}(\frakU_{1,j_1}\cdots\frakU_{1,j_{\abs{\frakq_1}}}\cdots\frakU_{\mu,j_1}\cdots\frakU_{\mu,j_{\abs{\frakq_\mu}}})\mu_u,
    \end{multline}
where $\mu_u:\frako_{y'}\xrightarrow{\sim}\frako_x$ is the isomorphism induced on orientation lines by $u$. Observe, since $\chi(H)$ and $\chi(H')$ are finite, $\overline{\frakc}_{\frakq_1,\ldots,\frakq_\mu,\frakU,\kappa,\kappa'}$ vanishes for $q\gg0$. 

We define \emph{$\frakU$-bulk-deformed symplectic cohomology} via
    \begin{equation}
    SH^*_\frakU(M;\bbL)\equiv\varinjlim_{\ell}HF^*_\frakU(M;H^\ell,J^\ell;\bbL),
    \end{equation}
where the directed system is over a family of regular Floer data $\big\{(H^\ell,J^\ell)\big\}_{\ell\in\bbZ_{\geq0}}$ satisfying 
    \begin{equation}
    \kappa_\ell<\kappa_{\ell'},\;\;\forall\ell<\ell'.
    \end{equation}
It is known $SH^*_\frakU(M;\bbL)$ is independent of all auxiliary choices in the construction (in particular, $J_{\rm ref}$) and only depends on the homology class of $\frakU$, cf. \cite[Parts 4.2.7 \& 4.2.8]{Sie21} for the analogous statements using smooth relative cycles.

\section{de Rham to Morse bulk-deformations}
\subsection{Floer homotopy on Liouville manifolds}
In this subsection, we will digress a bit and briefly review how a graded Liouville manifold naturally yields a pre-GRR flow category which is filtered by finite pre-GRR flow categories, i.e., we review the construction of spectral symplectic cohomology.

Let $(H,J)$ be regular Floer data. It was shown in \cite[Section 6]{Lar21} (cf. also \cite[Section 8]{PS24a} and \cite[Section 6]{PS25c}) that each $\bbF_{yz}$ admits the structure of a compact smooth manifold with corners whose codimension 1 boundary strata are enumerated by gluing maps of the form 
    \begin{equation}
    \partial_z\equiv\bbF_{yz}\times\bbF_{zx}\hookrightarrow\bbF_{yx}.
    \end{equation}
We have the following two basic relations. First, a short exact sequence 
    \begin{equation}
    0\to\underline{\bbR}\to T\bbF_{yx}\vert_{\partial_z}\to T\bbF_{yz}\oplus T\bbF_{zx}\to0
    \end{equation}
given by taking a collar neighborhood of $\partial_z$. Second, a short exact sequence
    \begin{equation}
    0\to\underline{\bbR}\to\ind D\overline{\partial}_{H,J}^{yx}\to T\bbF_{yx}\vert_{{\rm int}\bbF_{yx}}\to0
    \end{equation}
given by the translational direction. There are straightforward relations when passing to higher codimension boundary strata. The proof of the following result is contained in \cite[Section 7]{Lar21} (cf. also \cite[Section 8]{PS24a} and \cite[Section 6]{PS25c}).

\begin{prop}\label{prop:indexbundlefloer}
There is an extension of $\ind D\overline{\partial}_{H,J}^{yx}$ to $\bbF_{yz}$ whose restriction to the interior of a codimension 1 boundary stratum is of the form 
    \begin{equation}
    \ind D\overline{\partial}_{H,J}^{yx}\vert_{\partial_z}=\ind D\overline{\partial}_{H,J}^{yz}\oplus\ind D\overline{\partial}_{H,J}^{zy}.
    \end{equation}
The natural associativity diagram,
    \begin{equation}
    \begin{tikzcd}
    & & \underline{\bbR}\arrow[d] \\
    \underline{\bbR}\arrow[d,"\Delta"]\arrow[r] & \ind D\overline{\partial}_{H,J}^{yx}\vert_{\partial_z}\arrow[r]\arrow[d,equals] & T\bbF_{yx}\vert_{\partial_z}\arrow[d] \\
    \underline{\bbR}^2\arrow[r] & \ind D\overline{\partial}_{H,J}^{yz}\oplus\ind D\overline{\partial}_{H,J}^{zx}\arrow[r] & T\bbF_{yz}\oplus T\bbF_{zx},
    \end{tikzcd}
    \end{equation}
commutes. Moreover, the natural associativity diagram associated to higher codimension boundary strata commute.
\end{prop}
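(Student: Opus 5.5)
The plan is to build the extension of the index bundle by induction on the depth of the corner strata of $\bbF_{yx}$, i.e. on $\deg(x)-\deg(y)$. At each stage the bundle is already defined on a neighborhood of $\partial\bbF_{yx}$ by the product formula, and we extend it inward using the linear gluing isomorphisms. This is the same mechanism as in the proof of Lemma \ref{lem:indexbundle}.

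On the interior ${\rm int}\,\bbF_{yx}$ we first choose stabilizations $f_{yx}$ of the linearized operators $D\overline{\partial}_{H,J}^{yx}$. These are chosen inductively over the dimension, as in Lemma \ref{lem:stabilization}, so that near a codimension $1$ stratum $\partial_z$ they agree with the pregluing $f_{yz}\#_{1/\tau}f_{zx}$, suitably summed with zero. We then define $\ind D\overline{\partial}_{H,J}^{yx}$ as $\ker(D+f)$ minus the trivial stabilization bundle. Over the collar $\bbF_{yz}\times\bbF_{zx}\times[0,\epsilon)$ we set the bundle equal to $\ind^{yz}\oplus\ind^{zx}$. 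The two descriptions are identified on the overlap $\tau>0$ by the linear gluing isomorphism of \cite[Section 3.4]{PS24a}. Here one uses that Floer trajectories near the boundary are themselves obtained by nonlinear gluing, so the linearized operator at a glued trajectory is a small perturbation of $D^{yz}\#_{1/\tau}D^{zx}$. It therefore differs from the preglued operator by a term that goes to $0$ as $\tau\to 0$, and the kernels are canonically identified for small $\epsilon$ by projection.

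The main obstacle is strict compatibility at higher-codimension corners. On $\partial_{z_1}\cap\partial_{z_2}$, the two ways of gluing three pieces give identifications that agree only up to a small homotopy. I would handle this as in \cite[Lemma 3.15]{PS24a}: induct over codimension, and at each corner deform the gluing isomorphisms through a contractible space of nearby isomorphisms (those close to the identity in operator norm) until they match exactly. Since that space is contractible, the required extensions over cubes $[0,\epsilon)^k$ exist.

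It then remains to check the associativity diagram with the two $\underline\bbR$'s. The translation vector field on $\widetilde\scrF_{yx}$, restricted to glued trajectories, is asymptotically the sum of the translation fields on the two pieces; this is the diagonal $\Delta$. Its complement in the glued-parameter direction is the collar direction, which is the $\underline\bbR$ mapping to $T\bbF_{yx}|_{\partial_z}$. Both facts follow from the derivative of the nonlinear gluing map. This verifies that the square commutes up to a canonical homotopy, which can be straightened in the same inductive step. Higher-codimension versions follow by iterating the argument.
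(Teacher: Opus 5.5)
Your proposal is correct and follows essentially the argument the paper relies on. The paper gives no proof of its own and simply cites \cite[Section 7]{Lar21} and \cite[Section 8]{PS24a}, where the index bundle is extended over collars by linear gluing and made strictly associative by small inductive homotopies of the gluing isomorphisms, exactly as in the paper's proof of Lemma \ref{lem:indexbundle}. Two minor remarks: for regular Floer data the linearized operators are already surjective, so the stabilizations are unnecessary; and in the collar coordinate $\tau$ the gluing-parameter direction is a multiple of $\partial_\tau$ whose coefficient tends to $0$ at the boundary, so the naive derivative-based map to $T\bbF_{yx}$ must be renormalized in the collar direction, which your straightening step absorbs.
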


Let $\bbF^{H,J}$ be the unstructured flow category with objects $\chi(H)$ and morphism spaces $\bbF^{H,J}(y,z)\equiv\bbF_{yz}$; the composition map is the standard gluing of Floer trajectories. We endow $\bbF^{H,J}$ with a pre-GRR structure as follows.
\begin{enumerate}
\item\emph{Universal curve}: We define 
    \begin{equation}
    \scrC^{H,J}_{yx}\equiv\bbF_{yx}\times(\bbR\times S^1)\to\bbF_{yx}
    \end{equation}
with the obvious projection map $\pi_{yx}$. Let 
    \begin{equation}
    \big\{\varphi_{yzx}:\bbF_{yz}\times\bbF_{zx}\times[0,\epsilon)_\tau\hookrightarrow\bbF_{yx}\big\}
    \end{equation}
be a system of collars on $\bbF^{H,J}$. We use the isomorphism 
    \begin{equation}
    \scrC^{H,J}_{yz}\#_{1/\tau}\scrC^{H,J}_{zx}\xrightarrow{\sim}\varphi^*_{yzx}\scrC_{yx}
    \end{equation}
induced by the gluing of domains used in the gluing map 
    \begin{equation}
    \bbF_{yz}\times\bbF_{zx}\times[0,\epsilon)_\tau\hookrightarrow\bbF_{yx}.
    \end{equation}
In particular, since the gluing maps for Floer trajectories satisfy the natural associativity diagrams, we have constructed a universal curve 
    \begin{equation}
    \pi:\scrC^{H,J}\to\bbF^{H,J}.
    \end{equation}
\item\emph{Complex vector bundle}: By construction of $\pi:\scrC^{H,J}\to\bbF^{H,J}$, we have a natural evaluation map
    \begin{equation}
    \eval:\scrC^{H,J}\to M
    \end{equation}
which is defined via its restriction to a fiber:
    \begin{equation}
    \eval_{yx}\vert_u(s,t)\equiv u(s,t),\;\;u\in\bbF_{yx}.
    \end{equation}
We now obtain a complex vector bundle $\scrE^{H,J}\to\scrC^{H,J}$ defined via pullback:
    \begin{equation}
    \scrE^{H,J}\equiv\eval^*TM.
    \end{equation}
\item\emph{Floer-Cauchy-Riemann operators}: We take our puncture datum on $\bbF^{H,J}$ to be defined via 
    \begin{equation}
    \frake^{H,J}\equiv\big\{D(\overline{\partial}_{H,J})_x\big\}_{x\in\chi(H)}.
    \end{equation}
We take our family of Floer-Cauchy-Riemann operators $D\overline{\partial}_{H,J}$ on $\scrE^{H,J}$ with puncture datum $\frake^{H,J}$ to be defined via
    \begin{equation}
    D\overline{\partial}_{H,J}^{yx}\equiv\big\{D(\overline{\partial}^{yx}_{H,J})_u:W^{k,p}(\Theta;u^*TM)\to W^{k-1,p}(\Theta;u^*TM)\big\},\;\; kp>2.
    \end{equation}
(Observe, we have implicitly used the fact that $\eval^*TM\vert_{\pi^{-1}(u)}=u^*TM$.)
\item Proposition \ref{prop:indexbundlefloer} shows we have an isomorphism 
    \begin{equation}
    T\bbF_{yx}\oplus\underline{\bbR}\cong\ind D\overline{\partial}^{yx}_{H,J}
    \end{equation}
which satisfies the natural associativity diagram.
\end{enumerate}
In particular, $\bbF^{H,J}$ lifts to a pre-GRR flow category; we denote this pre-GRR lift by $\bbF^{H,J,{\rm U}}$. It remains to show that Floer continuation maps admit lifts to Floer continuation pre-GRR flow bimodules.

Now, consider two pairs of regular Floer data, $(H,J)$ and $(H',J')$, satisfying $\kappa<\kappa'$. We choose monotonic Floer continuation data $(H_s,J_s)$ connecting $(H',J')$ to $(H,J)$. Again, it was shown in \cite[Section 6]{Lar21} (cf. also \cite[Section 8]{PS24a} and \cite[Section 6]{PS25c}) that each $\overline{\frakc}_{\kappa,\kappa'}(x,y')$ admits the structure of a compact smooth manifold with corners whose codimension 1 boundary strata are enumerated by gluing maps of the form 
    \begin{align}
    \overline{\frakc}_{\kappa,\kappa'}(x,x')\times\bbF_{x'y'}&\hookrightarrow\overline{\frakc}_{\kappa,\kappa'}(x,y'), \\
    \bbF_{xy}\times\overline{\frakc}_{\kappa,\kappa'}(y,y')&\hookrightarrow\overline{\frakc}_{\kappa,\kappa'}(x,y').
    \end{align}
There are short exact sequences associated to the codimension 1 boundary strata of $\overline{\frakc}_{\kappa,\kappa'}(x,y')$ which are completely analogous to those for the codimension 1 boundary strata of $\bbF_{yx}$; moreover, they are compatible in a completely analogous way to Proposition \ref{prop:indexbundlefloer}. Let 
    \begin{equation}
    \frakc_{\kappa,\kappa'}:\bbF^{H,J}\to\bbF^{H',J'}
    \end{equation}
be the unstructured flow bimodule between $\bbF^{H,J}$ and $\bbF^{H',J'}$ determined by the $\frakc_{\kappa,\kappa'}(x,y')$'s. We endow $\frakc_{\kappa,\kappa'}$ with a pre-GRR structure as follows.
\begin{enumerate}
\item The source and target are endowed with the pre-GRR structure defined above.
\item\emph{Universal curve}: We define 
    \begin{equation}
    \scrC^{\kappa,\kappa'}_{x,x'}\equiv\overline{\frakc}_{\kappa,\kappa'}(x,y')\times(\bbR\times S^1)\to\overline{\frakc}_{\kappa,\kappa'}(x,y')
    \end{equation}
with the obvious projection map $\pi_{yx}$. Let 
    \begin{align}
    \big\{\varphi_{yzx}:\bbF_{yz}\times\bbF_{zx}\times[0,\epsilon)_\tau\hookrightarrow\bbF_{yx}\big\}&\amalg \nonumber \\
    \big\{\varphi_{xx'y'}:\overline{\frakc}_{\kappa,\kappa'}(x,x')\times\bbF_{x'y'}\hookrightarrow\overline{\frakc}_{\kappa,\kappa'}(x,y')\big\}&\amalg \nonumber \\
    \big\{\varphi_{xyy'}\bbF_{xy}\times\overline{\frakc}_{\kappa,\kappa'}(y,y')\hookrightarrow\overline{\frakc}_{\kappa,\kappa'}(x,y')\big\}&\amalg \nonumber \\
    \big\{\varphi_{y'z'x'}:\bbF_{y'z'}\times\bbF_{z'x'}\times[0,\epsilon)_\tau\hookrightarrow\bbF_{y'x'}\big\}
    \end{align}
be a system of collars on $\overline{\frakc}_{\kappa,\kappa'}$. We use the isomorphisms 
    \begin{align}
    \scrC^{\kappa,\kappa'}_{xx'}\#_{1/\tau}\scrC^{H',J'}_{x'y'}&\xrightarrow{\sim}\varphi^*_{xx'y'}\scrC^{\kappa,\kappa'}_{xy'}, \\
    \scrC^{H,J}_{xy}\#_{1/\tau}\scrC^{\kappa,\kappa'}_{yy'}&\xrightarrow{\sim}\varphi^*_{xyy'}\scrC^{\kappa,\kappa'}_{xy'} 
    \end{align}
induced by the gluing of domains used in the gluing maps 
    \begin{align}
    \overline{\frakc}_{\kappa,\kappa'}(x,x')\times\bbF_{x'y'}\times[0,\epsilon)_\tau&\hookrightarrow\overline{\frakc}_{\kappa,\kappa'}(x,y'), \\
    \bbF_{xy}\times\overline{\frakc}_{\kappa,\kappa'}(y,y')\times[0,\epsilon)_\tau&\hookrightarrow\overline{\frakc}_{\kappa,\kappa'}(x,y').
    \end{align}
In particular, since the gluing maps for Floer continuation cylinders and Floer trajectories satisfy the natural (intertwining) associativity diagrams, we have constructed a universal curve 
    \begin{equation}
    \pi:\scrC^{\kappa,\kappa'}\to\overline{\frakc}_{\kappa,\kappa'}.
    \end{equation}
\item\emph{Complex vector bundle}: By construction of $\pi:\scrC^{\kappa,\kappa'}\to\overline{\frakc}_{\kappa,\kappa'}$, we have a natural evaluation map
    \begin{equation}
    \eval:\scrC^{\kappa,\kappa'}\to M
    \end{equation}
which is defined via its restriction to a fiber:
    \begin{equation}
    \eval_{xy'}\vert_u(s,t)\equiv u(s,t),\;\;u\in\overline{\frakc}_{\kappa,\kappa'}(x,y').
    \end{equation}
We now obtain a complex vector bundle $\scrE^{\kappa,\kappa'}\to\scrC^{\kappa,\kappa'}$ defined via pullback:
    \begin{equation}
    \scrE^{\kappa,\kappa'}\equiv\eval^*TM.
    \end{equation}
\item\emph{Floer-Cauchy-Riemann operators}: We take our puncture datum on $\bbF^{H,J}$ to be defined via 
    \begin{equation}
    \frake^{\kappa,\kappa'}\equiv\big\{D(\overline{\partial}_{H,J})_x\big\}_{x\in\chi(H)}\amalg\big\{D(\overline{\partial}_{H,J})_{y'}\big\}_{y'\in\chi(H')}.
    \end{equation}
We take our family of Floer-Cauchy-Riemann operators $D\overline{\partial}_{\kappa,\kappa'}$ on $\scrE^{\kappa,\kappa'}$ with puncture datum $\frake^{\kappa,\kappa'}$ to be defined via
    \begin{equation}
    D\overline{\partial}^{xy'}_{\kappa,\kappa'}\equiv\big\{D(\overline{\partial}^{xy'}_{\kappa,\kappa'})_u:W^{k,p}(\Theta;u^*TM)\to W^{k-1,p}(\Theta;u^*TM)\big\},\;\; kp>2
    \end{equation}
given by linearizing the Floer continuation equation. (Observe, we have implicitly used the fact that $\eval^*TM\vert_{\pi^{-1}(u)}=u^*TM$.)
\item The analogous statement to Proposition \ref{prop:indexbundlefloer} for Floer continuation cylinders shows we have an isomorphism 
    \begin{equation}
    T\overline{\frakc}_{\kappa,\kappa'}(x,y')\cong\ind D\overline{\partial}^{xy'}_{\kappa,\kappa'}
    \end{equation}
which satisfies the natural (intertwining) associativity diagram(s).
\end{enumerate}
In particular, $\overline{\frakc}_{\kappa,\kappa'}$ lifts to a pre-GRR flow bimodule; we denote this pre-GRR lift by $\overline{\frakc}^{\rm U}_{\kappa,\kappa'}$. We define spectral symplectic cohomology via 
    \begin{equation}
    \bbF^{\rm U}\equiv\varinjlim_{\ell}\bbF^{H^\ell,J^\ell,{\rm U}}
    \end{equation}
where the directed system is over a family of regular Floer data $\big\{(H^\ell,J^\ell)\big\}_{\ell\in\bbZ_{\geq0}}$ satisfying 
    \begin{equation}
    \kappa_\ell<\kappa_{\ell'},\;\;\forall\ell<\ell'.
    \end{equation}
By lifting the usual cohomological arguments, it straightforwardly follows $\bbF^{\rm U}$ is independent of all auxiliary choices in the construction.

\subsection{de Rham bulk-deformations, a second pass}\label{subsec:derhamsecondpass}

Consider the pre-GRR flow category $\bbF^{H,J,{\rm U}}$; using Subsection \ref{subsec:derhamfirstpass}, we have the chain complex
    \begin{equation}
    CM_*\Big(\bbF^{H,J,{\rm U}},\exp\big(\pi_!\frakc\frakh(\eval^*TM)\big);\bC[\underline b]\Big).
    \end{equation}
In this subsection, we will open up this construction and repackage it. 

First, let us define a more familiar notion of de Rham bulk-deformed symplectic cohomology. Recall, $\dim M=2n$. Let $\frakc\frakh\equiv\big(\ch_2(TM),\ldots,\ch_n(TM)\big)$ be our $(n-1)$-tuple of closed differential forms and consider a partition
    \begin{align}
    \{1,\ldots,q\}&=\{1,\ldots,q_1\}\amalg\{q_1+1,\ldots,q_2\}\amalg\cdots\amalg\{q_{n-2}+1,\ldots,q_{n-1}\} \\
    &\equiv\frakq_1\amalg\cdots\amalg\frakq_{n-1}.
    \end{align}
For any two $x,y\in\chi(H)$, we have a map 
    \begin{equation}
    \eval_{yx;q}:(\scrC^{H,J}_{yx})^q\to M;
    \end{equation}
equivalently, we have a map
    \begin{equation}
    \eval_{yx;\frakq_1}\times\cdots\times\eval_{yx;\frakq_{n-1}}:(\scrC^{H,J}_{yx})^{\abs{\frakq_1}}\times\cdots\times(\scrC^{H,J}_{yx})^{\abs{\frakq_{n-1}}}\to M.
    \end{equation}
We define $R_{n-1}\equiv\bbC[\frakb_1,\ldots,\frakb_{n-1}]$, where $\frakb_\rho$ is a formal variable of degree $-2\rho$. The $\frakc\frakh$-bulk-deformed $(H,J)$-Floer cochain complex 
    \begin{equation}
    CF^j_{\frakc\frakh}(M;H,J;R_{n-1})\equiv\bigoplus_{\deg(x)=j}R_{n-1}\abs{\frako_x}
    \end{equation}
has the codifferential whose $R_{n-1}\abs{\frako_y}-R_{n-1}\abs{\frako_x}$ component is given by 
    \begin{equation}
    \delta_{\frakc\frakh;y,x}\equiv\sum_{q\geq0}\sum_{\frakq_1,\ldots,\frakq_{n-1}}\dfrac{1}{\abs{\frakq_1}!\cdots\abs{\frakq_{n-1}}!}\frakb^{\abs{\frakq_1}}_1\cdots\frakb^{\abs{\frakq_{n-1}}}_{n-1}\delta_{\frakq_1,\ldots,\frakq_{n-1},\frakc\frakh;y,x}, 
    \end{equation}
where
    \begin{multline}
    \delta_{\frakq_1,\ldots,\frakq_{n-1},\frakc\frakh;y,x}\equiv \\
    \int_{\bbF_{yx}}\int_{(\scrC^{H,J}_{yx})^q/\bbF_{yx}}\eval_{yx;q}^*\Big(\ch_2(TM)^{\abs{\frakq_1}}\wedge\cdots\wedge\ch_n(TM)^{\abs{\frakq_{n-1}}}\Big).
    \end{multline}
Two things to observe are the following:
\begin{enumerate}
\item since $\chi(H)$ is finite, $\delta_{\frakq_1,\ldots,\frakq_{n-1},\frakc\frakh;y,x}$ vanishes for $q\gg0$;
\item and the fact that $\pi_!\frakc\frakh(\eval^*TM)$ is a multiplicatively coherent cocycle shows $\delta_{\frakc\frakh}^2=0$.
\end{enumerate}

\begin{rem}
Observe, the proof that $\delta_{\frakc\frakh}^2=0$ is analogous to the proof of Lemma \ref{lem:observation1}; the combinatorial manipulation is the same (in fact, simpler since there is no Morse breaking), and one replaces the Morse cocycle observation with the multiplicatively coherent cocycle observation.
\end{rem}

Second, let us open up the definition of 
    \begin{equation}
    \exp\big(\pi_!\frakc\frakh(\eval^*TM)\big).
    \end{equation}
For any two $x,y\in\chi(H)$, we have that 
    \begin{align}
    \exp\big(\pi_!\frakc\frakh(\eval^*TM)\big)_{yx}&=\prod_{\rho=0}^{n-2}\exp\Big(\pi_!\big(\eval_{yx}^*\ch_{\rho+2}(TM)\big)b_{\rho+1}\Big) \\
    &=\prod_{\rho=0}^{n-2}\Big(1+\pi_!\big(\eval_{yx}^*\ch_{\rho+2}(TM)\big)b_{\rho+1}+\cdots\Big).
    \end{align}
Observe, the last line of the previous equation equals
    \begin{multline}
    \sum_{q\geq0}\sum_{\frakq_1,\cdots,\frakq_{n-1}}\dfrac{1}{\abs{\frakq_1}!\cdots\abs{\frakq_{n-1}}!}b^{\abs{\frakq_1}}_1\cdots b^{\abs{\frakq_{n-1}}}_{n-1} \\
    \pi_!\big(\eval_{yx}^*\ch_2(TM)\big)^{\abs{\frakq_1}}\cdots\pi_!\big(\eval_{yx}^*\ch_n(TM)\big)^{\abs{\frakq_{n-1}}}.
    \end{multline}
If we consider a single summand, we have the following by definition:
    \begin{multline}
    \pi_!\big(\eval_{yx}^*\ch_2(TM)\big)^{\abs{\frakq_1}}\cdots\pi_!\big(\eval_{yx}^*\ch_n(TM)\big)^{\abs{\frakq_{n-1}}}= \\
    \Bigg(\int_{\scrC^{H,J}_{yx}/\bbF_{yx}}\eval_{yx}^*\ch_2(TM)\Bigg)^{\abs{\frakq_1}}\cdots\Bigg(\int_{\scrC^{H,J}_{yx}/\bbF_{yx}}\eval_{yx}^*\ch_n(TM)\Bigg)^{\abs{\frakq_{n-1}}}.
    \end{multline}
Finally, performing Fubini-Tonelli in coordinates shows the previous equation yields
    \begin{equation}
    \int_{(\scrC^{H,J}_{yx})^{\abs{\frakq_1}}\times\cdots\times(\scrC^{H,J}_{yx})^{\abs{\frakq_{n-1}}}/\bbF_{yx}}\eval_{yx}^*\ch_2(TM)^{\abs{\frakq_1}}\wedge\cdots\wedge\eval_{yx}^*\ch_n(TM)^{\abs{\frakq_{n-1}}}, 
    \end{equation}
i.e., we have that $\exp\big(\pi_!\frakc\frakh(\eval^*TM)\big)_{yx}$ equals
    \begin{multline}
    \sum_{q\geq0}\sum_{\frakq_1,\ldots,\frakq_{n-1}}\dfrac{1}{\abs{\frakq_1}!\cdots\abs{\frakq_{n-1}}!}b_1^{\abs{\frakq_1}}\cdots b_{n-1}^{\abs{\frakq_{n-1}}} \\
    \int_{(\scrC^{H,J}_{yx})^q/\bbF_{yx}}\eval_{yx;q}^*\Big(\ch_2(TM)^{\abs{\frakq_1}}\wedge\cdots\wedge\ch_n(TM)^{\abs{\frakq_{n-1}}}\Big).
    \end{multline}
Therefore, the codifferential of $CM_*\Big(\bbF^{H,J,{\rm U}},\exp\big(\pi_!\frakc\frakh(\eval^*TM)\big);\bbC[\underline{b}]\Big)$ is seen to be the $\bbC[\underline{b}]$-linear extension of
    \begin{multline}
    y\mapsto\sum_{q\geq0}\sum_{\frakq_1,\ldots,\frakq_{n-1}}\dfrac{1}{\abs{\frakq_1}!\cdots\abs{\frakq_{n-1}}!}b_1^{\abs{\frakq_1}}\cdots b_{n-1}^{\abs{\frakq_{n-1}}} \\
    \int_{\bbF_{yx}}\int_{(\scrC^{H,J}_{yx})^q/\bbF_{yx}}\eval_{yx;q}^*\Big(\ch_2(TM)^{\abs{\frakq_1}}\wedge\cdots\wedge\ch_n(TM)^{\abs{\frakq_{n-1}}}\Big).
    \end{multline}
The upshot of the discussion so far is the following result. 

\begin{lem}\label{lem: piorehjgpowehgpih}
We have the following equality of cochain complexes over $\bC[\underline b]$:
    \begin{equation}
    CF^{-*}_{\frakc\frakh}(M;H,J;R_{n-1})\otimes_\bbC\bbC[b_n,\ldots]=CM_*\Big(\bbF^{H,J,{\rm U}},\exp\big(\pi_!\frakc\frakh(\eval^*TM)\big);\bC[\underline b]\Big).
    \end{equation}
\end{lem}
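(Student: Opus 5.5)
The identity in Lemma \ref{lem: piorehjgpowehgpih} is a matching of generators, gradings, and matrix coefficients, and the computation preceding the statement already supplies most of it. I will compare the two sides term by term after identifying the coefficient rings. We have $R_{n-1}\otimes_\bbC\bbC[b_n,\ldots]\cong\bC[\underline b]$ via $\frakb_\rho\mapsto b_\rho$ for $1\leq\rho\leq n-1$. Under the grading reversal $*\mapsto -*$, a variable $\frakb_\rho$ of degree $-2\rho$ goes to degree $2\rho$, which is $\deg(b_\rho)$. Likewise a generator $\abs{\frako_x}$ of cohomological degree $\deg(x)$ goes to degree $-\deg(x)$. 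I must check that this agrees with the grading assigned to $x$ as an object of $\bbF^{H,J,{\rm U}}$ in the total grading of $CM_*$. The key input is the convention $V_x=(0,\bbR^{\deg(e_x)})$ from the proof of Proposition \ref{prop:grrtocomplex}, where the puncture datum is $e_x=D(\overline{\partial}_{H,J})_x$. So both sides are free $\bC[\underline b]$-modules on $\chi(H)$ with the same grading, and the orientation lines $\abs{\frako_x}$ are identified with the rank-one summands $\bC[\underline b]\langle x\rangle$ via the orientation of $V_x$.

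Next I will match the differentials. By \eqref{eqn:twistedcodiff}, the $(x,y)$-entry of the differential on the right is $\int_{\bbF_{xy}}\exp(\pi_!\frakc\frakh(\eval^*TM))_{xy}$, where the flow category $\bbF^{H,J}$ is indexed so that $\bbF^{H,J}(y,x)=\bbF_{yx}$. The displayed Fubini--Tonelli computation rewrites the exponential as
\[
\sum_{q,\frakq}\frac{1}{\abs{\frakq_1}!\cdots\abs{\frakq_{n-1}}!}\,b^{\abs{\frakq_1}}_1\cdots b^{\abs{\frakq_{n-1}}}_{n-1}\int_{(\scrC^{H,J}_{yx})^q/\bbF_{yx}}\eval_{yx;q}^*\Big(\ch_2^{\abs{\frakq_1}}\wedge\cdots\wedge\ch_n^{\abs{\frakq_{n-1}}}\Big).
\]
Integrating this over $\bbF_{yx}$ gives exactly $\delta_{\frakc\frakh;y,x}$ after the substitution $\frakb_\rho\mapsto b_\rho$. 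Only components whose total form degree equals $\dim\bbF_{yx}$ survive the integration. That selection rule is precisely the degree constraint which makes $\delta_{\frakc\frakh}$ a degree-one map, so the homogeneous pieces agree. The terms with $\rho\geq n$ are absent on both sides: $\ch_{\rho+1}(TM)=0$ in form degree above $2n$ once we use a Chern--Weil representative on the $2n$-manifold $M$, so $b_n,b_{n+1},\ldots$ enter only through the tensor factor.

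The remaining and main obstacle is signs. The integral over $\bbF_{yx}$ uses the orientation induced by the decorated complex-oriented structure that $\ind(\bbX)$ produces: it comes from $I(y,x)=\ind D\oplus\ind\overline D\oplus\underline\bbC^{r}$, with the complex orientation twisted by the parity of $\abs{x}$ as in the lemma on \eqref{eqn:koszulconventioncomplex}. The Floer side uses $\mu_u$ on orientation lines. I will show that, under the identification $\abs{\frako_x}\cong\bC[\underline b]\langle x\rangle$, these orientations agree. The argument uses the gluing compatibility in Proposition \ref{prop:indexbundlefloer} together with the determinant-line isomorphism $\det(\ind D_u)\cong\frako_x\otimes\frako_y^{-1}$, combined with the Koszul conventions of Lemma \ref{lem:koszulsign1} and Corollary \ref{cor:koszulsign1}. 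If a residual sign $(-1)^{\epsilon(x)}$ depending only on the degree of $x$ remains, I will absorb it by rescaling the generators $x\mapsto(-1)^{\epsilon(x)}x$. This changes neither side's isomorphism type, and since the statement is phrased as an equality, I would build this normalization into the identification of orientation lines from the start.
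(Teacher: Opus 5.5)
Your proposal is correct and follows the paper's argument. Both rest on the same steps: write $\exp(\pi_!\frakc\frakh(\eval^*TM))_{yx}$ as a product of exponentials, expand it into the sum over partitions with weights $1/\abs{\frakq_1}!\cdots\abs{\frakq_{n-1}}!$, and use Fubini to turn the product of fiber integrals into a single fiber integral over $(\scrC^{H,J}_{yx})^q$, so the matrix entries match $\delta_{\frakc\frakh;y,x}$ after $\frakb_\rho\mapsto b_\rho$. Your grading and orientation checks are extra care that the paper does not spell out. The paper treats the identification as tautological, since $\delta_{\frakc\frakh}$ is defined by the same integrals over $\bbF_{yx}$ with the flow-category orientation, so your sign-normalization step is harmless but not needed for the lemma as stated.
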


In a straightforward manner, we may repeat the previous discussion for Floer continuation maps; this will yield the following result.

\begin{lem}
We have the commutative diagram 
    \begin{equation}
    \begin{tikzcd}[column sep=small, center picture]
    CF^{-*}_{\frakc\frakh}(M;H,J;R_{n-1})\otimes_\bbC\bbC[b_n,\ldots]\arrow[r]\arrow[d,equals] & CF^{-*}_{\frakc\frakh}(M;H',J';R_{n-1})\otimes_\bbC\bbC[b_n,\ldots]\arrow[d,equals] \\
    CM_*\Big(\bbF^{H,J,{\rm U}},\exp\big(\pi_!\frakc\frakh(\eval^*TM)\big);\bC[\underline b]\Big)\arrow[r] & CM_*\Big(\bbF^{H',J',{\rm U}},\exp\big(\pi_!\frakc\frakh(\eval^*TM)\big);\bC[\underline b]\Big),
    \end{tikzcd}
    \end{equation}
where: the top arrow is the $\frakc\frakh$-bulk-deformed Floer continuation map, the bottom arrow is the map induced by the image of $\overline{\frakc}^{\rm U}_{\tau,\tau'}$ under $\smallint{}_{\scrC/\bbX}\ch(\scrE)$, and the equalities are from the previous lemma.
\end{lem}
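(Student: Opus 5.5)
The plan is to repeat, for the continuation bimodule $\overline{\frakc}^{\rm U}_{\kappa,\kappa'}$, the unpacking that produced Lemma \ref{lem: piorehjgpowehgpih}. Both vertical maps are identities of free $\bbC[\underline b]$-modules on the orbits, so commutativity reduces to a componentwise identity of coefficients. For fixed $x\in\chi(H)$ and $y'\in\chi(H')$, the coefficient of the top arrow must equal that of the bottom arrow.

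First I will describe the bottom arrow. Under $\smallint{}_{\scrC/\bbX}\ch(\scrE)$, the image of a flow bimodule is the $1$-simplex in $\dgnerve(\Mod_{\bC[\underline b]})$ produced in Part \ref{subsubsection:twistedmorsesimplex}. Its component $H^{\frakm_{\langle 01\rangle}}$ sends $x$ to
\begin{equation*}
\sum_{y'}\Big(\int_{\overline{\frakc}_{\kappa,\kappa'}(x,y')}\exp\big(\pi_!\frakc\frakh(\eval^*TM)\big)_{xy'}\Big)y'.
\end{equation*}
Here the universal curve is the trivial product $\overline{\frakc}_{\kappa,\kappa'}(x,y')\times(\bbR\times S^1)$, and $\scrE^{\kappa,\kappa'}=\eval^*TM$. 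One small point needs care: by Construction \ref{construction:ahatfunctor}, the bundle is really the $\calG$-modified bundle $\scrE\oplus\sigma^*\scrF$. By the lemma showing the functor factors through $\calG$, this modification does not change $\pi_!\frakc\frakh$, so we may work with $\eval^*TM$ directly.

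Second, I will expand the exponential exactly as in the computation preceding Lemma \ref{lem: piorehjgpowehgpih}. The steps are:
\begin{enumerate}
\item write it as a product over $\rho$ of $\exp\big(\pi_!\eval^*\ch_{\rho+2}(TM)\,b_{\rho+1}\big)$;
\item expand into a sum over partitions $\frakq_1\amalg\cdots\amalg\frakq_{n-1}$ with weights $1/(\abs{\frakq_1}!\cdots\abs{\frakq_{n-1}}!)$;
\item use Fubini to rewrite each product of fiber integrals as a single fiber integral over $(\scrC^{\kappa,\kappa'}_{xy'})^q$ of $\eval_{xy';q}^*\big(\ch_2(TM)^{\abs{\frakq_1}}\wedge\cdots\wedge\ch_n(TM)^{\abs{\frakq_{n-1}}}\big)$.
\end{enumerate}
This is the de Rham analogue of the continuation coefficient $\overline{\frakc}_{\frakU,\kappa,\kappa'}$, with interior marked points weighted by pulled-back Chern forms instead of Morse cocycles. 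So I take the $\frakc\frakh$-bulk-deformed continuation map to be defined by precisely this formula, with $\frakb_\rho\leftrightarrow b_\rho$. With that definition the two coefficients agree term by term, and tensoring with $\bbC[b_n,\ldots]$, which acts trivially on both sides, gives the diagram.

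The main obstacle is bookkeeping of signs and gradings. Two things must be checked:
\begin{enumerate}
\item The orientation on $\overline{\frakc}_{\kappa,\kappa'}(x,y')$ coming from its decorated complex-oriented structure, produced via Proposition \ref{prop:grrtocomplex} and the $\mu=1$ case of the Koszul conventions (Corollaries \ref{cor:koszulsign1} and \ref{cor:koszulsign2}), must agree with the orientation-line convention $\mu_u$ used in the classical continuation map.
\item The grading reversal $CF^{-*}$ must match the convention for the total grading on $CM_*$.
\end{enumerate}
I would check both on the $q=0$ term first. There the statement reduces to the standard identification of the ordinary continuation map with the image of the flow bimodule under the Morse-complex functor. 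The higher-$q$ terms then only add even-degree forms, so they introduce no further signs.
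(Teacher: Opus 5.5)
Your proposal is correct and follows the paper's route. The paper's proof is simply to repeat, for the continuation bimodule $\overline{\frakc}^{\rm U}_{\kappa,\kappa'}$, the unpacking of $\exp\big(\pi_!\frakc\frakh(\eval^*TM)\big)$ that gave Lemma \ref{lem: piorehjgpowehgpih} (product over $\rho$, expansion over partitions, Fubini), with the de Rham bulk-deformed continuation map read off as the resulting formula. Your sign and orientation checks go further than the paper does. One small correction: the functor on $\flow^{\rm preGRR}$ is defined directly on $\scrE=\eval^*TM$, so the $\calG$-modification never enters; your remark about it is harmless but not needed.
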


\subsection{Interpolating moduli spaces}
Recall, we have $\bbC$-linear comparison map 
    \begin{align}
    \Phi:\Omega^j(M;\bbC)&\to CM_{2n-j}(M;\bbC) \\
    \eta&\mapsto\sum_{a\in\crit(f)}\Bigg(\int_{W^s(a)}\eta\Bigg)a \nonumber
    \end{align}
which satisfies $[\Phi,\partial]=0$. We define the Morse cycle
    \begin{equation}\label{eqn:definuch}
    \frakU^{\frakc\frakh}_\rho\equiv\Phi\big(\ch_{\rho+1}(TM)\big).\footnote{I.e., we consider the $(n-1)$-tuple $(\frakU^{\frakc\frakh}_1,\ldots,\frakU^{\frakc\frakh}_{n-1})$ induced by the $(n-1)$-tuple $\big(\ch_2(TM),\ldots,\ch_n(TM)\big)$.}
    \end{equation}
As before, we write
    \begin{equation}
    \frakU^{\frakc\frakh}_\rho\equiv\sum_{1\leq j_\rho\leq k_\rho}\frakU^{\frakc\frakh}_{\rho,j_\rho}a_{\rho,j_\rho}\in CM_{2n-2\rho}(M;\bbC).
    \end{equation}

The following two subsections are dedicated to proving the following result.

\begin{prop}\label{prop:quasi-isomorphismbd}
There is a quasi-isomorphism
    \begin{equation}\label{eqn:quasi-isomorphismbd}
    CF^*_{\frakc\frakh}(M;H,J;R_{n-1})\to CF^*_{\frakU^{\frakc\frakh}}(M;H,J;\bbL)
    \end{equation}
of cochain complexes over $\bbC$ which, after passing to homology, is an isomorphism of modules over $R_{n-1}\cong\bbL$. Moreover, the aforementioned quasi-isomorphism is compatible with the appropriate Floer continuation maps:
    \begin{equation}
    \begin{tikzcd}
    CF^*_{\frakc\frakh}(M;H,J;R_{n-1})\arrow[r]\arrow[d] & CF^*_{\frakU^{\frakc\frakh}}(M;H,J;\bbL)\arrow[d] \\
    CF^*_{\frakc\frakh}(M;H',J';R_{n-1})\arrow[r] & CF^*_{\frakU^{\frakc\frakh}}(M;H',J';\bbL).
    \end{tikzcd}
    \end{equation}
\end{prop}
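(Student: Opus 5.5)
The quasi-isomorphism \eqref{eqn:quasi-isomorphismbd} will be counted on interpolating moduli spaces, following the standard de Rham-to-Morse comparison (as in \cite[Section 4.2]{Sie21} and \cite{Sheridan}). Each interior marked point $z_i$ of a Floer cylinder is assigned a \emph{half-infinite Morse flow line} $\gamma_i:[0,+\infty)\to\widehat{M}$ starting at $u(z_i)$ and flowing along $-\grad f$ to a critical point $a$; the form $\ch_{\rho+1}(TM)$ is then integrated over the resulting moduli space. Concretely, for $x,y\in\chi(H)$, a partition $\frakq_1\amalg\cdots\amalg\frakq_{n-1}$ and critical points $a_{\frakq_\rho}$, I will consider moduli spaces $\calP_{a_{\frakq_1},\ldots,a_{\frakq_{n-1}}}(y,x)$. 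An element consists of a Floer cylinder with $q$ marked points together with, for each marked point, a \emph{length} $L_i\in[0,+\infty]$ and the flow segment of length $L_i$ from $u(z_i)$. The form is integrated at the endpoint $\gamma_i(L_i)$. When $L_i=+\infty$ the endpoint is constrained to $W^s(a_{\rho,j})$, which gives the Morse-constrained count. The map $\Psi$ then counts configurations in which every length equals $+\infty$ on the Morse side, mixed with the de Rham weight, so that its leading term is $\Psi=\mathrm{id}+O(\frakb)$. Rather than letting lengths vary independently, I will use a single combined parameter for simplicity: the $\bbL$-coefficient of $\Psi_{y,x}$ in degree $\frakb^{\abs{\frakq}}$ is $\int$ over $\calP$, with $[0,\infty]^q$-parameter, of $\eval^*\prod\ch$ pulled back via the endpoint map. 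The same weights $1/\abs{\frakq_1}!\cdots$ as in \eqref{eq: reihgeropiubgeopubrg} are used, and $\frakb_\rho$ is identified with $\hbar_\rho$.

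The key steps, in order, are as follows.

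First, establish transversality and Gromov compactness for these hybrid spaces, as in Propositions \ref{prop:bulkdeformedcodiff} and \ref{prop:bulkdeformedcont}, and enumerate their codimension-one boundary. The boundary consists of four types:
\begin{enumerate}
\item Floer breaking (at $x$ or $y$), which recovers the compositions $\Psi\circ\delta_{\frakc\frakh}$ and $\delta_{\frakU^{\frakc\frakh}}\circ\Psi$, the marked points being distributed between the two pieces;
\item $L_i=0$, where the form is evaluated directly on the cylinder, giving the de Rham contribution;
\item $L_i=+\infty$, where the configuration breaks into a constrained cylinder times $\int_{W^s(a)}\ch=\Phi(\ch)$, giving the Morse contribution;
\item Morse breaking of the infinite flow lines, which cancels because $\Phi$ is a chain map ($[\Phi,\partial]=0$) and $\frakU^{\frakc\frakh}_\rho$ is a Morse cycle.
\end{enumerate}
Sphere bubbling is ruled out by exactness.

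Second, apply Stokes' theorem to the closed forms $\eval^*\prod\ch$ on these manifolds with corners, carrying out the same combinatorial bookkeeping with factorials as in the proof of Lemma \ref{lem:observation1}. The multinomial coefficients $Q!/(q!\widetilde q!)$ must match when the marked points are split between the two pieces of a Floer breaking. The output is the identity $\delta_{\frakU^{\frakc\frakh}}\Psi=\Psi\delta_{\frakc\frakh}$, possibly after rearranging terms so that $\Psi$ is the map with all $L_i=\infty$ and the de Rham differential corresponds to $L_i=0$. I expect that the cleanest formulation uses lengths that are independent per marked point, with the integrand a product of forms in the endpoint variables.

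Third, because $\Psi$ is upper-triangular with respect to the $\frakb$-adic filtration and $\chi(H)$ is finite, $\Psi$ is a quasi-isomorphism. It is $R_{n-1}$-linear up to the identification $R_{n-1}\cong\bbL$ on homology; it may only be $\bbC$-linear at chain level, which is exactly how the statement of Proposition \ref{prop:quasi-isomorphismbd} is phrased. Compatibility with continuation maps is proved by repeating the argument on the sprinkled domains $\overline{\calD}_{q+2}$. This yields a chain homotopy, and hence a diagram that commutes up to homotopy, which suffices after passing to homology.

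The main obstacle is the first step, specifically the orientations and corner structure at $L_i=+\infty$, where fiber integration over $W^s(a)$ must be identified with the Morse coefficient $\frakU^{\frakc\frakh}_{\rho,j}$ with the correct signs. There is also the smoothness of the endpoint evaluation near $L=\infty$ to handle. My first attempt would be to compactify $[0,\infty]$ via $L\mapsto e^{-L}$ and use the standard smooth structure on the spaces of half-infinite gradient trajectories.
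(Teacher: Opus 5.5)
There is a genuine gap. The moduli spaces you describe do not have the boundary you claim, and the map $\Psi$ you define is not a degree-zero chain map.

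\textbf{Boundary.} Suppose \emph{every} marked point carries a length $L_i\in[0,+\infty]$.
\begin{enumerate}
\item A Floer breaking splits a configuration into two pieces whose marked points \emph{still} carry free lengths. So this face contributes terms of type $\Psi\circ\Psi$, not $\Psi\circ\delta_{\frakc\frakh}$ or $\delta_{\frakU^{\frakc\frakh}}\circ\Psi$.
\item The face $L_i=0$ (resp.\ $L_i=+\infty$) gives configurations in which one point has become a de Rham (resp.\ Morse) insertion while the others are still flowing. Such mixed configurations appear in none of $\Psi$, $\delta_{\frakc\frakh}$, $\delta_{\frakU^{\frakc\frakh}}$.
\end{enumerate}

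\textbf{Degree.} Each length parameter adds one dimension. Hence, under $\frakb_\rho\leftrightarrow\hbar_\rho$, a configuration with $q$ flowed points gives an operator of degree $1-q$ on cylinders modulo translation, and of degree $-q$ on cylinders not modulo translation. Summing over $q$ therefore cannot produce a homogeneous degree-zero map with leading term $\mathrm{id}$. In the translation-modded version the $q=0$ term is $\delta$ itself, and only configurations with exactly one flowed point have degree zero. Your phrase ``possibly after rearranging terms so that $\Psi$ is the map with all $L_i=\infty$ and the de Rham differential corresponds to $L_i=0$'' is exactly the missing step. It cannot be carried out in moduli spaces where all marked points are treated alike, so the orientation issues at $L=\infty$ are not the main obstacle.

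\textbf{Missing idea.} You need three \emph{kinds} of marked points, together with an auxiliary DGA that absorbs the $L=0$ and $L=\infty$ faces. The paper uses
\[
\widetilde{\bbL}=\bbC[\frakb_\rho,\frakL_\rho,\hbar_\rho],\qquad \deg\frakL_\rho=-2\rho+1,\qquad d\frakb_\rho=d\hbar_\rho=\frakL_\rho .
\]
It defines an interpolating complex $CF^*_{\frakc\frakh,\frakU^{\frakc\frakh}}(M;H,J;\widetilde{\bbL})$ whose codifferential counts cylinders with three kinds of points: $\frakb$-points evaluate the de Rham form at $u(z)$; $\frakL$-points flow for time $\frakl\in[0,+\infty)$ and then evaluate; $\hbar$-points carry the Morse constraint. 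The faces $\frakl\to0$ and $\frakl\to+\infty$ turn an $\frakL$-point into a $\frakb$-point or an $\hbar$-point, and the DGA differential compensates exactly these terms in the square-zero identity. Floer breaking now gives honest compositions of the same codifferential.

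The projections $\pi_\frakb$ and $\pi_\hbar$ are quasi-isomorphisms of DGAs. Base change along them tautologically recovers $CF^*_{\frakc\frakh}(M;H,J;R_{n-1})$ and $CF^*_{\frakU^{\frakc\frakh}}(M;H,J;\bbL)$. The quasi-isomorphism is therefore a zig-zag inverted over $\bbC$, so neither a direct map $\Psi$ nor your $\frakb$-adic triangularity argument is needed. The same construction on sprinkled domains gives compatibility with continuation maps.

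If you want a direct map instead, each configuration needs exactly one flowed point. You would also need a rule, compatible with breaking, deciding which unflowed points are de Rham and which are Morse, for instance a gauge-transformation or path-ordered-exponential construction. That requires iterated broken configurations, which is substantially more than your sketch provides.
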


Consider the following differential graded algebra (DGA): 
    \begin{equation}
    \Big(\widetilde{\bbL}\equiv\bbC[\frakb_1,\ldots,\frakb_{n-1},\frakL_1,\ldots,\frakL_{n-1},\hbar_1,\ldots,\hbar_{n-1}],d\Big),\;\;\deg(\frakL_\rho)=-2\rho+1,
    \end{equation}
where 
    \begin{equation}
    d\frakb_\rho=d\hbar_\rho=\frakL_\rho\;\;{\rm and}\;\;d\frakL_\rho=0
    \end{equation}
extended $\bC$-linearly and via Leibniz. We endow $R_{n-1}$ and $\bbL$ with the trivial differential. Observe, we have obvious projections: 
    \begin{equation}\label{eqn:obviouscoeffproj}
    \begin{tikzcd}
    & \widetilde{\bbL}\arrow[dl,"\pi_\frakb",swap]\arrow[dr,"\pi_\hbar"] & \\
    R_{n-1} & & \bbL.
    \end{tikzcd}
    \end{equation}

\begin{lem}
Both $\pi_\frakb$ and $\pi_\hbar$ are quasi-isomorphisms of DGAs.
\end{lem}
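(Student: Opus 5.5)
The plan is to change variables in $\widetilde{\bbL}$ so that it splits as a tensor product of a contractible piece and a polynomial algebra on cycles. First I will record the degrees. For $\frakU^{\frakc\frakh}$ we have $\ell_\rho=2\rho+2$, hence $\deg(\hbar_\rho)=2-\ell_\rho=-2\rho=\deg(\frakb_\rho)$ and $\deg(\frakL_\rho)=-2\rho+1$. Thus $d$ has degree $+1$ and all the substitutions below are homogeneous.

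Next I will check that the projections are DGA maps. I take $\pi_\frakb$ to send $\frakb_\rho\mapsto\frakb_\rho$ and $\frakL_\rho,\hbar_\rho\mapsto0$, and $\pi_\hbar$ to send $\hbar_\rho\mapsto\hbar_\rho$ and $\frakL_\rho,\frakb_\rho\mapsto0$. Both are algebra maps. They are chain maps because, on generators, $\pi(d\frakb_\rho)=\pi(d\hbar_\rho)=\pi(\frakL_\rho)=0$, which matches $d\pi(\frakb_\rho)=d\pi(\hbar_\rho)=0$ since the targets have zero differential.

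For the homology computation, set $c_\rho\equiv\hbar_\rho-\frakb_\rho$. Then $dc_\rho=0$, and as a graded commutative algebra
\begin{equation}
\widetilde{\bbL}=\bbC[\frakb_1,\ldots,\frakb_{n-1},\frakL_1,\ldots,\frakL_{n-1}]\otimes_\bbC\bbC[c_1,\ldots,c_{n-1}],
\end{equation}
with the differential acting only on the first factor, by $\frakb_\rho\mapsto\frakL_\rho$. That first factor is the tensor product over $\rho$ of the Koszul-type complexes $K_\rho\equiv\bbC[\frakb_\rho]\otimes\Lambda[\frakL_\rho]$; $\frakL_\rho$ has odd degree, so $\frakL_\rho^2=0$, and $d(\frakb_\rho^m)=m\frakb_\rho^{m-1}\frakL_\rho$. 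Since we are in characteristic $0$, $d:\bbC\frakb_\rho^m\to\bbC\frakb_\rho^{m-1}\frakL_\rho$ is an isomorphism for every $m\geq1$, so $H_*(K_\rho)=\bbC$, concentrated in weight $0$. By the K\"unneth theorem over the field $\bbC$,
\begin{equation}
H_*(\widetilde{\bbL})\cong\bbC[c_1,\ldots,c_{n-1}],
\end{equation}
a polynomial ring on the classes $[c_\rho]$. As a cross-check, the inclusion $\bbC[c]\hookrightarrow\widetilde{\bbL}$ is a quasi-isomorphism.

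Finally, $\pi_\frakb(c_\rho)=-\frakb_\rho$ and $\pi_\hbar(c_\rho)=\hbar_\rho$. So on homology, $\pi_\frakb$ and $\pi_\hbar$ are the ring maps $\bbC[c]\to R_{n-1}$ and $\bbC[c]\to\bbL$ determined by $c_\rho\mapsto-\frakb_\rho$ and $c_\rho\mapsto\hbar_\rho$. Both are isomorphisms of polynomial rings, so both projections are quasi-isomorphisms of DGAs. The only step needing any care is the acyclicity of $K_\rho$ in positive weight, which uses division by $m$ and is where the $\bbC$-coefficients enter; the rest is bookkeeping.
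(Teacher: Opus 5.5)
Your proof is correct and takes the same approach as the paper: the paper also identifies $H_*(\widetilde{\bbL})$ as the polynomial algebra on the classes of $\frakb_\rho-\hbar_\rho$, and notes that each projection carries these classes to the polynomial generators of $R_{n-1}$ resp.\ $\bbL$. Your change of variables $c_\rho=\hbar_\rho-\frakb_\rho$, the characteristic-$0$ acyclicity of each $\bbC[\frakb_\rho]\otimes\Lambda[\frakL_\rho]$, and the K\"unneth step spell out what the paper asserts in one line, including the implicit convention that $\widetilde{\bbL}$ is graded-commutative, so $\frakL_\rho^2=0$.
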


\begin{proof}
We compute: 
    \begin{align}
    H_*(\widetilde{\bbL})&=\dfrac{\langle\frakb_1-\hbar_1,\ldots,\frakb_{n-1}-\hbar_{n-1},\frakL_1,\ldots,\frakL_{n-1}\rangle}{\langle\frakL_1,\ldots,\frakL_{n-1}\rangle} \\
    &=\langle\frakb_1-\hbar_1,\ldots,\frakb_{n-1}-\hbar_{n-1}\rangle.
    \end{align}
Clearly, $\pi_\frakb$ resp. $\pi_\hbar$, which sets $\frakL_\rho$ and $\hbar_\rho$ resp. $\frakb_\rho$ and $\frakL_\rho$ to zero, induces an isomorphism on homology.
\end{proof}

We will now construct ``interpolating $(\frakc\frakh,\frakU^{\frakc\frakh})$-bulk-deformed symplectic cohomology''. Consider a partition
    \begin{equation}\label{eqn:partition}
    \{1,\ldots,q\}\equiv\frakq^\frakb_1\amalg\cdots\amalg\frakq^\frakb_{n-1}\amalg\frakq^\frakL_1\amalg\cdots\amalg\frakq^\frakL_{n-1}\amalg\frakq^\hbar_1\amalg\cdots\amalg\frakq^\hbar_{n-1};
    \end{equation}
choose 
    \begin{equation}
    j_{\rho_1},\ldots,j_{\rho_{\abs{\frakq^\hbar_\rho}}}
    \end{equation}
between 1 and $k_\rho$. (We would like to emphasize that we do not require a relation between $j_{\rho_1},\ldots,j_{\rho_{\abs{\frakq^\hbar_\rho}}}$.) Analogously to \eqref{eqn:bulkdeformedaux}, given any two $x,y\in\chi(H)$, we may define
    \begin{equation}
    \bbF_{\frakq^\hbar_1,\ldots,\frakq^\hbar_{n-1},\frakU^{\frakc\frakh}}(y,x)\equiv\coprod_{a_{\frakq^\hbar_1},\ldots,a_{\frakq^\hbar_{n-1}}}\bbF_{a_{\frakq^\hbar_1},\ldots,a_{\frakq^\hbar_{n-1}}}(y,x),
    \end{equation}
where $a_{\frakq^\hbar_\rho}\equiv(a_{\rho,j_{\rho_1}},\ldots,a_{\rho,j_{\abs{\frakq^\hbar_\rho}}})$ is a $\abs{\frakq^\hbar_\rho}$-tuple of terms appearing in $\frakU^{\frakc\frakh}_\rho$ and 
    \begin{multline}
    \bbF_{a_{\frakq^\hbar_1},\ldots,a_{\frakq^\hbar_{n-1}}}(y,x)\equiv\coprod_{T\in T^{semi}_{\calC_{q+2}}}\bbF_T(y,x)\times_{\widehat{M}^{\abs{\frakq^\hbar_1}+\cdots+\abs{\frakq^\hbar_{n-1}}}} \\
    \big(W^s(a_{1,j_1})\times\cdots\times W^s(a_{1,j_{\abs{\frakq^\hbar_1}}})\times\cdots\times W^s(a_{{n-1},j_1})\times\cdots\times W^s(a_{{n-1},j_{\abs{\frakq^\hbar_{n-1}}}})\big)
    \end{multline}
equipped with the Gromov topology (here, we implicitly only use the evaluation maps on $\bbF_T(y,x)$ at the marked points labeled by elements of $\{1,\ldots,q\}$ which are contained in the subset $\frakq^\hbar_1\amalg\cdots\amalg\frakq^\hbar_{n-1}$; in particular, here we are thinking of an element of $\bbF_T(y,x)$ as having three kinds of marked points given by the partition \eqref{eqn:partition}: $\frakb$ ones, $\frakL$ ones, and $\hbar$ ones). We use the shorthand
    \begin{multline}
    \bbF_{T,a_{\frakq^\hbar_1},\ldots,a_{\frakq^\hbar_{n-1}}}(y,x)\equiv\bbF_T(y,x)\times_{\widehat{M}^{\abs{\frakq^\hbar_1}+\cdots+\abs{\frakq^\hbar_{n-1}}}} \\
    \big(W^s(a_{1,j_1})\times\cdots\times W^s(a_{1,j_{\abs{\frakq^\hbar_1}}})\times\cdots\times W^s(a_{{n-1},j_1})\times\cdots\times W^s(a_{{n-1},j_{\abs{\frakq^\hbar_{n-1}}}})\big).
    \end{multline}
As usual, we have a map 
    \begin{equation}\label{eqn:fjkld;ajfdkl;asjf}
    \eval_{\frakq^\frakb}:\bbF_{\frakq^\hbar_1,\ldots,\frakq^\hbar_{n-1},\frakU^{\frakc\frakh}}(y,x)\to\widehat{M}^{\abs{\frakq^\frakb_1}+\cdots+\abs{\frakq^\frakb_{n-1}}}
    \end{equation}
given by evaluation at the $\frakb$ marked points. Meanwhile, we have a map
    \begin{equation}\label{eqn:fkldjal;fjkl;ajlkfa}
    \eval_{\frakq^\frakL}:[0,+\infty)^{\abs{\frakq^\frakL_1}+\cdots+\abs{\frakq^\frakL_{n-1}}}\times\bbF_{\frakq^\hbar_1,\ldots,\frakq^\hbar_{n-1},\frakU^{\frakc\frakh}}(y,x)\to\widehat{M}^{\abs{\frakq^\frakL_1}+\cdots+\abs{\frakq^\frakL_{n-1}}}
    \end{equation}
whose $k$-th component is given by evaluating at the marked point with label $k\in\frakq^\frakL_\rho$ after following the flow of $-\grad f$ for time $\frakl_k\in[0,+\infty)$. Clearly, \eqref{eqn:fjkld;ajfdkl;asjf} extends over 
    \begin{equation}
    \scrF_{\frakL,\frakq^\hbar_1,\ldots,\frakq^\hbar_{n-1},\frakU^{\frakc\frakh}}(y,x)\equiv[0,+\infty)^{\abs{\frakq^\frakL_1}+\cdots+\abs{\frakq^\frakL_{n-1}}}\times\bbF_{\frakq^\hbar_1,\ldots,\frakq^\hbar_{n-1},\frakU^{\frakc\frakh}}(y,x).
    \end{equation}
The basic Gromov-compactness and gluing result in this context is the following.

\begin{prop}
$\scrF_{\frakL,\frakq^\hbar_1,\ldots,\frakq^\hbar_{n-1},\frakU^{\frakc\frakh}}(y,x)$ admits a Gromov-compactification, denoted $\bbF_{\frakL,\frakq^\hbar_1,\ldots,\frakq^\hbar_{n-1},\frakU^{\frakc\frakh}}(y,x)$, which is a compact smooth manifold with corners whose codimension 1 boundary strata are of the following forms.
\begin{itemize}
\item First, the subspace corresponding to Morse degenerations:
    \begin{align}
    &\coprod_T\bbF_{T,a_{\frakq^\hbar_1},\ldots,a_{\frakq^\hbar_{n-1}}}(y,x) \\
    &\coprod_{T,\rho,\sigma}\coprod_{\substack{b\in\crit(f) \\ \deg(b)=\deg(a_{\rho,j_\sigma})+1}}\bbF_{T,a_{\frakq^\hbar_1},\ldots,a_{\frakq^\hbar_{\rho-1}},b_{\frakq^\hbar_\rho,\sigma},\ldots,a_{\frakq^\hbar_{n-1}}}(y,x)\times\bbM_{ba_{\rho,j_\sigma}}, \nonumber
    \end{align}
where: the first disjoint union only uses $T\in T_{\calC_{q+2}}$ having exactly one plain edge and no round edges, the second disjoint union only uses the unique $T\in T_{\calC_{q+2}}$ having exactly no plain edges and no round edges, and 
    \begin{equation}
    b_{\frakq^\hbar_\rho,\sigma}\equiv(a_{\rho,j_1},\ldots,a_{\rho,j_{\sigma-1}},b,\widehat{a}_{\rho,j_\sigma},\ldots,a_{\rho,j_{\abs{\frakq^\hbar_\rho}}}).
     \end{equation}
\item Second, the boundary stratum corresponding to when any $\bbR$-coordinate goes to 0, i.e., when any label in a partition of the form $\frakq^\frakL_\rho$ becomes a label in $\frakq^\frakb_\rho$.
\item And, the boundary stratum corresponding to when any $\bbR$-coordinate goes to $+\infty$, i.e., when any label in a partition of the form $\frakq^\frakL_\rho$ becomes a label in $\frakq^\hbar_\rho$. (Note, this will use \eqref{eqn:definuch}.)
\end{itemize}
In particular, 
    \begin{equation}
    \bbF_{\frakL,\frakq^\hbar_1,\ldots,\frakq^\hbar_{n-1},\frakU^{\frakc\frakh}}(y,x)\equiv\coprod_{a_{\frakq^\hbar_1},\ldots,a_{\frakq^\hbar_{n-1}}}\partial\bbF_{\frakL,a_{\frakq^\hbar_1},\ldots,a_{\frakq^\hbar_{n-1}}}(y,x)
    \end{equation}
is a compact smooth manifold with corners satisfying
    \begin{equation}
    \partial\bbF_{\frakL,\frakq^\hbar_1,\ldots,\frakq^\hbar_{n-1},\frakU^{\frakc\frakh}}(y,x)=\coprod_{a_{\frakq^\hbar_1},\ldots,a_{\frakq^\hbar_{n-1}}}\partial\bbF_{\frakL,a_{\frakq^\hbar_1},\ldots,a_{\frakq^\hbar_{n-1}}}(y,x).
    \end{equation}
\end{prop}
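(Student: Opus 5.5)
The plan is to realize $\bbF_{\frakL,\frakq^\hbar_1,\ldots,\frakq^\hbar_{n-1},\frakU^{\frakc\frakh}}(y,x)$ as a fiber product of spaces that are already known to be compact smooth manifolds with corners. The codimension $1$ strata of such a fiber product are the unions of the codimension $1$ strata of the factors, provided the relevant evaluation maps are transverse stratum by stratum.

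The first factor is the bulk-deformed Floer moduli space $\bbF_{\frakq^\hbar_1,\ldots,\frakq^\hbar_{n-1},\frakU^{\frakc\frakh}}(y,x)$, with the $\frakb$- and $\frakL$-marked points left unconstrained. By Proposition \ref{prop:bulkdeformedcodiff}, together with the smooth-corner-structure results of \cite{Lar21,PS24a} (which are uniform in the number of marked points), this is a manifold with corners. Its codimension $1$ strata are exactly the Morse degenerations listed in the first bullet: a single plain breaking, or a breaking $\bbM_{ba_{\rho,j_\sigma}}$ of one of the stable-manifold constraints.

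The second family of factors consists of one copy, for each $\frakL$-marked point, of the space of half-infinite gradient segments
\begin{equation}
\calG\equiv\big\{(\frakl,p):\frakl\in[0,+\infty),\ p\in\widehat{M}\big\},\qquad (\frakl,p)\mapsto\big(p,\phi^{\frakl}_{-\grad f}(p)\big).
\end{equation}
Its compactification $\overline{\calG}$ is the standard Morse--Smale compactification of the graph of the gradient flow (cf. Latour, Burghelea--Haller, Wehrheim). For a Morse--Smale pair $(f,g)$ this is a manifold with corners. Its codimension $1$ boundary is the face $\{\frakl=0\}$, which is the diagonal, together with the face at $\frakl=+\infty$, which is $\coprod_{a}W^s(a)\times W^u(a)$. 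Deeper corners correspond to further Morse breakings and have codimension at least $2$. The grading convention $\deg(b)=\deg(a_{\rho,j_\sigma})+1$ and the sign of $\Phi$ should be matched against \eqref{eqn:definuch} here, so that the stable-manifold constraint appearing at infinity is precisely the one defining $\frakU^{\frakc\frakh}$. We then define the compactified space as the fiber product of the first factor with $\prod\overline{\calG}$, taken along the evaluation maps at the $\frakL$-marked points and the first projections $\overline{\calG}\to\widehat{M}$. The map $\eval_{\frakq^\frakL}$ is the second projection.

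The key steps, in order, are as follows.
\begin{enumerate}
\item Achieve transversality of the evaluation maps at the $\frakL$- and $\hbar$-marked points to all strata of the Morse data. Since the Floer data are domain-dependent and $K=0$, $J=J_{\rm ref}$ near interior markings, this should be a Sard--Smale argument over perturbation data (or over a generic choice of $g$).
\item Show that no additional codimension $1$ boundary appears. Since $M$ is exact, the spheres indexed by $V_{rd}$ are constant, so collisions of marked points give strata of codimension at least $2$ and can be ignored. Escape to the cylindrical end is ruled out by the maximum principle, exactly as in \cite{Sie21}.
\item Apply the general fact that a transverse fiber product of manifolds with corners is a manifold with corners. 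Then read off the codimension $1$ faces: the Morse and Floer degenerations of the first factor give the first bullet; $\{\frakl_k=0\}$ gives the second bullet, turning an $\frakL$-label into a $\frakb$-label; and $\{\frakl_k=\infty\}$ gives the third bullet, where the marked point is constrained to $W^s(a)$ and integrating the form over the $W^u(a)$ factor produces the coefficients of $\Phi(\ch_{\rho+1})$, turning the label into an $\hbar$-label.
\item Take the disjoint union over $a_{\frakq^\hbar}$ to obtain the final statement about the boundary of the union.
\end{enumerate}

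I expect the main obstacle to be the smooth structure at $\frakl=+\infty$. One has to produce gluing charts for the flow-graph compactification that are compatible with those of \cite{Lar21} on the Floer side, so that the fiber product is a genuine manifold with corners rather than merely a topological one. A second difficulty is checking stratum-wise transversality at the corners where a Floer breaking and a Morse breaking occur simultaneously. My first attempt would be to use Wehrheim's smooth charts on compactified Morse trajectory spaces and to choose $g$ after the Floer data, so that generic-metric transversality handles all strata simultaneously by a countable intersection argument.
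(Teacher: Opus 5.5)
The paper gives no proof of this proposition; it is simply asserted as the standard compactness and gluing package. Your fiber product of the bulk-deformed space with copies of the compactified flow graph $\overline{\calG}$ is a reasonable way to make it precise. Away from collisions of marked points, your reading of the three kinds of faces is right. Your remark that the constraint sits on $W^s(a)$ while the form is integrated over $W^u(a)$ is also correct, and it means $\Phi$ has to be taken with unstable manifolds for that face to reproduce $\frakU^{\frakc\frakh}$.

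The argument breaks down in Steps~1--2, at collisions of marked points constrained by the same critical point. As you note, sphere bubbles are constant, so two marked points on a common bubble have identical evaluations. If both are constrained by $a$ (two $\hbar$-points, or an $\hbar$-point and an $\frakL$-point at $\frakl=\infty$), the fiber product is taken along the diagonal against $W^s(a)\times W^s(a)$. That is never transverse, whatever $g$ or Floer perturbation data you choose.

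Nor are these strata of codimension $\geq 2$. Let $D$ be the dimension of $\bbF_{\frakq^\hbar_1,\ldots,\frakq^\hbar_{n-1},\frakU^{\frakc\frakh}}(y,x)$ with one free $\frakL$-point, and let $S$ be its stratum where that point and an $\hbar$-point labelled $a$ share a bubble. Then $\dim S=D-2$ and the $\frakL$-evaluation maps $S$ into $W^s(a)$. So the part of your fiber product over $S$ and the $\frakl=\infty$ face labelled $a$ is $S\times W^u(a)$, of dimension $D-2+c$ with $c=\dim W^u(a)\geq 4$. That exceeds the dimension $D+1$ of the whole space. Likewise, two $\hbar$-points with the same label already produce, inside the first factor, a collision stratum of dimension $c-2$ above the top stratum. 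So the space you define is not a manifold with corners of the expected dimension, and Step~3 cannot be invoked there.

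You have to take the closure of the top stratum and analyse it near these loci directly, and this is not a formality. Take a linear model at $a$ with all unstable eigenvalues equal to $\lambda$. Consider sequences in which the two points collide ($w\to 0$) while $\frakl\to\infty$ with $\abs{w}e^{\lambda\frakl}$ bounded. These converge to points of $S$ whose $\frakL$-endpoint sweeps out a $2$-dimensional, $S$-dependent family in $W^u(a)$, approached from one side. This is a codimension-one boundary face of the closure that appears on neither your list nor the paper's.

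You would need to show this face contributes nothing to the integrals $\Psi$. For instance, if the unstable eigenvalues at $a$ are pairwise distinct, then for generic points of $S$ the limiting endpoints lie on a fixed $2$-dimensional strong-unstable submanifold of $W^u(a)$. There $\ch_{\rho+1}$, of degree $\geq 4$, restricts to zero. However, the Burghelea--Haller and Wehrheim smooth structures you plan to use are built on Euclidean local models at critical points, i.e.\ exactly the equal-eigenvalue case, where no such vanishing is automatic. Alternatively, use marked-point-dependent Morse data so that constraints at different marked points become mutually transverse.

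A smaller point: your $\calG$ lives over $\widehat{M}$, so $\overline{\calG}$ has an extra boundary face $[0,\infty)\times\partial\widehat{M}$. This enters the fiber product, since Floer cylinders need not stay inside $\widehat{M}$. Extend $f$ over the cylindrical end with $-\grad f$ pointing inward to remove it.
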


\begin{rem}
Observe, \eqref{eqn:fjkld;ajfdkl;asjf} and \eqref{eqn:fkldjal;fjkl;ajlkfa} extend over $\bbF_{\frakL,\frakq^\hbar_1,\ldots,\frakq^\hbar_{n-1},\frakU^{\frakc\frakh}}(y,x)$; we write $\eval^*_{yx; \frakq^\frakb+\frakq^\frakL}$ for their product.
\end{rem}

We define the shorthand
    \begin{multline}\
    \Psi_{a_{\frakq^\hbar_1},\ldots,a_{\frakq^\hbar_{n-1}},\frakq^\frakb+\frakq^\frakL;y,x}\equiv \\
    \int_{\bbF_{\frakL,a_{\frakq^\hbar_1},\ldots,a_{\frakq^\hbar_{n-1}}}(y,x)}\int\eval_{yx;\frakq^\frakb+\frakq^\frakL}^*\Big(\ch_2(TM)^{\abs{\frakq^\frakb_1}}\wedge\cdots\wedge\ch_n(TM)^{\abs{\frakq^\frakL_{n-1}}}\Big),\footnotemark
    \end{multline}
where the inner integral is implicitly the fiber integral\footnotetext{Observe, this integral is only possibly non-trivial when $\deg(x)=\deg(y)+\sum_{\rho=1}^{n-1}2\rho\abs{\frakq^\frakb_\rho}+\sum_{\rho=1}^{n-1}2\rho\abs{\frakq^\frakL_\rho}+\sum_{\rho=1}^{n-1}2\rho\abs{\frakq^\hbar_\rho}+1$.}
    \begin{multline}
    \int_{\scrC_{\frakL,a_{\frakq^\hbar_1},\ldots,a_{\frakq^\hbar_{n-1}}}(y,x)^{\frakq^\frakb+\frakq^\frakL}/\bbF_{\frakL,a_{\frakq^\hbar_1},\ldots,a_{\frakq^\hbar_{n-1}}}(y,x)}\eval_{yx;\frakq^\frakb+\frakq^\frakL}^*\Big(\ch_2(TM)^{\abs{\frakq^\frakb_1}}\wedge \\
    \cdots\wedge\ch_n(TM)^{\abs{\frakq^\frakb_{n-1}}}\wedge\ch_2(TM)^{\abs{\frakq^\frakL_1}}\wedge\cdots\wedge\ch_n(TM)^{\abs{\frakq^\frakL_{n-1}}}\Big).
    \end{multline}
The interpolating $(\frakc\frakh,\frakU^{\frakc\frakh})$-bulk-deformed $(H,J)$-Floer cochain complex
    \begin{equation}
    CF^j_{\frakc\frakh,\frakU^{\frakc\frakh}}(M;H,J;\widetilde{\bbL})\equiv\bigoplus_{\deg(x)=j}\widetilde{\bbL}\abs{\frako_x}
    \end{equation}
has the codifferential whose $\widetilde{\bbL}\abs{\frako_y}-\widetilde{\bbL}\abs{\frako_x}$ component is given by 
    \begin{equation}
    \delta_{\frakc\frakh,\frakU^{\frakc\frakh};y,x}\equiv\sum_{q\geq0}\sum_{\frakq^\frakb_1,\ldots,\frakq^\hbar_{n-1}}\dfrac{1}{\abs{\frakq^\frakb_1}!\cdots\abs{\frakq^\hbar_{n-1}}!}\frakb_1^{\abs{\frakq^\frakb_1}}\cdots\hbar_{n-1}^{\abs{\frakq^\hbar_{n-1}}}\delta_{\frakq^\frakb_1,\ldots,\frakq^\hbar_{n-1},\frakc\frakh,\frakU^{\frakc\frakh};y,x}, 
    \end{equation}
(extended by Leibniz) where:
    \begin{multline}
    \delta_{\frakq^\frakb_1,\ldots,\frakq^\hbar_{n-1},\frakc\frakh,\frakU^{\frakc\frakh};y,x}\equiv \\
    \sum_{a_{\frakq^\hbar_1},\ldots,a_{\frakq^\hbar_{n-1}}}(\frakU^{\frakc\frakh}_{1,j_1}\cdots\frakU^{\frakc\frakh}_{1,j_{\abs{\frakq^\hbar_1}}}\cdots\frakU^{\frakc\frakh}_{n-1,j_1}\cdots\frakU^{\frakc\frakh}_{n-1,j_{\abs{\frakq^\hbar_{n-1}}}})\Psi_{a_{\frakq^\hbar_1},\ldots,a_{\frakq^\hbar_{n-1}},\frakq^\frakb+\frakq^\frakL;y,x}
    \end{multline}
if at least one of 
    \begin{equation}
    \abs{\frakq^\frakb_1},\ldots,\abs{\frakq^\frakL_{n-1}}
    \end{equation}
is non-zero, otherwise
    \begin{multline}
    \delta_{\frakq^\frakb_1,\ldots,\frakq^\hbar_{n-1},\frakc\frakh,\frakU^{\frakc\frakh};y,x}\equiv \\
    \sum_{\substack{a_{\frakq^\hbar_1},\ldots,a_{\frakq^\hbar_{n-1}} \\ u\in\bbF_{\frakL,a_{\frakq^\hbar_1},\ldots,a_{\frakq^\hbar_{n-1}}}(y,x) \\ \deg(x)=\deg(y)+\sum_{\rho=1}^{n-1}2\rho\abs{\frakq^\hbar_\rho}+1}}(\frakU_{1,j_1}\cdots\frakU_{1,j_{\abs{\frakq^\hbar_1}}}\cdots\frakU_{n-1,j_1}\cdots\frakU_{n-1,j_{\abs{\frakq^\hbar_{n-1}}}})\mu_u.
    \end{multline}
Two things to observe are the following:
\begin{enumerate}
\item since $\chi(H)$ is finite, $\delta_{\frakq^\frakb_1,\ldots,\frakq^\hbar_{n-1},\frakc\frakh,\frakU^{\frakc\frakh};y,x}$ vanishes for $q\gg0$;
\item and, essentially by combining the proofs of the claims in Subsections \ref{subsec:bulkdeformed} and \ref{subsec:derhamsecondpass}, $\delta_{\frakc\frakh,\frakU^{\frakc\frakh}}^2=0$.
\end{enumerate}

\subsection{Zig-zag of quasi-isomorphisms}
Observe, \eqref{eqn:obviouscoeffproj}, being a diagram of quasi-isomorphism of $\bbC$-DGAs, induces obvious quasi-isomorphisms of cochain complexes over $\bbC$:
    \begin{equation}\label{eqn:quasi-isooverc}
    \begin{tikzcd}[column sep=-5ex]
    & CF^*_{\frakc\frakh,\frakU^{\frakc\frakh}}(M;H,J;\widetilde{\bbL})\arrow[dl,"\pi_\frakb",swap]\arrow[dr,"\pi_\hbar"] & \\
    CF^*_{\frakc\frakh,\frakU^{\frakc\frakh}}(M;H,J;\widetilde{\bbL})\otimes_{\widetilde{\bbL}} R_{n-1} & & CF^*_{\frakc\frakh,\frakU^{\frakc\frakh}}(M;H,J;\widetilde{\bbL})\otimes_{\widetilde{\bbL}}\bbL.
    \end{tikzcd}
    \end{equation}
Moreover, we have that the previous diagram, after passing to homology, is an isomorphism of modules over 
    \begin{equation}
    R_{n-1}\cong H_*(\widetilde{\bbL})\cong\bbL.
    \end{equation}

\begin{proof}[Proof of Proposition \ref{prop:quasi-isomorphismbd}]
Tautologically, we have the following equality of cochain complexes over $R_{n-1}$:
    \begin{equation}
    CF^*_{\frakc\frakh,\frakU^{\frakc\frakh}}(M;H,J;\widetilde{\bbL})\otimes_{\widetilde{\bbL}}R_{n-1}=CF^*_{\frakc\frakh}(M;H,J;R_{n-1});
    \end{equation}
this is because the tensor product on the left hand side has the effect of setting $\frakL_\rho$ and $\hbar_\rho$ to zero, and then the codifferentials on the left and right are, by definition, the same on $\frakb_\rho$ terms. Moreover, tautologically, we have the following equality of cochain complexes over $\bbL$:
    \begin{equation}
    CF^*_{\frakc\frakh,\frakU^{\frakc\frakh}}(M;H,J;\widetilde{\bbL})\otimes_{\widetilde{\bbL}}\bbL=CF^*_{\frakU^{\frakc\frakh}}(M;H,J;\bbL);
    \end{equation}
this is because the tensor product on the left hand side has the effect of setting  $\frakb_\rho$ and $\frakL_\rho$ to zero, and then the codifferentials on the left and right are, by definition, the same on $\hbar_\rho$ terms. In particular, using \eqref{eqn:quasi-isooverc} and the fact that we can invert quasi-isomorphism of cochain complexes over $\bbC$, we obtain a quasi-isomorphism of cochain complexes over $\bbC$,
    \begin{equation}
    CF^*_{\frakc\frakh}(M;H,J;R_{n-1})\to CF^*_{\frakU^{\frakc\frakh}}(M;H,J;\bbL),
    \end{equation}
which, after passing to homology, is an isomorphism of modules over
    \begin{equation}
    R_{n-1}\cong\bbL.
    \end{equation}

The statement about compatibility with appropriate Floer continuation maps follows because, in a straightforward manner, we may repeat the discussion of the previous subsection for Floer continuation maps;  in particular, we obtain a diagram of quasi-isomorphisms of cochain complexes over $\bbC$ of the form
    \begin{equation}
    \begin{tikzcd}[column sep=-5ex]
    & CF^*_{\frakc\frakh,\frakU^{\frakc\frakh}}(M;H,J;\widetilde{\bbL})\arrow[dl,"\pi_\frakb",swap]\arrow[dr,"\pi_\hbar"]\arrow[dd] & \\
    CF^*_{\frakc\frakh,\frakU^{\frakc\frakh}}(M;H,J;\widetilde{\bbL})\otimes_{\widetilde{\bbL}} R_{n-1}\arrow[dd] & & CF^*_{\frakc\frakh,\frakU^{\frakc\frakh}}(M;H,J;\widetilde{\bbL})\otimes_{\widetilde{\bbL}}\bbL\arrow[dd] \\
    & CF^*_{\frakc\frakh,\frakU^{\frakc\frakh}}(M;H',J';\widetilde{\bbL})\arrow[dl,"\pi_\frakb",swap]\arrow[dr,"\pi_\hbar"] & \\
    CF^*_{\frakc\frakh,\frakU^{\frakc\frakh}}(M;H',J';\widetilde{\bbL})\otimes_{\widetilde{\bbL}} R_{n-1} & & CF^*_{\frakc\frakh,\frakU^{\frakc\frakh}}(M;H',J';\widetilde{\bbL})\otimes_{\widetilde{\bbL}}\bbL
    \end{tikzcd}
    \end{equation}
which, after passing to homology, is an isomorphism of modules over 
    \begin{equation}
    R_{n-1}\cong H_*(\widetilde{\bbL})\cong\bbL.
    \end{equation}
\end{proof}

\begin{proof}[Proof of Theorem \ref{thm:main}]

    Follows by combining Corollary \ref{cor:firpg-iegn}, Theorem \ref{thm:tech}, Lemma \ref{lem: piorehjgpowehgpih}, and Proposition \ref{prop:quasi-isomorphismbd}.
\end{proof}

\section{Base change}
\subsection{Spectral sequences}
This subsection is dedicated to studying the change-of-coefficient spectral sequence for the DG-modules studied throughout the paper and proving that many of the quasi-isomorphisms constructed have associated isomorphisms of spectral sequences.

Recall the standard $t$-structure on cochain complexes:
    \begin{equation}
        \tau^{\geq i} C^* \equiv \begin{cases}
            C^* & \;\textrm{ if }* > i \\
            \coker(d^{i-1}) & \; \textrm{ if }*=i\\
            0 & \; \textrm{ if }*<i,
        \end{cases}
        \;\;
        \tau^{\leq i} C^* \equiv \begin{cases}
            0 & \;\textrm{ if }* > i \\
            \ker(d^{i-1}) & \; \textrm{ if }*=i\\
            C^* & \; \textrm{ if }*<i.
        \end{cases} 
    \end{equation}

Let $S$ be a DGA and $Y$ a DG $S$-module; we will assume that both are cohomologically graded. Assume that $S$ is non-positively graded with $S^0=\bbC$ and $Y$ is semi-free.\footnote{Alternatively, we may consider derived tensor products.} There is a decreasing filtration $F^*Y\equiv\{F^iY\}_{i\in\bbZ}$ on $Y$ in the derived category given by setting 
    \begin{equation}
    F^iY\equiv\tau^{\leq -i}S\otimes_SY; 
    \end{equation}
this induces a spectral sequence with first page
    \begin{equation}
        E_1^{p,q} = H^{p+q}\left(\mathrm{Cone}(F^{p+1}Y \to F^pY)\right) \simeq H^{p+q}\big(H^{-p}(S) \otimes_S Y [p]\big),
    \end{equation}
cf. \cite[12.24]{STK}. We call this the \emph{change-of-coefficients spectral sequence} associated to the augmentation $S \to S^0=\bC$. Note, this converges if $Y$ is a perfect $S$-module or a directed colimit of perfect $S$-modules. Also, applying the reversed $t$-structure yields an analogous spectral sequence in the homologically graded case; this utilizes the analogous filtration 
    \begin{equation}
    F_iY\equiv\tau_{\geq-i}S\otimes_SY.
    \end{equation}

\subsubsection{Obstructions to base change}
    \begin{lem}\label{lem:71}
        Let $Z$ be a cochain complex over $\bbC$. The change-of-coefficient spectral sequence for the $S$-module $S \otimes_\bC Z$ degenerates at the $E_2$-page.
    \end{lem}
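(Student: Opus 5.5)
The plan is to show that for $Y=S\otimes_\bC Z$ the filtration $F^\bullet Y$ is, up to quasi-isomorphism, induced by a filtration of $S$ alone, tensored with the constant complex $Z$. Then the spectral sequence for $Y$ is the spectral sequence of the filtered complex $\tau^{\leq -\bullet}S$ tensored over the field $\bC$ with $Z$, and the latter degenerates at $E_2$ for formal reasons.

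First, $Y=S\otimes_\bC Z$ is semi-free: a basis of $Z$ gives free generators. Hence
\begin{equation}
F^iY=\tau^{\leq -i}S\otimes_S(S\otimes_\bC Z)\cong\tau^{\leq -i}S\otimes_\bC Z,
\end{equation}
and the maps $F^{i+1}Y\to F^iY$ are $(\tau^{\leq -i-1}S\to\tau^{\leq -i}S)\otimes_\bC\mathrm{id}_Z$. Since $\bC$ is a field, $-\otimes_\bC Z$ is exact and commutes with cohomology by the Künneth formula, $H^*(A\otimes_\bC Z)\cong H^*(A)\otimes_\bC H^*(Z)$. So the exact couple of the filtration $F^\bullet Y$ is the exact couple of the filtration $\tau^{\leq-\bullet}S$ of $S$, tensored over $\bC$ with the graded vector space $H^*(Z)$. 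It follows that every page and every differential of the spectral sequence for $Y$ is obtained from the corresponding page and differential for $S$ (filtered by $\tau^{\leq -\bullet}S$ and viewed as a complex over $\bC$) by applying $-\otimes_\bC H^*(Z)$. In particular $E_1^{p,q}\cong H^{-p}(S)\otimes_\bC H^{p+q+p}(Z)$, up to the indexing conventions of the paper.

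It therefore suffices to treat the case $Z=\bC$, that is, the spectral sequence of the complex $S$ with its canonical (Postnikov) filtration $\tau^{\leq -\bullet}S$. This spectral sequence is known to have $E_1$ concentrated so that $d_1$ vanishes and $E_2=E_\infty$; it is the standard "décalage" of the trivial filtration. Concretely, I would verify two things:
\begin{enumerate}
\item The long exact sequences $H^*(\tau^{\leq -i-1}S)\to H^*(\tau^{\leq -i}S)\to H^*(H^{-i}(S)[i])$ split. This is because $H^k(\tau^{\leq -i}S)$ is $H^k(S)$ for $k\leq -i$ and $0$ otherwise, so the maps in the exact couple are injective.
\item From this, all differentials $d_r$ for $r\geq 2$ vanish. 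Every class in $E_1$ lifts to a genuine cohomology class of the filtered piece, and the connecting maps of the exact couple are zero, so the derived couples never produce further differentials.
\end{enumerate}
Combined with the previous paragraph, this gives vanishing of $d_r$, $r\geq2$, for $Y$.

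The main obstacle is bookkeeping in the indexing. With the $E_1$ formula given (involving the shift $[p]$ from $\mathrm{Cone}(F^{p+1}\to F^p)$), the "trivial" differentials for $S$ may appear as $d_1$ rather than $d_0$, so degeneration is claimed at $E_2$ and not earlier. I would compute the exact couple explicitly for $S$ to pin down which page the nonzero structure lives on, namely a possible nonzero $d_1$ encoding the multiplication on $H^*(S)$ acting on $H^*(Z)$. Identifying $\mathrm{Cone}(F^{p+1}Y\to F^pY)\simeq H^{-p}(S)[p]\otimes_\bC Z$ and tracking the injectivity argument through the shift is the step I would check most carefully.
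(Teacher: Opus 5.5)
Your proposal is correct and is essentially the paper's argument. You identify $F^\bullet(S\otimes_\bC Z)$ with $(\tau^{\le -\bullet}S)\otimes_\bC Z$, use K\"unneth over $\bC$ to reduce to the spectral sequence of $S$ alone, and note that this has no differentials. Your observation that $H^*(\tau^{\le -i-1}S)\to H^*(\tau^{\le -i}S)$ is injective, so the connecting map of the exact couple vanishes, justifies that last step more explicitly than the paper does.

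Your closing worry about a nonzero $d_1$ encoding the $H^*(S)$-action is unnecessary. The same vanishing of the connecting map kills every $d_r$ with $r\ge 1$ for $S$, and hence for $S\otimes_\bC Z$. That is stronger than degeneration at $E_2$ under either indexing convention.
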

    \begin{proof}
    Observe, the filtered complex $F^*(S \otimes_\bC Z)$ is equivalent to $(F^*S) \otimes_\bC Z$. Since $Z$ is free over $\bC$, we find that (from the $E_2$-page onwards) the spectral sequence for $S \otimes_\bC Z$ is determined by the spectral sequence for $S$ via tensoring with $H_*(Z)$; the result follows since the spectral sequence for $S$ has no differentials.
    \end{proof}
    Let $\flow^{\rm fr}$ be the stable $\infty$-category of framed flow categories, cf. \cite{AB24}. This is defined identically to $\flow^{\rm decU}$ in Definition  \ref{def: dec cx flow simp}, except all the bundles $I(x,y)$ are required to be trivial, i.e., simply complex vector spaces.\footnote{In \cite{AB24}, they allow trivial real vector spaces, and do not require the data of a decoration, but these does not change the equivalence class of the ensuing category by the same argument as Lemmas \ref{lem:obviousforgetful} and \ref{L219}.} In particular, we have an obvious inclusion functor 
        \begin{equation}
        \iota:\flow^{\rm fr} \to \flow^{\rm decU}.
        \end{equation}
    \begin{lem}\label{lem: rwipgjepign}
    For any $\bbX\in \flow^{\rm fr}$, we have that
        \begin{equation} 
            CM_*\Big(\iota(\bbX),\exp\big(\frakc\frakh(I)\big);{\bC[\underline b]}\Big) \simeq CM_*(\bbX;\bbC)\otimes_\bC {\bC[\underline b]}.
        \end{equation}
    \end{lem}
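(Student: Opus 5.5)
The proof amounts to checking that, once the complex virtual bundles are trivial, the twisting cocycle collapses to the constant $1$ and the twisted complex becomes the untwisted Morse complex, extended $\bC[\underline b]$-linearly. For $\bbX\in\flow^{\rm fr}$, the image $\iota(\bbX)$ carries, for each pair $x,y$, a complex virtual bundle $I(x,y)$ that is a trivial complex vector space, and the breaking isomorphisms $I(x,y)\cong I(x,z)\oplus I(z,y)$ are given by fixed linear isomorphisms. In Construction \ref{construction:chernfunctor} we may therefore choose the flat (trivial) Hermitian connection on each $I(x,y)$. This choice is compatible with breaking and restriction, because direct sums of trivial connections are trivial and the identifications are constant linear maps. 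It is also compatible with face maps, so it is a legitimate choice in the inductive construction.

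With this choice the curvature vanishes, so $\ch(I_{xy};\bC[\underline b])=\mathrm{rank}(I_{xy})\,b_0$ is a constant function with no higher components. Here a convention point needs care. Since the paper uses $\bC[\underline b]=\bC[b_1,b_2,\ldots]$, the $\rho=0$ term should be read with $b_0$ either absent or equal to $0$, as the genus formula in Proposition \ref{prop: rpoejgwpierngore} implicitly does: there are no $b_0$ terms. Hence $\frakc\frakh(I)=0$ as an additively coherent cocycle, and $\exp\frakc\frakh(I)$ is the constant $1$ on every $\iota(\bbX)_{xy}$.

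It follows that the twisted differential \eqref{eqn:twistedcodiff} becomes $x\mapsto\sum_y\big(\int_{\bbX_{xy}}1\big)y$. The integral vanishes unless $\dim\bbX_{xy}=0$, in which case it is the signed count of points of $\bbX_{xy}$, with orientations induced by the decorated equivalence \eqref{eqn:wxy2complex}. This is exactly the Morse differential of the framed flow category, extended $\bC[\underline b]$-linearly. The grading also matches, since the total grading is the sum of the flow-category grading and the $\bC[\underline b]$ grading.

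The only real obstacle is the sign check. The orientation of $\bbX_{xy}$ coming from the complex structure on $\widetilde I(x,y)$, with the parity convention attached to the orientation of $\widetilde I$, must agree with the orientation used to define $CM_*(\bbX;\bbC)$ for framed flow categories. I would handle this via the compatibility of the base change functors $\flow^{\rm decU}\to\flow^{\rm decSO}$ and Remark \ref{rem:koszulsignoriented}. Alternatively, one can simply take $CM_*(\bbX;\bbC)$ to be defined via these induced orientations, which yields an isomorphism and not merely an equivalence of complexes.

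Finally, independence of the choice of connection up to quasi-isomorphism follows because $\ch(\bbX)$ is well defined as a functor; in any case, the statement only asks for $\simeq$.
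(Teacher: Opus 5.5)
Your proposal is correct and is essentially the paper's own argument. Because the $I(x,y)$ are trivial, you choose compatible flat Hermitian connections, so every Chern--Weil form beyond $\ch_0$ vanishes and the twisted differential \eqref{eqn:twistedcodiff} reduces to the ordinary Morse differential of $\bbX$, extended $\bC[\underline b]$-linearly. Your extra remarks, on discarding the $\ch_0$ term (consistent with the genus formula in Proposition \ref{prop: rpoejgwpierngore}) and on matching orientation conventions, make explicit points the paper leaves implicit.
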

    \begin{rem}
    In particular, the change-of-coefficient spectral sequence for 
        \begin{equation}
        CM_*\Big(\iota(\bbX),\exp\big(\frakc\frakh(I)\big);{\bC[\underline b]}\Big) \in \Mod_{\bC[\underline b]}
        \end{equation}
    (1) converges and (2) degenerates at the $E_2$-page.
    \end{rem}
    \begin{proof}
        Since all the bundles $I(x,y)$ are trivial for $\bbX$, we may ensure the Hermitian connections chosen in Section \ref{sec: chern revisited} are flat for these bundles; in particular, all Chern-Weil forms vanish except for the $\ch_0$ term. This implies the only non-zero term in \eqref{eqn:twistedcodiff} is the usual Morse differential for $\bbX$, i.e., there are no $b_\rho^k$-terms for $k>0$.
    \end{proof}

    Given a complex-oriented flow category $\bbX$, we denote by $\frakX$ its associated ${\rm MU}$-module under the equivalence $\flow^{\rm U} \simeq \Mod_{{\rm MU}}$.
    
    \begin{cor}\label{cor; poeirjgpiengiegnpir}
    Let $\bbX\in\flow^{\rm U}$. Assume the change-of-coefficient spectral sequence for $CM_*\Big(\bbX,\exp\big(\frakc\frakh(I)\big);{\bC[\underline b]}\Big)$ is non-degenerate, then 
        \begin{equation}
        \frakX \not\simeq \frakY \otimes_\bS {\rm MU}
        \end{equation}
    for any spectrum $\frakY$.
    \end{cor}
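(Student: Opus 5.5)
We argue by contraposition: assume $\frakX\simeq\frakY\otimes_\bbS{\rm MU}$ for some spectrum $\frakY$ and show that the change-of-coefficient spectral sequence for $CM_*\big(\bbX,\exp(\frakc\frakh(I));\bC[\underline b]\big)$ degenerates at the $E_2$-page.

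The first step is to realize $\frakY\otimes_\bbS{\rm MU}$ by a framed flow category. By \cite{AB24}, $\flow^{\rm fr}\simeq\Sp$, so $\frakY$ is represented by some $\bbY\in\flow^{\rm fr}$. The base change functor $-\otimes_\bbS{\rm MU}:\Sp\to\Mod_{\rm MU}$ should correspond, under $\flow^{\rm fr}\simeq\Sp$ and $\flow^{\rm decU}\simeq\flow^{\rm U}\simeq\Mod_{\rm MU}$ (Lemma \ref{L219} and \cite{HO}), to the inclusion $\iota$. To justify this, note that $\iota$ is an exact functor sending $\unit$ to $\unit$ and preserving colimits (cones and disjoint unions). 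The induced map on endomorphism ring spectra of the unit is the unit map $\bbS\to{\rm MU}$, since on $\pi_*$ it is the map from framed to complex bordism. A colimit-preserving exact functor from $\Sp$ is determined by the image of $\bbS$ together with this action, so $\iota$ agrees with extension of scalars. Consequently $\bbX\simeq\iota(\bbY)$ in $\flow^{\rm decU}$.

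The second step transfers this equivalence to the twisted Morse complexes. Apply the functor $\ch(\bbX)$ of Construction \ref{construction:chernfunctor}. Being a functor of $\infty$-categories, it sends the equivalence $\bbX\simeq\iota(\bbY)$ to a quasi-isomorphism of $\bC[\underline b]$-modules
\begin{equation*}
CM_*\big(\bbX,\exp(\frakc\frakh(I));\bC[\underline b]\big)\simeq CM_*\big(\iota(\bbY),\exp(\frakc\frakh(I));\bC[\underline b]\big).
\end{equation*}
By Lemma \ref{lem: rwipgjepign}, the right-hand side is $CM_*(\bbY;\bbC)\otimes_\bC\bC[\underline b]$.

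The change-of-coefficient spectral sequence is defined via $\tau^{\leq -i}S\otimes_S(-)$ on a semi-free model, so it is functorial for quasi-isomorphisms and is an invariant of the object in the derived category. Quasi-isomorphic modules therefore have isomorphic spectral sequences from $E_1$ onward. Lemma \ref{lem:71}, applied with $S=\bC[\underline b]$ and $Z=CM_*(\bbY;\bbC)$ (regraded cohomologically), shows that the spectral sequence for the right-hand side degenerates at $E_2$. Hence the same holds for $\bbX$, contradicting the hypothesis.

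The main obstacle is the first step: identifying $\iota$ with $-\otimes_\bbS{\rm MU}$. The comparison requires $\iota$ to be compatible with the two equivalences to module categories, which we check through the endomorphism ring of $\unit$ as in the proof of Lemma \ref{lem:obviousforgetful}. We avoid needing any monoidality. It suffices that both functors are colimit-preserving, agree on the generator $\unit$, and induce the same map $\bbS\to{\rm MU}$ of $A_\infty$-endomorphism rings; for $\iota$ this map is the unit map, by the bordism computation above.
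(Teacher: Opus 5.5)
Your proposal is correct and follows the paper's proof: you contrapose, use Lemmas \ref{lem:71} and \ref{lem: rwipgjepign} to rule out $\bbX$ lying in the essential image of $\iota$, and transport this along the square that identifies $\iota$ with $-\otimes_\bbS{\rm MU}$. The only difference is how that square is justified: the paper cites naturality statements in \cite{HO} and \cite{HM}, whereas you deduce it from the universal property of $\Sp$ --- and the endomorphism-ring comparison you flag as the main obstacle is automatic, since a colimit-preserving functor out of $\Sp$ is determined by the image of $\bbS$ alone (equivalently, $\bbS$ is initial among $E_1$-rings), so you only need the two equivalences to send $\unit$ to $\bbS$ and to ${\rm MU}$ respectively.
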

    \begin{proof}
        Under the given hypotheses, Lemmas \ref{lem:71} and \ref{lem: rwipgjepign} imply that $\bbX$ cannot possibly be in the essential image of $\iota$. We have a commutative square of stable $\infty$-categories:
        \begin{equation}
            \begin{tikzcd}
                \flow^{\rm fr}
                \arrow[r, "\simeq"]
                \arrow[d, "\iota"]
                &
                \Mod_\bS = \mathrm{Sp} 
                \arrow[d, "- \otimes_\bS {\rm MU}"]
                \\
                \flow^{\rm U}
                \arrow[r, "\simeq"]
                &
                \Mod_{{\rm MU}}
            \end{tikzcd}
        \end{equation}
        Here, the horizontal arrows are the equivalences provided by \cite[Theorems 0.0.1]{HO} and \cite[Theorem 0.7]{HM}, and commutativity follows from the naturality statements in both theorems; the result follows.
    \end{proof}

\subsubsection{The $E_2$-page}\label{sec:ofrejgobtrbrptgne}
We consider $\bC[\hbar]$ and we assume $Y$ is of the form arising in bulk-deformation theory (and twisted Morse theory). More precisely, we assume 
    \begin{equation}
    Y\cong\bigoplus_{x\in\bbX}\bC[\hbar]\abs{\frako_x}
    \end{equation}
as $\bC[\hbar]$-modules, for $\bbX$ some set of formal generators with degree denoted $\deg(x)\in\bbZ$, such that the codifferential is of the form $d\equiv d^{(1)}+d^{(2)}+\cdots$, where each term $d^{(\rho)}$ has non-zero $\abs{\frako_y}-\abs{\frako_x}$ coefficient if and only if $\deg(x)-\deg(y)=\rho$. Note, $\overline{Y}\equiv Y\otimes_{\bC[\hbar]}\bbC$ is given by the cochain complex 
    \begin{equation}
    \bigoplus_{x\in\bbX}\bbC\abs{\frako_x}
    \end{equation}
with codifferential $d^{(1)}$. In this setting, the differential on the $E_1$-page of the change-of-coefficients spectral sequence is $d^{(1)}$, and we may compute the $E_2$-page to be
    \begin{equation}\label{eq: oidwhgepwughpoge}
        E_2^{p,q} \cong H^{-p}(\bC[\hbar]) \otimes_\bC H^{2p+q}(\overline Y).
    \end{equation}
The next differential is given by the next non-zero $d^{(\rho)}$ acting on the $E_{\rho}$-page.

\subsubsection{Compatibility with base change}
In the case of bulk-deformations, we would like to study the behavior of the spectral sequence under setting some $\hbar_\rho$'s to zero; we consider an abstraction of this situation.

Let $\phi: S \to S'$ be a map of DGAs, with both $S$ and $S'$ satisfying the condition that they are non-positively graded and given by $\bC$ in degree 0. For each $i$, we have an induced map
    \begin{equation}
    \tau^{\leq -i}S \to \tau^{\leq -i}S'
    \end{equation}
of $S$-modules. In particular, for any DG $S$-module $Y$, there is a map of filtered chain complexes:
    \begin{equation}
        F^i_SY = \tau^{\leq -i}S \otimes^\bL_S Y \to \tau^{\leq -i}S' \otimes^\bL_S Y = \tau^{\leq -i}S' \otimes^\bL_{S'} S' \otimes^\bL_S Y = F^i_{S'} (S' \otimes^\bL_S Y),
    \end{equation}
    where the subscript on the filtration denotes which DGA we take the filtration with respect to. In particular, we obtain the following result.
    \begin{lem}\label{lem: rpihgeoinhgpioebgp}
        There is a map of spectral sequences from the change-of-coefficient spectral sequence for $Y$ as an $S$-module to the change-of-coefficient spectral sequence for $S' \otimes_S^\bL Y$ as an $S'$-module.
    \end{lem}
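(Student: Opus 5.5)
The plan is to show that the map of filtered objects constructed just before the statement induces the map of spectral sequences, using the functoriality of the spectral sequence attached to a filtered complex. Fix a semi-free (or cofibrant) replacement of $Y$ so that derived tensor products can be computed naively. The filtration on $Y$ is then $F^i_SY=\tau^{\leq -i}S\otimes_SY$, and the filtration on $S'\otimes^\bL_SY$ is $F^i_{S'}(S'\otimes^\bL_SY)=\tau^{\leq-i}S'\otimes_SY$.

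First, I would check that $\phi$ restricts to maps $\tau^{\leq -i}S\to\tau^{\leq-i}S'$. This uses that $\phi$ is a chain map, so it preserves cocycles in degree $-i$ and all degrees below. These maps are $S$-linear, where $S$ acts on $\tau^{\leq -i}S'$ through $\phi$. They are also compatible with the inclusions $\tau^{\leq -i-1}\subset\tau^{\leq -i}$, so we obtain a map of inverse systems of $S$-modules.

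Next, I would tensor this map of systems over $S$ with the semi-free $Y$. Because $Y$ is semi-free, the functor $-\otimes_SY$ preserves the relevant cones. The result is a map of filtered complexes $F^\bullet_SY\to F^\bullet_{S'}(S'\otimes^\bL_SY)$. Concretely, it is the map written before the statement, combined with the identification $\tau^{\leq-i}S'\otimes_{S'}S'\otimes_SY\cong\tau^{\leq -i}S'\otimes_SY$.

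Finally, a morphism of filtered complexes induces a morphism of the associated spectral sequences, compatible with the differentials on every page; this is standard, e.g. \cite[12.24]{STK}. On $E_1$ the induced map is
\begin{equation}
H^{p+q}\big(H^{-p}(S)\otimes_SY[p]\big)\to H^{p+q}\big(H^{-p}(S')\otimes_SY[p]\big),
\end{equation}
and it is given by $H^{-p}(\phi)$.

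The main obstacle I expect is bookkeeping rather than anything substantive. The filtrations live in the derived category, so one must make sure they come from genuine filtered chain complexes, so that the classical construction of the spectral sequence applies. I would handle this by working throughout with the strict truncations and a fixed semi-free model of $Y$. The hypotheses $S^0=S'^0=\bC$ and non-positive grading guarantee that the filtrations are exhaustive and that their associated gradeds are as stated, and this holds compatibly for $S$ and $S'$.
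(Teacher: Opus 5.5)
Your proposal is correct and is essentially the paper's argument: the paper likewise takes the induced maps $\tau^{\leq -i}S\to\tau^{\leq -i}S'$ of $S$-modules and tensors with $Y$. This gives a map of filtered complexes $F^i_SY\to\tau^{\leq -i}S'\otimes^{\bL}_SY= F^i_{S'}(S'\otimes^{\bL}_SY)$, from which the map of spectral sequences is read off. Your extra bookkeeping (a fixed semi-free model of $Y$ so the filtrations are honest, and the identification of the map on $E_1$ via $H^{-p}(\phi)$) only spells out what the paper leaves implicit.
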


    From our computation of the $E_2$-page, i.e., \eqref{eq: oidwhgepwughpoge}, we have the following result.
    \begin{cor}\label{cor:foriwengoienbp}
        Assume $Y$ is of the form considered in Part \ref{sec:ofrejgobtrbrptgne} and that $H^*(\phi)$ is surjective, then the map of change-of-coefficient spectral sequences induced by $\phi$ is surjective on the $E_2$-page. In particular, if the change-of-coefficient spectral sequence for $S' \otimes^\bL_S Y$ is non-degenerate, then the change-of-coefficient spectral sequence for $Y$ is non-degenerate.
    \end{cor}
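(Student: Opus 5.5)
The plan is to reduce surjectivity on $E_2$ to surjectivity of $H^*(\phi)$ in cohomological degree $\le 0$, using the explicit $E_2$-page \eqref{eq: oidwhgepwughpoge} and the functoriality of Lemma \ref{lem: rpihgeoinhgpioebgp}. The second clause then follows by a diagram chase.

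\textbf{Identifying the $E_2$-pages.} First I would check that $S'\otimes^\bL_S Y$ is again of the form in Part \ref{sec:ofrejgobtrbrptgne}. Since $Y$ is free on the generators $\abs{\frako_x}$, we have $S'\otimes_S Y\cong\bigoplus_x S'\abs{\frako_x}$. The codifferential is the image of $d=d^{(1)}+d^{(2)}+\cdots$ under $\phi$, so each $d^{(\rho)}$ keeps its degree-drop property. Setting the augmentation ideal to zero gives the same reduced complex $\overline{Y}$ with differential $d^{(1)}$, because $\phi$ is the identity in degree $0$ (both rings are $\bC$ there).

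By \eqref{eq: oidwhgepwughpoge}, in its evident multi-variable form $E_2^{p,q}\cong H^{-p}(S)\otimes_\bC H^{2p+q}(\overline Y)$ (the computation only used that $Y$ is free over $S$ with $S^0=\bC$), we then get
\[
E_2^{p,q}(S)\cong H^{-p}(S)\otimes H^{2p+q}(\overline Y),\qquad E_2^{p,q}(S')\cong H^{-p}(S')\otimes H^{2p+q}(\overline Y).
\]
I would verify that under these identifications the map of spectral sequences is $H^{-p}(\phi)\otimes\mathrm{id}$. On $E_1$ the map is induced by
\[
H^{-p}(S)\otimes_S Y\to H^{-p}(S')\otimes_{S'}(S'\otimes_S Y),
\]
that is, $H^{-p}(\phi)\otimes\mathrm{id}_{\overline Y}$. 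This commutes with $d^{(1)}$, so the induced map on $E_2$ is as claimed, by the Künneth formula over the field $\bC$. Since $H^*(\phi)$ is surjective, so is $H^{-p}(\phi)\otimes\mathrm{id}$, and hence the map on $E_2$ is surjective.

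\textbf{Non-degeneracy.} Suppose the spectral sequence for $Y$ degenerated at $E_2$, i.e.\ all $d_r=0$ for $r\ge2$. Let $r$ be the first page on which the $S'$-sequence has a nonzero differential $d'_r$. On pages $E_2$ through $E_r$ the $S'$-sequence has $E'_r=E'_2$, and the $S$-sequence has $E_r=E_2$. The map $E_r\to E'_r$ is therefore surjective.

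Choose $\bar a\in E'_r$ with $d'_r\bar a\neq0$, and lift it to $a\in E_r$. Naturality gives
\[
d'_r(\bar a)=f(d_r a)=f(0)=0,
\]
which is a contradiction. So the spectral sequence for $Y$ is non-degenerate.

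\textbf{Main obstacle.} The delicate step is the identification of the map on $E_1$ and $E_2$ with $H^{-p}(\phi)\otimes\mathrm{id}$. This needs the derived tensors to be computed on semi-free models compatibly with the truncations $\tau^{\le -i}$. I would handle it by using the explicit semi-free form of $Y$, for which $\otimes^\bL$ agrees with the underived tensor product, and noting that the cones $\mathrm{Cone}(F^{p+1}\to F^p)$ are $H^{-p}(S)[p]\otimes_S Y$ functorially in $S$.
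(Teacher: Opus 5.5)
Your proposal is correct and follows essentially the same route as the paper. The paper reads off surjectivity directly from the explicit $E_2$-page \eqref{eq: oidwhgepwughpoge}, with the induced map being $H^{-p}(\phi)\otimes\mathrm{id}$, and then deduces non-degeneracy by naturality of differentials, exactly as in your lifting argument and as the paper itself uses it in the proof of Corollary \ref{cor:main}; your write-up adds the details the paper leaves implicit, namely that $S'\otimes^\bL_S Y$ is again of the required form and that the first nonzero $d'_r$ lifts.
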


\subsubsection{Compatibility with autoequivalences}
    Fortunately, the isomorphism of Lemma \ref{lem: rioehgeoubgoeshrg} is compatible with these change-of-coefficient spectral sequences, in the following sense. 
    
    \begin{warning}
    Note, in the subsequent lemma and proof, we work with homological gradings.
    \end{warning}
    
    \begin{lem}
        Let $M$ be the ${\bC[\underline b]}$-${\bC[\underline b]}$ bimodule from Section \ref{sec: oeirhngouebgoue} and $Y$ an ${\bC[\underline b]}$-module that is a directed colimit of perfect ${\bC[\underline b]}$-modules. The change-of-coefficient spectral sequences for $Y$ and $M \otimes_{\bC[\underline b]}^\bL Y$ are isomorphic (from the $E_1$-page onwards).
    \end{lem}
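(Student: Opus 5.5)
The plan is to realize the change-of-coefficients spectral sequence of $M\otimes^\bL_{\bC[\underline b]}Y$ from a filtered complex that is quasi-isomorphic, as a filtered complex, to the one defining the spectral sequence of $Y$. Concretely, with homological gradings the filtration is
\begin{equation}
F_iY=\tau_{\geq -i}\bC[\underline b]\otimes^\bL_{\bC[\underline b]}Y.
\end{equation}
Taking $M'$ right flat, we have
\begin{equation}
F_i(M'\otimes Y)=\tau_{\geq -i}\bC[\underline b]\otimes^\bL_{\bC[\underline b]}M'\otimes_{\bC[\underline b]}Y.
\end{equation}

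\textbf{Step 1: reduce to perfect $Y$.} Both spectral sequences are built by exact, colimit-commuting operations in $Y$. Since $Y$ is a directed colimit of perfect modules, it suffices to produce the isomorphism for perfect $Y$, naturally in $Y$, and then pass to the colimit.

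\textbf{Step 2: construct a filtered comparison map.} I would use the cycle $x\in M'$ from the proof of Lemma \ref{lem: rioehgeoubgoeshrg}. It gives the chain map $g_Y\colon Y\to M'\otimes_{\bC[\underline b]}Y$, $c\mapsto x\otimes c$. This map need not be $\bC[\underline b]$-linear, so it does not directly induce a map of filtrations. To fix this:
\begin{itemize}
\item Since $\bC[\underline b]$ is a graded-commutative polynomial ring, the truncations $\tau_{\geq -i}\bC[\underline b]$ are just ideals of elements of degree $\geq i$ (up to sign conventions). They are therefore sub-bimodules.
\item Tensoring $M'$ with these truncations on the left yields a filtration of $M'$ as a bimodule.
\item By Corollary \ref{cor: rieowhgesehgsrethouhbg}, $H_*(M)\cong\bC[\underline b]$ as bimodules. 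Thus the left and right actions on homology coincide.
\end{itemize}
Consequently the left-truncation filtration $\tau_{\geq -i}\bC[\underline b]\otimes M'$ and the right-truncation filtration $M'\otimes\tau_{\geq -i}\bC[\underline b]$ have quasi-isomorphic associated graded pieces. I would compare them via the map induced by $x$. This identifies $F_i(M'\otimes Y)$ with $M'\otimes(F_iY)$ up to filtered quasi-isomorphism.

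\textbf{Step 3: compare associated graded pieces.} With the filtrations identified, $g$ induces on the associated graded pieces the maps
\begin{equation}
H_{-p}(\bC[\underline b])\otimes Y\to H_{-p}(\bC[\underline b])\otimes M'\otimes Y.
\end{equation}
These are quasi-isomorphisms by the same induction over cones used in Lemma \ref{lem: rioehgeoubgoeshrg}: the statement holds for $Y=\unit_{\bC[\underline b]}$ and is stable under cones. A map of filtered complexes inducing an isomorphism on $E_1$ induces an isomorphism of spectral sequences from $E_1$ onward.

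\textbf{Main obstacle.} I expect Step 2 to be the hardest: promoting $g_Y$, which is linear only on homology, to a genuine map of filtered objects. Lemma \ref{lem: rioehgeoubgoeshrg} only controls the left module structure in homology, while the filtration is defined using chain-level module structures. My first attempt would be to replace $M'$ by a semi-free bimodule resolution whose generator $x$ is central on the nose. This seems plausible because $H_*(M)$ is free of rank one with commuting actions, so one can try to build the resolution degree by degree. If that fails, I would instead filter everything by the polynomial degree in the $b$'s and check that $g$ respects this filtration up to terms of strictly higher filtration, which suffices for a map of spectral sequences.
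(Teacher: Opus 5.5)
Your overall strategy (reduce to perfect $Y$, then produce a quasi-isomorphism of filtered complexes $F_*Y\simeq F_*(M\otimes^\bL_{\bC[\underline b]}Y)$) is the paper's. However, Step 2, which carries all the content, is not established.

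\textbf{Why Step 2 fails as written.}
\begin{itemize}
\item Knowing that the left-truncation and right-truncation filtrations of $M'$ have quasi-isomorphic associated graded pieces does not produce a filtered quasi-isomorphism.
\item The maps ``induced by $x$'' are $r\mapsto r\otimes x$ into $\tau_{\geq i}\bC[\underline b]\otimes M'$ and $r\mapsto x\otimes r$ into $M'\otimes\tau_{\geq i}\bC[\underline b]$. The first is only left-linear and the second only right-linear. So they cannot be tensored over $\bC[\underline b]$ with $Y$.
\item Your first repair, a resolution with $x$ central on the nose, would make $r\mapsto rx$ a bimodule quasi-isomorphism $\bC[\underline b]\to M$, i.e.\ $F\simeq\mathrm{Id}$. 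That is exactly the chain-level statement the paper does not prove (Remark \ref{rem:eopjfpwnwGN}). It does not follow from $H_*(M)\cong\bC[\underline b]$ as bimodules. $M$ is governed by an $A_\infty$-autoequivalence acting trivially on homology, and a degree-by-degree construction meets Hochschild-type obstructions; the paper gets triviality only conjecturally, from an $\bE_\infty$ structure.
\item Your fallback is too vague to assess. Polynomial degree in the $b_\rho$ is not even defined on $M'$.
\end{itemize}

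\textbf{The missing observation.} No simultaneous bimodule-level control is needed. The filtration $F_i(M\otimes^\bL Y)=(\tau_{\geq i}\bC[\underline b]\otimes^\bL M)\otimes^\bL Y$ sees $M$ only through the \emph{right} module $\tau_{\geq i}\bC[\underline b]\otimes^\bL M$. You can then use the two one-sided equivalences separately:
\begin{itemize}
\item Since $M\simeq\bC[\underline b]$ as left modules, $\tau_{\geq i}\bC[\underline b]\otimes^\bL M\simeq\tau_{\geq i}M$ as right modules. Concretely, take $M'$ connective. Multiplication $\tau_{\geq i}\bC[\underline b]\otimes M'\to\tau_{\geq i}M'$ is right-linear. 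It is a quasi-isomorphism because precomposing with the quasi-isomorphism $r\mapsto r\otimes x$ gives $r\mapsto rx$.
\item Since $M\simeq\bC[\underline b]$ as right modules, $\tau_{\geq i}M\simeq\tau_{\geq i}\bC[\underline b]$ as right modules.
\end{itemize}
Both equivalences are natural in $i$, so tensoring with $Y$ gives the filtered quasi-isomorphism; this is the paper's proof.

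The same multiplication maps also give right-linear, $i$-natural equivalences $\tau_{\geq i}\bC[\underline b]\otimes M'\simeq\tau_{\geq i}M'\simeq M'\otimes\tau_{\geq i}\bC[\underline b]$, which is exactly what your Step 2 needed. After that, applying $g$ levelwise to $F_*Y$ is automatically a filtered map, since $g_B$ is natural in $\bC[\underline b]$-linear maps. Its failure to be $\bC[\underline b]$-linear is harmless, because only a filtered quasi-isomorphism of $\bC$-complexes is required. So the obstacle you flagged is not actually there.
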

    \begin{proof}
        We may assume that $Y$ is perfect. It suffices to prove there is a quasi-isomorphism of filtered chain complexes: 
            \begin{equation}
            F_*Y \simeq F_*(M \otimes_{\bC[\underline b]}^\bL Y).
            \end{equation}
        Recall, $M \simeq {\bC[\underline b]}$ as right ${\bC[\underline b]}$-modules, so $Y$ and $M \otimes^\bL_{\bC[\underline b]} Y$ are already quasi-isomorphic as chain complexes. Moreover, since $M \simeq {\bC[\underline b]}$ as left ${\bC[\underline b]}$-modules, we have that   
            \begin{equation}
            \tau_{\geq i}{\bC[\underline b]} \otimes_{\bC[\underline b]}^\bL M \simeq \tau_{\geq i}M \simeq \tau_{\geq i} {\bC[\underline b]}
            \end{equation}
        as right ${\bC[\underline b]}$-modules. These equivalences are natural in $i$, whence the claim.
    \end{proof}

    \begin{cor}\label{cor:ornbpihrnhbpirtgpnr}
        The change-of-coefficient spectral sequences for the chain complexes underlying the two sides of \eqref{eq: rwigpiweg} are isomorphic.
    \end{cor}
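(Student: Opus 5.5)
The plan is to assemble the chain of identifications behind Theorem \ref{thm:main} and, at each link, check that it is compatible with the change-of-coefficient filtration. On the left, Corollary \ref{cor:firpg-iegn} identifies $SH^*(M;{\rm MU})\otimes_\bbZ\bbC$ with the homology of
\[
\ch\big(\ind(\bbF^{\rm U})\big)=CM_*\Big(\ind(\bbF^{\rm U}),\exp\big(\frakc\frakh(I)\big);\bC[\underline b]\Big),
\]
a directed colimit of perfect $\bC[\underline b]$-modules. By construction of the autoequivalence $F$ in Part \ref{sec: riehgpioejhngpejrg}, the GRR side $\smallint{}_{\scrC/\bbX}\ch(\scrE)$ applied to $\calG(\bbF^{\rm U})$ is quasi-isomorphic to $M\otimes^\bL_{\bC[\underline b]}\ch\big(\ind(\bbF^{\rm U})\big)$, first on finite pieces and then on colimits. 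The preceding lemma therefore gives an isomorphism of spectral sequences from the $E_1$-page onward between these two $\bC[\underline b]$-modules.

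Next I would move from the GRR side to the bulk side. Lemma \ref{lem: piorehjgpowehgpih}, together with its continuation-map analogue and the commutative triangle through $\calG$, identifies the GRR-side complex with the colimit over $\ell$ of $CF^{-*}_{\frakc\frakh}(M;H^\ell,J^\ell;R_{n-1})\otimes_\bbC\bbC[b_n,\ldots]$. This is an equality of complexes over $\bC[\underline b]$, so it preserves the filtration $F_iY=\tau_{\geq -i}\bC[\underline b]\otimes Y$ on the nose. Tensoring with the polynomial factor $\bbC[b_n,\ldots]$ is harmless: the filtration is multiplicative, and $\bC[\underline b]=R_{n-1}\otimes\bbC[b_n,\ldots]$ with $\tau$-truncations splitting as a tensor product over $\bbC$.

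The remaining step is the quasi-isomorphism of Proposition \ref{prop:quasi-isomorphismbd}. It is only a zig-zag over $\bbC$, built from the interpolating complex over $\widetilde{\bbL}$ and the projections $\pi_\frakb,\pi_\hbar$, so I would verify compatibility with the filtrations directly. The DGA $\widetilde{\bbL}$ is non-positively graded with $\widetilde{\bbL}^0=\bbC$, and $\pi_\frakb,\pi_\hbar$ are quasi-isomorphic DGA maps of this type. Lemma \ref{lem: rpihgeoinhgpioebgp} then gives maps of spectral sequences from the $\widetilde{\bbL}$-module $CF^*_{\frakc\frakh,\frakU^{\frakc\frakh}}(M;H,J;\widetilde{\bbL})$ to its two base changes, which are the two sides by the tautological identifications. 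These maps should be isomorphisms from the $E_2$-page on. Using the description \eqref{eq: oidwhgepwughpoge}, the $E_2$-page is $H(\text{coefficients})\otimes H(\overline Y)$. Here $\overline Y$ is the same undeformed Floer complex $CF^*(M;H,J;\bbC)$ in all three cases, and $H(\pi_\frakb),H(\pi_\hbar)$ are isomorphisms, so the $E_2$ map is an isomorphism; a map of convergent spectral sequences that is an isomorphism on $E_2$ is an isomorphism thereafter. Compatibility with continuation maps, as in the square of Proposition \ref{prop:quasi-isomorphismbd}, lets me pass to the colimit over $\ell$.

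The main obstacle is this last step. The interpolating module is not of the semi-free form assumed in Part \ref{sec:ofrejgobtrbrptgne}, since $\widetilde{\bbL}$ has a nontrivial differential and the codifferential is extended by Leibniz, so the $E_2$ computation must be redone with derived tensor products. I would first replace the module by a semi-free resolution, or filter by total polynomial weight in $\frakb,\frakL,\hbar$, and check that its associated graded reduces to $\widetilde{\bbL}\otimes CF^*(M;H,J;\bbC)$. The $E_2$ identification then follows by Künneth from the fact that $\widetilde{\bbL}\to R_{n-1}$ is a quasi-isomorphism.
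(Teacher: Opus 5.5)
Your proposal is correct and is essentially the paper's argument. The corollary is obtained by chaining three comparisons:
\begin{itemize}
\item the autoequivalence lemma for $M\otimes^\bL_{\bC[\underline b]}-$, which takes you from the MU side to the GRR side;
\item the on-the-nose identification of Lemma \ref{lem: piorehjgpowehgpih}, which takes you from the GRR side to the de Rham bulk-deformed complex;
\item the paper's lemma comparing the spectral sequences of the two sides of \eqref{eqn:quasi-isomorphismbd}, via Lemma \ref{lem: rpihgeoinhgpioebgp} applied to the zig-zag \eqref{eqn:quasi-isooverc}, followed by the colimit over continuation maps.
\end{itemize}

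Your explicit check that the comparison maps along $\pi_\frakb,\pi_\hbar$ are $H(\pi)\otimes\mathrm{id}_{H(\overline Y)}$ on the first nontrivial page supplies a justification the paper leaves implicit. The concern in your last paragraph is milder than you state: the interpolating module is already semi-free over $\widetilde{\bbL}$ (order the generators by Floer degree), so its layers are directly $H^{-p}(\widetilde{\bbL})\otimes_\bbC\overline{Y}$.
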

    \begin{rem}
    Corollary \ref{cor:ornbpihrnhbpirtgpnr} would follow from the conjectural quasi-isomorphism of Remark \ref{rem:eopjfpwnwGN}, but does not follow directly from Theorem \ref{thm:main}.
    \end{rem}

\subsection{Operations on symplectic cohomology}
\subsubsection{Piunikhin-Salamon-Schwarz map}
We briefly recall the construction of the PSS map. Let 
    \begin{equation}
    \Sigma\equiv\bbC P^1-\{0\}
    \end{equation}
be equipped with a negative cylindrical end $\epsilon^-$. Consider admissible Floer data $(H,J)$ and a Morse-Smale pair $(f,g)$. Let $\eta\in\Omega^1(\Sigma)$ be a 1-form which (1) vanishes in a neighborhood of $+\infty$ and (2) restricts to $dt$ at $\epsilon^-$. Let $J_\Sigma\equiv\{J_z\}_{z\in\Sigma}$ be a $\Sigma$-dependent $\omega$-tame almost complex structure which (1) is $z$-independent near $+\infty$ and (2) satisfies $J_\Sigma\vert_{\epsilon^-}=J$. 

Given any $x\in\chi(H)$, we consider the moduli space $\scrA_x$ of maps $u:\Sigma\to M$ satisfying
    \begin{equation}
    \begin{cases}
    (du-X_H\otimes\eta)^{0,1}=0 \\
    \lim_{s\to-\infty}u\big(\epsilon^-(s,t)\big)=x(t).
    \end{cases}   
    \end{equation}  
We have a natural evaluation map
    \begin{equation}
    \eval_{+\infty}:\scrA_x\to M.
    \end{equation}
In particular, given any $b\in\crit(f)$, we may consider the fiber product
    \begin{equation}
    \scrA_{bx}\equiv\scrA_x\times_M W^s(b);
    \end{equation}
this is, for generic data, a smooth manifold of dimension $\deg(x)-I(x)$ which admits a Gromov-compactification $\bbA_{bx}$ given by allowing breakings at Hamiltonian orbits and critical points. In the usual way, we obtain a cochain map 
    \begin{equation}
    {\rm PSS}:CM^*(M;\bbZ)\to CF^*(M;H,J;\bbZ)
    \end{equation}
whose $\abs{\frako_b}-\abs{\frako_x}$ component is given by 
    \begin{equation}
    {\rm PSS}_{b,x}\equiv\sum_{\substack{u\in\bbA_{bx} \\ \deg(x)=I(a)}}\mu_u,
    \end{equation}
where $\mu_u:\frako_b\xrightarrow{\sim}\frako_x$ is the isomorphism on orientation lines induced from the product orientation on $\bbA_{bx}$. This is compatible with continuation maps by the usual argument; hence, we obtain the PSS map:
    \begin{equation}
    {\rm PSS}:HM^*(M;\bbZ)\to SH^*(M;\bbZ).
    \end{equation}
    
\subsubsection{Lie bracket}
Now, we define the Lie bracket on symplectic cohomology loosely following \cite[Section 2.5.1]{Abo14}. Let 
    \begin{equation}
    \widetilde{\Sigma}\equiv\bbC P^1-\{0,+\infty\}\cong\bbC-\{0\},
    \end{equation}
where the identification equips a neighborhood of 0 with a negative cylindrical end $\epsilon^-$ and a neighborhood of $+\infty$ with a positive cylindrical end $\epsilon^+_1$. Let $z_\theta\equiv e^{i\theta}\in\widetilde{\Sigma}$, $z_\theta\in S^1$, be an additional marked point on $\widetilde{\Sigma}$ and consider 
    \begin{equation}
    \Sigma_\Theta\equiv\big\{\widetilde{\Sigma}-\{z_\theta\}\big\}_{z_\theta\in S^1},
    \end{equation}
where we equip the family $\Sigma_\Theta$ with a $\theta$-family of positive cylindrical ends $\epsilon^+_2\equiv\{\epsilon^+_{z_\theta}\}_{z_\theta\in S^1}$, which is canonical up to homotopy, such that each $\epsilon^+_{z_\theta}$ is a positive cylindrical end in a neighborhood of $z_\theta$. Consider admissible Floer data $(H_-,J_-)$ and $(H_i,J_i)$, $i=1,2$, whose slopes satisfy $\kappa_1+\kappa_2\leq\kappa_-$. Let $\eta=\Omega^1(\Sigma_\Theta)$ be a $\theta$-family of 1-forms which restricts to $dt$ at $\epsilon^-$ and $\epsilon^+_i$. Let $H_{\Sigma_\Theta}\equiv\{H_{\theta,z}\}_{z\in\Sigma_\theta}$ be a $\theta$-family of $\Sigma_\theta$-dependent Hamiltonians satisfying $H_{\theta,\epsilon^-}=H_-$ and $H_{\theta,\epsilon^+_i}=H_i$. Let $J_{\Sigma_\Theta}\equiv\{J_{\theta,z}\}_{z\in\Sigma_\theta}$ be a $\theta$-family of $\Sigma_\theta$-dependent $\omega$-tame almost complex structures which satisfies $J_{\Sigma_\Theta}\vert_{\epsilon^-}=J_-$ and $J_{\Sigma_\Theta}\vert_{\epsilon^+_i}=J_i$.

Given any three $x_-\in\chi(H_-)$ and $y_i\in\chi(H_i)$, we consider the moduli space $\scrB_{y_i;x_-}$ of pairs $(\theta,u_\theta)$, where $u_\theta:\Sigma_\theta\to M$ is a map satisfying
    \begin{equation}
    \begin{cases}
    (du-X_{H_{\Sigma_\Theta}}\otimes\eta)^{0,1}=0 \\
    \lim_{s\to-\infty}u\big(\epsilon^-(s,t)\big)=x_-(t) \\
    \lim_{s\to+\infty}u\big(\epsilon^+_1(s,t)\big)=y_1(t) \\
    \lim_{s\to+\infty}u\big(\epsilon^+_{z_\theta}(s,t)\big)=y_2(t);
    \end{cases}   
    \end{equation}  
this is, for generic data, a smooth manifold of dimension $\deg(x_-)-\deg(y_1)-\deg(y_2)+1$ which admits a Gromov-compactification $\bbB_{y_i;x_-}$ given by allowing breakings at Hamiltonian orbits. In the usual way, we obtain a cochain map
    \begin{equation}
    [-,-]:CF^*(M;H_1,J_1;\bbZ)\otimes_\bbZ CF^*(M;H_2,J_2;\bbZ)\to CF^*(M;H_-,J_-;\bbZ)
    \end{equation}
whose $\abs{\frako_{y_1}}\otimes_\bbZ\abs{\frako_{y_2}}-\abs{\frako_{x_-}}$ component is given by 
    \begin{equation}
    [-,-]_{y_i;x_-}\equiv\sum_{\substack{u\in\bbB{y_i;x_-} \\ \deg(x_-)=\deg(y_1)+\deg(y_2)-1}}\mu_u,
    \end{equation}
where $\mu_u:\frako_{y_1}\otimes_\bbZ\frako_{y_2}\xrightarrow{\sim}\frako_{x_-}$ is the isomorphism on orientation lines induced from $u$. This is compatible with continuation maps by the usual argument; hence, we obtain the Lie bracket:
    \begin{equation}
    [-,-]:SH^*(M;\bbZ)\otimes_\bbZ SH^*(M;\bbZ)\to SH^*(M;\bbZ).
    \end{equation}
\cite[Lemma 2.5.5]{Abo14} shows the Lie bracket is given by the pair-of-pants product and the BV operator.

\subsubsection{The composition}
Since this will be needed in the sequel, we now take a brief detour to explicitly describe the composition 
    \begin{equation}\label{eqn:comp1}
    SH^*(M;\bbZ)\otimes_\bbZ HM^*(M;\bbZ)\xrightarrow{{\rm Id}\otimes_\bbZ{\rm PSS}} SH^*(M;\bbZ)\otimes_\bbZ SH^*(M;\bbZ)\xrightarrow{[-,-]} SH^*(M;\bbZ).
    \end{equation}
Again, consider $\widetilde{\Sigma}$ together with an additional marked point $z_\theta\equiv e^{i\theta}\in\widetilde{\Sigma}$, $z_\theta\in S^1$. We define 
    \begin{equation}
    \Sigma_{\Theta}'\equiv\big\{(\widetilde{\Sigma},z_\theta)\big\}_{z\in S^1}.
    \end{equation}
In particular, the family $\Sigma_{\Theta}'$ differs from the family $\Sigma_{\Theta}$; the former does not remove the marked point while the latter does. Consider admissible Floer data $(H_-,J_-)$ and $(H_1,J_1)$ whose slopes satisfy $\kappa_1\leq\kappa_-$. Moreover, consider a Morse-Smale pair $(f,g)$. Let $\eta=\Omega^1(\Sigma_\Theta')$ be a $\theta$-family of 1-forms which (1) vanishes in a neighborhood of $z_\theta$ and (2) restricts to $dt$ at $\epsilon^-$ and $\epsilon^+_1$. Let $H_{\Sigma_\Theta'}\equiv\{H_{\theta,z}\}_{z\in\widetilde{\Sigma}}$ be a $\theta$-family of $\widetilde{\Sigma}$-dependent Hamiltonians satisfying $H_{\theta,\epsilon^-}=H_-$ and $H_{\theta,\epsilon^+_1}=H_1$. Let $J_{\Sigma_\Theta'}\equiv\{J_{\theta,z}\}_{z\in\widetilde{\Sigma}}$ be a $\theta$-family of $\widetilde{\Sigma}$-dependent $\omega$-tame almost complex structures which (1) is $z$-independent near $z_\theta$ and (2) satisfies $J_{\Sigma_\Theta'}\vert_{\epsilon^-}=J_-$ and $J_{\Sigma_\Theta'}\vert_{\epsilon^+_1}=J_1$.

Given any two $x_-\in\chi(H_-)$ and $y_1\in\chi(H_1)$, we consider the moduli space $\scrD_{y_1;x_-}$ of pairs $(\theta,u_\theta)$, where $u_\theta:\widetilde{\Sigma}\to M$ is a map satisfying
    \begin{equation}
    \begin{cases}
    (du-X_{H_{\Sigma_\Theta'}}\otimes\eta)^{0,1}=0 \\
    \lim_{s\to-\infty}u\big(\epsilon^-(s,t)\big)=x_-(t) \\
    \lim_{s\to+\infty}u\big(\epsilon^+_1(s,t)\big)=y_1(t).
    \end{cases}   
    \end{equation}  
We have a natural evaluation map
    \begin{equation}
    \eval:\scrD_{y_1;x_-}\to M.
    \end{equation}
In particular, given any $b\in\crit(f)$, we may consider the fiber product
    \begin{equation}
    \scrD_{y_1,b;x_-}\equiv\scrD_{y_1;x_-}\times_M W^s(b);
    \end{equation}
this is, for generic data, a smooth manifold of dimension $\deg(x_-)-\deg(y_1)-I(b)+1$ which admits a Gromov-compactification $\bbD_{y_1,b;x_-}$ given by allowing breakings at Hamiltonian orbits and critical points. In the usual way, we obtain a cochain map
    \begin{equation}
    CF^*(M;H_1,J_1;\bbZ)\otimes_\bbZ CM^*(M;\bbZ)\to CF^*(M;H_-,J_-;\bbZ)
    \end{equation}
whose $\abs{\frako_{y_1}}\otimes_\bbZ\abs{\frako_{b}}-\abs{\frako_{x_-}}$ component is given by 
    \begin{equation}
    \sum_{\substack{u\in\bbD_{y_1,b;x_-} \\ \deg(x_-)=\deg(y_1)+I(b)-1}}\mu_u,
    \end{equation}
where $\mu_u:\frako_{y_1}\otimes_\bbZ\frako_b\xrightarrow{\sim}\frako_{x_-}$ is the isomorphism on orientation lines induced from the product orientation on $\bbD_{y_1,b;x_-}$. This is compatible with continuation maps by the usual argument; hence, we obtain a map
    \begin{equation}\label{eqn:comp2}
    \bG:SH^*(M;\bbZ)\otimes_\bbZ HM^*(M;\bbZ)\to SH^*(M;\bbZ).
    \end{equation}
    
\begin{lem}\label{lem:78}
The composition \eqref{eqn:comp1} equals \eqref{eqn:comp2}.
\end{lem}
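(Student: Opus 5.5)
We will prove the equality by a standard gluing/cobordism argument at the chain level. The composition \eqref{eqn:comp1} is computed by counting broken configurations: an element of $\bbB_{y_1,y_2;x_-}$ (for the family $\Sigma_\Theta$, with the puncture $z_\theta$ carrying a positive cylindrical end asymptotic to $y_2$) glued along $y_2$ to a PSS plane in $\bbA_{by_2}$. Gluing the PSS cap $\Sigma=\bbC P^1-\{0\}$ into the puncture at $z_\theta$ fills it in, and produces a $\theta$-family of domains biholomorphic to $\widetilde{\Sigma}$ with a distinguished interior point (the image of $+\infty\in\Sigma$), i.e.\ a member of the family $\Sigma_\Theta'$. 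The plan is therefore to build a one-parameter family of Floer data interpolating between the glued data (at gluing length $R\to\infty$) and the data defining $\bbG$ (at $R=0$), and to read off a chain homotopy between the two cochain maps.

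Concretely, first we choose, for $R\in[0,+\infty)$, a family of $\theta$-parameterized domains $\widetilde{\Sigma}$ with marked point $z_\theta$, one-forms $\eta_R$, Hamiltonians $H_{\theta,R}$ and almost complex structures $J_{\theta,R}$. At $R=0$ these agree with the data for $\bbD_{y_1,b;x_-}$. For $R\gg0$ they are obtained by gluing, with neck length $R$, the data for $\Sigma_\Theta$ to the PSS data on $\Sigma$ along the end $\epsilon^+_{z_\theta}$. Consistency of cylindrical ends is needed here: the $\theta$-family $\epsilon^+_{z_\theta}$ is canonical up to homotopy, so the glued ends vary smoothly in $\theta$. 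We also need $\eta_R$ to satisfy the usual subclosedness/$d(\widetilde\kappa\gamma)\le0$ condition so that energy estimates and maximum principle hold; this requires $\kappa_2\geq0$ small, which is allowed since we may choose $H_2$ of small slope (PSS lands in $SH$ via the colimit). Next, we form the parameterized moduli space $\scrD^{\rm par}_{y_1,b;x_-}$ of triples $(R,\theta,u)$ together with the fiber product with $W^s(b)$ at the marked point. For index $\deg(x_-)=\deg(y_1)+I(b)-2$ (one below the rigid case), its Gromov compactification is a compact $1$-manifold. Its boundary consists of (i) the $R=0$ end, giving the rigid count defining $\bbG$; (ii) the $R=\infty$ end, which by standard gluing is exactly $\coprod_{y_2}\bbB_{y_1,y_2;x_-}\times\bbA_{by_2}$, the composition \eqref{eqn:comp1} at chain level; and (iii) Floer breakings at $x_-$, $y_1$ and Morse breakings at $b$. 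The strata in (iii) assemble into $\delta K\pm K(\delta\otimes1\pm1\otimes\delta)$ for the operation $K$ counting rigid elements of $\scrD^{\rm par}$. Summing the signed boundary gives the chain homotopy, so the two maps agree on cohomology. Compatibility with continuation maps, and hence passage to $SH^*$, follows by the same argument with a further parameter.

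The main obstacle is step (ii): the gluing analysis and the orientation comparison. One must check that the orientation on $\bbB\times\bbA$ induced by $\mu_u$ (product orientation with the PSS fiber product) matches the boundary orientation of $\scrD^{\rm par}$, including the $\theta$-direction. There is also a technical point because the gluing happens at a \emph{moving} puncture $z_\theta$. My first approach is to trivialize the $\theta$-dependence by rotating the domain, $\widetilde{\Sigma}\to\widetilde{\Sigma}$, $z\mapsto e^{-i\theta}z$. This moves $z_\theta$ to $1$ at the cost of making $H,J$ rotate, which reduces the construction to a family of fixed-domain gluings. For orientations, I would follow the conventions of \cite[Section 2.5]{Abo14} and accept a global sign, which can be absorbed into the definition of $\bbG$.
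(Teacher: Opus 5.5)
Your proposal is correct and follows the same route as the paper. The paper just asserts that interpolating moduli spaces, running from the glued bracket--PSS data to the data defining $\bG$, make the two underlying cochain maps cochain homotopic; your parameterized space $\scrD^{\rm par}$, with its $R=0$ end, $R=\infty$ end and breaking strata, is a fleshed-out version of that. One bookkeeping slip: the parameterized space is $1$-dimensional when $\deg(x_-)=\deg(y_1)+I(b)-1$, the same degree at which $\bbD_{y_1,b;x_-}$ is rigid, as your own description of the $R=0$ and $R=\infty$ ends requires. Its rigid elements at $\deg(x_-)=\deg(y_1)+I(b)-2$ are what define the homotopy $K$.
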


\begin{proof}
The lemma follows in the standard way by using interpolating moduli spaces, interpolating from the gluing of the data used in the composition \eqref{eqn:comp1} to the data used in \eqref{eqn:comp2}, to show the cochain maps underlying \eqref{eqn:comp1} and \eqref{eqn:comp2} are in fact cochain homotopic.
\end{proof}

\subsection{Spectral sequence from bulk-deformations}
We are now ready to compute the first allowable differential for the change-of-coefficients spectral sequence for bulk-deformed symplectic cohomology.

    \begin{lem}
        There is an isomorphism of change-of-coefficient spectral sequences (from the $E_2$-pages onwards) between the two sides of \eqref{eqn:quasi-isomorphismbd}
    \end{lem}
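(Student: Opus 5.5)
The quasi-isomorphism \eqref{eqn:quasi-isomorphismbd} was built as the zig-zag \eqref{eqn:quasi-isooverc} through the interpolating complex $CF^*_{\frakc\frakh,\frakU^{\frakc\frakh}}(M;H,J;\widetilde{\bbL})$, which is a DG module over $\widetilde{\bbL}$. I will therefore compare spectral sequences along each leg of the zig-zag, using the change-of-coefficients spectral sequence over $\widetilde{\bbL}$ for the middle term. The key observation is that $\widetilde{\bbL}$ is non-positively graded with $\widetilde{\bbL}^0=\bbC$, so its spectral sequence is defined. Moreover, $\pi_\frakb:\widetilde{\bbL}\to R_{n-1}$ and $\pi_\hbar:\widetilde{\bbL}\to\bbL$ are quasi-isomorphisms of DGAs of this type, and the two outer complexes are tautologically the base changes $-\otimes_{\widetilde{\bbL}}R_{n-1}$ and $-\otimes_{\widetilde{\bbL}}\bbL$.

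First, I apply Lemma \ref{lem: rpihgeoinhgpioebgp} to $\phi=\pi_\frakb$ and to $\phi=\pi_\hbar$ with $Y$ the interpolating complex. The middle complex is free on the orbits but not semi-free in the strict sense, because the differential has $\frakL$-components. Each $\widetilde{\bbL}\abs{\frako_x}$ is free, however, and the differential strictly increases the orbit filtration in the degree-shifting part. So the usual filtration by $\deg(x)$ exhibits $Y$ as semi-free, or one can pass to derived tensor products as the footnote permits. This yields maps of spectral sequences from the $\widetilde{\bbL}$-spectral sequence of the interpolating complex to those of the two sides.

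Second, I check that these maps are isomorphisms from $E_2$ on. The filtration maps are $\tau^{\leq -i}\widetilde{\bbL}\to\tau^{\leq -i}R_{n-1}$ and the analogue for $\bbL$. Since $\pi_\frakb$ and $\pi_\hbar$ are quasi-isomorphisms, they induce isomorphisms $H^{-p}(\widetilde{\bbL})\cong H^{-p}(R_{n-1})$, and likewise for $\bbL$. The $E_1$-pages are $H^{p+q}(H^{-p}(S)\otimes_S Y[p])$. By the computation of Part \ref{sec:ofrejgobtrbrptgne}, the $E_1$-differential is induced by $d^{(1)}$, which is the undeformed Floer differential on every side because all bulk terms carry positive powers of $\frakb,\frakL,\hbar$. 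The $E_2$-page is then $H^{-p}(S)\otimes_\bbC H^{2p+q}(CF^*(M;H,J;\bbC))$ by \eqref{eq: oidwhgepwughpoge}. This form needs to be justified for $\widetilde{\bbL}$ by repeating the argument with $H^*(\widetilde{\bbL})$ in place of $S$.

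The induced map on $E_2$ is therefore $H(\pi)\otimes\mathrm{Id}$, which is an isomorphism. A map of spectral sequences that is an isomorphism on $E_2$ is an isomorphism on all later pages. Composing one leg with the inverse of the other gives the isomorphism between the spectral sequences of $CF^*_{\frakc\frakh}(M;H,J;R_{n-1})$ and $CF^*_{\frakU^{\frakc\frakh}}(M;H,J;\bbL)$, compatibly with the identification $R_{n-1}\cong\bbL$ used in Proposition \ref{prop:quasi-isomorphismbd}.

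I expect the main obstacle to be the $E_1/E_2$ identification for the middle complex over the non-formal DGA $\widetilde{\bbL}$. There, $H^{-p}(\widetilde{\bbL})$ is a subquotient rather than a graded piece, and the $\frakL$-terms of the codifferential mix with the differential of $\widetilde{\bbL}$. If the direct $E_2$ computation becomes messy, I would instead compare the filtrations directly. The maps $\tau^{\leq -i}\widetilde{\bbL}\to\tau^{\leq -i}R_{n-1}$ are quasi-isomorphisms of $\widetilde{\bbL}$-modules for all $i$, since a quasi-isomorphism commutes with truncations. Tensoring with the (semi-free) $Y$ would then give a filtered quasi-isomorphism, which yields an isomorphism already from $E_1$, and in particular from $E_2$ onwards.
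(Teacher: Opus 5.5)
Your proposal is correct and follows the paper's proof, which consists of applying Lemma \ref{lem: rpihgeoinhgpioebgp} to the two legs $\pi_\frakb,\pi_\hbar$ of the zig-zag \eqref{eqn:quasi-isooverc} and composing. The paper leaves implicit why the resulting maps of spectral sequences are isomorphisms, since that lemma only produces a map. Your fallback argument supplies this most cleanly: the quasi-isomorphism $\pi:\widetilde{\bbL}\to S'$ ($S'=R_{n-1}$ or $\bbL$) induces quasi-isomorphisms $\tau^{\leq -i}\widetilde{\bbL}\to\tau^{\leq -i}S'$ for every $i$, hence a filtered quasi-isomorphism after tensoring with the semi-free interpolating complex. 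This route also avoids having to justify \eqref{eq: oidwhgepwughpoge} over the non-formal DGA $\widetilde{\bbL}$.
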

    \begin{proof}
        Follows from Lemma \ref{lem: rpihgeoinhgpioebgp} applied to the zig-zag \eqref{eqn:quasi-isooverc}.
    \end{proof}

For a moment, we restrict to bulk-deformations in a single cohomology class. Let $\bL' \equiv \bC[\hbar]$ and $\frakU$ a Morse cocycle of degree $\ell\geq3$ (in particular, $\deg(\hbar)=2-\ell$); we use notation as in Subsection \ref{subsec:bulkdeformed}. Consider the linear map $\delta^{(1)}_\frakU$, defined to be the $\hbar^{\frakq}$-term of \eqref{eq: reihgeropiubgeopubrg}, where $q=1$ and $\frakq$ is simply the partition $\{1\}$ of $[1]$. Now, by identifying $S^1_\theta$ and the family of Riemann surfaces $\Sigma_\Theta$ over it with $\cC_{1+2}$ and the family of Riemann surfaces $\cC_{1+2}^{univ}$ over it, making sure our choices of perturbation data agree under this identification, we see that $\bG\big(-, \mathrm{PSS}([\frakU])\big)$ (before passing to the colimit using Floer continuation maps) is precisely $\delta^{(1)}_\frakU$.

    \begin{lem}\label{lem: riogeoingiengpr}
        For $2 \leq r \leq \ell-2$, the differential $d_r$ on the $E_r$-page of the change-of-coefficient spectral sequence for $CF^*_\frakU(M; H,J; \bL')$ vanishes. The differential $d_{\ell-1}$ on the $E_{\ell-1}$-page is given by (a direct sum of copies of)
        \begin{equation}
            \big[-, [\frakU]\big]: HF^*(M; H, J;\bbC) \to HF^{*+\ell-1}(M; H, J;\bbC).
        \end{equation}
    \end{lem}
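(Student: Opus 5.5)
The plan is to apply the structural description of Part \ref{sec:ofrejgobtrbrptgne} directly to $Y\equiv CF^*_\frakU(M;H,J;\bL')$, viewed as a module over $S\equiv\bL'=\bC[\hbar]$ with $\deg(\hbar)=2-\ell$. The first step is to check that $Y$ has the required form. As a $\bC[\hbar]$-module it is free on the orbits $x\in\chi(H)$. The codifferential \eqref{eq: reihgeropiubgeopubrg} decomposes as $\delta_\frakU=\sum_{q\geq0}\frac{\hbar^q}{q!}\delta_{q,\frakU}$, and by Proposition \ref{prop:bulkdeformedcodiff} the term $\delta_{q,\frakU;y,x}$ is nonzero only when $\deg(x)=\deg(y)+q(\ell-2)+1$. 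So, indexing by the jump in generator degree, the only nonzero pieces are $d^{(1)}=\delta_{0,\frakU}$, which is the undeformed Floer differential, together with $d^{(1+q(\ell-2))}=\frac{\hbar^q}{q!}\delta_{q,\frakU}$ for $q\geq1$. Every other $d^{(\rho)}$ vanishes identically.

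The second step is the $E_1$/$E_2$ computation. Here $\overline Y=Y\otimes_{\bC[\hbar]}\bC=CF^*(M;H,J;\bbC)$ with differential $d^{(1)}$, so \eqref{eq: oidwhgepwughpoge} gives $E_2^{p,q}\cong H^{-p}(\bC[\hbar])\otimes HF^{2p+q}(M;H,J;\bbC)$. This is nonzero only for $-p\in(\ell-2)\bbZ_{\geq0}$, i.e.\ it is a direct sum of copies of $HF^*$ indexed by the powers $\hbar^k$.

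The third step is to read off the higher differentials, using the principle stated after \eqref{eq: oidwhgepwughpoge} that the next differential is the next nonzero $d^{(\rho)}$ acting on the $E_\rho$-page. The first such differential after $d^{(1)}$ comes from $\hbar\,\delta_{1,\frakU}$, which lowers the filtration by exactly one step of $\hbar$-degree $\ell-2$. Counting with the filtration indexing $F^i=\tau^{\leq -i}S\otimes_S Y$, one power of $\hbar$ moves $p$ by $\ell-2$, so this term first contributes on the page $E_{\ell-1}$. For $2\leq r\leq\ell-2$ no $d^{(\rho)}$ with $1<\rho\leq r$ is nonzero, so $d_r=0$ there, which gives the first claim. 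I expect the main obstacle to be this page-indexing bookkeeping: I would make it rigorous by writing the filtration explicitly, $F^iY=\hbar^{\lceil i/(\ell-2)\rceil}Y$, and computing the spectral sequence of this filtered complex by hand. The $d_r$ are then zigzag-induced maps, and the first surviving one is $[\hbar\delta_{1,\frakU}]$.

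Finally, on $E_{\ell-1}$, which equals $E_2$ since the earlier differentials vanish, the differential is the map induced on $d^{(1)}$-cohomology by $\delta_{1,\frakU}=\delta^{(1)}_\frakU$, applied summand-wise to the copies $\hbar^k\otimes HF^*$. By the identification stated just before the lemma, $\delta^{(1)}_\frakU$ is the chain map computing $\bG\big(-,{\rm PSS}([\frakU])\big)$. By Lemma \ref{lem:78} this equals $[-,{\rm PSS}([\frakU])]$, which we abbreviate as $[-,[\frakU]]$, and it has degree $\ell-1$ as stated. The remaining points are signs and the factor $1/q!$, both of which are harmless for $q=1$.
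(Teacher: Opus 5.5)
Your proposal is correct and follows the paper's proof. The paper's proof is exactly the one-line combination you expand: the structure from Part~\ref{sec:ofrejgobtrbrptgne}, where only the pieces $\tfrac{\hbar^q}{q!}\delta_{q,\frakU}$ with generator-degree jump $1+q(\ell-2)$ are nonzero, so the first differential after $d^{(1)}$ is induced by $\hbar\,\delta^{(1)}_\frakU$. This is combined with the identification of $\delta^{(1)}_\frakU$ with $\bG$ and with Lemma~\ref{lem:78}.

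One caution for when you compute by hand. The page numbers in the lemma use the paper's convention that $d^{(1)}$ is the $E_1$-differential, so the chain-level associated graded is labeled $E_1$. In the standard convention for $F^iY=\hbar^{\lceil i/(\ell-2)\rceil}Y$, a jump of $\ell-2$ in $p$ gives $d_{\ell-2}$. So your step deducing the page $E_{\ell-1}$ from that jump is only right after shifting all page labels by one.
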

    \begin{proof}
        Follows from Lemma \ref{lem:78} combined with the description of the first allowable differential from Part \ref{sec:ofrejgobtrbrptgne}.
    \end{proof}
    \begin{proof}[Proof of Corollary \ref{cor:main}]
        Let $\bL' \equiv \bC[\hbar_{\rho_0}]$ and consider the map $f: \bL \to \bL'$ obtained via setting each $\hbar_\rho$, $\rho\neq\rho_0$, to zero. By construction, we have an equality of cochain complexes over $\bL'$:
        \begin{equation}
            SC^*_{\frakU^{\frakc\frakh}}(M; \bL) \otimes_\bL \bL' \cong SC^*_{\frakU^{\frakc\frakh}_{\rho_0}}(M; \bL');
        \end{equation}
        note, the left hand side does in fact computed the derived tensor product since $SC^*_{\frakU^{\frakc\frakh}}(M; \bL)$ is a directed colimit of perfect $\bbL$-modules. By Corollary \ref{cor:foriwengoienbp}, there is a surjection $g$ of spectral sequences from the change-of-coefficient spectral sequence for $SC^*_{\frakU^{\frakc\frakh}}(M; \bL)$ to the change-of-coefficient spectral sequence for $SC^*_{\frakU^{\frakc\frakh}_{\rho_0}}(M; \bL')$. By combining \eqref{eq:18} with Lemma \ref{lem: riogeoingiengpr}, the target of $g$ has a non-zero differential; hence, so does the domain. Using Corollary \ref{cor:ornbpihrnhbpirtgpnr} followed by Corollary \ref{cor; poeirjgpiengiegnpir} finishes the proof. 
    \end{proof}

\subsection{Non-base change computation}\label{subsec:nonbasechange}
In this subsection, we compute Example \ref{example:nonbasechange}; we repeat it here for convenience.

\begin{claim}
The hypothesis of Corollary \ref{cor:main} holds in the case of $X=T^*\bbC P^n$, $n\in\bbZ$ is odd and at least 3.
\end{claim}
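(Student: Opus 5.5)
The plan is to reduce the bracket computation to string topology via the Viterbo isomorphism, and then to a computation in the known BV algebra of $H_*(\mathcal{L}\bbC P^n;\bbC)$. Throughout, set $Q\equiv\bbC P^n$ with $n$ odd and at least $3$, and write $h\in H^2(Q;\bbZ)$ for the hyperplane class.

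\emph{Step 1: compute $\ch(M)$.} Since $T(T^*Q)\cong TQ\otimes_\bbR\bbC\cong TQ\oplus\overline{TQ}$ as complex bundles, we get
\begin{equation}
\ch_k(M)=\big(1+(-1)^k\big)\ch_k(TQ)=\big(1+(-1)^k\big)\frac{(n+1)}{k!}h^k
\end{equation}
after pulling back along $M\simeq Q$; here we use $TQ\oplus\underline{\bbC}\cong\mathcal{O}(1)^{\oplus(n+1)}$. In particular $\ch_2(M)=(n+1)h^2\neq0$ because $n\geq 2$, so I will take $\rho_0=2$.

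\emph{Step 2: translate the bracket.} Since $n$ is odd, $Q$ is spin, so no local system twist is needed. By Abouzaid's version of the Viterbo isomorphism, $SH^*(T^*Q;\bbC)\cong H_{2n-*}(\mathcal{L}Q;\bbC)$, and this isomorphism intertwines the following structures (up to universal signs):
\begin{itemize}
\item the pair-of-pants product with the Chas--Sullivan loop product;
\item the BV operator with the $S^1$-rotation operator $\Delta$;
\item hence, by \cite[Lemma 2.5.5]{Abo14}, the bracket $[-,-]$ with the string bracket $\{a,b\}=\pm\big(\Delta(ab)-\Delta(a)b\mp a\Delta(b)\big)$.
\end{itemize}
Under this isomorphism, the PSS map corresponds to pushing forward the Poincar\'e dual of a cohomology class along the constant-loop inclusion $c:Q\to\mathcal{L}Q$. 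So the goal becomes finding $\alpha\in H_*(\mathcal{L}Q;\bbC)$ with $\{\alpha,c_*\mathrm{PD}(h^2)\}\neq0$. Since $\Delta$ vanishes on the image of constant loops, this reduces to showing $\Delta(\alpha\cdot c_*\mathrm{PD}(h^2))\neq\Delta(\alpha)\cdot c_*\mathrm{PD}(h^2)$.

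\emph{Step 3: the BV computation.} For this I will invoke the known rational BV-algebra structure on $\mathbb{H}_*(\mathcal{L}\bbC P^n)$, as computed for example by Yang or by Menichi via the Hochschild cohomology of the formal algebra $\bbC[x]/(x^{n+1})$ with its Connes operator. There $c_*\mathrm{PD}(h^k)$ is $x^k$, and there are extra generators $u$ (odd) and $t$ (of degree $2n+2$) subject to relations of the form $x^{n+1}=0$ and $(n+1)x^nt=0$. In Hochschild terms $\Delta$ is essentially the derivation $t\partial_x$-type operator, and $\{-,x^2\}$ acts like $2x\cdot\partial_t$-type contraction. The candidate is $\alpha=ut$ or $\alpha=t$, chosen so that the degrees match and $\{\alpha,x^2\}$ is a nonzero multiple of $x\,u$ or $x^{n-1}t$. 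I will check that this multiple is a nonzero integer times $(n+1)$, or a similar factor, which is invertible over $\bbC$. Then I will verify that the resulting class is not killed by the relations, which uses $2\leq n-1$, i.e.\ $n\geq3$.

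\emph{Main obstacle.} I expect Step 3 to be the hardest part. Pinning down the precise BV presentation, including the signs and the identification of the string bracket with the symplectic bracket under Viterbo, is the delicate piece, and so is ensuring that the degree-matching class is not killed by the relations. If the Hochschild model becomes unwieldy, my fallback is a direct geometric computation: take $\alpha$ to be the class of the $S^1$-family (parametrised by $SQ$) of simple closed geodesics, and compute the bracket with $\mathrm{PD}(h^2)$ as an intersection of the $S^1$-sweep with a constant-loop cycle, which is a Gysin-type count on the unit tangent bundle.
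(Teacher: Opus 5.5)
Your Steps 1 and 2 follow the paper's route. You use the untwisted Viterbo isomorphism, which is available because odd $n$ makes $\mathbb{C}P^n$ spin. You use Abouzaid's BV-compatibility, so that $[-,-]$ becomes the string bracket and PSS becomes $c_*\circ\mathrm{PD}$. Your value $\ch_2(M)=(n+1)h^2$ agrees with the paper's $-2\binom{n+1}{2}H^2+(n+1)^2H^2$.

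The gap is Step 3. It is never carried out, and as sketched it would not give what you predict. Your $u,x,t$ are the paper's $w,H,v$ in the presentation it quotes from Hepworth. There, the only nonzero brackets of generators are $\{H,w\}=-H$ and $\{v,w\}$; in particular $\{x,t\}=0$. By the Leibniz rule, your candidate $\alpha=t$ therefore gives $\{t,x^2\}=0$. Your other candidate $\alpha=ut$ gives $\{ut,x^2\}=\pm2x^2t$, which is indeed nonzero for $n\geq3$. But it is not ``a multiple of $xu$ or $x^{n-1}t$'' with a factor of $n+1$.

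The heuristics behind these guesses are also wrong:
\begin{itemize}
\item $\Delta$ is odd and kills constant-loop classes, as you use yourself in Step 2. So it cannot be a ``$t\partial_x$-type'' operator, which would send $x^k\mapsto kx^{k-1}t\neq0$.
\item $\{-,x^2\}$ is contraction against $d(x^2)=2x\,dx$. It acts on the odd generator, not as a derivative in the even class $t$.
\end{itemize}

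The missing idea is to take $\alpha=u$, the degree-one class. In the Hochschild model you mention, $u$ is the Euler derivation $x\partial_x$, so $\{u,x\}=\pm x$; this is exactly the bracket $\{H,w\}=-H$. The Poisson identity then gives $\{u,x^2\}=\pm2x^2$, and hence
$\big\{u,\ch_2(M)\big\}=\pm2\ch_2(M)=\pm2(n+1)x^2$.
This is nonzero because $x^2=c_*\mathrm{PD}(h^2)$ is not killed by the relations $x^{n+1}$, $(n+1)x^nt$, $x^nu$ once $n\geq2$.

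This is precisely the paper's computation $\{\ch_2,w\}=-2\ch_2$. It removes your ``main obstacle'' and makes the geodesic fallback unnecessary. Note also that, among odd $n$, the hypothesis $n\geq3$ is only there to guarantee $h^2\neq0$; for $n=1$ one has $\ch_2(M)=0$. It does not enter through an inequality like $2\leq n-1$.
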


\begin{proof}[Proof of claim]
We may check this as follows. Let $H\in H^2(\bbC P^n;\bbZ)$ be the standard generator corresponding to the Poincar\'e dual of the class of a hyperplane. Recall, in this case, $\bbC P^n$ satisfies $c_1(\bbC P^n)=(n+1)H\in H^2(\bbC P^n;\bbZ)$ which implies $w_2(\bbC P^n)=0\in H^2(\bbC P^n;\bbZ/2)$, i.e., $\bbC P^n$, in this case, is spin. Now, a classical theorem of Viterbo \cite{Vit98} shows 
    \begin{equation}
    SH^*(T^*\bbC P^n;\bbZ)\cong H_{2n-*}(\calL\bbC P^n;\bbZ),
    \end{equation}
where $\calL\bbC P^n$ is the free loop space of $\bbC P^n$. As already mentioned, the left hand side of the Viterbo isomorphism can be endowed with a pair-of-pants product and BV operator, turning it into a BV algebra. Meanwhile, the right hand side of the Viterbo isomorphism can be endowed with the Chas-Sullivan loop product and a BV operator, turning it into a BV algebra. Finally, the Viterbo isomorphism is an isomorphism of BV algebras, cf. \cite{Abo14}. In particular, since the product and BV operator determine the bracket, the Lie bracket $[-,-]$ on ordinary symplectic cohomology of a cotangent bundle, under the Viterbo isomorphism, corresponds to the string bracket $\{-,-\}$ on the homology of the free loop space of the base. In \cite{Hep09}, the string bracket for $\calL\bbC P^n$ was computed:
    \begin{equation}
    H_{*-2n}(\calL\bbC P^n;\bbZ)\cong\dfrac{\Lambda_\bbZ[w]\otimes_\bbZ\bbZ[H,v]}{\big\langle H^{n+1},(n+1)H^n\cdot v,w\cdot H^n\big\rangle},
    \end{equation}
where $\deg(w)=1$, $\deg(H)=2$, and $\deg(v)=-2n$; and the only non-trivial string brackets are
    \begin{align}
    \{H,w\}=-\{w,H\}&=-H, \\
    \{v,w\}=-\{w,v\}&=(n+1)v+\binom{n+1}{2}H^n\cdot v^2.
    \end{align}
It remains to verify the hypotheses of Corollary \ref{cor:main}, but this is straightforward. First, we recall 
    \begin{equation}
    c_1(T^*\bbC P^n)=c_1(\bbC P^n)-c_1(\bbC P^n)=0.
    \end{equation}
Second, we recall
    \begin{equation}
    c_2(T^*\bbC P^n)=c_2(\bbC P^n)-c_1(\bbC P^2)^2+c_2(\bbC P^n)=2\binom{n+1}{2}H^2-(n+1)^2H^2.
    \end{equation}
Third, we compute 
    \begin{align}
    \ch_1(T^*\bbC P^n)&=0 \\
    \ch_2(T^*\bbC P^n)&=\dfrac{1}{2}\big(c_1(T^*\bbC P^n)^2-2c_2(T^*\bbC P^n)\big) \\
    &=-2\binom{n+1}{2}H^2+(n+1)^2H^2.
    \end{align}
Fourth, we compute 
    \begin{equation}
    \{H^2,w\}=-\{w,H^2\}=-\{w,H\}\cdot H-H\cdot\{w,H\}=-2H^2
    \end{equation}
using the Poisson identity for the string bracket.
Finally, we verify
    \begin{equation}
    \big\{\ch_2(T^*\bbC P^n),w\big\}=-2\binom{n+1}{2}\{H^2,w\}+(n+1)^2\{H^2,w\}=-2\ch_2(T^*\bbC P^n);
    \end{equation}
i.e., $\big\{\ch_2(T^*\bbC P^n),w\big\}$ does not vanish when $n\in\bbZ$ is odd and at least 3.
\end{proof}

\subsection{Ample divisor complement computation}\label{subsec:ampledc}
In this subsection, we prove the two claims from the introduction; we repeat them here for convenience.

\begin{claim}
In the case $n\in\bbZ$ is odd and at least 3, $\calG\calW^{\rm U}_{T^*\bbC P^n}\neq0$.
\end{claim}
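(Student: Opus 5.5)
We argue by contradiction, combining the filtration result of \cite{Bla26} with the non-base change statement already established in Example \ref{example:nonbasechange}. Suppose $\calG\calW^{\rm U}_{T^*\bbC P^n}=0$. By the results recalled in the introduction, $\calG\calW^{\rm U}_{T^*\bbC P^n}$ is exactly the obstruction to the filtered ${\rm MU}$-module $\bbF^{\rm U}_{T^*\bbC P^n}$ degenerating into its associated graded. Its vanishing therefore splits the filtration, and we obtain an equivalence of ${\rm MU}$-modules
\begin{equation}
\bbF^{\rm U}_{T^*\bbC P^n}\simeq\frakD\bbC P^n_+\otimes_\bbS{\rm MU}\oplus\bigoplus_{k>1}\frakD\Sigma^{-2nk}(S_DM)_+\otimes_\bbS{\rm MU}.
\end{equation}
This is a directed colimit of the finite stages $F_k$. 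In each stage the relevant extension class vanishes, so each $F_k$ splits and the colimit is the direct sum.

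The right hand side is manifestly of the form $\frakY\otimes_\bbS{\rm MU}$, with
\begin{equation}
\frakY\equiv\frakD\bbC P^n_+\oplus\bigoplus_{k>1}\frakD\Sigma^{-2nk}(S_DM)_+,
\end{equation}
since $-\otimes_\bbS{\rm MU}$ commutes with direct sums. On the other hand, Example \ref{example:nonbasechange}, proved in Subsection \ref{subsec:nonbasechange}, shows that the hypothesis of Corollary \ref{cor:main} holds for $T^*\bbC P^n$ when $n$ is odd and at least $3$. Corollary \ref{cor:main} then says that $\bbF^{\rm U}$ is not equivalent to $\frakY\otimes_\bbS{\rm MU}$ for any spectrum $\frakY$. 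This contradicts the splitting above, so $\calG\calW^{\rm U}_{T^*\bbC P^n}\neq0$.

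The main obstacle is making precise that vanishing of $\calG\calW^{\rm U}_{T^*\bbC P^n}$ really yields a splitting of the whole filtered object, and not merely of its first nontrivial extension. In \cite{Bla26} the filtration has a special shape. After the $k=1$ piece, the higher pieces are governed by the same class through iterated gluing, that is, multiples of $S_DZ$ obtained by covering $D$ with multiplicity $k$. One must check that the higher attaching maps are built from $\calG\calW^{\rm U}_{T^*\bbC P^n}$, for instance via the module structure over the $k=1$ piece, so that its vanishing kills all of them. I would first try to extract this from the complex-oriented analogue of \cite{Bla26} invoked in the remark of the introduction, where the obstruction is stated as the obstruction to degeneration into the associated graded.

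If only the first extension is controlled, I would fall back on rationalizing. Corollary \ref{cor; poeirjgpiengiegnpir} only needs non-degeneracy of the change-of-coefficients spectral sequence. The nonzero differential produced by $[\alpha,\ch_2]$ in the proof of Corollary \ref{cor:main} must then be shown to involve the interaction between the $k=1$ piece and the $k=2$ piece, which is controlled precisely by $\calG\calW^{\rm U}_{T^*\bbC P^n}$. Vanishing of that class would split $F_2$ and hence kill this differential, again giving a contradiction.
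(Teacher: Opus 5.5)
Your argument is correct and is the same as the paper's. Assuming $\calG\calW^{\rm U}_{T^*\bbC P^n}=0$, the filtered object $\bbF^{\rm U}_{T^*\bbC P^n}$ degenerates into its associated graded, whose pieces are all base changed from $\bbS$, and this contradicts Corollary \ref{cor:main} via Example \ref{example:nonbasechange}. The paper simply takes from \cite{Bla26} the statement that this class is \emph{the} obstruction to degeneration, so it never addresses your worry about higher attaching maps, and your rationalized fallback is not needed.
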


\begin{proof}[Proof of claim]
If the conclusion were not true, then $\bbF^{\rm U}_{T^*\bbC P^n}$ would degenerate into its associated graded; this is an immediate contradiction to Example \ref{example:nonbasechange} since the wedge over the associated graded can be lifted to the sphere spectrum.
\end{proof}

\begin{claim}
In the case $n\in\bbZ$ is odd and at least 3, $\calG\calW^{\rm U}_{T^*\bbC P^n}\otimes_{\rm MU}\bbZ=0$.
\end{claim}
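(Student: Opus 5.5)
Since $\calG\calW^{\rm U}_{T^*\bbC P^n}\otimes_{\rm MU}\bbZ$ lives in
\begin{equation}
H_0\Big(\Sigma^{-2n+1}(S_DZ)_+\otimes_\bbS\frakD\bbC P^n_+;\bbZ\Big)\cong H_{2n-1}\big(S_DZ\times\bbC P^n{}^{\vee};\bbZ\big),
\end{equation}
the plan is to show that this group vanishes for degree reasons. If it does not vanish outright, the fallback is to identify the class with the connecting map for the filtration of $\bbF^{\rm U}\otimes_{\rm MU}\bbZ\simeq SC^{-*}(T^*\bbC P^n;\bbZ)$ and show that this map is zero.

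First, compute the target group. Write $\frakD\bbC P^n_+\simeq\Sigma^{-2n}(\bbC P^n)^{-T}$, which integrally has cells only in degrees $-2n,\ldots,0$, all even. So the group is a sum of pieces $H_{\rm odd}(S_DZ;\bbZ)$ in the appropriate shifted degrees.

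Next, $S_DZ$ is a circle bundle over the quadric $D\subset\bbC P^n\times\bbC P^n$, and $D$ is the projectivized (co)tangent bundle of $\bbC P^n$. Its cohomology is torsion-free and concentrated in even degrees. The Gysin sequence, with Euler class the restriction of the ample class $h_1+h_2$, computes $H_{\rm odd}(S_DZ)$ as the cokernel or kernel of cup product with $h_1+h_2$. I expect the odd-degree homology to be torsion, or zero in the relevant degree after the shift by $-2n+1$ coming from the $\Sigma^{-2n+1}$.

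If the group does not vanish on its own, the second step is to use the Ganatra--Pomerleano interpretation. Integrally, $\calG\calW\otimes_{\rm MU}\bbZ$ is the count of zero-dimensional moduli spaces. By \cite[Theorem 1.6]{GP20}, this is the obstruction to degeneration of the low-energy spectral sequence for $SH^*(T^*\bbC P^n;\bbZ)$.

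Degeneration can then be checked by comparing ranks. The associated graded $H^*(\bbC P^n)\oplus\bigoplus_{k>1}H^{*}(S_DM)$, shifted appropriately, should be compared with $H_{2n-*}(\calL\bbC P^n;\bbZ)$, which is known explicitly from \cite{Hep09}. For $n$ odd, $\bbC P^n$ is spin, so the Viterbo isomorphism holds integrally.

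If the total ranks and torsion agree, there is no room for a non-zero differential, so the obstruction class must vanish. Strictly speaking, this argument only shows the induced map on homology is zero. To conclude that the class itself is zero, I would need the identification of the class with the boundary map in homology. That identification is plausible because the target group is $\Omega^{\rm U}_0\otimes\bbZ$-type homology in a single degree.

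The main obstacle is the integral bookkeeping. The torsion in $H_*(\calL\bbC P^n)$ (for example, the $(n+1)H^n v$ relation) has to match the torsion coming from the Gysin sequence of $S_DZ$, with the shifts $-2nk$ aligned correctly. This is the step I would compute first, degree by degree for $k=1,2$.
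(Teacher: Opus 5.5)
\textbf{There is a genuine gap.} Your first route fails outright, and the decisive gap is in your last step.

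\textbf{The group does not vanish.} By the paper, $S_DZ$ is diffeomorphic to the unit sphere bundle of $\bbC P^n$. The Gysin sequence with Euler class $(n+1)H^n$ gives
\[
H_{2n-1}(S_DZ;\bbZ)\cong\bbZ/(n+1),\qquad H_{2n-1+2j}(S_DZ;\bbZ)\cong\bbZ\quad(1\le j\le n).
\]
The homology of $\frakD\bbC P^n_+$ is free, equal to $\bbZ$ in degrees $0,-2,\ldots,-2n$, so Künneth gives
\[
H_0\big(\Sigma^{-2n+1}(S_DZ)_+\otimes_\bbS\frakD\bbC P^n_+;\bbZ\big)\cong\bigoplus_{j=0}^{n}H_{2n-1+2j}(S_DZ;\bbZ)\cong\bbZ/(n+1)\oplus\bbZ^n .
\]
For instance, the fundamental class of $S_DZ$ against the bottom cell of $\frakD\bbC P^n_+$ already gives a $\bbZ$. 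So the odd homology of $S_DZ$ is mostly free, not torsion, and you are forced into your fallback.

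\textbf{The fallback and where it breaks.} The fallback is essentially the paper's argument. $SH^*(T^*\bbC P^n;\bbZ)$ is abstractly isomorphic to the sum of its associated graded, so the filtration spectral sequence has no room for differentials. Hence the maps on homotopy groups induced by the class vanish. The paper gets this abstract splitting from \cite[Theorem 6.1]{BO18} plus Viterbo, rather than from Hepworth's ring presentation, which avoids the bookkeeping you flag.

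Your final step, however, is wrong: the class is not determined by its effect on integral homology. Regard it as a map of $H\bbZ$-modules $A\equiv\frakD\Sigma^{-2n+1}(S_DZ)_+\otimes_\bbS\bbZ\to\frakD\bbC P^n_+\otimes_\bbS\bbZ\equiv B$. Morphisms in $D(\bbZ)$ have, besides the component seen on $\pi_*$, a component in $\prod_k\operatorname{Ext}^1(\pi_kA,\pi_{k+1}B)$. Here $\pi_kA\cong H^{2n-1-k}(S_DZ;\bbZ)$ and $\pi_*B$ is free, so this component is
\[
\operatorname{Ext}^1_\bbZ\big(H^{2n}(S_DZ;\bbZ),H^0(\bbC P^n;\bbZ)\big)\cong\operatorname{Ext}^1_\bbZ(\bbZ/(n+1),\bbZ)\cong\bbZ/(n+1).
\]
This is exactly the torsion summand computed above, and its nonzero elements induce zero on $\pi_*$. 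Saying the class is ``homology in a single degree'' does not exclude them.

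\textbf{The missing step.} You must also show the map vanishes on homotopy groups after base change along $\bbZ\to\bbZ/m$ for every $m$; mod-$m$ homotopy detects the Ext component via the Bockstein. The paper does exactly this by repeating the degeneration argument with $\bbZ/m$ in place of $\bbZ$. The abstract splitting with $\bbZ/m$ coefficients follows from the integral one by universal coefficients.
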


\begin{proof}[Proof of claim]
In order to show $\calG\calW^{\rm U}_{T^*\bbC P^n}\otimes_{\rm MU}\bbZ=0$, it suffices to show its associated morphism, 
    \begin{equation}
    \frakD\Sigma^{-2n+1}(S_DZ)_+\otimes_\bbS\bbZ\to\frakD\bbC P^n_+\otimes_\bbS\bbZ,
    \end{equation}
is (1) trivial on homotopy groups and (2) trivial on homotopy groups after base change along $\bbZ\to\bbZ/m$. We proceed as follows.

We have that the total space of the unit sphere bundle of $\bbC P^n$ is diffeomorphic to the total space of $S_DZ$, essentially by construction. Now, using \cite[Theorem 6.1]{BO18}, we have 
    \begin{equation}
    H_*(\calL\bbC P^n;\bbZ)\cong H_*(\bbC P^n;\bbZ)\oplus\bigoplus_{k\geq1}H_{*+2n-1-2nk}(S_DZ;\bbZ).
    \end{equation}
In particular, since $\bbC P^n$ is spin by assumption ($n\in\bbZ$ is odd and at least 3), the usual Viterbo isomorphism, combined with the previous equation and Poincar\'e duality, shows 
    \begin{align}
    SH^{-*}(T^*\bbC P^n;\bbZ)&\cong H_{*+2n}(\calL\bbC P^n;\bbZ) \\
    &\cong H_{*+2n}(\bbC P^n;\bbZ)\oplus\bigoplus_{k\geq1}H_{*+4n-1-2nk}(S_DZ;\bbZ) \\
    &\cong H^{-*}(\bbC P^n;\bbZ)\oplus\bigoplus_{k\geq1}H^{-*+2nk}(S_DZ;\bbZ).
    \end{align}
Meanwhile, we have already seen that writing $T^*\bbC P^n$ as $Z-D$ yields a filtration of $SH^*(T^*\bbC P^n;\bbZ)$ whose associated graded is 
    \begin{equation}
    Gr_kSH^*(T^*\bbC P^n;\bbZ)\cong
    \begin{cases}
    H^*(\bbC P^n;\bbZ)& k=1 \\
    H^{*+2nk}(S_DZ;\bbZ)& k>1 \\
    0 & {\rm otherwise}.
    \end{cases}
    \end{equation}
I.e., we have shown that, abstractly, $SH^*(T^*\bbC P^n;\bbZ)$ is isomorphic to the direct sum over its associated graded. In particular, the spectral sequence arising from the aforementioned filtration of $SH^*(T^*\bbC P^n;\bbZ)$ cannot possibly have a non-trivial differential, for if it did, we would contradict the abstract isomorphism of $SH^*(T^*\bbC P^n;\bbZ)$ to the direct sum over its associated graded. Observe, since each map on homotopy groups induced by $\calG\calW^{\rm U}_{T^*\bbC P^n}\otimes_{\rm MU}\bbZ$,
    \begin{equation}
    H^{\ell+2n+1}(S_DM;\bbZ)\to H^\ell(\bbC P^n;\bbZ),\;\;\ell\in\bbZ,
    \end{equation}
arises as a differential in the aforementioned spectral sequence, each must vanish; this shows (1). Now, (2) follows by repeating the previous argument with $\bbZ$ replaced by $\bbZ/m$; the claim follows.
\end{proof}

\bibliography{References}{}
\bibliographystyle{alpha.bst}
\end{document}